\documentclass[a4paper,11pt,oneside]{article}
\usepackage{graphicx} 
\usepackage[a4paper]{geometry}
\usepackage[T1]{fontenc}
\usepackage[utf8]{inputenc}
\usepackage[english]{babel}
\usepackage{microtype}
\usepackage{quoting}
\usepackage{amsthm, amsmath, amssymb, amscd, mathrsfs, mathtools} 
\usepackage{dsfont}
\usepackage{booktabs, caption, graphicx} 
\usepackage{enumitem}
\usepackage{esint} 
\usepackage{framed}
\usepackage{braket}
\usepackage{tikz-cd}
\usepackage{centernot}
\usepackage{fancyhdr}
\usepackage{emptypage}
\usepackage{csquotes}
\usepackage{leftidx}
\usepackage{tensor}
\usepackage{stmaryrd}
\usepackage{setspace}
\usepackage{nicematrix}
\usepackage[normalem]{ulem} 

\usepackage[dvipsnames]{xcolor}

\usepackage[colorlinks,linkcolor=blue, urlcolor = blue]{hyperref}
\hypersetup{
citecolor=Green
,filecolor=Mulberry
,urlcolor=NavyBlue
,menucolor=BrickRed
,runcolor=Mulberry
,linkbordercolor=BrickRed
,citebordercolor=Green
,filebordercolor=Mulberry
,urlbordercolor=NavyBlue
,menubordercolor=BrickRed
,runbordercolor=Mulberry
}

\usepackage[backend=biber,style=alphabetic, sorting=nyt]{biblatex}
\usepackage{settings} 

\mathtoolsset{showonlyrefs}

\newcommand{\Addresses}{{
  \bigskip
  \footnotesize

  \textsc{Andrea Malchiodi}\par\nopagebreak
  Scuola Normale Superiore, Piazza dei Cavalieri 7, 56126 Pisa, Italy \par\nopagebreak
  \textit{E-mail address}: \texttt{andrea.malchiodi@sns.it}

  \medskip

  \textsc{Francesco Malizia}\par\nopagebreak
  Scuola Normale Superiore, Piazza dei Cavalieri 7, 56126 Pisa, Italy \par\nopagebreak
  \textit{E-mail address}: \texttt{francesco.malizia@sns.it}

}}

\author{Andrea Malchiodi and Francesco Malizia\thanks{
The authors are grateful to Matt Gursky for helpful discussions and suggestions. They are supported by the PRIN Project 2022AKNSE4 {\em Variational and Analytical aspects of Geometric PDE} and are members of \emph{Gruppo Nazionale per l'Analisi Matematica, la Probabilità e le loro Applicazioni} (GNAMPA), as part of the Istituto Nazionale di Alta Matematica. Andrea Malchiodi is also supported by the project {\em Geometric problems with loss of compactness} from Scuola Normale Superiore. 
}}

\title{Decreasing Weyl's energy by connected sums \\ with locally conformally flat manifolds}

\date{}
\begin{document}

\maketitle

{\footnotesize
		\begin{abstract}	
			\noindent We study the Weyl functional on connected sums of 
            two four-dimensional manifolds $(M,g_M)$ and $(Z,g_Z)$, 
            assuming $g_M$ is Bach-flat and $g_Z$ locally conformally flat. 
            We show that if $g_M$ is neither self-dual nor anti self-dual 
            and if $g_Z$ is of positive Yamabe class, there exists a metric $g_Y$ 
            on $Y := M \# Z$ with Weyl energy lower than that of $g_M$ (with the 
            trivial exception of $(Z,g_Z) = (\Sp^4, g_{\Sp^4})$). This result has a 
            relation to a conjecture by Singer and has a perspective application 
            to the minimization of  Weyl energy. 
            The proof relies on a simultaneous interplay of $W_M^+, W_M^-$
            and the topology of $Z$, and 
            also covers some orbifold cases.

			\vspace{3ex}
			
			\noindent{\it Key Words:} Weyl Functional, Self-Duality, Locally Conformally Flat Manifolds.

			\noindent{{\bf MSC 2020: }53C21, 53C25, 58E11, 49J99. } 
			
	\end{abstract}}

\begin{spacing}{0.75}
    {\tableofcontents}
\end{spacing}

\section{Introduction}\label{sec:intro}

In Riemannian geometry it is often  desirable to  look for {\em canonical metrics}:  
in four dimension one natural condition for a metric $g$ is to be Einstein, namely when the 
Ricci tensor satisfies 
\begin{equation*}
	\mathrm{Ric}(g)=\lambda g. 
\end{equation*}
Analytically, the Einstein condition (up to a suitable gauge transformation such as a local choice 
of  harmonic coordinates) gives rise to an elliptic system in the metric (see e.g. \cite[Chapter 5]{besse-book-2008}), 
allowing to obtain compactness properties up to a finite number of points (\cite{anderson-1989-JAMS-einsteincompactness}, \cite{bando-kasue-nakajima-1989-Inventiones}). This property is peculiar of the four-dimensional case, 
since in dimension $2$ and $3$ the Einstein condition is rigid and yielding constant sectional curvature, 
while in dimension greater or equal to five there might be an abundance of such metrics, see \cite{gromov-1999-GAFA-spaces-and-questions}.

However, there are known obstructions to the existence of Einstein metrics: for example, some topological 
ones are given by the  \emph{Hitchin-Thorpe inequality} (\cite{thorpe-1969-JMathMech-HitchinThorpe, hitchin-1974-JDG-HitchinThorpe}), 
but some of them might depend  as well on other properties, such as the differentiable structure, see  e.g. \cite{lebrun-1996-mathResLett-4manif-without-Einstein}, \cite{kotschick-1998-G&T-einstein-smooth-structure}. 
We refer the reader to the surveys \cite{anderson-2010-einstein-survey}, \cite{lebrun-2003-einstein-survey} for more detailed comments.

\

Other natural metrics to consider might arise  from extremizing 
curvature functionals (see \cite[Chapter 4]{besse-book-2008}), 
such as the integrals of the squares of the scalar curvature 
or of the norm of the Riemann tensor. An example 
without known obstructions, up to now, is \textit{Weyl's energy}.   In four dimensions, 
the curvature tensor admits the following $L^2$-orthogonal decomposition 
\begin{equation}\label{eq:curv-decomp-4}
	\mathrm{Riem}=W \, \oplus \, \frac{1}{2}E\owedge g \,\oplus\,\frac{R_g}{24}g\owedge g, 
\end{equation}
where $\owedge$ stands for the Kulkarni-Nomizu product (see e.g. \cite{viaclovsky-notes-ParkCity}), $E:=\mathrm{Ric}-\frac{R_g}{4}g$ for the traceless Ricci tensor and $W$ denotes the Weyl tensor, the \emph{conformally covariant} component of the curvature. 

The \emph{Weyl functional} is defined in this way: 
\begin{equation}\label{eq:weyl-funct}
	\w^M(g):=\int_M \abs{W^g}^2\,dV_g.
\end{equation}
The covariance of $W_g$ makes $\w^M(\cdot)$ conformally invariant, and therefore
it only depends on the conformal class $[g]$ of $g$. Critical points of \eqref{eq:weyl-funct} are  \emph{Bach-flat metrics}, characterized by 
the equation 
\begin{equation}\label{eq:bach-flat-equation}
	B_{ij}:=-4\big(\nabla^{k}\nabla^{l} W_{k i j l}+\frac{1}{2}R^{kl} W_{k i j l}\big)=0, 
\end{equation}
where $B=B_{ij}$ is the \emph{Bach tensor}, see \cite{bach-1921-mathz}, \cite{kobayashi-1985-J-Math-Soc-second-weyl-variation}.

Bach-flat metrics include conformally Einstein metrics, self-dual or anti self-dual metrics but not only, even though only 
few examples different from these ones are known, see  \cite{abbena-garbiero-salamon-2013-CRAS-bachflat-lie-groups}. 
To our knowledge, there are no obstructions to the existence of Bach-flat metrics on closed $4$-manifolds, so these might be a 
natural class of critical metrics to look for.

Still, the system \eqref{eq:bach-flat-equation} is not simple to treat for different reasons. First, as for 
the Einstein equation, there is a degeneracy due to the action of diffeomorphisms, and for the 
present situation also due to the conformal covariance of the problem. For this reason, 
the parabolic version of \eqref{eq:bach-flat-equation} has been treated in \cite{bahuaud-helliwell-2011-CPDE-fourth-order-flow-existence}, \cite{bahuaud-helliwell-2015-BullLMS-uniqueness-fourth-order} and \cite{chen-lu-qing-2023-AGAG-conf-Bach-flow} 
by applying suitable gauge transformations in the spirit of DeTurck's trick for the Ricci flow. 
As for Einstein metrics, it is also expected that sequences of Bach-flat metrics  (or approximate 
Bach-flat) might develop \emph{collapsing} (see  \cite{cheeger-gromov-1986-JDG-collapsing1,cheeger-gromov-1990-JGD-collapsing2}) 
or \emph{orbifold singularities}, as explained in \cite{anderson-2006-AsianJ-canonical-metrics}. 
Indeed, under suitable curvature and volume bounds one has orbifold precompactness (see \cite{anderson-2005-Math.Annalen-orbifoldcompactness}, \cite{tian-viaclovsky-2005-Inventiones}, \cite{tian-viaclovsky-2005-advances}, \cite{tian-viaclovsky-2008-CMH}, \cite{streets-2010-transactions-removal-singul-Bachflat}).

\bigskip

Our goal in this paper is to exclude some type of topological degenerations for Bach-flat metrics 
via energy comparison, showing that on suitable connected sums of Bach-flat 
manifolds one can find a metric with lower energy than the sum of the two initial ones. 
To put our results into perspective, we begin by recalling the following theorem.

\begin{maintheorem}\label{thm:main} (\cite{malchiodi-malizia-2025-pre-weyl}) 
	Let $(M,g_M)$, $(Z,g_Z)$ be two $4$-dimensional closed, connected, oriented manifolds, and let $Y:=Z\#M$ be their connected sum. Assume that $g_M, g_Z$ are not locally conformally flat and that they are not both self-dual or both anti-self-dual. Then there exists a metric $g_Y$ on $Y$ such that
	\begin{equation}\label{eq:en-inequality}
		\w^Y(g_Y)<\w^M(g_M)+\w^Z(g_Z).
	\end{equation}
\end{maintheorem}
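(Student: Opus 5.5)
The approach is a gluing construction with a quantitative expansion of the Weyl energy in the gluing parameter. The aim is to show that the leading correction to $\w^M(g_M)+\w^Z(g_Z)$ is strictly negative for a suitable choice of gluing data.

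\textbf{Step 1: conformal blow-up and gluing.} Fix points $p\in M$ and $q\in Z$ where the Weyl tensors $W_M(p)$ and $W_Z(q)$ do not vanish; such points exist since neither metric is locally conformally flat. In conformal normal coordinates at $q$, replace $g_Z$ by the conformally related metric $|x|^{-4}g_Z$ on $Z\setminus\{q\}$. This metric is asymptotically flat, with the inverted coordinates $y=x/|x|^2$ as its end. Then glue, by a scaling $\varepsilon$ and an orientation-reversing identification $A\in O(4)$ of the tangent spaces, a small ball of radius $\sim\sqrt\varepsilon$ around $p$ in $M$ to the scaled asymptotically flat end of $Z$. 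This gives a family of metrics $g_{\varepsilon,A}$ on $Y$.

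\textbf{Step 2: expansion of the energy.} Since $\w$ is conformally invariant, both pieces can be treated in whichever conformal gauge is convenient. Expanding in the neck region, where $g_M=\delta+O(|x|^2)$ with quadratic term governed by $W_M(p)$, and the end of $Z$ is $\delta+O(|y|^{-2})$ with decay coefficient governed by $W_Z(q)$, I expect
\begin{equation*}
\w^Y(g_{\varepsilon,A})=\w^M(g_M)+\w^Z(g_Z)+c\,\varepsilon^{2}\,\langle W_M(p),A^*W_Z(q)\rangle+o(\varepsilon^2),
\end{equation*}
with $c\neq0$ a universal constant. The cross term comes from $2\int\langle W_M,W_Z\rangle$ over the neck, where the two curvature tails overlap. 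A cutoff must interpolate between the two expansions, and the error it produces must be shown to be $o(\varepsilon^2)$. This may require correcting the metric by solving a linearized problem so that no nonlocal $\varepsilon^2$ term appears beyond the interaction term.

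\textbf{Step 3: choosing $A$.} We now need $A$ in the appropriate component of $O(4)$ (orientation is forced by the oriented connected sum) such that $c\langle W_M(p),A^*W_Z(q)\rangle<0$. Decompose $W=W^+\oplus W^-$. The orientation-reversing identification swaps the self-dual and anti-self-dual parts, so the pairing is $\langle W_M^+,A^*W_Z^-\rangle+\langle W_M^-,A^*W_Z^+\rangle$. Here $SO(4)=SU(2)\times SU(2)/\pm1$ acts independently via $SO(3)$ on the two factors. Each term is the pairing of two traceless symmetric endomorphisms of $\Lambda^\pm\cong\mathbb R^3$ conjugated by a rotation. Averaging over $SO(3)$ gives zero, so a rotation making a nonzero term negative exists. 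The hypothesis that the metrics are not both self-dual or both anti-self-dual guarantees that at least one cross pairing has both factors nonzero, after possibly moving $p$ and $q$. The independence of the two $SU(2)$ factors then lets us make both terms nonpositive, with one strictly negative.

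\textbf{Main obstacle.} The hard part is Step 2: proving that the $\varepsilon^2$ coefficient is exactly the interaction term, with no competing self-interaction terms of the same order. Those would come, for example, from the mass-like term of the asymptotically flat end or from the $|x|^4$ terms of $g_M$. I would first try to handle this by exploiting the conformal invariance of $\w$ and the Gauss–Bonnet–Chern formula to convert boundary contributions into computable flux integrals.
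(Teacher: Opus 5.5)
Your Steps 1 and 3 agree with the strategy the paper recalls for this theorem. The paper blows up $M$ and inserts it into a ball around $q\in Z$, which is immaterial by symmetry. Also, since $W\stell V=\tfrac32\langle W,V\rangle$ for tensors with the Riemann symmetries, your pairing is the paper's $W^{\overbar{M}}(p)\stell W^Z(q)$, and you can fix its sign by choosing the two $SO(3)$ rotations independently.

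The gap is Step 2, and it is not a technicality. Take a cutoff in an annulus of bounded ratio around the balanced radius $\sqrt{\varepsilon}$, where the inverted tail $\varepsilon^2 W_Z\,y^ky^l/|y|^4$ and the quadratic term $W_M\,z^kz^l$ are both of size $\varepsilon$. For that construction the expansion you expect is false. Truncating each tail in the neck costs energy of order exactly $\varepsilon^2$, and the cost is strictly positive. Indeed, each tail $K$ lies in the kernel of the linearized Bach operator, so for its truncation $\chi K$ the cross term $\int\langle\dot W(K),\dot W((\chi-1)K)\rangle$ integrates by parts to zero, and the excess energy is $\int|\dot W((\chi-1)K)|^2>0$.

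Hence your $\varepsilon^2$-coefficient is $c_1|W_Z(q)|^2+c_2|W_M(p)|^2+c_3\langle W_M(p),A^*W_Z(q)\rangle$, with $c_1,c_2>0$ depending on the cutoff profile. By AM--GM the first two terms together are at least $2\sqrt{c_1c_2}\,|W_M(p)||W_Z(q)|$. So neither the choice of $A$ nor a constant rescaling of one metric decides the sign, and nothing in your argument shows $|c_3|>2\sqrt{c_1c_2}$. A linearized correction or the Gauss--Bonnet--Chern formula will not remove these terms: they are genuine curvature energy, neither gauge nor topological.

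The missing ingredient is a second parameter that stretches the transition region. In the paper the blow-up is shrunk by $a^2$, the other manifold is enlarged by $b^{-2}=\lambda^{-2}a^{-2}$ with an independent $\lambda$, and the interpolation is done in an annulus of relative width $\gamma$. This gives
\[
\w^Y(g_Y)-\w^M(g_M)-\w^Z(g_Z)=a^4\Big(C-\tfrac49\pi^2\lambda^2\,W^{\overbar{M}}(p)\stell W^Z(q)+O(\lambda^2\gamma^2)\Big)+\mathrm{h.o.t.},
\]
where the self-interaction $C$ does not grow with $\lambda$. One then takes $\lambda$ large, then $\gamma$ small, then $a$ small.

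Geometrically, this lets the two tails coexist on a long annulus, with the decaying one cut off far outside and the growing one far inside the balanced radius. On the overlap, the pointwise cross term integrates to zero over every sphere; this is the same cancellation as in \eqref{eq:wm-est-coeff}. So the interaction is a conserved flux produced at the cutoffs and independent of their position. Meanwhile the two truncation costs decay like $s^{-4}$, relative to the interaction, when the cutoff radii are $s$ times larger, resp.\ smaller, than the balanced one. Without this separation of scales Step 2 cannot be closed.
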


\bigskip

The above result has a counterpart  for the \emph{Willmore functional} defined on immersions $f:\Sigma^\rho\to\R^n$  from 
surfaces of genus $\rho$, and given by 
\begin{equation*}
	\mathbb{W}(f):=\frac{1}{4}\int_{\Sigma^\rho}\abs{H}^2\,d\mu,
\end{equation*}
where $H$ is the mean curvature of the immersion. Images of immersions that are stationary for $\mathbb{W}$ are known as {\em Willmore surfaces}. 
In \cite{simon-1986-proceed-existence-willmore-min-1,simon-1993-CAG-existence-Willmore-min-2} 
direct methods were used to study minimizing sequences, but one could not exclude a-priori 
convergence in the Hausdorff distance to surfaces of \emph{lower genus} $\rho_0<\rho$. 
This possibility was ruled out  in \cite{bauer-kuwert-2003-IMRN} by showing that if $\rho_i \geq 1$, $i = 1, 2$ and 
if $f_i:\Sigma^{\rho_i}\to\R^n$ are Willmore immersions, then there exists  an immersed surface $f:\Sigma^\rho\to\R^n$ with the topology of the connected sum (i.e. $\Sigma^\rho\cong\Sigma^{\rho_1}\#\Sigma^{\rho_2}$) such that 
\begin{equation}\label{eq:willmore-gluing-ineq}
	\mathbb{W}(f)<\mathbb{W}(f_1)+\mathbb{W}(f_2)-4\pi.
\end{equation}
As a consequence, for a minimizing sequence of immersions in genus $\rho$ it is not 
convenient to {\em split} into immersions of lower genus.  
The above inequality was proved by joining the two surfaces $f_i(\Sigma^{\rho_i})$ at two non-umbilic 
points, properly oriented according to their principal directions,   Kelvin-inverting $\Sigma^{\rho_1}$, 
and biharmonically interpolating the two local graphs.

\bigskip

The strategy for proving Theorem \ref{thm:main} is in some way similar in spirit. 
In a first instance, given $p \in M$, one 
performs a \emph{conformal blow-up} of $g_M$ 
on $M \setminus \{p\}$, a replacement for Kelvin's inversion on immersed surfaces. More precisely, one  
sets $(N,g_N) = (M \setminus \{p\}, f^2 g_M)$, 
with $f(\cdot) \simeq d_g(\cdot,p)^{-2}$. If $x$ 
is a system of geodesic coordinates centered at $p$, 
in {\em inverted coordinates} $y = {x}/{|x|^2}$ 
one has $g_N \simeq dy^2$ for $|y|$ large, and therefore $g_N$ is {\em asymptotically flat}. Notice that this process involves a change 
of orientation, which is later accounted for by \uline{gluing  a 
reversed copy of $M$}.

For $a > 0$ small, one then {\em shrinks} 
$(N,g_N)$ into a manifold $(N,g_a)$ by setting  $g_a:=a^2g_N$, and at the same time 
{\em enlarges} $(Z,g_Z)$ into a manifold $(Z,g_{b})$ via the scaling $g_b=b^{-2}g_Z:=\lambda^{-2}a^{-2}g_Z$, for some $\lambda>0$.
Using then a nearly biharmonic interpolation of $g_a$ 
and $g_b$ in a proper annulus centered at $q \in Z$ and of relative width $\gamma\ll1$, one can define a metric $g_Y$ on the connected sum $Y = M \# Z$ such that 
\begin{equation}\label{eq:en-bal-naive}
	\w^Y(g_Y)-\w^M(g_M)-\w^Z(g_Z)=a^4\Big(C-\frac{4}{9}\pi^2 \lambda^2 W^{\overbar{M}}(p)\stell W^Z(q) +O(\lambda^2\gamma^2) \Big) + \mathrm{h.o.t.}. 
\end{equation}
Here $\overbar{M}$ stands for $M$ with the opposite orientation, and $W^{\overbar{M}}(p)\stell W^Z(q)$ 
denotes the quantity 
\begin{equation}\label{eq:weyl-star-weyl}
	W^{\overbar{M}}(p)\stell W^Z(q):=\sum_{i,j,k,l}W^Z_{kijl}(q)\big(W^{\overbar{M}}_{kijl}(p)+W^{\overbar{M}}_{lijk}(p)\big). 
\end{equation}
In this  procedure, we are identifying $T_p \overbar{M}$ with $T_q Z$, and we are expressing tensor components with respect to a 
common orthonormal basis. Notice that the right-hand side in \eqref{eq:weyl-star-weyl} \uline{depends upon the identification of the two tangent spaces}. 

It turns out that, under the assumptions of Theorem \ref{thm:main}, it is 
always possible to identify $T_p \overbar{M}$ and $T_q Z$ so that $W^{\overbar{M}}(p)\stell W^Z(q) > 0$, which by \eqref{eq:en-bal-naive} gives the desired result. In fact, if $W_\pm$ denote the self-dual and anti self-dual components of the Weyl tensors, one sees in \eqref{eq:weyl-star-weyl} an interaction between  $W_\pm^M(p)$ and $W_\mp^Z(q)$.  
The term $W^{\overbar{M}} \stell W^Z$ appeared in 
\cite{gursky-viaclovsky-2016-Advances}, where the authors constructed metrics on connected sums that are critical for proper linear combinations of $\mathcal{W}$ and the integral of the 
scalar curvature squared. In  \cite{taubes-1992-JDG-selfdual} 
a related interaction was used to construct anti self-dual metrics 
on  $M\#m\overbar{\mathbb{C}\mathbb{P}}^2$, where $M$ is 
arbitrary and $m$ is taken sufficiently large.

\

Theorem \ref{thm:main} does not apply when $g_M$ and $g_Z$ are both  SD
or ASD. Indeed, for such cases there is a conjecture by I.Singer, whose validity  implies that if the two manifolds are also of positive Yamabe class,  it 
should be possible to find a family of self-dual (anti self-dual) metrics on $M \# Z$ with  {\em arbitrarily small necks}. As such metrics would be all Bach-flat (indeed, global minima of $\mathcal{W}$ by the signature formula), 
their Weyl energies would all coincide along deformations, and thus one would have equality 
of both sides in \eqref{eq:en-inequality} through such a family. 
We refer to \cite{gover-gursky-2024-CRELLE} for recent progress about Singer's 
conjecture, to \cite{floer-1991-JDG-CPsum}  for a 
construction on connected sums of $\mathbb{C}\mathbb{P}^2$'s, and 
to \cite{donaldson-friedman-1989-nonlin-SDgluing} for the more general case in which the second cohomology groups associated to the SD deformation complexes vanish.

\bigskip

The purpose of the present paper is to understand the remaining case in 
which the assumptions of Theorem \ref{thm:main} are not satisfied, namely 
when one manifold, say $(M,g_M)$, is neither self-dual nor anti self-dual 
and the other manifold is locally conformally flat. Our main result is 
the following theorem.

\begin{theorem}\label{thm:main2} 
	Let $(M,g_M)$, $(Z,g_Z)$ be two four-dimensional closed, connected, oriented manifolds, and let $Y:=Z\#M$ be their connected sum. Assume that $g_M$ is Bach-flat and  
	neither self-dual nor anti self-dual and that $g_Z$ is locally conformally 
	flat and of positive Yamabe class. Then, in case $(Z,g_Z)$ is {
		\em not} globally conformal to $(\Sp^4,g_{\Sp^4})$, there exists a metric $g_Y$ on $Y$ such that
	\begin{equation}\label{eq:en-inequality-2}
		\w^Y(g_Y)<\w^M(g_M). 
	\end{equation}
\end{theorem}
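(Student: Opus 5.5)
Since $g_Z$ is locally conformally flat, $W^Z\equiv 0$. The interaction term $W^{\overbar M}(p)\stell W^Z(q)$ in \eqref{eq:en-bal-naive} therefore vanishes identically, and the gluing behind Theorem \ref{thm:main} gives no gain at leading order. The plan is to find a \emph{second} interaction mechanism. On the $Z$ side, the relevant quantity will be the positive-mass invariant of $Z$ rather than its Weyl tensor. On the $M$ side, it will be a quadratic expression in $W^M_\pm(p)$.

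\textbf{Step 1 (blow-up of $Z$).} Since $g_Z$ has positive Yamabe class, the conformal Laplacian $L_{g_Z}$ has a positive Green's function $G_q$ at each $q\in Z$. Local conformal flatness gives, in conformally flat coordinates near $q$,
\[
G_q(x)=|x|^{-2}+A_q+O(|x|).
\]
The metric $\hat g_Z:=G_q^2 g_Z$ on $Z\setminus\{q\}$ is then scalar-flat, locally conformally flat and asymptotically flat. In inverted coordinates $y=x/|x|^2$ it reads
\[
\hat g_Z=\bigl(1+A_q|y|^{-2}+O(|y|^{-3})\bigr)^2\,dy^2 .
\]
By the positive mass theorem of Schoen--Yau in the locally conformally flat case, $A_q>0$ unless $(Z,g_Z)$ is conformal to $(\Sp^4,g_{\Sp^4})$. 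This is exactly where the exceptional case of the statement enters.

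\textbf{Step 2 (gluing).} Fix $p\in M$ and conformal normal coordinates $x$ at $p$, so that $g_M=dx^2+O(|x|^2)$ with the $O(|x|^2)$ part controlled by $W^M(p)$. Remove a small ball $B_{\varepsilon}(p)$ and glue in a rescaled copy $\varepsilon^2\hat g_Z$ of the asymptotically flat end of $Z$ (with reversed orientation). The gluing is done in an annulus of relative width $\gamma$ by a nearly biharmonic interpolation, as in \cite{malchiodi-malizia-2025-pre-weyl}. On $\{|x|\gg\varepsilon\}$ the resulting metric looks like
\[
g_M+2A_q\varepsilon^2|x|^{-2}\,dx^2+\dots,
\]
a conformally flat perturbation with a definite profile.

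Because $g_M$ is Bach-flat, the first variation of $\w^M$ vanishes. The energy change is then governed by two contributions:
\begin{itemize}
\item the second variation of $\w^M$ in the direction $h$ produced by the neck;
\item boundary and interpolation terms in the annulus.
\end{itemize}
To exploit the second variation, I would not simply glue but also add a correction $h=\varepsilon^2 k$. Here $k$ is chosen in the annulus region and then cut off, so that the cross term between $A_q$ and $\nabla^2 W^M$-type quantities pairs $W^+_M$ against $W^-_M$. The expected outcome is an expansion
\[
\w^Y(g_Y)-\w^M(g_M)=\varepsilon^4\bigl(C\gamma^{s}-c\,A_q\,\Phi(W^M_+(p),W^M_-(p),R)\bigr)+\mathrm{h.o.t.},
\]
where the loose term $C\gamma^{s}$ comes from the interpolation and the gain term involves $A_q>0$ together with a quantity $\Phi$. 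The identification $T_p\overbar M\simeq T_qZ$, i.e.\ a rotation in $SO(4)$, is again free, and I would average or optimize over it. I expect $\Phi$ to be positive precisely when both $W^+_M(p)$ and $W^-_M(p)$ are nonzero, through a product like $|W^+_M(p)|\,|W^-_M(p)|$.

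\textbf{Step 3 (choosing $p$ and using topology).} Since $g_M$ is neither self-dual nor anti self-dual, and Bach-flat metrics are real analytic in suitable gauges, $W_\pm^M$ vanish only on nowhere dense sets. Hence there is a point $p$ with $W^+_M(p)\neq0\neq W^-_M(p)$. One can double-check the gain through the topological identity
\[
\w=2\int|W^\pm|^2\mp 48\pi^2\tau .
\]
Here $\tau(Y)=\tau(M)+\tau(Z)$, and $\tau(Z)=0$ for locally conformally flat $Z$ of positive Yamabe type. So lowering either $\int|W^+|^2$ or $\int|W^-|^2$ below its value on $M$ suffices. This lets one choose the sign convention of the perturbation $k$ favourably. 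Choosing $\gamma$ small and then $\varepsilon$ small yields \eqref{eq:en-inequality-2}.

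\textbf{Main obstacle.} The hard step is Step 2: computing the $\varepsilon^4$ coefficient exactly and showing that the mass-driven term dominates the interpolation error with the right sign. The pointwise Weyl interaction now vanishes, so the gain is genuinely nonlocal: it is encoded in $A_q$ and the second variation at a Bach-flat metric. I would first compute everything in the model case where $(Z,\hat g_Z)$ is replaced by the exact conformally flat model
\[
\bigl(1+A|y|^{-2}\bigr)^2dy^2 .
\]
In this model the Weyl energy of the glued metric can be expanded explicitly against the quadratic Taylor polynomial of $g_M$ at $p$, and the rotation-averaged pairing can be evaluated by representation theory of $SO(4)$ acting on $\Lambda^2_+\oplus\Lambda^2_-$.
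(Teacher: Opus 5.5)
Your proposal has a genuine gap: Steps 1--2 contain no mechanism that can lower the energy. The scalar mass $A_q$ enters only through the conformal factor $G_q^2$. Indeed $\hat g_Z$ is conformal to $g_Z$, and your neck term $2A_q\varepsilon^2|x|^{-2}dx^2$ is pure trace.

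Since $|W|^2\,dV$ is pointwise conformally invariant ($\dot W_g(ug)=uW_g$), you can multiply $g_Y$ by a positive function without changing $\w^Y$. Choose one that undoes $G_q^2$ near the neck and equals $1$ far away. This reduces your metric to the naive gluing of a flat neighbourhood of $q$ to $g_M$, and $A_q$ survives only in cutoff-dependent terms in the transition annulus.

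Where $A_q$ enters, all the data are $O(4)$-invariant: the flat metric, $A_q|x|^{-2}$, the radial cutoffs, and the profile $-\frac13W_{kijl}x^kx^l$. So any coupling between $A_q$ and $W^M(p)$ must have the form $c\,A_q|W^M(p)|^2$, which cannot distinguish self-dual from non-self-dual $W^M(p)$. Your construction applies verbatim to a self-dual Bach-flat $M$ such as Fubini--Study $\mathbb{CP}^2$. There $\w^Y\ge 48\pi^2|\tau(Y)|=48\pi^2|\tau(M)|=\w^M(g_M)$, so this coupling can never give a gain.

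The representation theory you invoke actually rules out your $\Phi$. Since $\mathcal W^+$ and $\mathcal W^-$ are inequivalent irreducible $SO(4)$-modules, any rotation average of a quadratic expression in $W$ is of the form $c_+|W^+|^2+c_-|W^-|^2$. Averaging over $T_p\overline M\simeq T_qZ$ therefore kills every $W^+$--$W^-$ coupling. Optimizing over it instead needs an anisotropic invariant on the $Z$ side, which a scalar mass does not provide.

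Your interpolation error is also not a small $C\gamma^s$. The profile $h=-\frac13W_{kijl}x^kx^l$ solves $\dot B(h)=0$. Hence it minimizes the convex quadratic Weyl energy on $B_r$ among tensors with its Cauchy data on $\partial B_r$. Cutting it off therefore costs at least the energy $\frac{\pi^2}{2}|W^M(p)|^2r^4$ it removes; compare the constant $C$ in \eqref{eq:en-bal-naive}. This nonnegative term dominates every $\varepsilon^4$ correction unless the $W^M$-tail is continued \emph{exactly} into $Z$. A correction $k$ localized in the annulus cannot help. At a Bach-flat metric the first variation vanishes, and small-scale second variations are governed by the nonnegative form $\int|\dot W(k)|^2$.

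What is missing is a Green-type tensor for the linearized Bach operator in place of the scalar Green's function. The paper replaces $g_Z$ by a special locally conformally flat metric, which is allowed since only the existence of $g_Y$ is claimed. On $Z=(\R\times\Sp^3)/G$ it builds a tensor $F$ via the developing map and a series of pullbacks over the deck group, made convergent by Lie-derivative gauge terms. This $F$ satisfies $\dot B_{g_Z}(F)=0$ on $Z\setminus\{q\}$ and $F=-\frac13W_{kijl}x^kx^l|x|^{-4}+A$, with $A(0)=\nabla A(0)=0$ and $\frac12\partial^2A(0)$ of the form \eqref{eq:T-component-def}. A matching correction $a^4H$ is added on $M$.

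Because $F$ continues the tail exactly, the $\gamma^{-4}$ terms cancel and the balance becomes $\frac23a^4W^{kijl}\partial^2_{kl}A_{ij}(0)$. This tensorial mass is anisotropic: $W^{kijl}\partial^2_{kl}A_{ij}(0)=-\frac13C_2(t)\,W\stell W^P+O((t-1)^{-1})$, where $P$ is the reflection in the $\Sp^1$-direction $e_4$. Since $P$ reverses orientation, it exchanges $\Lambda^2_\pm$. In Derdzi\'nski's frame this gives $W\stell W^P=12(\lambda^+\lambda^-+\mu^+\mu^-+\nu^+\nu^-)$, which is positive for a suitable $p$ and frame. The growth $C_2(t)\sim\pi^4/(45(t-1)^4)$ makes this term dominate as $t\to1^+$.

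General $Z$ is handled through the Chen--Tang--Zhu classification plus a small-neck capacity argument. The round-sphere exception is excluded by the classification, not by a positive mass theorem. Your Step 3 (choosing $p$ by analyticity) does match the paper.
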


Before discussing the main ideas of the proof, we comment on the assumptions of the theorem.

\begin{remark}\label{rem:main-thm-remark}
	\begin{itemize}
		\item[(a)]  If $(Z,g_Z) = (\Sp^4,g_{\Sp^4})$, the connected sum $Y$ would have trivially the topology of $M$, and an optimal choice of $g_Y$ would just be $g_M$. We recall that, by a result of Kuiper \cite{kuiper-1949-Annals-LCFsimplyconnected}, the sphere is the only locally conformally flat manifold that is simply connected, up to conformal diffeomorphisms. In our proof, 
		as discussed below, we exploit the {\em developing map} 
        in order to  distinguish the round sphere from any other locally conformally flat manifold of positive Yamabe class, just like the 
         {\em ADM mass} does. Also, as noted before, if Singer's conjecture is 
		true and $M$ is of positive Yamabe class, for \eqref{eq:en-inequality-2} to hold one needs \underline{both} $W^M_+$ and $W^M_-$ 
		to be non zero  somewhere. Therefore, we need to use crucially \underline{an interplay 
        among $W_M^+, W_M^-$ and the topology of $Z$ at the same time}  to {\em diffuse} 
        efficiently Weyl energy from $M$ into $Z$.
		\item[(b)] We are assuming both $M$ and $Z$ to be Bach-flat. If one of them is not, it is very easy to construct a metric on $Y$ as in \eqref{eq:en-inequality-2}, modifying first either $g_M$ or $g_Z$. 
		\item[(c)] For  Willmore surfaces in $\R^n$  the counterpart of local conformal flatness would be umbilicity, but in this case  one has rigidity and the 
		corresponding surface would be a round sphere.  
	\end{itemize}
\end{remark}

\bigskip

In view of the applications to the minimization of Weyl's energy, discussed at the end of this introduction, we expect Theorem \ref{thm:main2} to be most relevant in positive Yamabe class, where one controls the Sobolev inequality, in turn useful to avoid collapsing. It would be interesting to investigate the cases in which $Z$ has zero or negative scalar curvature.

\medskip

As we remarked before, under the assumptions of Theorem \ref{thm:main2} the main interaction term in \eqref{eq:en-bal-naive} vanishes identically, and therefore we need a more precise choice of the metric and a finer expansion of Weyl's energy. 
To give a more precise idea of our procedure, we focus first on the 
locally conformally flat manifold $Z$, where near $q$ we can assume that the metric is flat, by the conformal invariance of the problem. Identifying $T_p \overbar{M}$ 
and $T_q Z$ and denoting by $x$ normal coordinates  centered at $q$, 
we can also use $x$ as {\em inverted normal coordinates} on $T_p M$ (after a proper dilation). 

Denoting by $\tilde{g}_M$ the conformal blow-up  of $g_M$, 
we have the expansion 
\[
\tilde{g}_M \simeq \Big( \delta_{ij} + \tau \, {W}^{\overbar{M}}_{k i j l} \frac{x^k x^l}{|x|^4} \Big) dx^i dx^j  \qquad \hbox{ for } \; x \simeq 0, \; x \neq 0, 
\]
where $\tau > 0$ is a small coefficient.

At this point, to match the two metrics across the gluing region, one needs to find a correction $\tau F$ to $g_Z$ with  $F \simeq {W}^{\overbar{M}}_{k i j l} \frac{x^k x^l}{|x|^4} dx^i dx^j$ near $q$. Moreover, it is clearly convenient to keep the metric on $Z$ \emph{as Bach-flat as possible}, and therefore it is also natural to look for a correction that solves
\begin{equation}\label{eq:corr-MZ}
	 \dot{B}_{g_Z} (F) = 0 \quad \hbox{ on } Z \setminus \{q\}.    
\end{equation}
This is reminiscent of other gluing constructions, and in particular to the one in \cite{schoen-1984-JDG-Yamabesol}, 
where for the (scalar) Yamabe equation a highly concentrated {\em bubble profile} was attached to a suitable multiple of the Green's function  of the conformal Laplacian, with 
pole at the location of the bubble. 
In particular, by the symmetries of $W$, in case $(Z,g_Z) = (\Sp^4,g_{\Sp^4})$ this would be a \textit{global} and transverse-traceless (TT) solution to \eqref{eq:corr-MZ} expressed in  stereographic 
coordinates at $-q$.

Since on $\R^4$ the linearized Bach operator coincides with the bi-Laplacian on TT tensors, to have a solution $F$ as desired one can try to solve a distributional equation like 
\begin{equation}\label{eq:dotB-glob-Z}
	\dot{B}_{g_Z} (F) = {W}^{\overbar{M}}_{k i j l} \, \nabla^k \nabla^l \delta_q \, 
	dx^i dx^j \qquad \hbox{ on } Z, 
\end{equation}
where $\delta_q$ stands for the Dirac delta at $q$.  
Such an equation can be solved by abstract means (up to Lagrange multipliers), yielding a solution of the type 
\begin{equation} \label{eq:typehZ}
F = \left( {W}^{\overbar{M}}_{k i j l} \frac{x^k x^l}{|x|^4} + A_{ij}(x) \right) 
dx^i dx^j \qquad \hbox{ near } q, 
\end{equation}
and the prospect is that the extra term $A_{ij}$ might  have a role in the expansion of Weyl's energy on $Y$. 

While there is in general no hope to control the remainder term 
$A_{ij} $ with an abstract approach, the 
advantage of working with a locally conformally flat metric on $Z$ is that one can pass to the universal cover $\tilde{Z}$  of $Z$  and use the developing map $\Phi : \tilde{Z} \to \Sp^4$, which turns out to be injective in positive Yamabe class, see \cite{schoen-yau-1988-Inventiones-Conf-flat}. Here, 
the latter property allowed to prove positivity of the ADM mass of (non spherical) locally conformally flat manifolds by roughly writing the lift to $\tilde{Z}$ of the Green's function of the conformal Laplacian 
on $Z$ as a multi-pole singular solution. 

The hope in the present situation is to use a similar strategy, in order to write $A_{ij}$ via a formal series expansion. In fact, one could start from the 
above solution on $\Sp^4$, use the conformal covariance of $\dot{B}$  and in a first attempt sum over all pull-backs via deck transformations of $\tilde{Z}$, 
as in  \eqref{eq:serie-pullback}. If this 
would converge, one might project such tensor onto $Z$ and find the metric perturbation $F$ 
as desired.

\bigskip

A model case one can consider is the product $\Sp^1_t \times \Sp^3$, with $\Sp^1_t$ standing for the circle of length 
$\log t$, $t > 1$. The universal cover is $\R^4 \setminus \{0\}$ (or $\Sp^4$ with two antipodal points removed) 
and the deck transformation group is isomorphic to $\Z$. Proceeding as described above, 
from \eqref{eq:serie-pullback} we obtain the 
expression for $\tilde{A}$ given by 
\eqref{eq:A-nonconv}, see Section \ref{sss:sing-corr}. Unfortunately, the  
series constructed in this way does not converge: since however the kernel of the linearized Bach operator contains all Lie derivatives, as a second attempt we  modify each summand in the series by a non-TT element,  allowing to solve the convergence problem, see the discussion before Lemma \ref{lem:Abar-expansion}. We then 
obtain a solution of \eqref{eq:corr-MZ} that behaves as in \eqref{eq:typehZ}.

 Since we modified the metric on $Z$, which was locally flat, by a term like \eqref{eq:typehZ}, in order to properly glue the new metric on $M$ we need to adjust $g_M$ in order to match the extra term $A$.
If $y = \frac{x}{|x|^2}$ (with $x$ coordinates as before) are normal coordinates 
at $p \in M$, we need to add a suitable multiple of a correction $H$ that behaves like
\begin{equation}\label{eq:dotB-glob-M}
	H \simeq \frac{1}{2}\nabla^2_{kl} A_{ij}(0)  
	x^k x^l \, 
	dx^i dx^j \qquad \hbox{ near } p.  
\end{equation}
Using integration by parts and some delicate expansions, we see that the correction to Weyl's energy (compared to $\mathcal{W}(g_M)$) for the connected sum $Y$ with metric profiles as described above is proportional to  
$- W_{\overbar{M}}^{kijl} \nabla^2_{kl} A_{ij}$, 
so  qualitatively  $\nabla^2 A$ plays 
the role of a {\em tensorial mass} that couples 
to the Weyl's tensor of $M$. After a direct calculation, for $Z = \Sp^1_t \times \Sp^3$ the term  $W_{\overbar{M}}^{kijl} \nabla^2_{kl} A_{ij}$ becomes of the type 
\begin{equation}\label{eq:MWW}
 F(t) \; W^M(p) \stell W^{\overbar{M}}(p) 
\end{equation}
where $F(t)$ depends on the length of  $\Sp^1_t$, see Section \ref{subsec:inter-sign} for details. Qualitatively, $F(t)$, which is positive, behaves with respect to $t$ like the ADM mass of $Z = \Sp^1_t \times \Sp^3$, see Remark \ref{r:mass} (and in particular blows-up as $t\to 1^+$). Therefore, we get the desired comparison for the Weyl energies of $g_Y$ and $g_M$  whenever the interaction \eqref{eq:MWW} is \emph{positive}, which can be always achieved provided $M$ is not self-dual or anti self-dual. 

Before discussing the case of a general $Z$, we  comment on an aspect of the above gluing procedure that we find relevant. 
In order to perform a proper matching, we need to modify $g_Z$ so that the tensor $A_{ij}$ vanishes at $q$ together with its gradient
and so that
the term $W^{kijl} \nabla^2_{kl} A_{ij}$ is not affected by this change. To prove this fact we again employ a translational gauge (i.e., we add a Lie derivative), which also allows us to project the hessian of $A$ on symmetrized curvature tensors and streamline many computations.
This is analogous to the construction of normal coordinates by iterated polynomial change of variables, see Section 4 in \cite{wang-yin-2025-pre-ALEcoordinates}.

\bigskip

We next turn to the case of a general locally conformally flat manifold $(Z,g_Z)$ of positive Yamabe class. By a result in \cite{chen-tang-zhu-2012-JDG-4LCF-classification} (which completes the work in \cite{hamilton-1997-CAG-PIC}, \cite{izeki-1995-Inventiones-KleinianGroups}), all such manifolds are diffeomorphic to  connected sums of compact quotients of $\Sp^1 \times \Sp^3$. We exploit this information by choosing a 
special family of metrics on $Z$ which will 
lead to the same conclusions as above. First, fixing one of the summands as just described, we take the corresponding parameter $t$ to be sufficiently close to $1$, in order to obtain a strong interaction term in \eqref{eq:MWW} 
(i.e., $F(t)$ will be large). One can prove, see Proposition \ref{prop:F-exp-quotients}, that even passing to quotients, the interaction term $W^{kijl} \nabla^2_{kl} A_{ij}$ stays positive. Taking  connected sums with other quotients of this type, we choose sufficiently small necks, in order to obtain an interaction term very close to the original one, for capacity reasons, see  Proposition \ref{p:ZZ1}.

\begin{remark}
	Instead of working with a glued manifold $Y\cong Z\#M$, it will be easier in the next sections to perform a simpler gluing and consider $X\cong Z\# \overbar{M}$. This does not create any issue, as in the end it will be sufficient to perform the same procedure with $\overbar{M}$ replacing $M$, see Remark \ref{rem:gluing-diffeo-X}. 
\end{remark}

\bigskip

 Theorem \ref{thm:main2} can be extended to some orbifold settings:

\begin{theorem}\label{thm:gluing-orbifold}
   Let $(M,g_M)$, $(Z,g_Z)$ be two four-dimensional closed, connected, oriented orbifolds.
    Assume that $g_M$ is Bach-flat and that $g_Z$ is locally conformally 
	flat and with positive scalar curvature.
    Let $p\in M$ and $q\in Z$ be orbifold points, both with group $\Gamma\cong \Z_2$, and denote by $Y:=Z\#M$ the orbifold connected sum at $p,q$. Assume that the Weyl tensor of $g_M$ at $p$ is neither self-dual nor anti-self-dual and that the number of orbifold points on $Z$ with group isomorphic to $\Z_2$ is at least $4$. Then there exists a metric $g_Y$ on $Y$ such that
	\begin{equation*}
		\w^Y(g_Y)<\w^M(g_M). 
	\end{equation*}
\end{theorem}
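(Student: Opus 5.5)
The idea is to run the same gluing scheme as for Theorem \ref{thm:main2}, but now at orbifold points. Locally the construction is $\Z_2$-equivariant on $\R^4\setminus\{0\}$, with $\Z_2=\{\pm \mathrm{Id}\}$. We conformally blow up $g_M$ at the orbifold point $p$ to get an asymptotically locally Euclidean end $(\R^4\setminus B)/\Z_2$. Its expansion $\delta_{ij}+\tau W^{\overbar M}_{kijl}x^kx^l/|x|^4$ is invariant under $x\mapsto -x$, since it is even in $x$. Hence the leading profile descends to the quotient. Near $q$ we take $g_Z$ flat in an orbifold chart $\R^4/\Z_2$, which we may do by conformal invariance. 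We then look for a correction $F$ solving \eqref{eq:corr-MZ} on $Z\setminus\{q\}$, behaving like \eqref{eq:typehZ} in the $\Z_2$-invariant chart. Once $F$ and the matching correction $H$ on $M$ as in \eqref{eq:dotB-glob-M} are available, the energy expansion is purely local near the neck plus integrations by parts against Bach-flat metrics. It therefore carries over verbatim in equivariant form, with the integrals over the quotient equal to half those upstairs. As before, the outcome is
\[
\w^Y(g_Y)-\w^M(g_M)= -c\,a^4\, W_{\overbar M}^{kijl}(p)\nabla^2_{kl}A_{ij}(0)+\mathrm{h.o.t.},\qquad c>0 .
\]
The gauge step (adding Lie derivatives so that $A(0)=0$, $\nabla A(0)=0$ and $\nabla^2A$ is projected onto curvature-type tensors) is done with $\Z_2$-invariant vector fields. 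The relevant ones are odd vector fields, and quadratic polynomial fields are excluded, so this step remains compatible with the action.

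The core is to construct $A$ and show the tensorial mass is positive in the relevant sense. Here the hypothesis on four $\Z_2$ points replaces the topology of $\Sp^1\times\Sp^3$. The model I would use is the flat torus orbifold $T^4/\{\pm1\}$, the Kummer-type orbifold with $16$ singular points; more generally, a locally conformally flat orbifold with positive scalar curvature whose developing map is injective. Upstairs, around the lift of $q$, the sum over pullbacks of the spherical solution by the orbifold fundamental group is taken, corrected by Lie derivatives for convergence, exactly as in the construction before Lemma \ref{lem:Abar-expansion}. The key point is that $A$ is then generated by the other singular images. For a $\Z_2$ point $q'\neq q$, the contribution of the reflected pole behaves like a Green-type term whose Hessian at $q$ pairs with $W^{\overbar M}(p)$. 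The resulting coefficient is proportional to $W^M(p)\stell W^{\overbar M}(p)$ times a positive geometric factor, as in \eqref{eq:MWW}.

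The non-self-duality hypothesis at $p$ is what lets us pick the identification $T_p\overbar M\simeq T_qZ$ making this pairing positive. The choice must commute with $-\mathrm{Id}$, which holds automatically since $-\mathrm{Id}$ is central in $SO(4)$.

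The main obstacle I expect is this positivity and convergence for a general $Z$. I would handle it as in Proposition \ref{p:ZZ1}. First, I would show that the count of at least four $\Z_2$ points forces enough structure, roughly via the orbifold Euler characteristic and the classification of positive-Yamabe locally conformally flat orbifolds, that either the developing image has nontrivial limit set or there are at least two other $\Z_2$ points interacting with $q$. Second, I would deform $g_Z$ within its class by shrinking necks, or by moving towards a degenerate model, so that one positive interaction dominates, the analogue of letting $t\to1^+$. A capacity argument then shows the remaining contributions are small perturbations.

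If a clean classification is unavailable, I would argue directly. Since $Z$ is not a global quotient of $\Sp^4$ by $\Z_2$ (which has only two singular points), the lifted Green-type tensor has a nonzero regular part. Its Hessian term has a definite sign by the same maximum-principle/mass-type reasoning as in \cite{schoen-yau-1988-Inventiones-Conf-flat}. This sign is what makes the right-hand side above negative, giving $\w^Y(g_Y)<\w^M(g_M)$.
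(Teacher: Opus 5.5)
Your local part agrees with the paper: the blow-up profile and the quadratic correction are even in $x$, the expansions of Sections~\ref{sec:Weyl-en-Z} and~\ref{sec:weyl-en-M} carry over with a common factor coming from $|\Gamma|=2$, and since $-\mathrm{Id}$ is central the frame identification at $p,q$ is unconstrained.

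The gap is in the only nontrivial step, the sign of $W^{kijl}\partial^2_{kl}A_{ij}(0)$, for which you give no working mechanism.
\begin{itemize}
\item Your model $T^4/\{\pm1\}$ is flat. It therefore has zero scalar curvature and is not an admissible $Z$.
\item The other $\Z_2$ points $q'\neq q$ are not poles of $F$: upstairs the singular set is only the orbit of the lift of $q$. So they do not ``generate'' $A$.
\item A single image pole, composed in general with a rotation, contributes a term whose pairing with $W$ has no definite sign.
\item The fallback via a maximum-principle/mass-type argument has no counterpart here. $\dot B$ is a fourth-order operator on symmetric tensors, and no positive-mass theorem controls this tensorial mass. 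Even a nonzero regular part would not fix the sign, which depends on how $T_pM$ is identified with $T_qZ$.
\end{itemize}
Also, the four $\Z_2$ points do not replace the topology of $\Sp^1\times\Sp^3$. They are used to guarantee that this topology is present near $q$.

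Here is the missing idea. Three facts combine:
\begin{itemize}
\item Theorem~\ref{thm:classif-LCF-PSC-orbifolds} decomposes $Z$ into footballs and cylinder quotients.
\item Lemmas~\ref{lem:football-orbif-count} and~\ref{lem:cylinder-orbifold-count} show that footballs carry two singular points and cylinder quotients carry $0$ or $4$, all of type $\Z_2$.
\item The connected sum of two $\Z_2$-footballs is $(\Sp^1\times\Sp^3)/\{\mathrm{Id},\gamma\}$, with $\gamma(r,w)=(-r,\mathrm{diag}(1,-1,-1,-1)w)$.
\end{itemize}
Together these let the assumption of at least four $\Z_2$ points place $q$ inside a factor $(\R\times\Sp^3)/G$. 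The excluded case, $q$ on a lone football, is the analogue of $Z=\Sp^4$ in Theorem~\ref{thm:main2}.

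On that factor one shrinks the circle, $t\to1^+$. The images $-t^ne_4$ of the pole accumulate at $q$ along a line and produce the dominant term. Lemma~\ref{lem:Abar-exp-quotients} then gives
\[
W^{kijl}\partial^2_{kl}A_{ij}(0)=-\tfrac13C_2(t)\,W\stell W^P+O\big((t-1)^{-1}\big),\qquad C_2(t)\sim\pi^4/\big(45(t-1)^4\big).
\]
In Derdzinski's frame, $W\stell W^P=12(\lambda^+\lambda^-+\mu^+\mu^-+\nu^+\nu^-)$. A suitable choice of oriented frame at $p$ makes this positive, because $W_+(p)\neq0\neq W_-(p)$ (Lemma~\ref{lem:inter-term-formula}). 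Proposition~\ref{p:ZZ1} then handles the remaining summands.

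Your ``degenerate model'' remark points in this direction. However, without identifying this dominant term and its explicit $W\stell W^P$ structure, the sign you need is only asserted.
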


We refer to Section \ref{subsec:orbifold-sums} for the proof and the definitions of orbifolds and orbifold connected sums.
The proof of Theorem \ref{thm:gluing-orbifold} follows along the same lines as that of Theorem \ref{thm:main2}; indeed, by the results in \cite{chen-tang-zhu-2012-JDG-4LCF-classification}, \cite{wang-2025-pre-LCForbifolds} we have a classification of locally conformally flat orbifolds with positive scalar curvature, Theorem \ref{thm:classif-LCF-PSC-orbifolds}, which is similar to that of their smooth counterpart.
We point out that orbifold points corresponding to any given group always arise in even numbers, see Section \ref{subsec:orbifold-sums}.

We expect the result of Theorem \ref{thm:gluing-orbifold} to hold in the more general case in which $Z$ is not conformally equivalent to a football orbifold, see Remark \ref{rem:orbif-gluing-theorem-generalization}.
This is important for what we believe to be one of the principal perspective applications of these gluing results, that is, to the blow-up analysis of (suitable) minimizing sequences for the Weyl energy, which will be briefly described next.

\subsection*{Connected sums and minimization of the Weyl energy}

As already outlined in \cite{malchiodi-malizia-2025-pre-weyl}, one of the most interesting applications of Theorems \ref{thm:main}, \ref{thm:main2} and \ref{thm:gluing-orbifold} concerns the minimization of Weyl's functional in the noncollapsing case.

Assume we have a minimizing sequence $(g_i)_i$ for the Weyl energy on a given closed four-manifold $Y$, and suppose that the volume of geodesic balls is uniformly bounded from below along the sequence\footnote{This can be achieved in suitable classes of four-manifolds, like the class $\mathcal{Y}_2^+$ defined in \cite{chang-gursky-zhang-2020-MathZ-conformallyGAPthm}.}
Then it is reasonable to expect the validity of a bubble convergence/orbifold compactness theorem for  suitable perturbations of Weyl's functional along $(g_i)_i$. Indeed, this holds true in the space of Bach flat metrics with (positive) constant scalar curvature by the works of Tian-Viaclovsky and Streets (\cite{tian-viaclovsky-2005-Inventiones}, \cite{tian-viaclovsky-2005-advances}, \cite{tian-viaclovsky-2008-CMH}, \cite{streets-2010-transactions-removal-singul-Bachflat}), and a possible approach for minimizing sequences of a Sacks-Uhlenbeck type approximation of Weyl's functional was for instance proposed in \cite{anderson-2006-AsianJ-canonical-metrics}.

If we further assume to have an energy identity (that is, the infimum of Weyl's energy on $Y$ coincides with the Weyl energy on the limit orbifold plus the Weyl energy of each bubble), then we could employ Theorems \ref{thm:main}, \ref{thm:main2}, \ref{thm:gluing-orbifold} to glue back together these spaces and obtain a manifold, diffeomorphic to $Y$, with a smooth metric whose Weyl energy is strictly smaller than the supposed infimum.
This would give a contradiction and thus show us that there could not have been bubbling in the first place. As a consequence, $g_i$ subconverges to a smooth Bach flat metric on $Y$.

In other words, we expect these theorems to provide \emph{obstructions} to the loss of topology/formation of orbifold points along minimizing sequences,  as it happens for the Willmore functional (see \cite{bauer-kuwert-2003-IMRN}). 

In this setting, after a suitable scaling the bubbles are expected to converge to Bach-flat and scalar-flat asymptotically locally Euclidean (ALE) orbifolds, and it has been shown in \cite{streets-2010-transactions-removal-singul-Bachflat}, \cite{ache-viaclovsky-2012-GAFA-obstruction-flatALE} that said manifolds are ALE of order at least $2$.

In the particular case of asymptotically Euclidean (AE) manifolds, then $2$ is also the maximal order of decay by the rigidity statement of positive mass theorem. Moreover, it has been shown that these manifolds can be \emph{conformally compactified} into  smooth manifolds (see again \cite{streets-2010-transactions-removal-singul-Bachflat}). Hence we are exactly in the setting of Theorems \ref{thm:main}, \ref{thm:main2}, which, coupled with a positive answer to Singer's conjecture, would provide a complete picture of bubbling via neckpinch in positive Yamabe class. In other words, the only expected bubbling behavior is that of a self-dual manifold which is developing self-dual bubbles, which is something obtainable by reversing the gluing constructions in \cite{donaldson-friedman-1989-nonlin-SDgluing}, \cite{floer-1991-JDG-CPsum}.

However, to have a complete picture we would also need counterparts of our previous theorems for orbifolds, and in these situations we also expect the necessity to consider \emph{higher order obstructions} (as the Weyl tensor might vanish at a given finite order at the orbifold point).

By analogy with smooth manifolds, we expect the bubbles to be self-dual (anti self-dual) orbifolds. This is indeed the situation for the Gibbons-Pope or Page construction (\cite{gibbons-pope-1979-CMP-K3-16-torus}, \cite{page-1978-16-torus-K3}, \cite{topilawa-1987-Inventiones-}, \cite{lebrun-singer-1994-mathannalen-orbifold-ASD-gluing}).
We also point out that a gluing result of self-dual orbifolds was established in \cite{kovalev-singer-2001-GAFA-ASD-orbifold-gluing}, \cite{ache-viaclovsky-2015-JGA-ASDorbifold-gluing} with analytical techniques, see in particular \cite{ache-viaclovsky-2015-JGA-ASDorbifold-gluing}, Theorems 1.13 and 1.14, which suggest that an analogue of Singer's conjecture might hold even in the orbifold setting.

\begin{remark}
    We notice that, although Theorem \ref{thm:gluing-orbifold} does not apply when $Z$ is a football orbifold, there are recent results showing that football orbifolds cannot arise as orbifold limits along bubbling sequences of Einstein manifolds (\cite{ozuch-2024-CPAM-obstructions-integrability-deform}) or locally conformally flat manifolds (\cite{wang-2025-pre-LCForbifolds}). Therefore, we might expect a similar obstruction even in our case.
\end{remark}

\medskip

We end this Introduction with a brief description of the contents of each Section. In Section \ref{sec:prelims} we fix the notation and recall some known facts and preliminary results. In Section \ref{sec:green-LCF}, we construct the correction $F$ on the locally conformally flat manifold $Z$ of the form $(\Sp^1 \times \Sp^3)/G$, starting from the model case of $\Sp^1_t \times \Sp^3$. After that, in Section \ref{sec:Weyl-en-Z} we compute the Weyl energy expansion for the modified metric for $Z$ of the above type and in Section \ref{sec:weyl-en-M} we define the correction $H$ on $M$ and compute the relative energy expansion. Finally, in Section \ref{sec:conclusion} we define the metric on the connected sum, study the sign of \eqref{eq:MWW} and conclude the proof of Theorems \ref{thm:main2} and \ref{thm:gluing-orbifold}.

\section{Preliminaries}\label{sec:prelims}
\subsection{Notation and definitions}

In order to keep computations as clear as possible, throughout the paper we adopt the Einstein summation convention, \emph{but only for repeated indices appearing once as a subscript and once as a superscript}. Thus, for instance, $\sum_i a_i b^i=a_ib^i$, but $\sum_i a_i b_i\not=a_i b_i$.  As a consequence, we will usually denote local coordinates of a vector $x\in \R^n$ as $x^i$, but sometimes we will also use the notation $x_i$.

\medskip

We define the Riemann tensor as
\begin{gather*}
    R(X,Y)Z:=\nabla_X\nabla_Y Z-\nabla_Y\nabla_X Z-\nabla_{[X,Y]}Z, \\
    R(X,Y,Z,W):=g\big(R(X,Y)Z,W\big).
\end{gather*}
In local coordinates,
\begin{equation*}
R_{ijkl}=R\indices{_{ijk}^{\alpha}}g_{\alpha l}, \qquad  R_{ij}=R_{kijl}g^{kl}=R_{iklj}g^{kl}=R\indices{_{kij}^k}, \qquad R_g=R_{ij}g^{ij}.
\end{equation*}
The Weyl tensor in dimension $4$ is defined by \eqref{eq:curv-decomp-4} and, in local coordinates, is given by
\begin{equation}\label{eq:weyl-local-coord-expr}
    W_{ijkl}=R_{ijkl}-\frac{1}{n-2}\big(R_{jk} g_{il}+R_{il}g_{jk}-R_{ik}g_{jl}-R_{jl}g_{ik}\big)+\frac{R_g}{(n-1)(n-2)}(g_{jk}g_{il}-g_{ik}g_{jl}).
\end{equation}

The curvature tensor can be regarded as a symmetric endomorphisms on $2$-forms
\begin{equation*}
    \widehat{R}\colon \Lambda^2(T^*M)\to \Lambda^2(T^*M),
\end{equation*}
cf. \cite[Lecture 5]{viaclovsky-notes-ParkCity} or \cite{besse-book-2008}.
Indeed, given a local frame $\{e_1,\dots, e_n\}$ with local coframe $\{e^1,\dots, e^n\}$, then the family $\{e^i\wedge e^j\}_{i<j}$ defines a local frame for $\Lambda^2(T^*M)$ and we can define
\begin{equation}\label{eq:curv-operator-def}
    \widehat{R}(e^i\wedge e^j):=\frac{1}{2}R_{ijlk}e^k\wedge e^l=\sum_{k<l}R_{ijlk}e^k\wedge e^l.
\end{equation}
Notice the position reversal between $k$ and $l$. 
In the same way, we can regard the Weyl tensor $W$ as a symmetric \emph{trace-free} endomorphism $\widehat{W}$ on $\Lambda^2(T^*M)$.
In particular, if $n=4$ and $M$ is oriented, then the Hodge star operator acting on 2-forms, $*\colon \Lambda^2(T^*M)\to \Lambda^2(T^*M)$, satisfies $*^2=\mathrm{Id}$ and, under its action, $\Lambda^2$ decomposes into
\begin{equation*}
    \Lambda^2(T^*M)=\Lambda^2_+(T^*M)\oplus \Lambda^2_-(T^*M),
\end{equation*}
which are the $\pm 1$-eigenspaces of $*$ respectively and satisfy $\dim(\Lambda^2_{\pm}(T^*M))=3$. The Weyl curvature operator decomposes accordingly as follows:
\begin{equation*}
    \widehat{W}=\widehat{W}_+ +\widehat{W}_-, \qquad \qquad \widehat{W}_{\pm}\colon \Lambda^2_{\pm}(T^*M)\to\Lambda^2_{\pm}(T^*M).
\end{equation*}
$\widehat{W}_+$, $\widehat{W}_-$ are called \emph{self-dual} and \emph{anti self-dual} Weyl curvatures respectively (as well as $W_+, W_-$ which are the associated $(0,4)$-tensors).

We recall that the norm of a $(0,4)$ curvature-type tensor $\abs{T}^2=T^{ijkl}T_{ijkl}$ differs by a factor of $4$ from the norm $\big\lVert{\widehat{T}}\big\rVert^2$ of $T$ seen as a symmetric endomorphism of $\Lambda^2(T^*M)$:
\begin{equation*}
    \abs{T}^2=4\big\lVert{\widehat{T}}\big\rVert^2.
\end{equation*}

\subsection{Decomposition of pairwise symmetric $4$-tensors and inverted coordinates}

The next lemma provides a decomposition of $4$-tensors which belong to $\mathrm{Sym}(2)\otimes \mathrm{Sym}(2)$. This is well-known, as it provides a way to recover the second-order expansion of the metric in normal coordinates without using Jacobi fields, see Proposition 2.4 and Section 4 in \cite{wang-yin-2025-pre-ALEcoordinates}. However, we include a  proof for the reader's convenience.

\begin{lemma}\label{lem:tensor-projection}
    Let $A=A_{kijl}$ be a covariant $4$-tensor in an $n$-dimensional (real) inner product space $V$, and assume that $A$ is symmetric in $\{i,j\}$ and in $\{k,l\}$, namely
    \begin{align}\label{eq:A-symmetries}
        A_{kijl}=A_{kjil}=A_{lijk} \qquad \forall i,j,k,l.
    \end{align}
    Then $A$ uniquely decomposes as the direct sum
    \begin{align}\label{eq:A-decomposition}
        A=T\oplus C,
    \end{align}
    where $T$ can be written as
    \begin{equation}\label{eq:T-component-def}
        T_{kijl}=\frac{2}{3}\big( R_{kijl}+R_{lijk}),
    \end{equation}
    for a suitable $R$ with the symmetries of Riemann's tensor, and
         $C$ is a gauge term that can be written as
        \begin{align}\label{eq:C-gauge-formula}
        C_{kijl}=X_{i,jkl}+X_{j,ikl},
        \end{align}
        where $X$ is symmetric in the last three entries.
\end{lemma}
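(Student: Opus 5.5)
The plan is to prove the decomposition by linear algebra: show that the two summands intersect trivially, that the maps $R\mapsto T$ and $X\mapsto C$ are injective, and that the dimensions add up to $\dim(\mathrm{Sym}(2)\otimes\mathrm{Sym}(2))$.

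\textbf{Reformulation.} We associate to $A$ the quadratic, symmetric-$2$-tensor-valued polynomial
\[
h_A(x)=A_{kijl}x^kx^l\,dx^idx^j .
\]
The symmetries \eqref{eq:A-symmetries} make $A\mapsto h_A$ an isomorphism onto homogeneous quadratic symmetric $2$-tensor fields on $V$. In this language the two summands have a geometric meaning.
\begin{itemize}
\item For $T$ as in \eqref{eq:T-component-def}, $h_T$ is (up to the sign convention) the second-order term of a metric in normal coordinates. It satisfies the radial gauge condition $(h_T)_{ij}x^j=0$, because $(R_{kijl}+R_{lijk})x^kx^jx^l=0$ by antisymmetry of $R$ in its first two and last two slots.
\item For $C$ as in \eqref{eq:C-gauge-formula}, let $Y_i:=\tfrac13X_{i,jkl}x^jx^kx^l$. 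Then $h_C=\partial_jY_i+\partial_iY_j$, so $h_C=\mathcal{L}_Y\delta$ with $Y$ a homogeneous cubic vector field. Conversely, every homogeneous cubic vector field arises this way from a unique $X$ symmetric in its last three entries.
\end{itemize}

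\textbf{Trivial intersection.} Suppose $h_C=\mathcal{L}_Y\delta$ is also in radial gauge, i.e. $(\partial_jY_i+\partial_iY_j)x^j=0$. By Euler's identity $x^j\partial_jY_i=3Y_i$, and $x^j\partial_iY_j=\partial_i(Y\cdot x)-Y_i$. With $f:=Y\cdot x$, a quartic, the condition becomes $2Y_i+\partial_if=0$. Contracting with $x^i$ gives $2f+4f=0$, so $f=0$, hence $Y=0$ and $C=0$. Since every $T$ lies in the radial gauge, $T$-type and $C$-type tensors intersect only in $0$.

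\textbf{Injectivity.} The map $X\mapsto C$ is injective by the correspondence $X\leftrightarrow Y$ above. For $R\mapsto T$, suppose $T=0$. Then $R_{kijl}x^kx^l=0$ for all $x$ and all $i,j$. Evaluating $R_{kijl}x^ky^iy^jx^l$ shows that all sectional-type quantities $R(x,y,y,x)$ vanish. By the classical polarization argument for tensors with the Riemann symmetries, including the first Bianchi identity, this forces $R=0$.

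\textbf{Dimension count and conclusion.} We have
\begin{itemize}
\item $\dim\{\text{curvature tensors}\}=n^2(n^2-1)/12$;
\item $\dim\{X\}=n\cdot n(n+1)(n+2)/6$.
\end{itemize}
Their sum is
\[
\frac{n^2(n+1)\big[(n-1)+2(n+2)\big]}{12}=\frac{n^2(n+1)^2}{4},
\]
which equals $\dim(\mathrm{Sym}(2)\otimes\mathrm{Sym}(2))$. The two injective images therefore intersect trivially and have complementary dimensions, so they span. This gives existence and uniqueness of the decomposition $A=T\oplus C$.

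The step I expect to need the most care is the injectivity of $R\mapsto T$, i.e. correctly invoking the Bianchi identity in the polarization. If that becomes messy, I would instead write $R$ explicitly from the radial-gauge part of $A$ via the standard normal-coordinate inversion formula.
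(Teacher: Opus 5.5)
Your proof is correct, but it takes a different route from the paper's.

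\textbf{The paper's route.} The paper's proof is constructive. It defines the projection
\[
R=\mathcal{P}(A)=\tfrac14\big(A_{kijl}-A_{ikjl}-A_{kilj}+A_{iklj}\big),
\]
checks that $R$ has the Riemann symmetries, and sets $C:=A-T$. It then verifies $\mathcal{P}(C)=0$ and reads off $X$ explicitly from $2X_{i,jkl}=C_{kijl}+C_{jikl}-C_{ijkl}$. Uniqueness follows because $\mathcal{P}$ annihilates every gauge term and recovers $R$ from $T$.

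\textbf{Your route.} You get existence from the dimension count $n^2(n^2-1)/12+n^2(n+1)(n+2)/6=n^2(n+1)^2/4$, which is correct. You get uniqueness by showing that $T$-type tensors are in radial gauge, and that a Lie derivative $\mathcal{L}_Y\delta$ of a homogeneous cubic field in radial gauge forces $Y=0$. Your Euler-identity computation for this step is right.

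\textbf{Two points to tighten.}
\begin{itemize}
\item The bijection $X\leftrightarrow Y$ by itself does not make $X\mapsto C$ injective. You also need that no nonzero cubic $Y$ satisfies $\mathcal{L}_Y\delta=0$. This follows at once from your own radial-gauge computation applied to $h_C=0$, since $0$ is in radial gauge.
\item For the dimension count you should check that $T$ is symmetric in $(i,j)$, which uses the pair symmetry of $R$. Only then do both injective images lie in $\mathrm{Sym}(2)\otimes\mathrm{Sym}(2)$.
\end{itemize}

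\textbf{What each approach gives.} Yours is conceptual. It identifies the $T$-part with the normal-coordinate, radial gauge $h_{ij}x^j=0$, and the $C$-part with infinitesimal polynomial diffeomorphisms. This explains Lemma~\ref{lem:perturb-metr-inverse-exp} and the remark after it: the radial-gauge condition is exactly what removes the quadratically growing terms under inversion. However, your argument is non-constructive. The paper's explicit formulas for $\mathcal{P}$ and $X$ are used later: $X$ builds the gauge form $\omega_2$ in Proposition~\ref{prop:F-expansion-cylinder}, and $\mathcal{P}$ is used to compute $\partial^2\mathrm{tr}(A)(0)$ in Remark~\ref{rem:A-non-TT-property}.
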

\begin{proof}
    Define
    \begin{align}\label{eq:Tens-def}
        R_{kijl}=\mathcal{P}(A):=\frac{1}{4}\big( A_{kijl}-A_{ikjl}-A_{kilj}+A_{iklj}\big).
    \end{align}
    Then it is straightforward to verify that $R$ has the symmetries of Riemann's tensor; in particular, the antisymmetric property in the first and last two entries follows from the definition of $R$, while the symmetry $R_{kijl}=R_{jlki}$ and the first Bianchi identity are a consequence of both \eqref{eq:A-symmetries} and \eqref{eq:Tens-def}. 
    
    At this point, define $T$ as in \eqref{eq:T-component-def},
    and let $C:=A-T$.  Then 
    \begin{align*}
        C_{kijl}&=A_{kijl}-\frac{1}{6}\big(A_{kijl}-A_{ikjl}-A_{kilj}+A_{iklj}+A_{lijk}-A_{iljk}-A_{likj}+A_{ilkj}\big) \\
        &=\frac{2}{3}A_{kijl}-\frac{1}{3}A_{iklj}+\frac{1}{6}\big( A_{ikjl}+A_{kilj}+A_{iljk}+A_{likj}\big),
    \end{align*}
    and a direct computation (together with \eqref{eq:A-symmetries}) shows that 
    \begin{align}\label{eq:C-curv-project-zero}
        \mathcal{P}(C)=\frac{1}{4}\big( C_{kijl}-C_{ikjl}-C_{kilj}+C_{iklj}\big)=0,
    \end{align}
    namely $C$ has no curvature part.

    We next show that we can further decompose $C$ as in \eqref{eq:C-gauge-formula}.
    Define $X$ as
    \begin{align*}
        2X_{i,jkl}:=C_{kijl}+C_{jikl}-C_{ijkl}.
    \end{align*}
    Since $R$ has the symmetries of Riemann's tensor, it is immediate to see that $T$ satisfies the relations \eqref{eq:A-symmetries}, which are therefore also satisfied by $C$. As a consequence, 
    \begin{align*}
        2 X_{i,kjl}=C_{jikl}+C_{kijl}-C_{ikjl}=C_{jikl}+C_{kijl}-C_{ijkl}=2 X_{i,jkl}.
    \end{align*}
    Moreover, using \eqref{eq:C-curv-project-zero} we also got
    \begin{align*}
        2\big(X_{i,jkl}-X_{i,jlk}\big)&=C_{kijl}+C_{jikl}-C_{ijkl}-C_{lijk}-C_{jilk}+C_{ijlk} \\
        &=C_{jikl}-C_{ijkl}-C_{jilk}+C_{ijlk}=0.
    \end{align*}
    Hence $X$ is indeed symmetric in the last three entries, and one further has
    \begin{align*}
        2\big( X_{i,jkl}+X_{j,ikl}\big)=C_{kijl}+C_{jikl}-C_{ijkl}+C_{kjil}+C_{ijkl}-C_{jikl}=2 C_{kijl},
    \end{align*}
so that \eqref{eq:C-gauge-formula} holds.
Finally, we notice that any $T$ of the form \eqref{eq:T-component-def} trivially satisfies $\mathcal{P}(T)=T$, and that any $C$ of the form \eqref{eq:C-gauge-formula} satisfies $\mathcal{P}(C)=0$, so the decomposition \eqref{eq:A-decomposition} is indeed a direct sum.
\end{proof}

Having a tensor with the symmetries \eqref{eq:T-component-def} of $T$  (instead of only being pairwise symmetric as in \eqref{eq:A-symmetries}) is useful when considering inverted normal coordinates:

\begin{lemma}\label{lem:perturb-metr-inverse-exp}
    Let $g_0$ be a Riemannian metric on $M$ and let $g:=g_0+tH$ be a perturbation that, in normal coordinates at a point $p\in M$, has the following expression:
    \begin{align*}
        g(x)=\Big[\delta_{ij}-\frac{1}{3}R_{kijl}z^k z^l+t T_{kijl}\frac{z^k z^l}{\abs{z}^4}+ O^{(4)}(\abs{z}^3)\Big] dz^i dz^j , \qquad \text{as $\abs{z}\to 0$}.
    \end{align*}
    Assume that $T$ can be written as in \eqref{eq:T-component-def}, and let $f$ be a positive function on $M$ such that $f(z)=\abs{z}^{-2}$ for $\abs{z}$ small. Then, in inverted normal coordinates $y=z/\abs{z}^2$, the inverted metric $\tilde{g}:=f^2 g$ has the following expansion:
     \begin{align*}
        \tilde{g}(y)=\Big[\delta_{ij}-\frac{1}{3}R_{kijl}\frac{y^k y^l}{\abs{y}^4}+t T_{kijl}y^k y^l+O^{(4)}(\abs{y}^{-3})\Big]dy^i dy^j, \qquad \text{as $\abs{y}\to+\infty$}.
    \end{align*}
\end{lemma}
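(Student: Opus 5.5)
The argument is a direct computation of the pullback of $\tilde g = f^2 g$ under the inversion $z = y/\abs{y}^2$. The only structural input is the identity $T_{kijl}y^iy^j = 0$ (and likewise $R_{kijl}y^ky^l=0$ contracted in the appropriate slots), which is where the hypothesis \eqref{eq:T-component-def} enters.

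First I would record how the Euclidean data transform. For $z = y/\abs{y}^2$ we have $\abs{z}=\abs{y}^{-1}$ and
\[
dz^i = \abs{y}^{-2}\Big(\delta_{ia} - 2\frac{y^iy^a}{\abs{y}^2}\Big)dy^a =: \abs{y}^{-2} P_{ia}\,dy^a ,
\]
where $P$ is the reflection across $y^\perp$. In particular $P$ is orthogonal, so $\delta_{ij}dz^idz^j = \abs{y}^{-4}dy^2$. Since $f = \abs{z}^{-2} = \abs{y}^2$, we get $f^2 = \abs{y}^4$, and the flat part becomes exactly $\delta_{ab}\,dy^ady^b$. Any term $S_{ij}(z)\,dz^idz^j$ then transforms into $\abs{y}^{4}\abs{y}^{-4} S_{ij}(z(y))P_{ia}P_{jb}\,dy^a dy^b = S_{ij}(z(y)) P_{ia}P_{jb}\,dy^ady^b$.

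Next, the curvature and $T$ terms. Here $z^kz^l = y^ky^l\abs{y}^{-4}$, so $-\frac13 R_{kijl}z^kz^l$ becomes $-\frac13 R_{kijl}y^ky^l\abs{y}^{-4}$, and $tT_{kijl}z^kz^l\abs{z}^{-4}$ becomes $tT_{kijl}y^ky^l$. It remains to remove the reflections $P_{ia}P_{jb}$. Expanding gives
\[
P_{ia}P_{jb}S_{ij} = S_{ab} - 2\frac{y^iy^a}{\abs{y}^2}S_{ib} - 2\frac{y^jy^b}{\abs{y}^2}S_{aj} + 4\frac{y^iy^jy^ay^b}{\abs{y}^4}S_{ij}.
\]
Each correction term contains a contraction of $S_{ij}$ with $y^i$ or $y^j$.

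For $S_{ij} = R_{kijl}y^ky^l$, the contraction $y^iy^kR_{kijl}$ vanishes because $R$ is antisymmetric in $k,i$. For $S_{ij}=T_{kijl}y^ky^l$ with $T$ of the form \eqref{eq:T-component-def}, we have
\[
y^iT_{kijl}y^ky^l = \tfrac23\big(R_{kijl}y^ky^iy^l + R_{lijk}y^ly^iy^k\big) = 0
\]
by the same antisymmetry. This is precisely why the Riemann-type form of $T$ is needed; a general pairwise symmetric $A$ would produce extra terms. So both tensors pass through the inversion unchanged, yielding the claimed leading expansion.

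Finally, the remainder. An $O^{(4)}(\abs{z}^3)$ term transforms into $S_{ij}(z(y))P_{ia}P_{jb}$, which has size $O(\abs{y}^{-3})$ since $P$ is bounded. For derivatives, each $\partial_y$ of a function of $z(y)$ costs a factor $\abs{y}^{-2}$ (because $\partial z/\partial y = O(\abs{y}^{-2})$) while gaining $\abs{z}^{-1}=\abs{y}$ from the derivative in $z$, for a net gain of $\abs{y}^{-1}$ per derivative. Derivatives falling on $P$, which is homogeneous of degree $0$, also gain $\abs{y}^{-1}$. This gives $O^{(4)}(\abs{y}^{-3})$, and the same bookkeeping applies to the mixed terms. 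The main obstacle is only this derivative bookkeeping, which I would handle via the homogeneity and chain rule just described; the only conceptual point is the vanishing of the contractions, which is where the hypothesis on $T$ is used.
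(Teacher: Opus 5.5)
Your proposal is correct and follows the paper's proof. Both compute the pullback under the inversion using $I^*(dz^\alpha)=\abs{y}^{-2}\big(\delta^\alpha_s-2y^\alpha y_s/\abs{y}^2\big)dy^s$, and both kill the reflection cross-terms through the antisymmetry $R_{kijl}y^ky^i=0$ and its consequence for $T$ of the form \eqref{eq:T-component-def}. You spell out the vanishing contractions and the remainder bookkeeping, which the paper compresses into ``expanding the above expression.''
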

\begin{proof}
     If $I(y)=y/\abs{y}^2$ denotes the inversion map, then
    \begin{align*}
        \tilde{g}=(f\circ I)^2(y) I^*\Big(\Big[\delta_{ij}-\frac{1}{3}R_{kijl}z^k z^l+t T_{kijl}\frac{z^k z^l}{\abs{z}^4}+ O^{(4)}(\abs{z}^3)\Big] dz^i dz^j\Big).
    \end{align*}
    Recalling that
    \begin{equation*}
    I^*(dz^{\alpha})=d\Big(\frac{y^{\alpha}}{\abs{y}^2}\Big)=\frac{dy^{\alpha}}{\abs{y}^2}-2\frac{y^{\alpha}(y\cdot dy)}{\abs{y}^4}=\frac{1}{\abs{y}^2}\Big(\delta^{\alpha }_s-2\frac{y^{\alpha} y_s}{\abs{y}^2}\Big) dy^{s},
\end{equation*}
one obtains
\begin{align*}
        \tilde{g}=\abs{y}^4&\Big[\delta_{\alpha \beta}-\frac{1}{3}R_{k\alpha \beta l}\frac{y^k y^l}{\abs{y}^4}+t T_{k\alpha \beta l}y^k y^l+O^{(4)}(\abs{y}^{-3})\Big] \\
        & \qquad \qquad \times\frac{1}{\abs{y}^4}\Big(\delta^\alpha_\mu-2\frac{y^\alpha y_\mu}{\abs{y}^2}\Big)\Big(\delta^\beta_\nu-2\frac{y^\beta y_\nu}{\abs{y}^2}\Big)dy^\mu dy^\nu, \qquad \text{as $\abs{y}\to+\infty$}.
    \end{align*}
Computing the $dy^i dy^j$-coefficient of the above tensor yields
\begin{align}
\notag
    \tilde{g}_{ij}=\Big[\delta_{\alpha \beta}&-\frac{1}{3}R_{k\alpha \beta l}\frac{y^k y^l}{\abs{y}^4}+t T_{k\alpha \beta l}y^k y^l+O^{(4)}(\abs{y}^{-3})\Big] \\
    \label{eq:inv-coord-pert-general-form}
    &\qquad \times \Big(\delta_i^\alpha \delta_j^\beta -2\delta_i^\alpha\frac{y^\beta y_j}{\abs{y}^2}-2\delta^\beta_j\frac{y^\alpha y_i}{\abs{y}^2}+4\frac{y^\alpha y^\beta y_i y_j}{\abs{y}^4}\Big), \qquad \text{as $\abs{y}\to+\infty$}.
\end{align}
The result now follows by expanding the above expression and using the symmetries of Riemann's tensor and of $T$ (cf. \eqref{eq:T-component-def}) to make some terms vanish.
\end{proof}

\begin{remark}
    If we only assume $T$ to be pairwise symmetric as in \eqref{eq:A-symmetries}, then in \eqref{eq:inv-coord-pert-general-form} we get additional terms in $T$ which grow quadratically at infinity.
\end{remark}

\section{Construction of the modified metric on the LCF manifold}\label{sec:green-LCF}
The objective of this section is to define a correction term for the metric on the locally conformally flat manifold $(Z,g_Z)$ in some specific cases. This will allow us to perform the gluing with the other (non SD/ASD) manifold $(M,g_M)$.

To begin, recall that we are considering $4$-manifolds admitting a locally conformally flat metric with positive scalar curvature. These manifolds are classified according to the following theorem by Chen, Tang and Zhu (see also \cite{hamilton-1997-CAG-PIC}, \cite{izeki-1995-Inventiones-KleinianGroups}):

\begin{theorem}[\cite{chen-tang-zhu-2012-JDG-4LCF-classification}]\label{thm:CTZ-LCF-classification}
    Let $(Z,g_Z)$ be a closed, oriented, locally conformally flat $4$-manifold with positive scalar curvature.
    Then $Z$ is diffeomorphic to $\Sp^4$ or to a connected sum of manifolds of type $(\R\times \Sp^3)/G$, where $G$ is a cocompact, fixed point free, discrete subgroup of the isometry group of the standard metric on $\R \times \Sp^3$. In particular, if $Z\not\simeq\Sp^4$, then a finite cover of $Z$ is diffeomorphic to a connected sum of copies of $\Sp^1 \times \Sp^3$.
\end{theorem}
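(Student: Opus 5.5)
Since this statement is quoted from \cite{chen-tang-zhu-2012-JDG-4LCF-classification} and is only invoked in what follows, I would not reprove it in full. The plan I would follow is the Kleinian-group route, combined with the topological input that Chen--Tang--Zhu supply.

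\textbf{Step 1: reduce to a Kleinian quotient.} Let $\tilde Z$ be the universal cover of $Z$ and $\Phi\colon \tilde Z\to \Sp^4$ the developing map of the lifted locally conformally flat structure. Positivity of the Yamabe class, equivalently positive scalar curvature, lets me apply the result of Schoen--Yau \cite{schoen-yau-1988-Inventiones-Conf-flat}. It gives that $\Phi$ is injective and conformal onto a domain $\Omega\subset\Sp^4$. The holonomy $\Gamma\cong\pi_1(Z)$ then acts on $\Omega$ as a discrete group of M\"obius transformations, freely and cocompactly, so $Z\cong\Omega/\Gamma$. Schoen--Yau also show that the complement $\Lambda=\Sp^4\setminus\Omega$ is the limit set of $\Gamma$ and has Hausdorff dimension at most $(n-2)/2=1$. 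If $\Gamma$ is trivial then $\Omega=\Sp^4$ and $Z\cong\Sp^4$, which is the first alternative. I therefore assume $\Gamma\neq\{1\}$, so that $\Lambda\neq\emptyset$.

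\textbf{Step 2: show $\Gamma$ is virtually free, and split on the size of $\Lambda$.}
\begin{itemize}
\item If $\Lambda$ consists of two points, then $\Omega\cong\R\times\Sp^3$ conformally and $\Gamma$ preserves the cylindrical structure. In this case $Z=(\R\times\Sp^3)/G$ with $G$ a cocompact, fixed-point-free group of isometries of the standard product metric.
\item Otherwise $\Lambda$ is infinite. Here I would show, following Izeki \cite{izeki-1995-Inventiones-KleinianGroups}, that the dimension bound forces $\Gamma$ to be virtually free, and in fact virtually a Schottky-type group. A finite-index free subgroup of rank $k$ then acts on $\Omega$ with quotient diffeomorphic to $\#_k(\Sp^1\times\Sp^3)$. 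This is visible from a fundamental domain bounded by $2k$ disjoint round $3$-spheres with paired faces.
\end{itemize}

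\textbf{Step 3: pass from the finite cover to $Z$ itself.} This is the step I expect to be the main obstacle. Knowing that a finite cover is a connected sum of copies of $\Sp^1\times\Sp^3$ does not, by itself, show that $Z$ is a connected sum of quotients $(\R\times\Sp^3)/G$. The torsion of $\Gamma$ and the way finite subgroups sit in the decomposition must be controlled. My first attempt would use the Bass--Serre structure of the virtually free group $\Gamma$: write it as a graph of finite groups, realize each vertex and edge group by conformal actions on $\Sp^3$-cross-sections, and cut $Z$ along embedded spherical space forms to obtain the summands. Making this equivariant decomposition rigorous is where Chen--Tang--Zhu instead use Hamilton's Ricci flow with surgery \cite{hamilton-1997-CAG-PIC}. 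Their argument runs the flow from a suitable conformally flat PIC-type metric, where neckpinches produce exactly spherical space-form necks and the flow ends in pieces of the form $(\R\times\Sp^3)/G$ or spherical space forms. The hardest point in that approach is to deform the given metric into the PIC class and to carry out the surgery analysis without the original PIC assumption. I would import that analysis as a black box.

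Finally, when $Z\not\simeq\Sp^4$, the last assertion follows by passing to a torsion-free finite-index subgroup, which exists because $\Gamma$ is virtually free.
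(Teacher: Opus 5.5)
There is a genuine gap: the step that actually splits $Z$ into summands is never carried out, and the way you propose to outsource it rests on a misconception. Steps 1--2 only concern $\pi_1(Z)$ and a finite cover, and, as you note yourself, they do not decompose $Z$ into pieces $(\R\times\Sp^3)/G$. For that you defer to Chen--Tang--Zhu. You describe the needed input as deforming $g_Z$ into the PIC class and then running the surgery ``without the original PIC assumption''.

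No deformation is needed, because in dimension four a locally conformally flat metric with $R>0$ is already PIC, pointwise. If $W\equiv0$, then $\mathrm{Riem}$ is the Kulkarni--Nomizu product of the Schouten tensor $P$ with $g$. So in any orthonormal frame $K_{ij}=P_{ii}+P_{jj}$ for $i\neq j$, and $R_{1234}=0$. Hence every isotropic curvature $K_{13}+K_{14}+K_{23}+K_{24}-2R_{1234}$ equals $2\,\mathrm{tr}_gP=R/3>0$. This is also what Micallef--Wang's four-dimensional criterion reduces to when $W=0$.

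The Chen--Tang--Zhu classification of PIC four-manifolds therefore applies verbatim to $(Z,g_Z)$. It says $Z$ is diffeomorphic to $\Sp^4$, $\mathbb{R}\mathbb{P}^4$, some $(\R\times\Sp^3)/G$, or a connected sum of these. Orientability of $Z$ rules out $\mathbb{R}\mathbb{P}^4$ summands, and $\Sp^4$ summands can be dropped; this gives the first assertion. The finite-cover clause is their Corollary~1. This is exactly the paper's justification: it quotes their Corollary~2, the equivalence of PIC with LCF plus $R>0$, and the direction used here is the computation above. With this, the Kleinian-group analysis is unnecessary.

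Steps 1--2 also could not replace this argument as written. The bound $\dim_H\Lambda\le1$ alone does not force virtual freeness. Take a cocompact Fuchsian group acting on $\Sp^4$ with a round circle as limit set. Then $\Omega=\Sp^4\setminus\Sp^1$ is conformal to $\mathbb{H}^2\times\Sp^2$, and $\Omega/\Gamma\cong\Sigma\times\Sp^2$ with $\Sigma$ a closed hyperbolic surface. This is a closed LCF manifold with $\dim_H\Lambda=1$ and surface fundamental group (its LCF metric is scalar-flat), so positivity of $R$ would have to enter much more strongly.

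You also skip the one-point limit set case; it yields a compact flat manifold, which admits no metric of positive scalar curvature. Finally, you give no reason why a finite-index free subgroup should be a classical Schottky group, i.e. have a fundamental domain bounded by $2k$ disjoint round $3$-spheres. Your identification of the finite cover with $\#_k(\Sp^1\times\Sp^3)$, and hence your final clause, relies on exactly that.
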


\begin{remark}
    The above statement slightly differs from the one of the Main Theorem in \cite{chen-tang-zhu-2012-JDG-4LCF-classification}, which concerns the classification of closed $4$-manifolds with \emph{positive isotropic curvature} (PIC). However, closed $4$-manifolds admit a PIC metric if and only if they admit a locally conformally flat metric with positive scalar curvature, cf. \cite[Corollary 2]{chen-tang-zhu-2012-JDG-4LCF-classification}. Moreover, the last statement in Theorem~\ref{thm:CTZ-LCF-classification} follows by Corollary~1 of the same paper.
\end{remark}

Theorem \ref{thm:CTZ-LCF-classification} will allow us to construct the modified metric rather explicitly as any $Z$ will be diffeomorphic to one of the manifolds classified as above. The model case is that of $\Sp^1 \times \Sp^3$ with its standard metric, whose universal cover $\R\times \Sp^3$ is conformal to $\R^4 \backslash\{0\}$, allowing us to define the metric correction as the series of pullbacks (with respect to the action of the fundamental group) of a suitably defined (singular) tensor on $\R^4$. We will then employ a \acc perturbative'' argument in order to extend the construction to other quotients of $\R\times \Sp^3$. Finally, in Section \ref{subsec:conn-sums} we will deal with the general case of connected sums.

\subsection{Construction of the modified metric on $\Sp^1 \times \Sp^3$}\label{subsec:green-cylinder}

We represent $\Sp^1 \times \Sp^3$ as a cylinder
\begin{align}\label{eq:cylinder-def}
    (Z,g):= \bigg(\frac{[-R/2,R/2]\times \Sp^3}{\sim}, g\bigg),
\end{align}
where $R>0$, and we are identifying the points $(-R/2,q)$ with $(R/2,q)$, for $q\in \Sp^3$. However, instead of the standard metric $g_0=dr^2+ g_{\Sp^3}$, we will consider the conformal metric 
\begin{align}\label{eq:g-metr-def}
    g:=\varphi(r) e^{2r}g_0 + (1-\varphi(r))g_0,
\end{align}
where $\varphi=\varphi_R$ is a smooth cutoff function with $\varphi(r)\equiv 1$ for $\abs{r}<R/100$ and $\varphi(r)\equiv 0$ for $\abs{r}>R/10$.
By our choice, $g$ is Euclidean in a tubular neighborhood of $\{0\}\times \Sp^3$. \\
Denote by $\tilde{g}$ the pullback of $g$ to the universal cover $\pi\colon\tilde{Z}:=\R \times \Sp^3\to Z$. Since the fundamental group $\pi_1(Z)\simeq\Z$ acts on $\R\times \Sp^3$ by translations (and a generator is the right translation $\gamma(r,q)=(r+R,q)$), $\tilde{g}$ will be Euclidean in a tubular neighborhood of $\big\{\{nR\}\times \Sp^3 \mid n\in \Z\big\}$.

Consider the map (here we regard $\Sp^3\subset \R^4$)
\begin{gather}\label{eq:Phi-def-conf}
    \Phi\colon \R\times \Sp^3 \to \R^4\backslash \{0\}, \\
    \notag
    (r,q)\to e^r q.
\end{gather}
$\Phi$ is conformal and $\bar{g}:=\Phi_*(\tilde{g})$ is a conformal metric on $\R^4\backslash\{0\}$ such that, letting $t:=e^R$,
\begin{align}\label{eq:gbar-def}
    \bar{g}(x)=e^{2\phi(x)}g_E=
    \begin{cases*}
        t^{-2n}g_{E} & \text{in a tubular neighborhood of $\partial B_{t^n}(0)$\footnotemark} \\
        \abs{x}^{-2} g_E & \text{away from $\{\partial B_{t^n}(0)\mid n\in\Z\}$.}
    \end{cases*}
\end{align}
\footnotetext{The Euclidean size of the neighborhood depends on $n$, while the size with respect to $\bar{g}$ is constant.}
Moreover, the fundamental group will act on $\R^4\backslash\{0\}$ by dilations (a generator is the map $\psi_\gamma(y)=\psi(y):=ty=e^R y$), and $\bar{g}$ is invariant under the action of such transformations.

\subsubsection{The singular correction}\label{sss:sing-corr}

On $\R^4$, define the singular tensor 
\begin{equation}\label{eq:h-eucl-def}
    h(x):=-\frac{1}{3}W_{kijl}\frac{(x+e_4)^k (x+ e_4)^l}{\abs{x+e_4}^4}\,dx^i dx^j,
\end{equation}
where $e_4=(0,0,0,1)\in \R^4$. Then $h$ is TT in $\R^4\backslash\{-e_4\}$, and a direct computation shows that
\begin{align}\label{eq:lin-bach-h}
    \dot{B}_{g_E}(h)=\Delta^2(h)=0 \qquad \text{in $\R^4\backslash\{-e_4\}$.}
\end{align}
Recalling that the Bach operator satisfies the covariance law $B_{e^{2\phi}g}=e^{-2\phi} B_g$, we deduce that
\begin{align}\label{eq:lin-bach-conf-cov}
    \dot{B}_{e^{2\phi} g}(T)=\lim_{s\to 0}\frac{B_{e^{2\phi}g+sT}-B_{e^{2\phi}g}}{s}=\lim_{s\to0}\frac{B_{e^{2\phi}(g+se^{-2\phi}T)}-B_{e^{2\phi}g}}{s}=e^{-2\phi}\dot{B}_g(e^{-2\phi} T).
\end{align}
Therefore, if we let
\begin{align}\label{eq:hbar-def}
    \bar{h}:=e^{2\phi}h,
\end{align}
then 
\begin{align}\label{eq:lin-bach-hbar}
    \dot{B}_{\bar{g}}(\bar{h})=0 \qquad \text{in $\R^4\backslash \{0,-e_4\}$,}
\end{align}
where here $\phi,\bar{g}$ are given by \eqref{eq:gbar-def} (but this holds more generally for any $g$ and $\bar{g}=e^{2\phi}g$).

We would now like to define a singular tensor on the manifold in \eqref{eq:cylinder-def}, whose behavior around the singular point is similar to that of $h$. To define such an object, we might consider the series over $n\in\Z\simeq\pi_1(Z)$ of the pullbacks $\psi_n^* \bar{h}$. This would (theoretically) provide us with a tensor which lifts through $\Phi^{-1}$ to an equivariant tensor on $\R\times\Sp^3$, allowing us to pass it to the quotient.

\medskip

\noindent
\textbf{First attempt.} On $\R^4\backslash\{0\}$ with the standard Euclidean coordinates, let us \emph{formally} define the (singular) tensor $\tilde{A}=\tilde{A}_{ij}(y)dy^i dy^j$ given by
\begin{align}\label{eq:serie-pullback}
    \tilde{A}_{ij}(y):=\sum_{n\in\Z,n\not=0}(\psi_n^*\bar{h})_{ij}(y).
\end{align}
By definition of pullback and of $\psi_n$,
\begin{align}
\notag
    (\psi_n^*\bar{h})_{ij}(y)=&e^{2(\phi\circ\psi_n)(y)}h_{\psi_n(y)}\big(d_y\psi_n(\p_i|_y),d_y\psi_n(\p_j|_y)\big) \\
    \notag
    =&e^{2(\phi\circ\psi_n)(y)}(\p_i\psi_n(y))^k(\p_j\psi_n(y))^l h_{\psi_n(y)}(\p_k|_{\psi_n(y)},\p_l|_{\psi_n(y)}) \\
    \label{eq:hbar-pullback-eq}
    =&e^{2(\phi\circ\psi_n)(y)}t^{2n}h_{ij}(t^ny).
\end{align}
Moreover, since $\psi_n^*\bar{g}=\bar{g}$, we see that, \underline{for $y$ close to $-e_4$} (i.e. where $e^{2\phi(y)}\equiv1$), one has
\begin{align}\label{eq:conf-fact-pullback-eq}
    e^{2(\phi\circ\psi_n)(y)}\psi_n^* g_E=e^{2\phi(y)}g_E=g_E \quad \Longrightarrow \quad e^{2(\phi\circ\psi_n)(y)}=\abs{\p_i \psi_n(y)}^{-2}=t^{-2n},
\end{align}
therefore
\begin{align*}
    (\psi_n^*\bar{h})_{ij}(y)=h_{ij}(t^n y), \qquad \text{for $y\sim -e_4$.}
\end{align*}
Substituting in the formula for $\tilde{A}_{ij}$ we formally get, by \eqref{eq:h-eucl-def},
\begin{align} \label{eq:A-nonconv}
    \tilde{A}_{ij}(y)=-\frac{1}{3}\sum_{n\in\Z,n\not=0}W_{\mu i j \nu}\frac{(t^n y +e_4)^\mu(t^n y + e_4)^\nu}{\abs{t^n y + e_4}^4}, \qquad \text{for $y\sim -e_4$.}
\end{align}
However, we immediately notice that, for any $t>1$ (fixed), when $n<0$ is large enough one has
\begin{align*}
  W_{\mu i j \nu} \frac{(t^n y +e_4)^\mu(t^n y + e_4)^\nu}{\abs{t^n y + e_4}^4}\simeq W_{4ij4}, \qquad \text{for $y\sim -e_4$}, 
\end{align*}
therefore the above series does not converge and $\tilde{A}$ is not well-defined.
However, this lack of convergence is only caused by a bad choice of gauge term for $h$, which will be adjusted below.

\medskip

\noindent
\textbf{Second attempt with a gauged $h$.} We start by recalling that, at a Bach-flat metric, all Lie derivatives of the metric are kernel elements for the linearized Bach operator.\footnote{Indeed, if $X$ is a vector field and $(\varphi_t)_t$ the associated flow at time $t$, then differentiating the identity $B_{\varphi_t^*g}=\varphi_t^* B_g=0$ at time $t=0$ yields $\dot{B}_g(\mathcal{L}_Xg)=\mathcal{L}_X B_g=0$.} The idea now is to gauge $h$ by adding suitable Lie derivatives of $g_E$ in such a way to obtain a well-defined and \acc nice'' tensor when considering the series of pullbacks as above.

Let $0\leq\eta_0,\eta_1\leq 1$ be smooth cutoff functions such that 
\begin{align}\label{eq:cutoffs-cylinder}
    \eta_0(s)=\begin{cases*}
        1 & \text{for $s\leq t^{-3/8}$} \\
        0 & \text{for $s \geq t^{-1/8}$,} 
        \end{cases*}
        \qquad \eta_1(s)=\begin{cases*}
            1 & \text{for $t^{-1/8}\leq s \leq t^{1/8}$} \\
            0 & \text{for $s\leq t^{-3/8}$ or $s\geq t^{3/8}$.}
    \end{cases*}
\end{align}
Define the $1-$forms
\begin{align}\label{eq:omega_0-def}
    \omega_0(y):=\frac{1}{6}\eta_0(\abs{y}) W_{4ik4}y^k dy^i,
\end{align}
and 
\begin{align}
\notag
    \omega_1(y):=\frac{1}{3}\eta_1(\abs{y})\Big\{&\frac{1}{2}C_0(t) W_{4ik4}(y+e_4)^k+ C_1(t)\Big[\big(W_{kil4}+W_{4ilk}\big)(y+e_4)^k(y+e_4)^l \\
    \label{eq:omega_1-def}
    & \qquad \quad -2W_{4ik4}(y+e_4)^k(y+e_4)_4 
    +W_{4kl4}\delta_{i4}(y+e_4)^k (y+e_4)^l\Big]\Big\}\,dy^i,
\end{align}
where the constants $C_0(t)$ and $C_1(t)$ are given by (cf. proof of Lemma \ref{lem:Abar-expansion})
\begin{align}\label{eq:C_0C_1-def}
    C_0(t):=\sum_{n>0}\bigg[\frac{1}{(1-t^n)^2}+\frac{1}{(1-t^{-n})^2}-1\bigg], \qquad C_1(t):=\sum_{n>0}\bigg[\frac{t^n}{(1-t^n)^3}+\frac{t^{-n}}{(1-t^{-n})^3}\bigg].
\end{align}
Let $X_0, X_1$ be the dual vector fields (with respect to $g_E$) to $\omega_0,\omega_1$, and define the gauged tensor
\begin{align}\label{eq:-zeta-h-gauged-eucl}
    \xi:=h+\mathcal{L}_{X_0}g_E + \mathcal{L}_{X_1}g_E.
\end{align}
Recalling that $(\mathcal{L}_{X}g)_{ij}=\nabla_i\omega_j+\nabla_j\omega_i$, a straightforward computation shows 
\begin{align*}
    \xi_{ij}(y)=&-\frac{1}{3}W_{\mu i j \nu}\frac{(y+e_4)^\mu(y+e_4)^\nu}{\abs{y+e_4}^4} \\
    &+\begin{cases*}
        \frac{1}{3}W_{4ij4} & \text{near $y=0$,} \\
        \frac{1}{3}\Big\{C_0(t) W_{4ij4}+C_1(t)\Big[(W_{kij4}+W_{4ijk})(y+e_4)^k-4 W_{4ij4}(y+e_4)_4\Big]\Big\} & \text{near $y=-e_4$.}
    \end{cases*}
\end{align*}
In particular, we see that $\xi$ is TT both near the origin and near $-e_4$ (however, it is not globally TT since it is not transverse in the cutoff regions where $\eta_0,\eta_1\neq0,1$), and, by construction, $\xi$ is again a solution for $\dot{B}(\xi)=0$ in $\R^4\backslash\{-e_4\}$.  

Let now $\bar{\xi}:=e^{2\phi}\xi$ in $\R^4\backslash\{0\}$. By \eqref{eq:lin-bach-conf-cov}, $\bar{\xi}$ is a solution to \eqref{eq:lin-bach-hbar}. Define the tensor
\begin{align}\label{eq:T-def}
\bar{T}:=\sum_{n\in\Z}\psi_n^*\bar{\xi}=\sum_{n\in\Z}\psi_n^*(e^{2\phi}\xi).
\end{align}
Using \eqref{eq:hbar-pullback-eq} (with $\bar{\xi}$ in place of $\bar{h}$), \eqref{eq:conf-fact-pullback-eq} and the definition of $\bar{\xi}$, we see that, \underline{for $y\sim -e_4$}, there holds
\begin{align}
\notag
    \bar{T}_{ij}(y)=\sum_{n\in\Z}\xi_{ij}(t^ny)=&-\frac{1}{3}W_{\mu i j \nu}\frac{(y+e_4)^\mu (y+e_4)^\nu}{\abs{y+e_4}^4}-\frac{1}{3}\sum_{n>0}W_{\mu i j \nu}\frac{(t^n y +e_4)^\mu(t^n y + e_4)^\nu}{\abs{t^n y + e_4}^4} \\
    \notag
    &-\frac{1}{3}\sum_{n>0}\bigg[W_{\mu i j \nu}\frac{(t^{-n} y +e_4)^\mu(t^{-n} y + e_4)^\nu}{\abs{t^{-n} y + e_4}^4}-W_{4ij4}\bigg] \\
    \label{eq:barT-local-formula}
    +\frac{1}{3}\Big\{&C_0(t) W_{4ij4}+C_1(t)\Big[(W_{kij4}+W_{4ijk})(y+e_4)^k-4 W_{4ij4}(y+e_4)_4\Big]\Big\}. 
\end{align}
The expression will be different at a generic point $y\in\R^4\backslash\{0\}$ (due to the non-constant conformal factor $e^{2\phi}$ and the change in gauge terms); however, it is not difficult to see that $\bar{T}$ is pointwisely convergent and well-defined on $\R^4\backslash\big(\{0\}\cup \{-t^n e_4\mid n\in\Z\}\big)$. Moreover, we got
\begin{align}\label{eq:Tbar-lin-bach-eq}
    \dot{B}_{\bar{g}}(\bar{T})=0    \qquad \text{in} \,\,\,\R^4\backslash\big(\{0\}\cup \{-t^n e_4\mid n\in\Z\}\big). 
\end{align}

\begin{lemma}\label{lem:Abar-expansion}
Let us write, for $y\sim -e_4$,
\begin{align*}
    \bar{T}_{ij}(y)=-\frac{1}{3}W_{\mu i j \nu}\frac{(y+e_4)^\mu (y+e_4)^\nu}{\abs{y+e_4}^4}+\bar{A}_{ij}(y),
\end{align*}
where $\bar{T}$ is given by \eqref{eq:T-def}. Then $\bar{A}_{ij}(y)$ is TT for $y$ close to $-e_4$, and moreover
\begin{align}\label{eq:AgradAbar-vanish}
    \bar{A}(-e_4)=0, \qquad \quad \nabla \bar{A}({-e_4})=0,
\end{align}
\begin{align}
\notag
    \p^2_{kl}\bar{A}_{ij}(-e_4)=-\frac{1}{3}{C_2(t)}\bigg[&\big(W_{kijl} + W_{lijk}\big)-4\delta_{4k}\big(W_{lij4}+W_{4ijl}\big)-4\delta_{4l}\big(W_{kij4}+W_{4ijk}\big) \\
    \label{eq:hessian-Abar-formula}
    &\qquad  \qquad \qquad \qquad \qquad -4\delta_{kl} W_{4ij4} +24 \delta_{4k}\delta_{4l} W_{4ij4}\bigg],
\end{align}
where
\begin{align}\label{eq:C_2(t)}
    C_2(t):=\sum_{n\in\Z,n\not=0}\frac{t^{2n}}{(1-t^n)^4}=2\sum_{n>0}\frac{t^{2n}}{(1-t^n)^4}.
\end{align}
\end{lemma}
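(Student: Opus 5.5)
Starting from the local formula \eqref{eq:barT-local-formula}, $\bar A_{ij}(y)$ is the sum of three pieces. The first is the $n>0$ series of the TT tensors $-\frac13 W_{\mu ij\nu}\frac{(t^ny+e_4)^\mu(t^ny+e_4)^\nu}{|t^ny+e_4|^4}$. The second is the $n>0$ series of the corresponding $t^{-n}$ terms with $W_{4ij4}$ subtracted. The third is the explicit polynomial gauge $\frac13\{C_0W_{4ij4}+C_1[\dots]\}$.

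\textbf{Convergence and TT property.} Each summand is smooth near $y=-e_4$, since $t^{\pm n}y+e_4\neq 0$ there for $n\neq 0$. The $n>0$ summands decay like $t^{-2n}$, and the subtracted $t^{-n}$ summands are $O(t^{-n})$. Hence the series converges in $C^\infty$ near $-e_4$. Each summand is TT, being a rescaling of $h$ with the singularity moved away. Adding a constant tensor or a linear term keeps it TT provided the linear part is transverse: the divergence of $(W_{kij4}+W_{4ijk})(y+e_4)^k-4W_{4ij4}(y+e_4)_4$ equals $W_{iij4}+W_{4iji}-4W_{4ij4}\delta_{j4}$ summed appropriately, which vanishes by tracelessness of $W$. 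I will check this directly. This gives the TT claim.

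\textbf{Taylor expansion at $-e_4$.} Set $z=y+e_4$, so $t^ny+e_4=t^nz+(1-t^n)e_4$. I expand each summand
\[
f_n(z)=W_{\mu ij\nu}\frac{(t^nz+(1-t^n)e_4)^\mu(t^nz+(1-t^n)e_4)^\nu}{|t^nz+(1-t^n)e_4|^4}
\]
to second order in $z$, using the homogeneity $f(\lambda x)=\lambda^{-2}f(x)$ of $F(x)=W_{\mu ij\nu}x^\mu x^\nu|x|^{-4}$. This gives $f_n(z)=(1-t^n)^{-2}F\big(e_4+\frac{t^n}{1-t^n}z\big)$, hence
\[
f_n(z)=\frac{F(e_4)}{(1-t^n)^2}+\frac{t^n}{(1-t^n)^3}\,\partial_kF(e_4)z^k+\frac{t^{2n}}{2(1-t^n)^4}\,\partial^2_{kl}F(e_4)z^kz^l+\dots
\]
For $-n$ the same identity holds with $t^{-n}$, and $\frac{t^{-2n}}{(1-t^{-n})^4}=\frac{t^{2n}}{(t^n-1)^4}$.

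Summing, the order-$0$ coefficient gives $-\frac13 W_{4ij4}C_0(t)$. This is cancelled by the $C_0$ gauge, since the subtracted $W_{4ij4}$ accounts for the $-1$ in $C_0$. The order-$1$ coefficient gives $-\frac13 C_1(t)\,\partial_kF(e_4)$. A direct differentiation shows $\partial_kF(e_4)=(W_{kij4}+W_{4ijk})-4\delta_{k4}W_{4ij4}$, which is exactly cancelled by the $C_1$ term. This proves \eqref{eq:AgradAbar-vanish}; it is precisely how $C_0$ and $C_1$ were chosen.

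The order-$2$ coefficient gives $\partial^2_{kl}\bar A_{ij}(-e_4)=-\frac13 C_2(t)\,\partial^2_{kl}F(e_4)$, because the gauge terms are at most linear. The remaining, and main, step is computing $\partial^2_{kl}F(e_4)$. Using $|x|^{-4}$ derivatives $-4x_k|x|^{-6}$ and $\partial^2_{kl}|x|^{-4}=-4\delta_{kl}|x|^{-6}+24x_kx_l|x|^{-8}$ at $x=e_4$, we get
\[
\partial^2_{kl}F(e_4)=(W_{kijl}+W_{lijk})-4\delta_{4k}(W_{lij4}+W_{4ijl})-4\delta_{4l}(W_{kij4}+W_{4ijk})-4\delta_{kl}W_{4ij4}+24\delta_{4k}\delta_{4l}W_{4ij4}.
\]
This is \eqref{eq:hessian-Abar-formula}. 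The only delicate points are the bookkeeping of these product-rule terms, and justifying termwise differentiation via the uniform geometric decay of the series.
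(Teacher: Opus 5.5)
Your proposal is correct and takes the same route as the paper. Both read off the Taylor coefficients at $-e_4$ of the local formula \eqref{eq:barT-local-formula}: $C_0(t)$ and $C_1(t)$ cancel the order-zero and order-one terms, and the $n\neq 0$ Hessian contributions sum to $C_2(t)$. Your rescaling identity $f_n(z)=(1-t^n)^{-2}F\big(e_4+\tfrac{t^n}{1-t^n}z\big)$ and your uniform-decay justification for termwise differentiation simply spell out the ``straightforward computations'' the paper leaves to the reader.
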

\begin{proof}
By \eqref{eq:barT-local-formula}, we see that $\bar{A}_{ij}$ is indeed TT near $-e_4$.
    Again from \eqref{eq:barT-local-formula} we have
    \begin{align*}
        \bar{A}_{ij}(-e_4)=-\frac{1}{3}W_{4ij4}\sum_{n>0}\bigg[\frac{1}{(1-t^n)^2}+\frac{1}{(1-t^{-n})^2}-1\bigg]+\frac{1}{3}C_0(t) W_{4ij4}=0,
    \end{align*}
    where the last identity follows by the definition of $C_0(t)$ in \eqref{eq:C_0C_1-def}. The other identities are similarly proved by straightforward computations.
\end{proof}

By Lemma \ref{lem:Abar-expansion}, we see that $\bar{T}$ is almost the singular correction we are looking for (cf. Lemma \ref{lem:inter-term-formula}). However, when taking connected sum, in order to match the hessian of $\bar{A}$, we would also like to add a singular perturbation to the metric on $M$, which, once inverted, will provide a quadratic term corresponding to the hessian of $\bar{A}$. However, this is not possible in general as, in order for the inverted perturbation to have the right form, we would need $\p^2_{kl}\bar{A}_{ij}$ to possess the same symmetries of Riemann's tensor (or, more precisely, of a symmetrized version of it, see \ref{lem:tensor-projection}) and this property is not satisfied in general.
To fix this problem, we can add a (final) gauge term to $\xi$ which will allow us to project the hessian at $-e_4$ onto the right space of tensors.
This will allow us to obtain the following result:

\begin{proposition}\label{prop:F-expansion-cylinder}
    Let $(Z,g)$ be given as in \eqref{eq:cylinder-def}, \eqref{eq:g-metr-def} and let $q:=(0,S)\in Z$, where $S=-e_4\in\Sp^3$ denotes the south pole. There exists a symmetric tensor $F$, which is smooth in $Z\backslash\{q\}$, such that $\dot{B}_g(F)=0$ in $Z\backslash\{q\}$ and, in suitable Euclidean coordinates centered at $q$, 
    \begin{align}\label{eq:F-tensor-cylinder}
        F_{ij}(x)=-\frac{1}{3}W_{\mu i j \nu}\frac{x^\mu x^\nu}{\abs{x}^4}+A_{ij}(x),
    \end{align}
    where $A$ is smooth in a neighborhood of zero and satisfies
    \begin{align}\label{eq:AgradA-vanish}
        A(0)=0, \qquad \qquad \nabla A(0)=0,
    \end{align}
    \begin{align}
    \notag
        W^{kijl}\p^2_{kl} A_{ij}(0)=&-\frac{1}{3}C_2(t)\bigg[W^{kijl}\big(W_{kijl}+W_{lijk}\big)-4W^{4ijl}\big(W_{4ijl}+W_{lij4}\big) \\
    \label{eq:A-interaction-form}
    &\qquad \qquad \qquad-4 W^{kij4}\big(W_{kij4}+W_{4ijk}\big)+24W^{4ij4}W_{4ij4}\bigg], 
    \end{align}
    where $C_2(t)$ is given by \eqref{eq:C_2(t)}.
    Moreover, if we let
    \begin{align}\label{eq:T-hessian-def}
        T_{kijl}:=\frac{1}{2}\p^2_{kl}A_{ij}(0),
    \end{align}
    then $T$ can be written as in \eqref{eq:T-component-def}.
\end{proposition}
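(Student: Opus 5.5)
The plan is to start from the tensor $\bar T$ of \eqref{eq:T-def}, add one more Lie-derivative gauge term localized near $-e_4$, and then transport everything to $Z$. Lemma \ref{lem:Abar-expansion} already gives the expansion of $\bar T$ near $-e_4$, with $\bar A(-e_4)=0$, $\nabla\bar A(-e_4)=0$ and the Hessian \eqref{eq:hessian-Abar-formula}. The only defect is that the Hessian $\p^2_{kl}\bar A_{ij}(-e_4)$ is pairwise symmetric in the sense of \eqref{eq:A-symmetries} but need not be of the form \eqref{eq:T-component-def}.

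First, apply Lemma \ref{lem:tensor-projection} to the $4$-tensor $\frac12\p^2_{kl}\bar A_{ij}(-e_4)$. This splits it as $T\oplus C$, with $C_{kijl}=X_{i,jkl}+X_{j,ikl}$ and $X$ symmetric in its last three entries.

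Next, define the $1$-form
\[
\omega_2(y):=-\tfrac13\,\eta_2(\abs{y+e_4})\,X_{i,jkl}(y+e_4)^j(y+e_4)^k(y+e_4)^l\,dy^i ,
\]
with the constant chosen so that $\mathcal L_{X_2}g_E$ has quadratic part exactly $-C_{kijl}(y+e_4)^k(y+e_4)^l$. Here $\eta_2$ is a cutoff equal to $1$ near $0$ and supported in a small ball, and $X_2$ is the dual vector field. Indeed, $\p_i\omega_j+\p_j\omega_i$ produces the combination $X_{j,ikl}+X_{i,jkl}$ times $(y+e_4)^k(y+e_4)^l$, up to the factor $3$ from differentiating a cubic.

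Then replace $\xi$ by $\xi+\mathcal L_{X_2}g_E$ in \eqref{eq:T-def}. The support of $\eta_2$ is small and meets none of its images under the dilations $\psi_n$, so the series changes only by this single compactly supported term in each fundamental domain. Convergence is therefore unaffected. Since $g_E$ is flat, hence Bach-flat, near $-e_4$, the footnoted identity $\dot B(\mathcal L_X g)=0$ shows that \eqref{eq:Tbar-lin-bach-eq} still holds. Conformal covariance \eqref{eq:lin-bach-conf-cov} takes care of the factor $e^{2\phi}$, which equals $1$ near $-e_4$.

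The new $A$ still satisfies $A(-e_4)=0$ and $\nabla A(-e_4)=0$, because the added term is homogeneous quadratic near the point. Its Hessian is now $2T$, and $T$ is of the form \eqref{eq:T-component-def}.

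The interaction \eqref{eq:A-interaction-form} is unchanged. By Lemma \ref{lem:tensor-projection}, $C=\frac12\p^2\bar A-T$ satisfies $\mathcal P(C)=0$. Contracting against $W^{kijl}$ only sees $\mathcal P$-components: using the Riemann symmetries of $W$, we have $W^{kijl}C_{kijl}=W^{kijl}\mathcal P(C)_{kijl}$ up to a constant factor, and this vanishes. Hence $W^{kijl}\p^2_{kl}A_{ij}=W^{kijl}\p^2_{kl}\bar A_{ij}(-e_4)$, and contracting \eqref{eq:hessian-Abar-formula} with $W^{kijl}$ gives exactly \eqref{eq:A-interaction-form}.

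The new gauge term is not TT. However, on its support the full $\dot B$ is applied to a Lie derivative, which is harmless. Away from the support, TT-ness near the pole is inherited from Lemma \ref{lem:Abar-expansion}. Only the Bach equation is required in the statement.

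Finally, pull $\bar T$ back via $\Phi$ to $\R\times\Sp^3$. It is invariant under $\psi$ by construction, being a sum over the whole orbit, so it descends to a tensor $F$ on $Z$, smooth away from $q=(0,S)$. Since $g$ is Euclidean near $\{0\}\times\Sp^3$, the coordinates $x=y+e_4$ are Euclidean coordinates centered at $q$, which gives \eqref{eq:F-tensor-cylinder}.

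The main obstacle is the index bookkeeping in the last gauge step. One must check that the cubic $1$-form realizes exactly $-C$ with the correct constant, and that $W\cdot C=0$; the latter rests on the identity $\mathcal P(C)=0$ from Lemma \ref{lem:tensor-projection}. One must also verify that the cutoff region of $\eta_2$ introduces no new singularities and does not disturb the convergence already established for $\bar T$.
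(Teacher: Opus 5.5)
Your proposal is correct and follows essentially the same route as the paper. Both arguments project the Hessian of $\bar A$ at $-e_4$ via Lemma \ref{lem:tensor-projection}, cancel the gauge part $C$ with the Lie derivative of a cut-off cubic $1$-form built from $X$, use $e^{2\phi}\mathcal{L}_{X}g_E\in\ker\dot B_{\bar g}$, and descend the equivariant sum of pullbacks to $Z$. The differences are cosmetic and all check out: you decompose $\frac12\partial^2\bar A$ with constant $-\frac13$ where the paper decomposes $\partial^2\bar A$ with $-\frac16$; you localize in a small ball around $-e_4$ rather than with the annular cutoff $\eta_1(|y|)$; and you obtain $W^{kijl}C_{kijl}=0$ from the identity $W\cdot C=W\cdot\mathcal{P}(C)$ rather than directly from the symmetries of $X$ against those of $W$.
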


\begin{remark}\label{r:mass}
    As we will see in Lemma \ref{lem:hessian-identity-W^P},  the main interaction 
    term for the Weyl energy is indeed given by \eqref{eq:A-interaction-form}. We notice that the 
    ADM mass of (a scalar-flat conformal blow-up of) $\Sp^1_t \times \Sp^3$ 
    is proportional to $\sum_{n\in\Z,n\not=0}\frac{t^{n}}{(1-t^n)^2}$. 
    Although not identical, the expression is similar to the one in \eqref{eq:C_2(t)}, 
    with a related qualitative behavior in the limits $t \to 1^+$ or $t \to +\infty$. 
    \end{remark}

\begin{proof}
To begin, we notice that the tensor $\bar{T}$ defined in \eqref{eq:T-def} is equivariant (by construction) under the action of $\psi_n$, $\forall n\in\Z$. As a consequence, we could lift $\bar{T}$ to an equivariant tensor on $\R\times\Sp^3$, which therefore defines a singular tensor on $Z$ (which is only singular at $(0,S)$). Moreover, by Lemma \ref{lem:Abar-expansion}, the tensor $\bar{A}$ satisfies (after a translation) properties \eqref{eq:AgradA-vanish} and \eqref{eq:A-interaction-form}. The only problem is that the hessian of $\bar{A}$ does not possess the symmetries of \eqref{eq:T-component-def}, but we can correct this last issue by adding another gauge term.

Let $A_{kijl}:=\p^2_{kl}\bar{A}_{ij}(-e_4)$. Then $A_{kijl}$ satisfies \eqref{eq:A-symmetries}, so, by Lemma \ref{lem:tensor-projection}, we can decompose $A$ as in \eqref{eq:A-decomposition}, \eqref{eq:C-gauge-formula}.
Let us define
\begin{align*}
    \omega_2(y):=-\frac{1}{6}\eta_1(\abs{y})\big[X_{i,stu} (y+e_4)^s (y+e_4)^t (y+e_4)^u\big]\,dy^i,
\end{align*}
where $\eta_1$ is given by \eqref{eq:cutoffs-cylinder} and $X$ is given by \eqref{eq:C-gauge-formula}. Then, letting $X_2$ be the vector field dual to $\omega_2$, we got, for $y\sim -e_4$,
\begin{align*}
    (\mathcal{L}_{X_2} g_E)_{ij}(y)=-\frac{1}{2}\big(X_{i,jkl}+X_{j,ikl}\big) (y+e_4)^k (y+e_4)^l, \qquad \p^2_{kl}(\mathcal{L}_{X_2} g_E)_{ij}(y)=-X_{i,jkl}-X_{j,ikl}.
\end{align*}
Therefore, if we define
\begin{align*}
    \zeta:=\xi + \mathcal{L}_{X_2} g_E=h+\mathcal{L}_{X_0} g_E+\mathcal{L}_{X_1} g_E+\mathcal{L}_{X_2} g_E, \qquad \bar{\zeta}:=e^{2\phi}\zeta,
\end{align*}
    and
    \begin{align*}
        \bar{F}:=\sum_{n\in\Z}\psi_n^* \bar{\zeta},
    \end{align*}
    we see that, for $y\sim -e_4$, 
    \begin{align*}
        \bar{F}_{ij}(y)=\bar{T}_{ij}(y)-\frac{1}{2}\big(X_{i,jkl}+X_{j,ikl}\big) (y+e_4)^k (y+e_4)^l.
    \end{align*}
    Now, as for $\bar{T}$, the tensor $\bar{F}$ is equivariant under the action of $\psi_n$ $\forall n \in\Z$, so the lift $\tilde{F}:=\Phi^*\bar{F}$ defines a tensor on $\R\times \Sp^3$ which is equivariant under translations of length $nR$, $\forall n\in\Z$. Therefore $\tilde{F}$ descends to a singular tensor $F:=\pi_*(\tilde{F})$ on $Z$, which is smooth outside $q=(0,S)$. Moreover, by \eqref{eq:Tbar-lin-bach-eq} and the fact that $e^{2\phi} \mathcal{L}_X g_E\in \ker(\dot{B}_{\bar{g}})$, we also have $\dot{B}_g(F)=0$ in $Z\backslash\{q\}$. If we take Euclidean coordinates $x^i:=y^i+e_4$ centered at $q$, we see that $F$ decomposes as in \eqref{eq:F-tensor-cylinder}, where now the tensor $A$ satisfies
    \begin{align}\label{eq:AAbar-formula}
        {A}_{ij}(x)=\bar{A}_{ij}(x-e_4)-\frac{1}{2}\big( X_{i,jkl}+X_{j,ikl}\big)x^k x^l.
    \end{align}
    Hence \eqref{eq:AgradA-vanish} holds and the Hessian in \eqref{eq:T-hessian-def} has the symmetries of \eqref{eq:T-component-def}.
    Finally, by the symmetries of $X$ and the (anti-)symmetry of Weyl's tensor, we got
    \begin{align}\label{eq:weyl-X-zero-prod}
      W^{kijl}\big(X_{i,jkl}+X_{j,ikl}\big)=0,
    \end{align}
    so that 
    \eqref{eq:A-interaction-form} follows from \eqref{eq:hessian-Abar-formula}, \eqref{eq:AAbar-formula}.
\end{proof}

\begin{remark}\label{rem:A-non-TT-property}
    We point out that, while $\bar{A}$ is TT (cf. Lemma \ref{lem:Abar-expansion}), the tensor $A$ in \eqref{eq:F-tensor-cylinder} is not TT, and in general is not even traceless. To see this we notice that, by \eqref{eq:AAbar-formula},
    \begin{align*}
        \p^2_{kl}A_{ij}(0)=\p^2_{kl}\bar{A}_{ij}(-e_4)- X_{i,jkl}-X_{j,ikl},
    \end{align*}
    so, recalling \eqref{eq:T-component-def}, \eqref{eq:Tens-def}, the fact that $\bar{A}$ is TT and \eqref{eq:hessian-Abar-formula}, one has 
    \begin{align*}
        \p^2_{kl} \tr(A)(0)=\frac{1}{3}(\Delta \bar{A})_{kl}(-e_4)=\frac{8}{9}C_2(t)W_{4ij4},
    \end{align*}
    which does not vanish in general for all $i,j$.
\end{remark}

\subsection{Construction of the modified metric on $(\R \times \Sp^3)/G$}\label{subsec:green-quotients}

\subsubsection{The geometric setting}\label{subsec:quotients-geom-setting}

As pointed out in Theorem \ref{thm:CTZ-LCF-classification}, all manifolds of diffeomorphism type $Z\simeq(\R \times \Sp^3)/G$ (where, we recall,  $G$ is a cocompact fixed point free discrete subgroup of the isometry group of the standard metric on $\R\times \Sp^3$) are finitely covered by $\Sp^1 \times \Sp^3$. 
Therefore, given any $R>0$, we can indeed represent $Z$ as a quotient $(\R\times \Sp^3)/G$ and, without loss of generality, we might assume that there exists a normal subgroup $H\trianglelefteq G$, of finite index and isomorphic to $\Z$, which acts on $\R \times \Sp^3$ via translations of length $mR$, for $m\in\Z$.
\begin{remark}\label{rem:H-subgroup-translations}
    Without loss of generality, we can assume that $H$ is \emph{exactly} the subgroup consisting of elements $g\in G$ which act on $\R\times \Sp^3$ as a translation on the first factor (as that would indeed be a normal subgroup of $G$ of finite index containing $H$ and isomorphic to $\Z$, so we could replace $H$ with it).
\end{remark}
The quotient $(\R \times \Sp^3) /H$ is isometric to $([-R/2,R/2] \times \Sp^3)/\sim$, where the equivalence relation is the usual identification of the boundaries. Moreover, the (finite) quotient group $N:=G/H$ acts on $([-R/2,R/2] \times \Sp^3)/\sim$, and its quotient is isometric to $(\R\times \Sp^3)/G$.
Let $\pi\colon \R\times \Sp^3\to Z$ denote the projection and consider $\pi((0,S))=:q\in Z$ (where $S=(0,0,0,-1)\in\Sp^3$).
If we modify the standard metric on $Z$ in a conformal metric $g$ that is locally Euclidean around $q$, we can pull back  $g$ through $\pi$ to get a metric which is locally Euclidean around $\pi^{-1}(q)$. Let us call $\tilde{g}:=\pi^*(g)$.
We have the following characterization of the action of $\gamma\in N$ in $([-R/2,R/2] \times \Sp^3)/\sim$:

\begin{lemma}\label{lem:quotient-map-cylinder}
    Any $\gamma\in N$ is a composition of translations and rotations by a matrix $O\in \mathrm{SO}(4)$.
\end{lemma}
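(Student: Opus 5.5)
The plan is to use the structure of the isometry group of the standard product metric $g_0 = dr^2 + g_{\Sp^3}$ on $\R\times\Sp^3$, together with the orientability of $Z$. First, I will recall that $\mathrm{Isom}(\R\times\Sp^3, g_0) = \mathrm{Isom}(\R)\times \mathrm{O}(4)$. This holds because the de Rham decomposition of this product is unique: the $\R$-factor is the flat factor, and $\Sp^3$ is irreducible. Hence every isometry has the form $(r,q)\mapsto(\pm r + c, Oq)$ with $O\in \mathrm{O}(4)$. Any element $\gamma\in N = G/H$ is represented by some $\tilde\gamma\in G$ of this form, and since $H$ consists of translations $r\mapsto r+mR$, the induced map on $([-R/2,R/2]\times\Sp^3)/\sim$ is well defined. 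It is again of the form $(r,q)\mapsto (\pm r + c, Oq)$, with $c$ taken modulo $R$.

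The next step is to exclude the reflection $r\mapsto -r+c$. Since $H$ is normal in $G$, conjugating the generator $\tau(r,q)=(r+R,q)$ by $\tilde\gamma$ must give an element of $H$. If $\tilde\gamma$ reverses the $\R$-direction, this conjugate is the translation by $-R$, which still lies in $H$, so normality alone does not rule it out. Instead I will use that $G$ acts freely and cocompactly. The key observation is that if $\tilde\gamma(r,q) = (-r+c, Oq)$, then $\tilde\gamma^2(r,q) = (r, O^2 q)$ is a pure rotation. A nontrivial element of finite order in $\mathrm{SO}(4)$ or $\mathrm{O}(4)$ might act freely on $\Sp^3$, so this is not yet a contradiction.

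I expect this freeness argument to be the main obstacle, and I will handle it by a fixed-point consideration. The map $r\mapsto -r+c$ fixes $r_0 = c/2$, so $\tilde\gamma$ preserves the slice $\{c/2\}\times\Sp^3$ and acts on it by $O$. If $O\in \mathrm{O}(4)\setminus\mathrm{SO}(4)$, then $O$ has eigenvalue $1$ and therefore a fixed point on $\Sp^3$, contradicting freeness. If $O\in\mathrm{SO}(4)$, the total map reverses orientation, since its differential is $(-1)\oplus O$, which contradicts the assumption that the quotient is oriented and $G$ acts preserving orientation. In the same way, an element acting as $(r,q)\mapsto(r+c,Oq)$ with $\det O=-1$ would reverse orientation. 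So, using that $Z$ is oriented (and hence $G$ acts by orientation-preserving isometries), every $\gamma$ acts as $(r,q)\mapsto (r+c,Oq)$ with $O\in\mathrm{SO}(4)$, that is, as a translation composed with a rotation. If orientability alone does not suffice to exclude the reflecting case, the fixed-slice argument combined with freeness covers the remaining possibility.

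Finally, I will note that the two components commute, and the translation part is well defined modulo $R$. This gives the claimed description of $\gamma$ as a composition of translations and rotations by a matrix $O\in\mathrm{SO}(4)$.
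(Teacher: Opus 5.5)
Your proposal is correct and is essentially the paper's argument: orientation-preservation excludes the cases (reflection in $r$, $O\in\mathrm{SO}(4)$) and (translation, $O\in\mathrm{O}(4)\setminus\mathrm{SO}(4)$). In the remaining case (reflection, $\det O=-1$), a fixed unit vector $v$ of $O$ (which the paper obtains from $\det(I-O)=-\det(I-O)$) gives a fixed point $(c/2,v)$, contradicting freeness. Your explicit de Rham justification of $\mathrm{Isom}(\R\times\Sp^3)=\mathrm{Isom}(\R)\times\mathrm{O}(4)$ and your use of lifts $\tilde\gamma\in G$ (instead of working on the quotient by $H$) are harmless refinements, and the digressions about normality of $H$ and about $\tilde\gamma^2$ can simply be dropped.
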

\begin{proof}
    Since we are assuming that the quotient $Z$ is oriented, it is known that the action of each element $\gamma\in N$ must be orientation-preserving. This automatically excludes maps which act on $([-R/2,R/2] \times \Sp^3)/\sim $ as an inversion (plus translation) on the first factor and as a rotation in $\mathrm{SO}(4)$ on the second factor, as well as maps acting as a translation on the first factor and as a rotation  in $\mathrm{O}(4)\backslash \mathrm{SO}(4)$ in the second one (as they are both orientation-reversing).
    Finally, let us consider maps in which we have an inversion on the first factor and a rotation $O\in \mathrm{O}(4)\backslash \mathrm{SO}(4)$ in the second one, namely $\gamma\in N$ satisfies
    \begin{align*}
        \gamma(r,w)=(L-r,O(w)), 
    \end{align*}
    for $\abs{L}\leq R/2$. Since $\mathrm{det}(O)=-1$, one has
    \begin{align*}
        \mathrm{det}(I-O)=\mathrm{det}(I-O^{-1})=\det(O^{-1})\mathrm{det}(O-I)=-\det(I-O),
    \end{align*}
    which implies that $\exists v\in \Sp^3$ satisfying $O(v)=v$. Then $(L/2,v)$ is a fixed point for $\gamma$, contradicting the fact that the action is free.
\end{proof}

By the above construction, if we regard $([-R/2,R/2] \times \Sp^3)$ as a subset of $\R\times \Sp^3$, then $\pi^{-1}(q)$ will be given by the  $N$-orbit of $(0,S)$ together with all of its translations by $mR, m\in\Z$, and the metric $\tilde{g}$ will be Euclidean in a neighborhood of all such points (and also equivariant under the action of $G$).

As before, we can consider the conformal map $\Phi$ into $\R^4\backslash\{0\}$ given by \eqref{eq:Phi-def-conf}, and we let
$\bar{g}:=\Phi_*(\tilde{g})=:e^{2\phi}g_E$ be a metric on $\R^4\backslash\{0\}$ conformal to $g_E$. This time, the dilations $\psi_n(y)=t^n y$, $t:=e^R$, correspond to the action of $H\simeq \Z$, while, by the above claim, the other elements $\gamma\in G$ correspond, in $\R^4\backslash\{0\}$, to dilations composed with (nontrivial) rotations in $\mathrm{SO}(4)$.

\subsubsection{The singular correction}

For any given $R>0$, let $t:=e^R>1$.
To begin, by the previous claim it is possible to write the $G$-orbit of a point $(r,w)\in\R\times \Sp^3$ as
\begin{align*}
    G((r,w)):=\bigcup_{n\in\Z}\bigcup_{a=1}^{N_0}\gamma_{n,a}((r,w))=\bigcup_{n\in\Z}\bigcup_{a=1}^{N_0} (r+r_a+nR,O_a(w)),
\end{align*}
where $N_0:=\#(N)$ is the cardinality of the quotient group $N$, $-R/2<r_1\leq \dots\leq r_{N_0}\leq R/2$ and $O_a\in \mathrm{SO}(4)$ $\forall a=1,\dots, N_0$.
Passing to $\R^4\backslash\{0\}$ via $\Phi$, we see that the $G$-orbit of a point $0\not=y\in\R^4$ is
\begin{align}\label{eq:G-orbit-euclidean-sp}
   G(y):= \bigcup_{n\in \Z}\bigcup_{a=1}^{N_0}\psi_{n,a}(y)= \bigcup_{n\in \Z}\bigcup_{a=1}^{N_0}  t^n s_a O_a(y),
\end{align}
where $\psi_{n,a}=\psi_{\gamma_{n,a}} \in\mathcal{M}_4$ is the M\"obius map $\psi_{n,a}(y):=t^n s_a O_a(y)$, and $s_a:= e^{r_a}$ satisfies $t^{-1/2}<s_1\leq \dots\leq s_{N_0}\leq t^{1/2}$ .

On $\R^4$, let $h$ be given by \eqref{eq:h-eucl-def} and define $\bar{h}:=e^{2\phi}h$. Then $\dot{B}_{\bar{g}}(\bar{h})=0$ in $\R^4\backslash\{0,-e_4\}$ by the conformal covariance \eqref{eq:lin-bach-conf-cov}. Given any $\gamma_{n,a}\in G$, let $\psi_{n,a}\in \mathcal{M}_4$ be the corresponding M\"obius map, which, by construction, is an isometry for the metric $\bar{g}$. Therefore, writing $O\in \mathrm{SO}(4)$ as $O=O^k_l \p_k\otimes dx^l$ and arguing as in \eqref{eq:hbar-pullback-eq}, \eqref{eq:conf-fact-pullback-eq}, we see that
\begin{equation}\label{eq:mobius-pullback-quotient}
    (\psi_{n,a}^* \bar{h})_{ij}(y)=(O_a)_i^\alpha (O_a)_j^\beta h_{\alpha \beta}(\psi_{n,a}(y)), \qquad \text{for $y$ close to $-e_4$, $\forall \gamma_{n,a}\in G$.}
\end{equation}

Now, as in the case of $\Sp^1 \times \Sp^3$, we would like to suitably gauge $\bar{h}$ in order to make the series of its pullbacks convergent and \acc well-behaved'' around $-e_4$. 
Let $0\leq\tilde{\eta}_0,\tilde{\eta}_1\leq 1$ be smooth cutoff functions on $\R\times \Sp^3$ such that 
\begin{align*}
    \tilde{\eta}_0(r,w)=\begin{cases*}
        1 & \text{for $r\leq r_1-\frac{3}{4}(r_1+R/2)$} \\
        0 & \text{for $r \geq r_1-\frac{1}{4}(r_1+R/2)$,} 
        \end{cases*}
        \qquad \tilde{\eta}_1(r,w)=\begin{cases*}
            1 & \text{for $\mathrm{dist}_{g_{\R\times \Sp^3}}((r,w),(0,S))\leq d_0/4$} \\
            0 & \text{for $\mathrm{dist}_{g_{\R\times \Sp^3}}((r,w),(0,S))\geq d_0 /2$,}
    \end{cases*}
\end{align*}
where 
\begin{align*}
d_0:= \min\{\mathrm{dist}_{g_{\R\times \Sp^3}}((r_a,O_a(S)),(0,S))\mid a=1,\dots,N_0, (r_a,O_a(S))\not=(0,S)\}
\end{align*}
denotes the minimal distance (in the standard metric) between $(0,S)$ and any other point in its $G$-orbit.
We can then define, on $\R^4\backslash\{0\}$,
\begin{align}\label{eq:cutoff-quotients-def}
    \eta_i(y):=\Phi(\tilde{\eta}_i(\Phi^{-1}(y))), \qquad i=0,1,
\end{align}
which are smooth cutoffs in the Euclidean space. In particular, $\eta_0$ will be identically equal to $1$ around all points $x\in G(-e_4)$ with $\abs{x}\leq t^{-1/2}$ and identically zero around the others, while $\eta_1$ will be identically $1$ around $-e_4$ and will vanish around all the other points in $G(-e_4)$.

Let us define the (preliminary) tensor
\begin{align}\label{eq:zeta_0-quotients-def}
    \xi_0:=h+ \mathcal{L}_{X_0}g_E,
\end{align}
where $X_0$ is the vectorfield dual to $\omega_0$ defined in \eqref{eq:omega_0-def} (now the cutoff $\eta_0$ in \eqref{eq:omega_0-def} is the one defined in \eqref{eq:cutoff-quotients-def}).
Then $\xi_0$ is TT both near the origin and near $-e_4$, and $\dot{B}_{g_E}(\xi_0)(y)=0$ if $y\not={-e_4}$. Let $\bar{\xi}_0:=e^{2\phi}\xi_0$; by conformal covariance, $\dot{B}_{\bar{g}}(\bar{\xi}_0)(y)=0$ if $y\not= 0,-e_4$. Define 
\begin{align}\label{eq:Tzerobar-quotients-def}
\bar{T}_0:=\sum_{a=1}^{N_0}\sum_{n\in\Z}\psi_{n,a}^*\bar{\xi}_0,
\end{align}
where $\psi_{n,a}$ is given by \eqref{eq:G-orbit-euclidean-sp}. By \eqref{eq:mobius-pullback-quotient}, \eqref{eq:zeta_0-quotients-def} we see that, \underline{for $y\sim -e_4$}, there holds
\begin{align}
\notag
    (\bar{T}_0)_{ij}(y)&=\sum_{a=1}^{N_0}\sum_{n\in\Z}(O_a)_i^\alpha (O_a)_j^\beta(\xi_0)_{\alpha \beta}(t^n s_aO_a(y)) \\
    \notag
    &=-\frac{1}{3}\sum_{a=1}^{N_0}\sum_{n\geq0}(O_a)_i^\alpha (O_a)_j^\beta W_{\mu \alpha \beta \nu}\frac{(t^n s_a O_a(y)+e_4)^\mu (t^n s_a O_a(y)+e_4)^\nu}{\abs{t^n s_a O_a(y)+e_4}^4} \\
    \label{eq:T_0-bar-formula-quotient}
    &\quad -\frac{1}{3}\sum_{a=1}^{N_0}\sum_{n>0}(O_a)_i^\alpha (O_a)_j^\beta \bigg[ W_{\mu \alpha \beta  \nu}\frac{(t^{-n} s_a O_a(y)+e_4)^\mu (t^{-n} s_a O_a(y)+e_4)^\nu}{\abs{t^{-n} s_a O_a(y)+e_4}^4}-W_{4\alpha \beta 4}\bigg].
\end{align}
Hence, being $t>1$, we see that the above series is convergent and well-defined at any point $y\not=-e_4$ close to $-e_4$ (indeed, the terms in the first sum behave as $t^{-2n}$, while the ones in the second sum behave as $t^{-n}$). 
The expression will be different for a generic point $y\in\R^4\backslash\{0\}$ due to the non-constant conformal factor $e^{2\phi}$ and the cutoff functions; however, it is not difficult to see that $\bar{T}_0$ is pointwisely convergent and well-defined on $\R^4\backslash\big(\{0\}\cup G(-e_4)\big)$. We also notice that, for $y\sim -e_4$, $\bar{T}_0(y)$ is also a TT tensor, as $\bar{\xi}_0$ is and the pullback by rotations and dilations preserve the TT condition with respect to the Euclidean metric $g_E$($=\bar{g}$ near $-e_4$).
Now, let us write 
\begin{align*}
    (\bar{T}_0)_{ij}(y)=-\frac{1}{3}W_{\mu i j \nu}\frac{(y+e_4)^\mu (y+e_4)^\nu}{\abs{y+e_4}^4}+(\bar{A}_0)_{ij}(y).
\end{align*}
By definition, $(\bar{A}_0)_{ij}$ is smooth and TT in a small neighborhood of $-e_4$.
However, in general we have $(\bar{A}_0)_{ij}(-e_4)\not=0$ and $\nabla (\bar{A}_0)_{ij}(-e_4)\not=0$, so, in order to perform the match between the metrics on $M$ and $Z$, we need to further gauge $\bar{\xi}_0$.

Let us define 
\begin{align*}
    S_{ij}:=(\bar{A}_0)_{ij}(-e_4), \qquad \qquad S_{ij,k}:=\p_k (\bar{A}_0)_{ij}(-e_4),
\end{align*}
\begin{align}\label{eq:omega_1-quotient-def}
    \omega_1(y):=-\eta_1(y)\Big\{\frac{1}{2}S_{is}(y+e_4)^s+\frac{1}{4}\big[S_{is,t}+S_{it,s}-S_{st,i}\big] (y+e_4)^s (y+e_4)^t \Big\}\,dy^i,
\end{align}
where $\eta_1$ is given by \eqref{eq:cutoff-quotients-def}.
Then, if we let $X_1$ be the vector field dual to $\omega_1$, it is straightforward to check that
\begin{align}\label{eq:gauge-term-identities-quotient}
    (\mathcal{L}_{X_1}g_E)_{ij}(-e_4)=-(\bar{A}_0)_{ij}(-e_4), \qquad \qquad \nabla (\mathcal{L}_{X_1}g_E)_{ij}(-e_4)=-\nabla (\bar{A}_0)_{ij}(-e_4).
\end{align}
Define
\begin{align}\label{eq:xi-quotient-def}
\xi:=h+\mathcal{L}_{X_0}g_E+ \mathcal{L}_{X_1}g_E,
\end{align}
and
\begin{align}\label{eq:Tbar-quotients-def}
\bar{T}:=\sum_{a=1}^{N_0}\sum_{n\in\Z}\psi_{n,a}^*\bar{\xi}, \qquad \bar{\xi}:=e^{2\phi} \xi.
\end{align}
As for $\bar{T}_0$, $\bar{T}$ is well-defined on $\R^4\backslash\big(\{0\}\cup G(-e_4)\big)$, and it satisfies by construction
\begin{align}\label{eq:Tbar-lin-bach-eq-quotient}
    \dot{B}_{\bar{g}}(\bar{T})=0    \qquad \text{in} \,\,\,\R^4\backslash \big(\{0\}\cup G(-e_4)\big). 
\end{align}

\begin{lemma}\label{lem:Abar-exp-quotients}
    Let us write, for $y\sim -e_4$,
    \begin{equation*}
        \bar{T}_{ij}=-\frac{1}{3}W_{\mu i j \nu}\frac{(y+e_4)^\mu (y+e_4)^\nu}{\abs{y+e_4}^4}+\bar{A}_{ij}(y),
    \end{equation*}
    where $\bar{T}$ is given by \eqref{eq:Tbar-quotients-def}.
    Then $\bar{A}_{ij}(y)$ is TT for $y$ close to $-e_4$, and one further has
    \begin{align}\label{eq:vanishing-Abar-quotient}
        \bar{A}(-e_4)=0, \qquad \nabla \bar{A}(-e_4)=0,
    \end{align}
    \begin{align}
\notag
    \p^2_{kl}\bar{A}_{ij}(-e_4)=-\frac{1}{3}{C_2(t)}\bigg[&\big(W_{kijl} + W_{lijk}\big)-4\delta_{4k}\big(W_{lij4}+W_{4ijl}\big)-4\delta_{4l}\big(W_{kij4}+W_{4ijk}\big) \\
    \label{eq:hessian-Abar-quotients-formula}
    &   -4\delta_{kl} W_{4ij4} +24 \delta_{4k}\delta_{4l} W_{4ij4}\bigg] + O\big((t-1)^{-1}\big), \qquad \text{as $t\to 1^+$.}
\end{align}
Here $C_2(t)$ is given by \eqref{eq:C_2(t)}; in particular,
\begin{align}\label{eq:C_2(t)-asymptotics}
    C_2(t)=\frac{\pi^4}{45(t-1)^4}+ O\big( (t-1)^{-3}\big), \qquad \text{as $t\to 1^+$.}
\end{align}
\end{lemma}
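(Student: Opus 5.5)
We first restrict the local formula for $\bar T$ near $-e_4$. Near $-e_4$ the cutoff $\eta_1\equiv 1$, and $\eta_0\equiv 0$ near every orbit point with $\abs{x}>t^{-1/2}$, in particular near $-e_4$. The gauge term $\mathcal{L}_{X_1}g_E$ is supported in a $\bar g$-ball of radius $d_0/2$ around $-e_4$, and by the choice of $d_0$ this ball contains no other orbit point. Hence among the pullbacks $\psi_{n,a}^*(e^{2\phi}\mathcal{L}_{X_1}g_E)$ only the identity term $(n,a)=(0,\mathrm{id})$ is nonzero near $-e_4$. Therefore, for $y\sim -e_4$,
\[
\bar A_{ij}(y)=(\bar A_0)_{ij}(y)+(\mathcal{L}_{X_1}g_E)_{ij}(y).
\]
Here $\mathcal{L}_{X_1}g_E$ is, near $-e_4$, an explicit tensor that is at most linear in $(y+e_4)$. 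Since $\omega_1$ is a polynomial of degree at most two in $y+e_4$, its Lie derivative has vanishing Hessian. By \eqref{eq:gauge-term-identities-quotient} this gives \eqref{eq:vanishing-Abar-quotient} at once, and also $\p^2\bar A(-e_4)=\p^2\bar A_0(-e_4)$.

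For the TT property, $\bar A_0$ is TT near $-e_4$ because each summand in \eqref{eq:T_0-bar-formula-quotient} is a rotated and dilated pullback of the TT tensor $\xi_0$. The correction $\mathcal{L}_{X_1}g_E$ is not automatically TT, so I would verify it directly.
\begin{itemize}
\item \emph{Trace.} It is $2\,\mathrm{div}\,\omega_1$. Because $\bar A_0$ is traceless, $S_{ss}=0$ and $S_{ss,k}=0$, and the linear and quadratic parts of $\mathrm{div}\,\omega_1$ then cancel.
\item \emph{Divergence.} Using the divergence-free condition $S_{is,i}=0$ and the symmetry of $S_{ij,k}$ in $ij$, the divergence reduces to a multiple of $\Delta\omega_1$, which vanishes because $\omega_1$ has degree at most two with the traces above removed.
\end{itemize}
If an exact cancellation fails, I would instead note that a symmetric-trace correction of the form $\mathcal{L}_X g_E$ with $X$ linear is conformal Killing plus trace, and adjust $\omega_1$ accordingly. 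I expect the intended argument to be this direct check.

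For the Hessian, I differentiate \eqref{eq:T_0-bar-formula-quotient} twice at $y=-e_4$, subtracting the singular $(n,a)=(0,\mathrm{id})$ term. Split the terms into two groups.
\begin{itemize}
\item \emph{Those with $O_a=\mathrm{id}$, $s_a=1$, i.e.\ the subgroup $H$.} These give exactly the computation of Lemma \ref{lem:Abar-expansion}, hence the main term $-\frac13 C_2(t)[\dots]$. The reason is that the Hessian of $W_{\mu ij\nu}u^\mu u^\nu\abs{u}^{-4}$ at $u=(1-t^n)e_4$, scaled by $t^{2n}$, produces $t^{2n}(1-t^n)^{-4}$ times the bracket in \eqref{eq:hessian-Abar-formula}.
\item \emph{The remaining $N_0-1$ cosets with $O_a\neq \mathrm{id}$ or $s_a\neq1$.} The point $t^ns_aO_a(-e_4)+e_4$ stays at distance $\gtrsim \abs{1-t^ns_a}+\abs{O_a(e_4)-e_4}$ from $0$. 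Since the action is free and $(r_a,O_a(S))\neq(0,S)$, this distance is bounded below by roughly $c\,\abs{n}(t-1)+c_a$ with $c_a>0$ when $O_aS\neq S$. Each Hessian term is bounded by $t^{2n}\,\mathrm{dist}^{-4}$.
\end{itemize}

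The main obstacle is the remaining group, and specifically the cosets with $O_a(S)=S$ but $O_a\neq\mathrm{id}$. There $s_a=e^{r_a}$ with $r_a\neq0$, but $r_a$ may be small; as $t\to1^+$, $\abs{r_a}<R/2=\frac12\log t$ is itself $O(t-1)$. For these cosets I would use that $\{r_a+nR\}$ is a shifted lattice of spacing $R\sim t-1$ avoiding $0$. This gives $\sum_n(\abs{r_a+nR})^{-4}\lesssim \abs{r_a}^{-4}$, which is not uniformly controlled.

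So this step needs a more careful argument. Since $G$ is a fixed group, the ratio $r_a/R$ is fixed as $t$ varies (we rescale $R$, while the group structure, and hence $r_a/R\in(-\tfrac12,\tfrac12]$, is fixed). These contributions are then of order $R^{-4}$ times an $O_a$-rotated bracket, which is not $O((t-1)^{-1})$. I would then attempt either to show that such cosets cancel by the rotation symmetry after contracting the indices, or to reduce to the case where these coset contributions are genuinely $O(R^{-1})$. The latter holds when $O_a(S)\neq S$: the distance is then bounded below by a constant, and summing over $\abs{n}\lesssim R^{-1}$ terms gives $O((t-1)^{-1})$.

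Finally, \eqref{eq:C_2(t)-asymptotics} follows by comparing the sum with an integral. With $t=e^R$ one has $t^{2n}(1-t^n)^{-4}=(2\sinh(nR/2))^{-4}\approx (nR)^{-4}$, and $2\sum_{n>0}(nR)^{-4}=2\zeta(4)R^{-4}=\frac{\pi^4}{45}R^{-4}$. Since $R=t-1+O((t-1)^2)$, the error is $O(R^{-3})$, which gives \eqref{eq:C_2(t)-asymptotics}.
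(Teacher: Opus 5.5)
Your treatment of the vanishing conditions, of the TT property, of the $H$-cosets producing $C_2(t)$, and of the asymptotics of $C_2(t)$ follows the paper's route and is correct. Your explicit check that $\mathcal{L}_{X_1}g_E$ is TT near $-e_4$, using $S_{ss}=S_{ss,k}=S_{is,s}=0$, is more than the paper gives.

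The gap is the Hessian bound for cosets with $O_a\neq\mathrm{Id}$ but $O_a(S)=S$. You leave this case open, and neither remedy you suggest works as stated. Cancellation is impossible, for the following reason.
\begin{itemize}
\item For such a coset, freeness forces $r_a=\theta_aR\neq 0$, with $\theta_a\in(-\frac12,\frac12]$ fixed as $t$ varies.
\item The points $t^ns_aO_a(-e_4)=-t^{n+\theta_a}e_4$ then all lie on the line through $-e_4$.
\item Since $O_ae_4=e_4$, the Hessian at $e_4$ of $u\mapsto W_{\mu ij\nu}u^\mu u^\nu\abs{u}^{-4}$ is $O_a$-equivariant.
\end{itemize}
Hence this coset contributes exactly $-\frac13\sum_{n\in\mathbb{Z}}\big(2\sinh((n+\theta_a)R/2)\big)^{-4}\sim -\frac13 c_{\theta_a}(t-1)^{-4}$ times the bracket of \eqref{eq:hessian-Abar-quotients-formula} evaluated on $O_a^*W$. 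Nothing cancels this term, so \eqref{eq:hessian-Abar-quotients-formula} fails in general in this situation. For example, take $G$ generated by $(r,w)\mapsto(r+R/2,\mathrm{diag}(-1,-1,1,1)w)$, which acts freely and preserves orientation. There the extra coefficient is $\frac{\pi^4}{3}(t-1)^{-4}$, fifteen times the leading term $\frac{\pi^4}{45}(t-1)^{-4}$ of $C_2(t)$.

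The missing idea is that the gluing point $q$ is yours to choose.
\begin{itemize}
\item A nontrivial element of $\mathrm{SO}(4)$ fixes either no point of $\Sp^3$ or a great circle.
\item So choose the $\Sp^3$-component of $q$ off the finitely many great circles fixed by the nontrivial $O_a$. Equivalently, conjugate $G$ by a rotation so that no $O_a\neq\mathrm{Id}$ fixes $S$.
\item Then every coset outside $H$ satisfies $O_a(S)\neq S$, i.e.\ $(O_a)^4_4<1$.
\end{itemize}
In that case your argument gives $O((t-1)^{-1})$. The distance to $-e_4$ is bounded below uniformly in $t$ for the $O((t-1)^{-1})$ indices with $t^ns_a\in[\frac12,2]$, and the remaining terms decay geometrically like $t^{-2\abs{n}}$. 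This is equivalent to the paper's Riemann-sum bound with $I(\tau,s)<\infty$ for $\tau<1$.

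Be aware that the paper disposes of your problematic case by asserting that freeness forces $O^4_4\in[-1,1)$. That inference is not valid: freeness only rules out $O_a(S)=S$ when $r_a=0$. So this generic choice of $q$ is needed to make the paper's proof correct as well.
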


\begin{remark}
    Differently from the case of $\Sp^1 \times \Sp^3$ (see Lemma \ref{lem:Abar-expansion}), here we do not have an exact (useful) identity for the hessian of $\bar{A}$. However, by letting $t\to 1^+$ (that is, by shrinking the length $R$ of the cylinder to zero), we see that the leading term in the hessian is again given by the quantity appearing in \eqref{eq:hessian-Abar-formula}. This is motivated by the fact that, differently from the poles coming from pure dilations, the poles of $\bar{T}$ coming from dilations and rotations do not accumulate toward $-e_4$ as $t\to 1^+$.
\end{remark}

\begin{proof}
    The identities \eqref{eq:vanishing-Abar-quotient} and the fact that $\bar{A}$ is TT directly follow by construction, see \eqref{eq:Tzerobar-quotients-def}, \eqref{eq:T_0-bar-formula-quotient}, \eqref{eq:gauge-term-identities-quotient}, \eqref{eq:Tbar-quotients-def}.

    Let us turn our attention to \eqref{eq:hessian-Abar-quotients-formula}. Since the gauge terms in \eqref{eq:xi-quotient-def} are given (around $-e_4$) by polynomials of order $\leq1$, we see that the hessian of $\bar{A}$ is equal to that of $\bar{A}_0$.
    Look at \eqref{eq:T_0-bar-formula-quotient}, and, for $y\sim -e_4$, consider a single term of the series
    \begin{align*}
        V_{ij}(y):=-\frac{1}{3}W_{\mu i j \nu}\frac{(t^n s O(y)+e_4)^\mu (t^n s O(y)+e_4)^\nu}{\abs{t^n s O(y)+e_4}^4},
    \end{align*}
    where we dropped the subscript $a$ for convenience. We have
    \begin{align}
    \notag
        \p^2_{kl}V_{ij}(y)=&-\frac{1}{3} t^{2n}s^2W_{\mu i j \nu}\Bigg[\frac{O^\mu_k O^\nu_l+O^\mu_l O^\nu_k}{\abs{t^nsO(y)+e_4}^4}-4(t^nsy_l+O^4_l)\frac{O^\mu_k(t^nsO^\nu_\beta y^\beta+\delta_4^\nu)+(t^nsO^\mu_\alpha y^\alpha+\delta_4^\mu)O^\nu_k}{\abs{t^nsO(y)+e_4}^6} \\
        \notag
        -4\delta_{kl}&\frac{(t^ns O^\mu_\alpha y^\alpha+\delta^\mu_4)(t^nsO^\nu_\beta y^\beta+\delta_4^\nu)}{\abs{t^nsO(y)+e_4}^6}-4(t^nsy_k+O^4_k)\frac{O^\mu_l(t^nsO^\nu_\beta y^\beta+\delta_4^\nu)+(t^nsO^\mu_\alpha y^\alpha+\delta_4^\mu)O^\nu_l}{\abs{t^nsO(y)+e_4}^6} \\
        \label{eq:hess-V-formula}
        &+24(t^ns y_k+O^4_k)(t^ns y_l +O^4_l)\frac{(t^ns O^\mu_\alpha y^\alpha+\delta^\mu_4)(t^nsO^\nu_\beta y^\beta+\delta_4^\nu)}{\abs{t^nsO(y)+e_4}^8}\Bigg].
    \end{align}
    Let us now look at $\p^2_{kl}V_{ij}(-e_4)$.

\medskip

    \noindent $\bullet$ Assume $O=\mathrm{Id}$ and $s=1$. Then
    \begin{align*}
        \p^2_{kl}V_{ij}(-e_4)=-\frac{1}{3}\frac{t^{2n}}{(1-t^n)^4}\bigg[&\big(W_{kijl} + W_{lijk}\big)-4\delta_{4k}\big(W_{lij4}+W_{4ijl}\big)-4\delta_{4l}\big(W_{kij4}+W_{4ijk}\big) \\
    &   -4\delta_{kl} W_{4ij4} +24 \delta_{4k}\delta_{4l} W_{4ij4}\bigg],
    \end{align*}
    so, when summing over $n\in\Z, n\not=0$, we obtain exactly the first term of \eqref{eq:hessian-Abar-quotients-formula}, cf. \eqref{eq:C_2(t)}.

\medskip

    \noindent $\bullet$ Assume $O\not=\mathrm{Id}$ and $t^{-1/2}\leq s\leq t^{1/2}$. Notice that, by virtue of Remark \ref{rem:H-subgroup-translations}, we cannot have $s\not=1$ and $O=\mathrm{Id}$, so all the remaining cases are included in this second one. Being $O\in \mathrm{SO}(4)$, looking at \eqref{eq:hess-V-formula} we deduce
    \begin{align*}
        \abs{\p^2_{kl}V_{ij}(-e_4)}\leq C\frac{t^{2n}s^2}{\big|-t^nsO_4^\beta e_\beta+e_4\big|^4}= C\frac{t^{2n}s^2}{\big(t^{2n}s^2-2t^nsO_4^4+1\big)^2},
    \end{align*}
    where $C>0$ does not depend on $n\in\Z$, neither on $O\in \mathrm{SO}(4)$ or $s\in[t^{-1/2},t^{1/2}]$.
    Being $\mathrm{Id}\not=O\in \mathrm{SO}(4)$ with free action, then $\tau:=O^4_4\in[-1,1)$ necessarily.
    Consider the series over $n\in\Z$ of the above coefficients. Setting $\epsilon=\log t$ we have
    \begin{align*}
        \sum_{n\in\Z}\frac{t^{2n}s^2}{\big(t^{2n}s^2-2t^nsO_4^4+1\big)^2}&=\frac{1}{s^2}\sum_{n\in\Z}\frac{e^{2n\eps }}{\big( e^{2n\eps}-2\tau s^{-1}e^{n\eps}+s^{-2}\big)^2} \\
        &=\frac{1}{\eps s^2}\bigg(\sum_{n\in\Z}\eps \frac{e^{2n\eps }}{\big( e^{2n\eps}-2\tau s^{-1}e^{n\eps}+s^{-2}\big)^2}\bigg)=:\frac{1}{\eps s^2}S(\eps).
    \end{align*}
    If $t\to1^+$, then $\eps\to 0^+$ and the Riemann sum converges to 
    \begin{align*}
        S(\eps)\xrightarrow{\eps\to 0^+} \int_{-\infty}^{-\infty}\frac{e^{2x}}{\big(e^{2x}-2\tau s^{-1}e^x+s^{-2}\big)^2}\,dx=\int_{0}^{+\infty}\frac{r}{\big(r^2-2\tau  s^{-1}r+s^{-2}\big)^2}\,dr=:I(\tau,s),
    \end{align*}
    where $I(\tau,s)$ is finite for any $\tau\in[-1,1)$ and $s\in[1/2,2]$, with $I(\tau,s)\to+\infty$ as $\tau\to 1^-$.
    Hence (recall that $\tau=O^4_4$)
    \begin{align*}
        \sum_{n\in\Z}\abs{\p^2_{kl}V_{ij}(-e_4)}\leq \frac{C(O_4^4)}{(t-1)}, \qquad \text{as $t\to 1^+$.}
    \end{align*}

    The above considerations prove \eqref{eq:hessian-Abar-quotients-formula}.

    \medskip

    Finally, we prove \eqref{eq:C_2(t)-asymptotics}. Letting again $\eps:=\log t$, we rewrite
    \begin{align*}
        C_2(t)=2\sum_{n>0}\frac{t^{2n}}{(t^n-1)^4}=2\sum_{n>0}\frac{e^{2n\eps}}{(e^{n\eps}-1)^4}=:2\sum_{n>0}\bigg[\frac{1}{n^4 \eps^4}+f(n\eps)\bigg],
    \end{align*}
    where 
    \begin{align*}
        f(x):=\frac{e^{2x}}{(e^x-1)^4}-\frac{1}{x^4}.
    \end{align*}
    One has
    \begin{align*}
       \sum_{n=1}^{+\infty}\frac{1}{n^4\eps^4}=\frac{\pi^4}{90(t-1)^4}+O\big( (t-1)^{-3}\big), \qquad \text{as $t\to 1^+$.}
    \end{align*}
    As for the other term, we notice that $f$ is continuous in $(0,+\infty)$, $f(x)=O(x^{-3})$ as $x\to 0^+$ and $f(x) = O(x^{-4})$ as $x\to +\infty$. Therefore there exists a constant $C>0$ such that $\abs{f(x)}\leq C x^{-3} \,\, \forall x>0$, which implies
    \begin{align*}
        \sum_{n=1}^{+\infty}f(n\eps)\leq \frac{C}{\eps^3}\sum_{n=1}^{+\infty}\frac{1}{n^3}\leq \frac{C'}{(t-1)^3}, \qquad \text{as $t\to 1^+$,}
    \end{align*}
    so that \eqref{eq:C_2(t)-asymptotics} holds.
\end{proof}

Finally, we have the extension of Proposition \ref{prop:F-expansion-cylinder} to this setting:

\begin{proposition}\label{prop:F-exp-quotients}
    Let $(Z,g)$ and $q\in Z$ be given as in Section \ref{subsec:quotients-geom-setting}. There exists a symmetric tensor $F$, which is smooth in $Z\backslash\{q\}$, such that $\dot{B}_g(F)=0$ in $Z\backslash\{q\}$ and, in suitable Euclidean coordinates centered at $q$, \eqref{eq:F-tensor-cylinder} holds,
    where $A$ is smooth up to zero and satisfies
    \begin{align}\label{eq:AgradA-vanish-quotients}
        A(0)=0, \qquad \qquad \nabla A(0)=0,
    \end{align}
    \begin{align}
    \notag
        W^{kijl}\p^2_{kl} A_{ij}(0)=&-\frac{1}{3}C_2(t)\bigg[W^{kijl}\big(W_{kijl}+W_{lijk}\big)-4W^{4ijl}\big(W_{4ijl}+W_{lij4}\big) \\
    \label{eq:A-interaction-form-quotients}
    &\quad   -4 W^{kij4}\big(W_{kij4}+W_{4ijk}\big)+24W^{4ij4}W_{4ij4}\bigg] + O\big( (t-1)^{-1}\big), \quad \text{as $t\to 1^+$,}
    \end{align}
    where $C_2(t)$ is given by \eqref{eq:C_2(t)} and satisfies \eqref{eq:C_2(t)-asymptotics}.
    Moreover, if we let
    \begin{align}\label{eq:T-hessian-def-quotients}
        T_{kijl}:=\frac{1}{2}\p^2_{kl}A_{ij}(0),
    \end{align}
    then $T$ can be written as in \eqref{eq:T-component-def}.
\end{proposition}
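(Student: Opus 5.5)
The plan is to copy the proof of Proposition \ref{prop:F-expansion-cylinder}, replacing Lemma \ref{lem:Abar-expansion} with Lemma \ref{lem:Abar-exp-quotients}. The starting object is $\bar{T}$ from \eqref{eq:Tbar-quotients-def}. Its hessian at $-e_4$, $A_{kijl}:=\p^2_{kl}\bar{A}_{ij}(-e_4)$, is symmetric in $\{i,j\}$ and in $\{k,l\}$, since $\bar{A}$ is a symmetric $2$-tensor and partial derivatives commute. We therefore apply Lemma \ref{lem:tensor-projection} and write $A=T\oplus C$, with $C_{kijl}=X_{i,jkl}+X_{j,ikl}$ and $X$ symmetric in its last three entries. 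Next we take the cubic gauge form
\begin{align*}
\omega_2(y):=-\frac{1}{6}\eta_1(y)\,X_{i,stu}(y+e_4)^s(y+e_4)^t(y+e_4)^u\,dy^i,
\end{align*}
where $\eta_1$ is the cutoff \eqref{eq:cutoff-quotients-def}, which equals $1$ near $-e_4$ and vanishes near every other point of $G(-e_4)$. With $X_2$ the dual vector field, we set
\begin{align*}
\zeta:=\xi+\mathcal{L}_{X_2}g_E,\qquad \bar{\zeta}:=e^{2\phi}\zeta,\qquad \bar{F}:=\sum_{a=1}^{N_0}\sum_{n\in\Z}\psi_{n,a}^*\bar{\zeta}.
\end{align*}

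First we check that the sum converges and is equivariant. The extra term $\mathcal{L}_{X_2}g_E$ is compactly supported near $-e_4$, away from $0$ and from the other orbit points. Its pullbacks under the $\psi_{n,a}$ therefore have disjoint supports clustering only at $0$ and $\infty$, so the series is locally finite on $\R^4\setminus(\{0\}\cup G(-e_4))$. Convergence of the rest is already known for $\bar{T}$. Since the summation runs over the whole group $G=\{\gamma_{n,a}\}$, we get $\psi_\gamma^*\bar{F}=\bar{F}$ for every $\gamma\in G$. Hence $\Phi^*\bar{F}$ is $G$-invariant on $\R\times\Sp^3$ and descends to a tensor $F$ on $Z$ that is smooth away from $q$.

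Next we check $\dot{B}_g(F)=0$. Each pullback is by a $\bar{g}$-isometry, and $e^{2\phi}\mathcal{L}_{X_2}g_E\in\ker\dot{B}_{\bar{g}}$ by \eqref{eq:lin-bach-conf-cov} together with the footnote on Lie derivatives. Combined with \eqref{eq:Tbar-lin-bach-eq-quotient}, this gives $\dot{B}_{\bar{g}}(\bar{F})=0$ off the orbit. This passes to $\dot{B}_g(F)=0$ on $Z\setminus\{q\}$, because $\Phi$ and $\pi$ are local isometries for the relevant metrics.

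Now we read off the local expansion. Near $-e_4$ the terms with $(n,a)\neq$ identity contain no $X_2$ contribution, since $\eta_1$ vanishes near the other orbit points and those pullbacks are supported away from $-e_4$. In the coordinates $x=y+e_4$ we therefore get \eqref{eq:F-tensor-cylinder} with
\begin{align*}
A_{ij}(x)=\bar{A}_{ij}(x-e_4)-\frac{1}{2}\big(X_{i,jkl}+X_{j,ikl}\big)x^kx^l.
\end{align*}
The correction is a homogeneous quadratic, so \eqref{eq:AgradA-vanish-quotients} follows from \eqref{eq:vanishing-Abar-quotient}. Moreover $\p^2_{kl}A_{ij}(0)=A_{kijl}-C_{kijl}=T_{kijl}$, which has the form \eqref{eq:T-component-def} up to the harmless factor $\tfrac12$; this can be absorbed by rescaling $R$ in Lemma \ref{lem:tensor-projection}.

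Finally, \eqref{eq:weyl-X-zero-prod} still holds by symmetry alone: contracting $W^{kijl}$ with $X_{i,jkl}$ kills the term because $X$ is symmetric in $j,l$ while $W$ is antisymmetric there, and the term in $X_{j,ikl}$ vanishes in the same way. Hence $W^{kijl}\p^2_{kl}A_{ij}(0)=W^{kijl}\p^2_{kl}\bar{A}_{ij}(-e_4)$. Contracting \eqref{eq:hessian-Abar-quotients-formula} with $W^{kijl}$ gives \eqref{eq:A-interaction-form-quotients}; the error stays $O((t-1)^{-1})$ because $W$ is a fixed tensor.

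The only delicate point is the locality claim: the orbit points other than $-e_4$ must not come arbitrarily close to $-e_4$, so that the support of $\eta_1$ isolates $-e_4$. This holds because $d_0>0$ by discreteness and freeness of the action, and $\eta_1$ is defined in terms of $d_0$ on $\R\times\Sp^3$. I expect no further obstacle.
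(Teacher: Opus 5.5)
Your proposal is correct and follows the paper's route. The paper proves this proposition by saying the argument is identical to that of Proposition \ref{prop:F-expansion-cylinder}: project the Hessian via Lemma \ref{lem:tensor-projection}, add the cubic gauge $\omega_2$ localized by $\eta_1$, sum the pullbacks over $G$ and descend to $Z$, then use $W^{kijl}(X_{i,jkl}+X_{j,ikl})=0$ together with Lemma \ref{lem:Abar-exp-quotients} in place of Lemma \ref{lem:Abar-expansion}. Your extra checks — local finiteness of the gauge series, absorbing the factor $\tfrac12$ into $R$, and separation of the orbit points for the support of $\eta_1$ — fill in details the paper leaves implicit.
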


The proof is identical to that of Proposition \ref{prop:F-expansion-cylinder}. As in Remark \ref{rem:A-non-TT-property}, we notice that $A$ will not be TT in general around $0$.

\section{Weyl energy of the modified metric on \texorpdfstring{$Z=(\R\times \Sp^3)/G$}{Z}}\label{sec:Weyl-en-Z}

We know that any locally conformally flat manifold $(Z,g_Z)$ is diffeomorphic to one of the models described in Theorem \ref{thm:CTZ-LCF-classification}. Since we are only interested in the diffeomorphism class of the connected sum $Z\# M$, we might assume without loss of generality that $(Z,g_Z)$ is indeed one of the model manifolds described in Section \ref{sec:green-LCF}. Then, by Propositions \ref{prop:F-expansion-cylinder}, \ref{prop:F-exp-quotients}, we know that, in the particular case in which $Z=(\R\times \Sp^3)/G$, there exists a point $q\in Z$ with $g_Z$ Euclidean around $q$, and there exists a tensor $F$, singular at $q$, which satisfies the properties described in the aforementioned propositions. 

Let us consider the modified metric $g_b:=g_Z+b^2 F$, where $b\ll 1$ is a small parameter. By virtue of \eqref{eq:F-tensor-cylinder}, we have the following expansion of $g_b$ in Euclidean coordinates at $q$:
\begin{equation}\label{eq:g_b-definition}
    (g_{b})_{ij}(x)=\delta_{ij}+b^2 \bigg(-\frac{1}{3} W^M_{kijl}\frac{x^kx^l}{\abs{x}^4}+ A_{ij}(x)\bigg) \quad \text{in $B_{\delta}(0),$}
\end{equation}
for $\delta>0$ small enough.
Consider a small parameter $\gamma$ such that $0<a,b\ll\gamma\ll\ \delta/2$; we now want to estimate 
\begin{equation*}
   \w^Z_\gamma(g_b):= \int_{Z\backslash B_{\gamma}^{g_Z}(q)}\abs{W^{g_b}}^2\,dV_{g_b}.
\end{equation*}
To begin, Taylor-expanding $g_b$ with respect to $t:=b^2$, we get 
\begin{equation*}
     \w^Z_\gamma(g_b)=\w^Z_\gamma(g_Z)+b^2\frac{d}{dt}\Big\vert_{t=0}\w^Z_\gamma(g_b)+\frac{b^4}{2}\frac{d^2}{dt^2}\Big\vert_{t=0}\w^Z_\gamma(g_b)+O(b^6).
\end{equation*}
Notice that the error term $O(b^6)$ also depends upon $\gamma$; however, since $b\ll\gamma$, the error term remains uniformly controlled by, e.g., $b^5$, and hence still of higher order.
Being $g_Z$ locally conformally flat, we have $\w^Z_\gamma(g_Z)=0$ and
\begin{equation}\label{eq:wetl-first-var}
    \frac{d}{dt}\Big\vert_{t=0}\w^Z_\gamma(g_b)=-4\int_{Z\backslash B_{\gamma}^{g_Z}(q)}\big(W^{g_Z}_{\alpha i j \beta}\nabla^\alpha \nabla^\beta F^{ij}+\frac{1}{2}W^{g_Z}_{\alpha i j \beta}R^{\alpha \beta}F^{ij}\big)\,dV_{g_Z}=0,
\end{equation}
so we are left with
\begin{equation}\label{eq:weyl-g_b-expans}
    \w^Z_\gamma(g_b)=\frac{b^4}{2}\frac{d^2}{dt^2}\Big\vert_{t=0}\w^Z_\gamma(g_b)+O(b^6).
\end{equation}
Using again the fact that $g_Z$ is locally conformally flat, we see from \eqref{eq:wetl-first-var} that 
\begin{equation}\label{eq:second-der-weyl}
    \frac{d^2}{dt^2}\Big\vert_{t=0}\w^Z_\gamma(g_b)=-4\int_{Z\backslash B_{\gamma}^{g_Z}(q)}\big(\dot{W}_{\alpha i j \beta}\nabla^\alpha \nabla^\beta F^{ij}+\frac{1}{2}\dot{W}_{\alpha i j \beta}R^{\alpha \beta}F^{ij}\big)\,dV_{g_Z}.
\end{equation}
Integrating by parts twice, one has
\begin{align}
\notag
    \int_{Z\backslash B_{\gamma}}\big(\dot{W}_{\alpha i j \beta}&\nabla^\alpha \nabla^\beta F^{ij}+\frac{1}{2}\dot{W}_{\alpha i j \beta}R^{\alpha \beta}F^{ij}\big)\,dV_{g_Z} \\
    \notag
    &=-\int_{Z\backslash B_{\gamma}}\big(\nabla^\alpha\dot{W}_{\alpha i j \beta}\nabla^\beta F^{ij}+\frac{1}{2}\dot{W}_{\alpha i j \beta}R^{\alpha \beta}F^{ij}\big)\,dV_{g_Z}-\int_{\partial B_\gamma}\dot{W}_{n i j \beta}\nabla^\beta F^{ij}\,d\sigma_{g_Z} \\
    \notag
    &=\int_{Z\backslash B_{\gamma}}\big(\nabla^\beta\nabla^\alpha\dot{W}_{\alpha i j \beta}F^{ij}+\frac{1}{2}\dot{W}_{\alpha i j \beta}R^{\alpha \beta}F^{ij}\big)\,dV_{g_Z}-\int_{\partial B_\gamma}\dot{W}_{n i j \beta}\nabla^\beta F^{ij}\,d\sigma_{g_Z} \\
    \label{eq:weyl-second-der-parts}
    &\quad \quad \quad \quad +\int_{\partial B_\gamma}\nabla^\alpha\dot{W}_{\alpha i j n}F^{ij}\,d\sigma_{g_Z},
\end{align}
where the $n$ subscript denotes the tensor component with respect to the \emph{inward pointing} unit normal (so it points towards the \emph{exterior} of $B_\gamma$). 

We now observe the following:
\begin{claim*}
    Let $(g_t)_t$ be a smooth family of metrics. If $g=g_0$ is locally conformally flat, then 
    \begin{equation*}
        \frac{d}{dt}\Big\vert_{t=0}\big(\nabla^\alpha\nabla^\beta W^{g_t}_{\alpha i j \beta}\big)=\nabla^\alpha\nabla^\beta\dot{W}^g_{\alpha i j \beta}.
    \end{equation*}
\end{claim*}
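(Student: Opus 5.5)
The plan is a direct product-rule computation. Write $W_t:=W^{g_t}$, $\nabla_t$ for the Levi-Civita connection of $g_t$, and $\dot{(\cdot)}$ for the $t$-derivative at $t=0$. The quantity $\nabla^\alpha\nabla^\beta W_{\alpha ij\beta}$ is a full contraction
\begin{equation*}
g_t^{\alpha\mu}g_t^{\beta\nu}\,(\nabla_t)_\mu(\nabla_t)_\nu (W_t)_{\alpha ij\beta},
\end{equation*}
so differentiating in $t$ gives three kinds of terms:
\begin{itemize}
\item[(i)] terms where the derivative falls on the inverse metrics $g_t^{\alpha\mu}$, $g_t^{\beta\nu}$;
\item[(ii)] terms where it falls on the connections, i.e.\ the variation $\dot\Gamma$ of the Christoffel symbols appearing in the two covariant derivatives;
\item[(iii)] the term $g^{\alpha\mu}g^{\beta\nu}\nabla_\mu\nabla_\nu\dot W_{\alpha ij\beta}$, which is exactly $\nabla^\alpha\nabla^\beta\dot W^g_{\alpha ij\beta}$.
\end{itemize}
I then show that the terms of type (i) and (ii) vanish.

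The key point is that since $g_0=g$ is locally conformally flat, $W^g\equiv0$ identically on an open set (indeed everywhere). Hence $W^g$, $\nabla W^g$ and $\nabla^2 W^g$ all vanish pointwise.

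Each term of type (i) contains $\nabla_\mu\nabla_\nu W^g$ contracted against $\dot g^{-1}$, so it is zero. For type (ii), write
\begin{equation*}
\tfrac{d}{dt}\big(\nabla_t\nabla_t W_t\big)=\nabla\nabla\dot W+\dot\Gamma * \nabla W^g+\nabla(\dot\Gamma * W^g),
\end{equation*}
using that the variation of a covariant derivative of a tensor $S$ is $\dot\Gamma * S$ (with one contraction per index). The outer derivative varies into $\dot\Gamma*\nabla W^g$. The inner one varies into $\nabla(\dot\Gamma*W^g)=\nabla\dot\Gamma*W^g+\dot\Gamma*\nabla W^g$. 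Both vanish since $W^g\equiv0$ and $\nabla W^g\equiv0$. Note that the inner term is differentiated once more, so we genuinely need $W^g=0$ on an open set, not just at a point. That holds because conformal flatness is a pointwise identity valid everywhere.

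There is no real obstacle. The only care needed is the bookkeeping of the variation of the iterated covariant derivative: one must make sure that every term other than $\nabla\nabla\dot W$ contains either $W^g$ or a covariant derivative of $W^g$ as an undifferentiated factor. Organizing the computation in the $*$-notation above makes this transparent. The same argument also justifies the simplification used in \eqref{eq:second-der-weyl}, where the variation of the term $\frac12 W_{\alpha ij\beta}R^{\alpha\beta}$ similarly reduces to $\frac12\dot W_{\alpha ij\beta}R^{\alpha\beta}$.
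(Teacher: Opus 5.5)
Your proposal is correct and follows essentially the same route as the paper. You differentiate the contraction $g^{\alpha\mu}g^{\beta\nu}\nabla_\mu\nabla_\nu W_{\alpha ij\beta}$, discard the inverse-metric variation terms because $\nabla^2 W^g=0$, and discard the $\dot\Gamma$ terms from both covariant derivatives because $W^g\equiv 0$ and $\nabla W^g\equiv 0$; the paper does the same, only writing out the Christoffel-symbol expansions explicitly instead of using $*$-notation.
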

\begin{proof}
Let $h=\frac{d}{dt}\mid_{t=0} g_t$. Being the metric tensor and its inverse parallel at each time and using the fact that $W^{g}\equiv0$, we have
\begin{align*}
    \frac{d}{dt}\Big\vert_{t=0}\big(\nabla^\alpha\nabla^\beta W_{\alpha i j \beta}\big)&=\frac{d}{dt}\Big\vert_{t=0}\big(g^{\alpha \mu}g^{\beta \nu}\nabla_\mu \nabla_\nu W_{\alpha i j \beta}\big) \\
    =&g^{\alpha \mu}g^{\beta \nu}\frac{d}{dt}\Big\vert_{t=0}\big(\nabla_\mu\nabla_\nu W_{\alpha i j \beta}\big)-\big(h^{\alpha\mu}g^{\beta\nu}+g^{\alpha \mu}h^{\beta\nu}\big)\nabla_\mu\nabla_\nu W_{\alpha i j \beta} \\
    =&g^{\alpha \mu}g^{\beta \nu}\frac{d}{dt}\Big\vert_{t=0}\Big[\partial_\mu\big(\nabla_\nu W_{\alpha i j \beta}\big)-\Gamma_{\mu \nu}^s\nabla_s W_{\alpha i j \beta}-\Gamma_{\mu \alpha}^s\nabla_\nu W_{s i j \beta} \\
    &\quad \quad-\Gamma_{\mu i}^s\nabla_\nu W_{\alpha s j \beta}-\Gamma_{\mu j}^s\nabla_\nu W_{\alpha i s \beta}-\Gamma_{\mu \beta}^s\nabla_\nu W_{\alpha i j s}\Big] \\ 
    =&g^{\alpha\mu}g^{\beta\nu}\Big[\nabla_\mu \big(\frac{d}{dt}\Big\vert_{t=0}(\nabla_\nu W_{\alpha i j \beta})\big)-\dot{\Gamma}_{\mu \nu}^s\nabla_s W_{\alpha i j \beta}-\dot{\Gamma}_{\mu \alpha}^s\nabla_\nu W_{s i j \beta} \\
    &\quad \quad-\dot{\Gamma}_{\mu i}^s\nabla_\nu W_{\alpha s j \beta}-\dot{\Gamma}_{\mu j}^s\nabla_\nu W_{\alpha i s \beta}-\dot{\Gamma}_{\mu \beta}^s\nabla_\nu W_{\alpha i j s}\Big] \\
    =&g^{\alpha \mu}g^{\beta \nu}\nabla_\mu \Big[\frac{d}{dt}\Big\vert_{t=0}\big(\partial_\nu W_{\alpha i j \beta}-\Gamma_{\nu\alpha}^sW_{s i j \beta}-\Gamma_{\nu i}^sW_{\alpha s j \beta}-\Gamma_{\nu j}^sW_{\alpha i s \beta}-\Gamma_{\nu\beta}^sW_{\alpha i j s}\big)\Big] \\
    =&g^{\alpha \mu}g^{\beta \nu}\nabla_\mu\Big[\nabla_\nu\dot{W}_{\alpha i j \beta}-\dot{\Gamma}_{\nu\alpha}^sW_{s i j \beta}-\dot{\Gamma}_{\nu i}^sW_{\alpha s j \beta}-\dot{\Gamma}_{\nu j}^sW_{\alpha i s \beta}-\dot{\Gamma}_{\nu\beta}^sW_{\alpha i j s}\Big] \\
    =&g^{\alpha \mu}g^{\beta \nu}\nabla_\mu \nabla_\nu \dot{W}_{\alpha i j \beta}=\nabla^\alpha\nabla^\beta\dot{W}_{\alpha i j \beta},
\end{align*}
thus the claim holds.
\end{proof}

As a consequence of the previous claim, we see that, at our locally conformally flat metric $g_Z$, one has 
\begin{equation*}
    \dot{B}_{ij}(F)=-4\big(\nabla^\alpha\nabla^\beta \dot{W}_{\alpha i j \beta}+\frac{1}{2}R^{\alpha\beta}\dot{W}_{\alpha i j \beta}\big).
\end{equation*}
Using this identity together with \eqref{eq:weyl-g_b-expans}, \eqref{eq:second-der-weyl}, \eqref{eq:weyl-second-der-parts} and the fact that $\dot{B}_{g_Z}(F)(p)=0$ for $p\not=q$ (see e.g. Proposition \ref{prop:F-expansion-cylinder}), we deduce that
\begin{equation*}
    \w^Z_\gamma(g_b)=2b^4\bigg(\int_{\partial B_\gamma}\dot{W}_{n i j \beta}\nabla^\beta F^{ij}\,d\sigma_{g_Z}-\int_{\partial B_\gamma}\nabla^\alpha \dot{W}_{\alpha i j n}F^{ij}\,d\sigma_{g_Z}\bigg) +O(b^6).
\end{equation*}
In particular, since we are assuming $g_Z$ to be Euclidean inside $B_\delta(q)$ and since $\gamma<\delta/2$, we finally get
\begin{equation}\label{eq:weyl-g_b-boundary-formula}
    \w^Z_\gamma(g_b)=2b^4\bigg(\int_{\partial B_\gamma}\dot{W}_{\eta i j \beta}\partial^\beta F^{ij}\nu^\eta\,d\sigma-\int_{\partial B_\gamma}\partial^\alpha \dot{W}_{\alpha i j \eta}F^{ij}\nu^\eta\,d\sigma\bigg) +O(b^6),
\end{equation}
where $\nu=(\nu^1,\dots,\nu^4)=x/\abs{x}$ is the unit normal and $d\sigma$ is the Euclidean surface element.

\medskip

We now want to expand both integrals in \eqref{eq:weyl-g_b-boundary-formula}, starting from the second one.
To begin, recall the formula for the linearized Weyl tensor at the Euclidean metric (see \cite{malchiodi-malizia-2025-pre-weyl}, equation (2.8) for the linearized of Weyl tensor in the same notation employed here; in alternative, see e.g. \cite{catino-mastrolia-book}, which employs a different convention):
\begin{align}
\notag
    \dot{W}_{\alpha i j \beta}=&\frac{1}{2}\Big[\partial^2_{\alpha j}F_{i\beta}+\partial^2_{i \beta} F_{\alpha j}-\partial^2_{\alpha \beta} F_{ij}-\partial^2 _{ij} F_{\alpha \beta}\Big]+\frac{1}{4}\Big[\delta_{\alpha\beta} \Delta F_{ij}+\delta_{ij}\Delta F_{\alpha\beta}-\delta_{i\beta}\Delta F_{\alpha j}-\delta_{\alpha j}\Delta F_{i\beta}\Big] \\
    \notag
    &+\frac{1}{4}\Big[\big(\partial_\alpha(\delta F)_j +\partial_j(\delta F)_\alpha -\p^2_{\alpha j}(\tr F)\big)\delta_{i\beta}+\big(\partial_i(\delta F)_\beta+\partial_\beta(\delta F)_i-\p^2_{i\beta}(\tr F)\big)\delta_{\alpha j} \\
    \notag
    &\quad \quad -\big(\partial_\alpha(\delta F)_\beta+\partial_\beta(\delta F)_\alpha-\p^2_{\alpha \beta} (\tr F)\big)\delta_{ij} -\big(\partial_i(\delta F)_j+\partial_j(\delta  F)_i-\p^2_{ij}(\tr F)\big) \delta_{\alpha\beta}\Big] \\
    \label{eq:weyl-lin-tfree-eucl}
    & \qquad \quad  +\frac{1}{6}\big(-\Delta (\tr F)+(\delta^2 F)\big) (\delta_{\alpha\beta} \delta_{ij}-\delta_{\alpha j}\delta_{\beta i}),
\end{align}
where $\delta F$ denotes the (Euclidean) divergence of $F$. 
We can also compute the Euclidean divergence of \eqref{eq:weyl-lin-tfree-eucl}, obtaining
\begin{align}
\notag
    \partial^\alpha\dot{W}_{\alpha i j \beta}=&\frac{1}{4}\Big[\partial_j \Delta F_{i\beta }-\partial_\beta \Delta F_{ij}+\partial^2_{i\beta}(\delta F )_j-\partial^2_{ij}(\delta F)_\beta\Big] \\
    \label{eq:weyl-lin-div-alpha}
    &+\frac{1}{12}\Big[\big(\partial_j(\delta^2 F)-\p_j \Delta(\tr F)\big)\delta_{i\beta}-\big(\partial_\beta(\delta^2 F)-\p_\beta \Delta(\tr F)\big)\delta_{ij}\Big].
\end{align}

Let us write (cf. \eqref{eq:F-tensor-cylinder})
\begin{equation}\label{eq:F-KA-splitting}
    F_{ij}(x)=K_{ij}(x)+A_{ij}(x), \qquad K_{ij}(x):=-\frac{1}{3}W_{\mu i j \nu}\frac{x^\mu x^\nu}{\abs{x}^4}.
\end{equation}
While it is immediate to check that $K(x)$ is a TT tensor with respect to the Euclidean metric for any $x\not=0$, the tensor $A$ in general has nontrivial trace and divergence near the origin, see e.g. Remark \ref{rem:A-non-TT-property}.
Moreover, from the definition of $K$ we easily compute
\begin{equation}\label{eq:lin-weyl-f1}
    \partial_\beta \Delta K_{ij}=\partial_\beta\Big(\frac{8}{3}W_{kijl}\frac{x^k x^l}{\abs{x}^6}\Big)=\frac{8}{3}W_{kijl}\Big(\frac{\delta_\beta^kx^l+ x^k\delta_\beta^l}{\vert x|^6}-6\frac{x_\beta x^k x^l}{\abs{x}^8}\Big),
\end{equation}
while, since the singular term of $F$ is TT, we further have
\begin{equation}\label{eq:lin-weyl-f2}
    \partial^2_{i\beta}(\delta F)_j=\partial^2_{i\beta}(\delta A)_j, \qquad \quad \partial_j(\delta^2 F)=\partial_j(\delta^2 A), \qquad \quad \p_j\Delta (\tr F)=\p_j\Delta (\tr A).
\end{equation}

\begin{lemma}\label{lem:Z-en-bal-1}
    In the above setting, it holds
    \begin{equation}\label{eq:divint-expansion}
        \int_{\partial B_\gamma}\partial^\alpha\dot{W}_{\alpha i j \beta}F^{ij}\nu^\beta\,d\sigma=\mathcal{C}_1\gamma^{-4}+\frac{\pi^2}{6}\partial^{kl} A^{ij}(0)W_{kijl}+O(\gamma^2), \qquad \text{as $\gamma\to0^+$,}
    \end{equation}
    for a real constant $\mathcal{C}_1=\mathcal{C}_1(W^M(p))$.
\end{lemma}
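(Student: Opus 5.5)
We will expand the integrand using the splitting \eqref{eq:F-KA-splitting}, $F=K+A$, together with the formula \eqref{eq:weyl-lin-div-alpha} for $\partial^\alpha\dot W_{\alpha ij\beta}$. Since $K$ is TT, the divergence and trace terms in \eqref{eq:weyl-lin-div-alpha} only involve $A$, by \eqref{eq:lin-weyl-f2}. We then Taylor expand $A$ at the origin. By Proposition~\ref{prop:F-exp-quotients} (or Proposition~\ref{prop:F-expansion-cylinder}) we have $A(0)=0$ and $\nabla A(0)=0$, so
\begin{equation*}
A_{ij}(x)=\tfrac12\partial^2_{kl}A_{ij}(0)x^kx^l+O(|x|^3).
\end{equation*}
It follows that $\partial^\alpha\dot W(A)$, which is third order in $A$, is $O(1)$, and that $\partial^\alpha\dot W(K)$ is homogeneous of degree $-5$. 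We split the integral into four pieces: $(K,K)$, $(K,A)$ (meaning $\partial\dot W(K)$ paired with $A$), $(A,K)$, and $(A,A)$. Throughout, $d\sigma\sim\gamma^3$ on $\partial B_\gamma$.

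\textbf{The $(K,K)$ and $(A,A)$ pieces.} The $(K,K)$ piece has an integrand homogeneous of degree $-9$ times $\gamma^3$. It therefore gives exactly $\mathcal C_1\gamma^{-4}$, where $\mathcal C_1$ is a quadratic expression in $W$ obtained from \eqref{eq:lin-weyl-f1}. We need not compute $\mathcal C_1$; it may well vanish by the TT property. The $(A,A)$ piece is bounded by $O(1)\cdot O(\gamma^2)\cdot\gamma^3=O(\gamma^5)$, which is negligible.

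\textbf{The $(A,K)$ piece.} Here $\partial^\alpha\dot W(A)$ is paired with $K$, and $|K|\sim\gamma^{-2}$. Expanding $\partial^\alpha\dot W(A)(x)=c+O(|x|)$, the constant part $c$ is third derivatives of $A$ at $0$. It is integrated against $K_{ij}\nu^\beta$, which is odd in $x$, since $K$ is even and $\nu$ is odd. This contribution therefore vanishes by parity. The next order is $O(\gamma)\cdot\gamma^{-2}\cdot\gamma^3=O(\gamma^2)$, and the remaining terms are even smaller.

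\textbf{The $(K,A)$ piece.} This is $\partial^\alpha\dot W(K)$ paired with $A$. Using \eqref{eq:weyl-lin-div-alpha} restricted to $K$, we get
\begin{equation*}
\tfrac14\bigl(\partial_j\Delta K_{i\beta}-\partial_\beta\Delta K_{ij}\bigr),
\end{equation*}
which has degree $-5$. We pair it with the quadratic part $\tfrac12\partial^2_{kl}A_{ij}(0)x^kx^l$. The result is homogeneous of degree $-3$, so after multiplying by $\gamma^3$ it is scale invariant. This yields a constant which is linear in $\partial^2 A(0)$ and linear in $W$. The cubic remainder of $A$ contributes $O(\gamma)$ with odd parity, so the first nonvanishing correction is $O(\gamma^2)$.

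To evaluate the constant, we insert \eqref{eq:lin-weyl-f1} and use the sphere moment formulas on $\mathbb S^3$,
\begin{equation*}
\int\nu^k\nu^l=\tfrac{\pi^2}{2}\delta^{kl},\qquad \int\nu^a\nu^b\nu^c\nu^d=\tfrac{\pi^2}{12}\bigl(\delta\delta+\delta\delta+\delta\delta\bigr),
\end{equation*}
and we contract. The trace-free property and the symmetries of $W$ kill all full traces. The terms of the form $W_{kijl}\,\partial^2_{kl}A_{ij}$ and $W_{kijl}\,\partial^2_{kl}A_{ji}$ survive, and these combine because $A$ is symmetric. Other contractions, such as $W_{kij l}\,\partial^2_{il}A_{kj}$, may appear; we reduce them using the first Bianchi identity together with the fact, from Proposition~\ref{prop:F-exp-quotients}, that $T=\tfrac12\partial^2A(0)$ has the form \eqref{eq:T-component-def}. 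This reduction is needed because $A$ is not TT, so we cannot simply discard trace or divergence contributions. Note that the divergence and trace terms of $A$ in \eqref{eq:weyl-lin-div-alpha} only enter through $(A,K)$, which we have already shown to be $O(\gamma^2)$.

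\textbf{Main obstacle.} The hardest step is the careful bookkeeping of index contractions in the $(K,A)$ piece, in order to land exactly on the coefficient $\frac{\pi^2}{6}$. We will do this by writing the integral in the form $c_1 W^{kijl}\partial^2_{kl}A_{ij}+c_2(\dots)$ and then using the symmetries of $T$ to show that the $(\dots)$ terms are proportional to the main one.
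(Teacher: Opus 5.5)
Your proposal is correct and follows the paper's proof: split $F=K+A$, use that $K$ is TT, kill the order-$\gamma$ part of the $\partial\dot W(A)$--$K$ pairing by parity, bound the $A$--$A$ pairing, and extract the constant from the $\partial\Delta K$--$A$ pairing via \eqref{eq:brendle-form}. There are two harmless slips: the $K$--$K$ integrand has degree $-7$, not $-9$, and $\mathcal C_1=-\frac{\pi^2}{12}\abs{W}^2$ does not vanish. The contraction you flag as the main obstacle is easy: by the antisymmetries of $W$ and the symmetry of $A$ alone (the paper reaches the same reduction via Bianchi), that pairing equals exactly $2W_{kijl}x^kx^lA^{ij}\abs{x}^{-7}$, so only $W_{kijl}\partial^2_{st}A^{ij}(0)\int_{\Sp^3}z^kz^lz^sz^t$ survives, its two non-trace pairings give $\frac{\pi^2}{6}W_{kijl}\partial^{kl}A^{ij}(0)$, and the special form of $T$ is never needed.
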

\begin{proof}
From \eqref{eq:weyl-lin-div-alpha} one has
\begin{align*}
    \int_{\partial B_\gamma}\partial^\alpha \dot{W}_{\alpha i j \beta} F^{ij}\nu^\beta\,d\sigma=&\frac{1}{4}\int_{\partial B_\gamma}\Big[\partial_j \Delta F_{i\beta }-\partial_\beta \Delta F_{ij}+\partial^2_{i\beta}(\delta F )_j-\partial^2_{ij}(\delta F)_\beta\Big]F^{ij}\nu^\beta\,d\sigma \\
    +\frac{1}{12}\int_{\partial B_\gamma}&\Big[\big(\partial_j(\delta^2 F)-\p_j \Delta(\tr F)\big)\delta_{i\beta}F^{ij}\nu^\beta-\big(\partial_\beta(\delta^2 F)-\p_\beta \Delta(\tr F)\big)(\tr F)\nu^\beta\Big]\,d\sigma. 
\end{align*}
Using now \eqref{eq:F-KA-splitting}, \eqref{eq:lin-weyl-f2}, the above expression becomes
\begin{align}
\label{eq:divint-mainord}
    \int_{\partial B_\gamma}\partial^\alpha \dot{W}_{\alpha i j \beta} F^{ij}&\nu^\beta\,d\sigma= \frac{1}{4}\int_{\partial B_\gamma}\Big[\p_j \Delta K_{i\beta}-\p_\beta \Delta K_{ij}\Big] K^{ij}\nu^\beta\,d\sigma \\
    \label{eq:divint-order1-term}
    &  +\frac{1}{4}\int_{\partial B_\gamma}\Big[\p_j \Delta K_{i\beta}-\p_\beta \Delta K_{ij}\Big]A^{ij}\nu^\beta\,d\sigma \\
    \label{eq:divint-hot1}
    &  +\frac{1}{4}\int_{\partial B_\gamma}\Big[ \partial_j \Delta A_{i \beta}-\p_\beta \Delta A_{ij}+\partial^2_{i\beta}(\delta A)_j-\partial^2_{ij}(\delta A)_{\beta}\Big]K^{ij}\nu^\beta\,d\sigma \\
    \label{eq:divint-hot2-1}
    & +\frac{1}{4}\int_{\partial B_\gamma}\Big[ \partial_j \Delta A_{i \beta}-\p_\beta \Delta A_{ij}+\partial^2_{i\beta}(\delta A)_j-\partial^2_{ij}(\delta A)_{\beta}\Big] A^{ij}\nu^\beta\,d\sigma  \\
    \label{eq:divint-hot3}
    &  +\frac{1}{12}\int_{\partial B_\gamma}\big(\partial^j(\delta^2A)-\p_j\Delta(\tr A)\big) \delta_{i\beta}K^{ij}\nu^\beta\,d\sigma \\
    \label{eq:divint-hot2-2}
    &+\frac{1}{12}\int_{\partial B_\gamma}\Big[\big(\partial_j(\delta^2 A)-\p_j \Delta(\tr A)\big)\delta_{i\beta}A^{ij}\nu^\beta-\big(\partial_\beta(\delta^2 A)-\p_\beta \Delta(\tr A)\big)(\tr A)\nu^\beta\Big]\,d\sigma.
\end{align}
We now consider separately at each  of the above integrals.

\textbullet \, By looking at  \eqref{eq:F-KA-splitting}, \eqref{eq:lin-weyl-f1}, it is easy to see (after a change of variables) that the RHS of \eqref{eq:divint-mainord} is of order $\gamma^{-4}$. Since we are going to show below that all the other integrals provide higher order contributions, we conclude that
\begin{align}\label{eq:divint-mainexp}
    \frac{1}{4}\int_{\partial B_\gamma}\Big[\p_j \Delta K_{i\beta}-\p_\beta \Delta K_{ij}\Big] K^{ij}\nu^\beta\,d\sigma=: \mathcal{C}_1 \gamma^{-4},
\end{align}
where $\mathcal{C}_1=\mathcal{C}_1(W^M(p))$ is the constant appearing in \eqref{eq:divint-expansion}.

\textbullet \, Let us consider \eqref{eq:divint-order1-term}. To begin, recalling \eqref{eq:lin-weyl-f1} and using the symmetries of Weyl's tensor and Bianchi's identity we got
\begin{align*}
    & W_{ki\beta l}\Big(\frac{\delta_j^k x^l + x^k \delta_j^l}{\abs{x}^6}-6\frac{x_j x^k x^l}{\abs{x}^8}\Big)-W_{kijl}\Big(\frac{\delta_\beta^k x^l + x^k \delta_\beta^l}{\abs{x}^6}-6\frac{x_\beta x^k x^l}{\abs{x}^8}\Big) \\
    =& (W_{ji\beta l} +W_{li\beta j}-W_{\beta i j l}-W_{lij\beta})\frac{x^l}{\abs{x}^6}-6W_{ki \beta l}\frac{x_j x^k x^l}{\abs{x}^8}+6  W_{kijl}\frac{x_\beta x^k x^l}{\abs{x}^8} \\
    =&-3W_{lij\beta}\frac{x^l}{\abs{x}^6}-6W_{ki \beta l}\frac{x_j x^k x^l}{\abs{x}^8}+6  W_{kijl}\frac{x_\beta x^k x^l}{\abs{x}^8}.
\end{align*}
Substituting this formula in \eqref{eq:divint-order1-term} and Taylor-expanding $A_{ij}$, we get
\begin{multline*}
    \frac{2}{3}\int_{\partial B_\gamma}\Big[ -3W_{lij\beta}\frac{x^l}{\abs{x}^6}-6W_{ki \beta l}\frac{x_j x^k x^l}{\abs{x}^8}+6  W_{kijl}\frac{x_\beta x^k x^l}{\abs{x}^8}\Big]\times \\
    \times\Big(A^{ij}(0)+\partial_s A^{ij}(0)x^s+\frac{1}{2}\p^2_{st} A^{ij}(0) x^s x^t+\frac{1}{6}\p^3_{stu} A^{ij}(0) x^s x^t x^u+O(\abs{x}^4)\Big)\frac{x^\beta}{\abs{x}}\,d\sigma.
\end{multline*}
Next, we notice that $W_{ki\beta l}x^k x^lx^\beta=0$ by skew-symmetry and that all the terms involving an \emph{odd} number of derivatives of $A^{ij}$ are zero because we got an integral over a sphere of an \emph{odd} number of coordinate functions (which vanishes because odd in at least one direction). Moreover, we have $A(0)=0$, $\nabla A(0)=0$, see \eqref{eq:AgradA-vanish}, \eqref{eq:AgradA-vanish-quotients}. Hence we obtain
\begin{align*}
    \eqref{eq:divint-order1-term}=\frac{1}{3}\int_{\partial B_\gamma}\Big[ -3W_{lij\beta}\frac{x^l}{\abs{x}^6}+6  W_{kijl}\frac{x_\beta x^k x^l}{\abs{x}^8}\Big]\partial^2_{st}A^{ij}(0)\frac{x^sx^tx^\beta}{\abs{x}}\,d\sigma+ O(\gamma^2), 
\end{align*}
which, after a change of variables, can be rewritten as
\begin{align*}
    \eqref{eq:divint-order1-term}=W_{kijl}\partial^2_{st}A^{ij}(0)\int_{\Sp^3} z^k z^l z^s z^t\,d\sigma +O(\gamma^2).
\end{align*}
We can now employ the identity (cf. \cite{brendle-2008-JAMS-Yamabe}, Corollary 29)
\begin{equation}\label{eq:brendle-form}
    \int_{\Sp^3}x^\mu x^\nu x^k x^l\,d\sigma_{\Sp^3}=\frac{\pi^2}{12}(\delta^{\mu\nu}\delta^{kl}+\delta^{\mu k}\delta^{\nu l}+\delta^{\mu l}\delta^{\nu k}),
\end{equation}
to finally obtain that
\begin{align}\label{eq:divint-exp-1}
    \eqref{eq:divint-order1-term}=\frac{\pi^2}{6}W_{kijl}\partial^{kl} A^{ij}(0)+O(\gamma^2).
\end{align}

\textbullet \, Consider now \eqref{eq:divint-hot1}. Taylor-expanding the terms involving derivatives of $A$ and performing a subsequent change of variables, one gets
\begin{equation*}
    \eqref{eq:divint-hot1}=O(\gamma^2).
\end{equation*}
Notice that there are no terms of order $\gamma$ because the integrals involve an odd number of coordinate functions. The same argument also shows that
\begin{equation*}
    \eqref{eq:divint-hot3}=O(\gamma^2).
\end{equation*}

\textbullet \, Finally, a change of variables shows that $\eqref{eq:divint-hot2-1}+\eqref{eq:divint-hot2-2}=O(\gamma^3)$.

\medskip

Combining the last two items and \eqref{eq:divint-mainexp}, \eqref{eq:divint-exp-1}, we finally obtain \eqref{eq:divint-expansion},  concluding our proof.
\end{proof}

\medskip

We now focus on the first integral in \eqref{eq:weyl-g_b-boundary-formula}; using \eqref{eq:weyl-lin-tfree-eucl}, \eqref{eq:F-KA-splitting} and the fact that $K$ is TT, we compute
\begin{align}
\notag
    \dot{W}_{\eta i j \beta}\partial^\beta F^{ij}&\nu^\eta=\frac{1}{2}\Big[\partial^2_{\eta j}F_{i\beta}+\partial^2_{i \beta}F_{\eta j}-\partial^2_{\eta \beta}F_{ij}-\partial^2_{i j}F_{\eta \beta}\Big]\partial^\beta F^{ij}\nu^\eta+\frac{1}{4}\Big[\Delta F_{ij}\partial^\beta F^{ij}\nu_\beta-\Delta F_{i \beta}\partial^\beta F^{ij}\nu_j\Big] \\
    \notag
    +\frac{1}{4}\Big[&\Delta F_{\eta\beta}\p^\beta (\tr A) \nu^\eta-\Delta F_{\eta j}(\delta A)^j\nu^\eta\Big] \\
    \notag
    +\frac{1}{4}\Big[&\p_i(\delta A)_\beta +\p_\beta(\delta A)_i-\p^2_{i\beta}(\tr A)\Big]\p^\beta F^{ij}\nu_j-\frac{1}{4}\Big[\p_i(\delta A)_j +\p_j(\delta A)_i-\p^2_{ij}(\tr A)\Big]\p^\beta F^{ij}\nu_\beta \\
    \notag
    +\frac{1}{4}\Big[&\p_\eta(\Delta A)_j+\p_j(\delta A)_\eta-\p^2_{\eta j}(\tr A)\Big](\delta A)^j\nu^\eta-\frac{1}{4}\Big[\p_\eta(\Delta A)_\beta+\p_\beta(\delta A)_\eta-\p^2_{\eta \beta}(\tr A)\Big]\p^\beta(\tr A)\nu^\eta \\
    \label{eq:term-prod}
    +\frac{1}{6}\Big[&-\Delta(\tr A)+(\delta^2 A)\Big]\big(\p^\beta(\tr A)\nu_\beta-(\delta A)^j\nu_j\big).
\end{align}


To begin, we show that the integrals of all trace and divergence terms of \eqref{eq:term-prod} generate higher-order contributions.

\begin{lemma}
    There holds
    \begin{align}
    \notag
        \int_{\p B_\gamma}\dot{W}_{\eta i j \beta}\partial^\beta F^{ij}\nu^\eta\,d\sigma=&\frac{1}{2}\int_{\p B_\gamma}\big[\partial^2_{\eta j}F_{i\beta}+\partial^2_{i \beta}F_{\eta j}-\partial^2_{\eta \beta}F_{ij}-\partial^2_{i j}F_{\eta \beta}\big]\partial^\beta F^{ij}\nu^\eta\,d\sigma \\
        \label{eq:div-terms-est}
        +\frac{1}{4}&\int_{\p B_\gamma}\big[\Delta F_{ij}\partial^\beta F^{ij}\nu_\beta-\Delta F_{i \beta}\partial^\beta F^{ij}\nu_j\big]\,d\sigma+ O(\gamma^2), \qquad \text{as $\gamma\to 0^+$.}
    \end{align}
\end{lemma}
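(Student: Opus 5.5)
The plan is to integrate the pointwise identity \eqref{eq:term-prod} over $\partial B_\gamma$. The first two brackets are exactly the integrals kept on the right-hand side of \eqref{eq:div-terms-est}. So it suffices to show that every remaining line of \eqref{eq:term-prod} contributes $O(\gamma^2)$.

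Each remaining line contains at least one factor built from $\tr A$ or $\delta A$, or from their derivatives. These are the only places trace and divergence occur, because $K$ is TT and so $\tr F=\tr A$ and $\delta F=\delta A$, as already used in \eqref{eq:lin-weyl-f2}. I would sort these terms into two groups.

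\emph{Quadratic in $A$.} These are the fourth and fifth lines of \eqref{eq:term-prod}, together with the pieces of the earlier lines where $F$ is replaced by $A$. Every factor is a derivative of the smooth tensor $A$, so the integrand is bounded near $0$. Since $|\partial B_\gamma|=2\pi^2\gamma^3$, each such integral is $O(\gamma^3)$.

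\emph{Mixed $K$--$A$ terms.} The typical shapes are
\[
\Delta K_{\eta\beta}\,\partial^\beta(\tr A)\,\nu^\eta,\qquad \Delta K_{\eta j}\,(\delta A)^j\nu^\eta,\qquad \partial^2(\delta A)\,\partial K\,\nu,\qquad \partial^2(\tr A)\,\partial K\,\nu .
\]
Here $\Delta K\sim |x|^{-4}$ and $\partial K\sim|x|^{-3}$. I would Taylor expand the $A$-factor at $0$, using $A(0)=0$ and $\nabla A(0)=0$ from \eqref{eq:AgradA-vanish} and \eqref{eq:AgradA-vanish-quotients}.

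\begin{itemize}
\item In the terms with $\Delta K$, the factors $\partial(\tr A)$ and $\delta A$ vanish at $0$ and are therefore $O(|x|)$. The leading piece is a homogeneous integrand of degree $-4+1$ against the measure $\gamma^3 d\sigma_{\Sp^3}$, giving a constant-order contribution. Its angular integrand is $W_{kijl}\,z^kz^l\,z^s z^\eta$-type with an odd total number of coordinates, because of the extra $\nu$. It therefore vanishes by parity, as in the proof of Lemma \ref{lem:Z-en-bal-1}. The next Taylor order is even in $z$ and scales like $\gamma^{-4}\gamma^{2}\gamma^{3}=\gamma$. I will have to check carefully that it also carries odd parity; if not, I would instead exploit the $W$-symmetries (tracelessness of $W$ kills contractions like $W_{kijl}\delta^{kl}$, and skew-symmetry kills $W_{\eta ij\beta}z^\eta z^\beta$) together with \eqref{eq:brendle-form}.
\item In the terms with $\partial K\sim\gamma^{-3}$, the factor $\partial^2(\delta A)$ or $\partial^2(\tr A)$ is $c+O(|x|)$. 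The constant part pairs with $\partial K\,\nu$, which is odd of degree $-3$, and vanishes by parity. The linear part gives an even integrand of size $\gamma^{-3}\gamma\gamma^3=\gamma$. Here I expect cancellation from the contraction structure: $\partial^\beta K^{ij}$ contracted with $\nu_j$ or $\nu_\beta$, using $K_{ij}x^j=0$ (the Weyl skew-symmetry) and $\partial^\beta K_{ij}x_\beta=-2K_{ij}$ (homogeneity), reduces these terms to $K$ times $\nu$-contractions. Those then vanish again by skew-symmetry, leaving $O(\gamma^2)$.
\end{itemize}

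The main obstacle is this order-$\gamma$ bookkeeping. The homogeneity and orthogonality identities for $K$ (namely $x^jK_{ij}=0$ and $x\cdot\partial K=-2K$) should be applied first to each contraction with $\nu$, before expanding. After that, the surviving angular integrals are computed with \eqref{eq:brendle-form}, and tracelessness of $W$ should make them vanish.
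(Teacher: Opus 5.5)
Your overall route is the paper's: integrate \eqref{eq:term-prod}, discard the purely-$A$ pieces as $O(\gamma^3)$, and treat the mixed $K$--$A$ pieces by Taylor expansion using $A(0)=0$ and $\nabla A(0)=0$. However, your parity bookkeeping is inverted in both bullets, and as a result the essential cancellation is placed at the wrong order. Since $K$ is even in $x$, $\Delta K$ is even and $\partial K$ is odd, while $\nu$ is odd. In the $\Delta K$-terms, the leading piece has angular integrand of type $W_{k\eta\beta l}z^kz^lz^sz^\eta$. That is four coordinates, as you wrote yourself, so it is \emph{even} and of order one, and parity does not kill it. Likewise $\partial K\,\nu$ is even. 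Pairing it with the constant part of $\partial^2(\delta A)$ or $\partial^2(\mathrm{tr}\,A)$ therefore gives an even order-one integral, which parity does not kill either. Conversely, the order-$\gamma$ pieces you single out as the main obstacle are odd and drop out trivially. So, as written, the claim that the order-one contributions ``vanish by parity'' is false. Those order-one contributions are exactly what the lemma has to dispose of.

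The tools you list do close the gap once you apply them at order one instead of order $\gamma$.
\begin{itemize}
\item For the second line of \eqref{eq:term-prod}, use $x^\eta\Delta K_{\eta\beta}=\tfrac83 W_{k\eta\beta l}x^kx^lx^\eta\lvert x\rvert^{-6}=0$, by antisymmetry of $W$ in $(k,\eta)$. The whole $K$-part then vanishes pointwise, which is how the paper gets $O(\gamma^3)$.
\item For the third line, $x^jK_{ij}=0$ and Euler's identity give $\partial^\beta K^{ij}\nu_j=-K^{i\beta}/\lvert x\rvert$ and $\partial^\beta K^{ij}\nu_\beta=-2K^{ij}/\lvert x\rvert$. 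The constant Taylor part of the bracket then integrates, by \eqref{eq:brendleform-order2}, to a multiple of $W_k{}^{i\beta}{}_l\delta^{kl}=0$, which vanishes by tracelessness. The linear part is odd, and the remainder is $O(\gamma^2)$. This matches the paper's ``skew-symmetry and trace-free'' plus ``odd number of coordinates'' argument.
\end{itemize}
Note also that your sample identity ``skew-symmetry kills $W_{\eta ij\beta}z^\eta z^\beta$'' is wrong. On $\Sp^3$ that contraction equals $-3K_{ij}(z)\neq 0$. What vanishes is a contraction within a single antisymmetric pair, such as $W_{k\eta\beta l}z^kz^\eta$.
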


\begin{proof}
We consider each divergence or trace term of \eqref{eq:term-prod} separately.

\textbullet \, Consider the term in the second line of \eqref{eq:term-prod}.  Integrating and recalling \eqref{eq:F-KA-splitting}, one has
\begin{align*}
    \frac{1}{4}&\int_{\p B_\gamma}\Big[\Delta F_{\eta\beta}\p^\beta (\tr A) -\Delta F_{\eta j}(\delta A)^j\Big]\nu^\eta\,d\sigma \\
    &=\frac{1}{4}\int_{\p B_\gamma}\Big[\Big(\frac{8}{3}W_{k\eta \beta l}\frac{x^k x^l}{\abs{x}^6}+\Delta A_{\eta \beta}(x)\Big)\p^\beta(\tr A)-\Big(\frac{8}{3}W_{k\eta j l}\frac{x^k x^l}{\abs{x}^6}+\Delta A_{\eta j}(x)\Big)(\delta A)^j\Big]\frac{x^\eta}{\abs{x}}\,d\sigma \\
    &=\frac{2}{3}\int_{\p B_\gamma}\Big[\big( W_{k \eta \beta l} \p^\beta (\tr A)- W_{k\eta j l}(\delta A)^j\big)\frac{x^k x^l x^\eta}{\abs{x}^7}\,d\sigma + O(\gamma^3)=O(\gamma^3),
\end{align*}
due to symmetry properties of Weyl's tensor.

\medskip

\textbullet \, Consider the first term in the third line of \eqref{eq:term-prod}, namely $\frac{1}{4}\big[\p_i(\delta A)_\beta +\p_\beta(\delta A)_i-\p^2_{i\beta}(\tr A)\big]\p^\beta F^{ij}\nu_j$. One has
\begin{align*}
    &\frac{1}{4}\int_{\p B_\gamma}\Big[\p_i(\delta A)_\beta(x) +\p_\beta(\delta A)_i(x)-\p^2_{i\beta}(\tr A)(x)\Big]\p^\beta\Big(-\frac{1}{3}W\indices{_k^{ij}_l}\frac{x^k x^l}{\abs{x}^4}+A^{ij}(x)\Big)\frac{x_j}{\abs{x}}\,d\sigma \\
    &=-\frac{1}{12}\int_{\p B_\gamma}\Big[\p_i(\delta A)_\beta(x) +\p_\beta(\delta A)_i(x)-\p^2_{i\beta}(\tr A)(x)\Big]W\indices{_k^{ij}_l}\Big(\frac{\delta^{\beta k}x^l +x^k \delta^{\beta l}}{\abs{x}^4}-4\frac{x^\beta x^k x^l}{\abs{x}^6}\Big)\frac{x_j}{\abs{x}}\,d\sigma + O(\gamma^3).
\end{align*}
Taylor-expanding the $A$-terms we get
\begin{align*}
    -\frac{1}{12}\int_{\p B_\gamma}&\Big[\p_i(\delta A)_\beta(0) +\p_\beta(\delta A)_i(0)-\p^2_{i\beta}(\tr A)(0)\Big]\Big(\frac{W\indices{^{\beta i j}_k}x^k+W\indices{_k^{i j \beta}}x^k}{\abs{x}^4}-4 W\indices{_k^{ij}_l}\frac{x^\beta x^k x^l}{\abs{x}^6}\Big)\frac{x_j}{\abs{x}}\,d\sigma \\
    &-\frac{1}{12}\int_{\p B_\gamma}\bigg\{\Big[\p^2_{is}(\delta A)_\beta(0) +\p^2_{\beta s}(\delta A)_i(0)-\p^3_{i\beta s}(\tr A)(0)\Big]x^s \times \\
    &\qquad \qquad \qquad \qquad \qquad \qquad \times 
    \Big(\frac{W\indices{^{\beta i j}_k}x^k+W\indices{_k^{i j \beta}}x^k}{\abs{x}^4}-4 W\indices{_k^{ij}_l}\frac{x^\beta x^k x^l}{\abs{x}^6}\Big)\frac{x_j}{\abs{x}}\bigg\}\,d\sigma +O(\gamma^2).
\end{align*}
However, the first integral above vanishes due to the skew-symmetry and trace-free property of Weyl's tensor, while the second integral vanishes because of the odd number of coordinate functions. Hence
\begin{align*}
    \frac{1}{4}\int_{\p B_\gamma}\Big[\p_i(\delta A)_\beta(x) +\p_\beta(\delta A)_i(x)-\p^2_{i\beta}(\tr A)(x)\Big]\p^\beta F^{ij}\nu_j\,d\sigma=O(\gamma^2)
\end{align*}
    
\medskip

\textbullet \, A computation similar to the previous one also shows that the second term in the third line of \eqref{eq:term-prod} satifies
\begin{equation*}
    -\frac{1}{4}\int_{\p B_\gamma}\Big[(\p_i(\delta A)_j +\p_j(\delta A)_i-\p^2_{ij}(\tr A)\Big]\p^\beta F^{ij}\nu_\beta\,d\sigma=O(\gamma^2).
\end{equation*}

\medskip

 Finally, it is immediate to see that the terms in the last two lines of \eqref{eq:term-prod} are of order $O(\gamma^3)$.
This concludes the proof.
\end{proof}

\medskip

We now compute the integral over $\partial B_\gamma$ of a generic term in the first line of \eqref{eq:term-prod}; recalling \eqref{eq:F-KA-splitting} one has
\begin{align*}
    \int_{\partial B_\gamma} \partial^2_{\alpha \beta}F_{ij}\partial_\tau F_{st}\nu_\eta\,d\sigma&=\int_{\partial B_\gamma} \partial^2_{\alpha \beta}K_{ij}\partial_\tau K_{st}\nu_\eta\,d\sigma \\
    &-\frac{1}{3}\int_{\partial B_\gamma} \partial^2_{\alpha \beta} A_{ij}(x) W_{\mu s t \nu}\bigg(\frac{\delta^\mu_\tau x^\nu +x^\mu \delta_\tau^\nu}{\abs{x}^4}-4\frac{x_\tau x^\mu x^\nu}{\abs{x}^6}\bigg)\frac{x^\eta}{\abs{x}}\,d\sigma \\
    -\frac{1}{3}\int_{\partial B_\gamma}W_{kijl}&\bigg(\frac{\delta_\alpha^k \delta_\beta^l+\delta_\beta^k \delta_\alpha^l}{\abs{x}^4}-4\frac{x_\beta(\delta_\alpha^k x^l+x^k\delta_\alpha^l)}{\abs{x}^6}-4\frac{\delta_{\alpha\beta}x^k x^l+x_\alpha(\delta_\beta^k x^l+x^k \delta_\beta^l)}{\abs{x}^6} \\
    & \quad \quad \quad +24\frac{x_\alpha x_\beta x^k x^l}{\abs{x}^8}\bigg)\partial_\tau A_{st}(x)\frac{x^\eta}{\abs{x}}\,d\sigma+\int_{\partial B_\gamma}\partial^2_{\alpha \beta}A_{ij}(x)\partial_\tau A_{st}(x)\frac{x^\eta}{\abs{x}}\,d\sigma.
\end{align*}
Taylor-expanding $\partial_\tau A_{st}$ and $\partial^2_{\alpha \beta} A_{ij}$ at $0$, using the symmetry of the integrals and a change of variables, one gets
\begin{align}
\notag
    \int_{\partial B_\gamma} \partial^2_{\alpha \beta}F_{ij}\partial_\tau F_{st}\nu_\eta\,d\sigma&=\int_{\partial B_\gamma} \partial^2_{\alpha \beta}K_{ij}\partial_\tau K_{st}\nu_\eta\,d\sigma \\
    \notag
    &-\frac{1}{3}\int_{\Sp^3} \partial^2_{\alpha \beta} A_{ij}(0) W_{\mu s t \nu}\big(\delta^\mu_\tau x^\nu +x^\mu \delta_\tau^\nu-4x_\tau x^\mu x^\nu\big)x^\eta\,d\sigma \\
    \notag
    -\frac{1}{3}\int_{\Sp^3}W_{kijl}&\Big[\delta_\alpha^k \delta_\beta^l+\delta_\beta^k \delta_\alpha^l-4x_\beta(\delta_\alpha^k x^l+x^k\delta_\alpha^l)-4\big(\delta_{\alpha\beta}x^k x^l+x_\alpha(\delta_\beta^k x^l+x^k \delta_\beta^l)\big) \\
    \label{eq:bd-exp-order1}
    & \quad \quad \quad \quad \quad +24x_\alpha x_\beta x^k x^l\Big]\partial^2_{\tau\xi} A_{st}(0)x^\eta x^\xi\,d\sigma+ O(\gamma^2).
\end{align}

\begin{lemma}\label{lem:Z-en-bal-2}
    There holds
    \begin{align}\label{eq:nondivint-expansion}
        \int_{\p B_\gamma}\dot{W}_{\eta i j \beta}\partial^\beta F^{ij}\nu^\eta\,d\sigma=\mathcal{C}_2\gamma^{-4}+\frac{\pi^2}{6}\p^{kl}A^{ij}(0)W_{kijl}+O(\gamma^2), \qquad \text{as $\gamma\to 0^+$,}
    \end{align}
    for a real constant $\mathcal{C}_2=\mathcal{C}_2(W^M(p))$.
\end{lemma}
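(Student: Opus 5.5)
By the lemma just proved, it suffices to treat the two surviving integrals: the second-derivative one in the first line of \eqref{eq:term-prod} and the $\Delta F$ one. Each is a finite sum of integrals of the form $\int_{\partial B_\gamma}\partial^2_{\alpha\beta}F_{ij}\,\partial_\tau F_{st}\,\nu_\eta\,d\sigma$ with indices contracted. I will expand every such integral with \eqref{eq:bd-exp-order1}. The $\Delta F$ terms fit this form, since $\Delta F_{ij}=\delta^{\alpha\beta}\partial^2_{\alpha\beta}F_{ij}$.

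\textbf{Pure $K$ part.} The contribution $\int_{\partial B_\gamma}\partial^2 K\,\partial K\,\nu$ is homogeneous. Since $\partial^2K\sim|x|^{-4}$, $\partial K\sim |x|^{-3}$ and $|\partial B_\gamma|\sim\gamma^3$, the scaling $x=\gamma z$ makes it exactly $\gamma^{-4}$ times an integral over $\Sp^3$. That integral is quadratic in $W$ and depends only on $W^M(p)$, so after contraction it gives $\mathcal{C}_2\gamma^{-4}$.

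\textbf{Mixed part.} The terms involving both $K$ and $A$ were reduced in \eqref{eq:bd-exp-order1} to $\gamma$-independent integrals over $\Sp^3$. Two facts make this work: $A(0)=0$ and $\nabla A(0)=0$ by \eqref{eq:AgradA-vanish}/\eqref{eq:AgradA-vanish-quotients}, and odd moments on the sphere vanish. Each such integrand is linear in $W$ and linear in $\partial^2A(0)$, multiplied by a polynomial of degree $2$ or $4$ in $z$. I will evaluate it with \eqref{eq:brendle-form}, together with $\int_{\Sp^3}z^kz^l\,d\sigma=\frac{\pi^2}{2}\delta^{kl}$.

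\textbf{Contraction step.} After integration, every surviving term is a contraction of $W$ with $\partial^2A(0)$ by Kronecker deltas. Because $W$ is trace-free and has the Riemann symmetries, any contraction that traces $W$ vanishes. What remains is a combination of $W_{kijl}\partial^{kl}A^{ij}(0)$ and $W_{kijl}\partial^{ij}A^{kl}(0)$, with possibly index orders such as $W_{kilj}$.

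To collapse these into a single invariant, I use Proposition \ref{prop:F-exp-quotients}: $T_{kijl}=\frac12\partial^2_{kl}A_{ij}(0)$ has the form \eqref{eq:T-component-def}. Equivalently, $\partial^2A(0)$ has the symmetries of a symmetrized Riemann-type tensor. Using this, the first Bianchi identity, and $W_{kilj}=-W_{kijl}$ (antisymmetry in the last pair), every surviving contraction becomes a multiple of $W_{kijl}\partial^{kl}A^{ij}(0)$. For instance, $W_{kijl}\partial^{ij}A^{kl}=-\frac12W_{kijl}\partial^{kl}A^{ij}$, obtained from the symmetries of $R$ in \eqref{eq:T-component-def}.

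The remaining terms are quadratic in $A$ and are $O(\gamma^3)$ by the scaling. Taylor remainders enter at the next even order and give $O(\gamma^2)$.

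\textbf{Main obstacle.} The bookkeeping of the coefficients is the hard part. Each of the roughly six product types in \eqref{eq:term-prod} produces several sphere moments, and the total must come out exactly to $\frac{\pi^2}{6}$, the same constant as in Lemma \ref{lem:Z-en-bal-1}. I expect this equality to be structural. One can test it by noting that the boundary functional \eqref{eq:weyl-g_b-boundary-formula} applied to a smooth, purely quadratic $A$ should reproduce the Euclidean integration by parts of $W$ against $\dot W$. Still, I would verify it directly: contract index by index, keep separate track of the terms with $\delta_{\alpha\beta}$ (the Laplacian terms), and reduce every result to the invariant $W_{kijl}\partial^{kl}A^{ij}(0)$.
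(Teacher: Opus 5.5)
Your outline matches the paper's: reduce via \eqref{eq:div-terms-est}, expand with \eqref{eq:bd-exp-order1}, read off $\mathcal{C}_2\gamma^{-4}$ from the homogeneous $K$--$K$ part, and push everything else to $O(\gamma^2)$. But it stops exactly where the content of the lemma begins. The lemma does not say the order-one term is \emph{some} multiple of $W_{kijl}\partial^{kl}A^{ij}(0)$. It says the multiple is exactly $\frac{\pi^2}{6}$, the same as in Lemma \ref{lem:Z-en-bal-1}, and this is what makes the $O(1)$ terms cancel in Corollary \ref{cor:Weyl-energy-g_b-Z}. You never compute this constant. Your structural heuristic cannot replace the computation: the mixed boundary flux is homogeneous of degree zero, and on annuli the mixed energy $\int\dot W(K)\cdot\dot W(A)$ vanishes (the spherical average of $\dot W(K)$ is zero). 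So neither scaling nor integration by parts fixes its value.

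Moreover, the one reduction rule you state is false. Since $W_{kijl}=W_{iklj}$, relabeling $k\leftrightarrow i$, $j\leftrightarrow l$ gives $W_{kijl}\partial^{ij}A^{kl}=W_{kijl}\partial^{kl}A^{ij}$, with coefficient $+1$, not $-\frac12$. Under the structure \eqref{eq:T-component-def} that you invoke, one even has $\partial^2_{ij}A_{kl}(0)=\partial^2_{kl}A_{ij}(0)$. Your identity would then force $W_{kijl}\partial^{kl}A^{ij}(0)=0$, which kills the interaction term of Lemma \ref{lem:inter-term-formula}.

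The correct rule is this. A full contraction of $W$ with $\partial^2_{ab}A_{cd}(0)$ equals $0$, $+W_{kijl}\partial^{kl}A^{ij}(0)$ or $-W_{kijl}\partial^{kl}A^{ij}(0)$, according as the derivative pair occupies slots $\{1,2\}$ or $\{3,4\}$, slots $\{1,4\}$ or $\{2,3\}$, or slots $\{1,3\}$ or $\{2,4\}$ of $W$. This needs only the symmetries of $W$, of $A$ and of the Hessian, not \eqref{eq:T-component-def}. Factors $\frac12$ arise only when contracting two Riemann-type tensors.

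With this rule the missing step can be done term by term as in \eqref{eq:big-formula}--\eqref{eq:bigf-8}. A more economical route uses the linearity of $\dot W$ to split the mixed part into two integrals.
\begin{itemize}
\item $\int_{\partial B_\gamma}\dot W(A)_{\eta ij\beta}\partial^\beta K^{ij}\nu^\eta\,d\sigma$ equals $-\frac{\pi^2}{18}\dot W(A)(0)^{\eta ij\beta}(W_{\beta ij\eta}+W_{\eta ij\beta})+O(\gamma^2)$. This is $\frac{\pi^2}{6}W_{kijl}\partial^{kl}A^{ij}(0)+O(\gamma^2)$, because $W_{\eta ij\beta}\dot W(A)(0)^{\eta ij\beta}=-2W_{kijl}\partial^{kl}A^{ij}(0)$.
\item $\int_{\partial B_\gamma}\dot W(K)_{\eta ij\beta}\partial^\beta A^{ij}\nu^\eta\,d\sigma$ is $O(\gamma^2)$. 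On $|x|=1$ one has $x^\eta\dot W(K)_{\eta ij\beta}=-W_{\alpha ij\beta}x^\alpha+2x_\beta W_{\mu ij\nu}x^\mu x^\nu-2x_jW_{\mu i\beta\nu}x^\mu x^\nu$. The three resulting sphere moments contribute $(-\frac12+\frac13+\frac16)\pi^2=0$ times $W_{kijl}\partial^{kl}A^{ij}(0)$.
\end{itemize}
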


\begin{proof}
By looking at  \eqref{eq:F-KA-splitting}, \eqref{eq:lin-weyl-f1}, it is easy to see (after a change of variables) that the first term on the RHS of \eqref{eq:bd-exp-order1} is of order $\gamma^{-4}$. Since from \eqref{eq:bd-exp-order1} it is clear that all the other terms are of order $1$, we conclude from  \eqref{eq:div-terms-est} and \eqref{eq:bd-exp-order1} that
\begin{align}
\notag
    \frac{1}{2}\int_{\p B_\gamma}\big[\partial^2_{\eta j}K_{i\beta}&+\partial^2_{i \beta}K_{\eta j}-\partial^2_{\eta \beta}K_{ij}-\partial^2_{i j}K_{\eta \beta}\big]\partial^\beta K^{ij}\nu^\eta\,d\sigma \\
    \label{eq:nondivint-mainexp}
        +\frac{1}{4}&\int_{\p B_\gamma}\big[\Delta K_{ij}\partial^\beta K^{ij}\nu_\beta-\Delta K_{i \beta}\partial^\beta K^{ij}\nu_j\big]\,d\sigma=: \mathcal{C}_2 \gamma^{-4},
\end{align}
where $\mathcal{C}_2=\mathcal{C}_2(W^M(p))$ is the constant appearing in \eqref{eq:nondivint-expansion}.

We now focus on the contribution of order $1$;
by looking again at \eqref{eq:div-terms-est} and \eqref{eq:bd-exp-order1}, we see that it is given by the following integrals:
\begin{multline}
\label{eq:big-formula}
    -\frac{1}{6}\int_{\Sp^3}\big(\partial^{\eta j}A^{i\beta}(0)+\partial^{i \beta}A^{\eta j}(0)-\partial^{\eta \beta}A^{ij}(0)-\partial^{ij}A^{\eta \beta}(0)\big)W_{\mu i j \nu}(\delta_\beta^\mu x^\nu +x^\mu \delta_\beta^\nu-4x_\beta x^\mu x^\nu)x_\eta \\
    -\frac{1}{12}\int_{\Sp^3}\Big[W_{\mu i j \nu}(\delta_\beta^\mu x^\nu + x^\mu \delta_\beta^\nu-4 x_\beta x^\mu x^\nu)\big( \Delta A^{ij}(0) x^\beta -\Delta A^{i\beta}(0) x^j\big) \\
    -\frac{1}{6}\int_{\Sp^3}W_{ki\beta l}\Big[\delta_\eta^k \delta_j^l+\delta_j^k \delta_\eta^l-4x_\eta(\delta_j^k x^l+x^k\delta_j^l)-4\big(\delta_{j\eta}x^k x^l+x_j(\delta_\eta^k x^l+x^k \delta_\eta^l)\big)+24x_j x_\eta x^k x^l\Big]\partial^{\beta\xi} A^{ij}(0)x^\eta x_\xi \\
    -\frac{1}{6}\int_{\Sp^3}W_{k\eta jl}\Big[\delta_i^k \delta_\beta^l+\delta_\beta^k \delta_i^l-4x_\beta(\delta_i^k x^l+x^k\delta_i^l)-4\big(\delta_{i\beta}x^k x^l+x_i(\delta_\beta^k x^l+x^k \delta_\beta^l)\big)+24x_i x_\beta x^k x^l\Big]\partial^{\beta\xi} A^{ij}(0)x^\eta x_\xi \\
    +\frac{1}{6}\int_{\Sp^3}W_{kijl}\Big[\delta_\eta^k \delta_\beta^l+\delta_\beta^k \delta_\eta^l-4x_\beta(\delta_\eta^k x^l+x^k\delta_\eta^l)-4\big(\delta_{\eta\beta}x^k x^l+x_\eta(\delta_\beta^k x^l+x^k \delta_\beta^l)\big)+24x_\eta x_\beta x^k x^l\Big]\partial^{\beta\xi} A^{ij}(0)x^\eta x_\xi \\
    +\frac{1}{6}\int_{\Sp^3}W_{k\eta \beta l}\Big[\delta_i^k \delta_j^l+\delta_j^k \delta_i^l-4x_j(\delta_i^k x^l+x^k\delta_i^l)-4\big(\delta_{ij}x^k x^l+x_i(\delta_j^k x^l+x^k \delta_j^l)\big)+24x_i x_j x^k x^l\Big]\partial^{\beta\xi} A^{ij}(0)x^\eta x_\xi \\
    -\frac{1}{12}\sum_\alpha\int_{\Sp^3}W_{k i j l}\Big[2\delta_\alpha^k \delta_\alpha^l-4x_\alpha(\delta_\alpha^k x^l+x^k\delta_\alpha^l)-4x^k x^l-4x_\alpha(\delta_\alpha^k x^l+x^k \delta_\alpha^l)+24 (x_\alpha)^2 x^k x^l\Big]\partial^{\beta\xi} A^{ij}(0)x_\beta x_\xi \\
    +\frac{1}{12}\sum_\alpha\int_{\Sp^3}W_{k i \beta l}\Big[2\delta_\alpha^k \delta_\alpha^l-4x_\alpha(\delta_\alpha^k x^l+x^k\delta_\alpha^l)-4x^k x^l-4x_\alpha(\delta_\alpha^k x^l+x^k \delta_\alpha^l)+24 (x_\alpha)^2 x^k x^l\Big]\partial^{\beta\xi} A^{ij}(0)x_j x_\xi
\end{multline}
    We need to compute each term in \eqref{eq:big-formula}. We start noticing that
    \begin{align}
    \notag
        \int_{\Sp^3}W_{\mu i j \nu}(\delta_\beta^\mu x^\nu +x^\mu \delta_\beta^\nu -4 x_\beta x^\mu x^\nu)x_\eta&=\int_{\Sp^3}\big( W_{\beta i j \nu}x^\nu +W_{\mu i j \beta }x^\mu\big)x_\eta -4W_{\mu i j \nu}\int_{\Sp^3}x_\beta x^\mu x^\nu x_\eta \\
        \notag
        &=\big( W_{\beta i j \eta} +W_{\eta i j \beta}\big)\int_{\Sp^3}(x_\eta)^2-4W_{\mu i j \nu}\int_{\Sp^3}x_\beta x^\mu x^\nu x_\eta \\
        \notag
        &=\frac{\pi^2}{2}\big( W_{\beta i j \eta}+W_{\eta i j \beta}\big)-\frac{\pi^2}{3}W_{\mu i j \nu}(\delta_{\beta \eta}\delta^{\mu \nu}+\delta_\beta^\mu \delta_\eta^\nu +\delta_\beta^\nu \delta_\eta^\mu) \\
        \label{eq:bigf-1}
        &=\frac{\pi^2}{6}\big( W_{\beta i j \eta} +W_{\eta i j \beta}\big),
    \end{align}
    where we used (cf. \cite[Corollary 29]{brendle-2008-JAMS-Yamabe}) 
\begin{align}\label{eq:brendleform-order2}
    \int_{\Sp^3}x_\eta x_\nu=\frac{\pi^2}{2}\delta_{\eta \nu},
    \end{align}
    \eqref{eq:brendle-form} and the trace-free property of Weyl tensor. Similar computations also show that
    \begin{equation}\label{eq:bigf-2}
        \int_{\Sp^3}W_{\mu i j \nu}(\delta_\beta^\mu x^\nu +x^\mu \delta_\beta^\nu -4 x_\beta x^\mu x^\nu)x^\beta=0=\int_{\Sp^3}W_{\mu i j \nu}(\delta_\beta^\mu x^\nu +x^\mu \delta_\beta^\nu -4 x_\beta x^\mu x^\nu)x^j.
    \end{equation}
Next, we got
\begin{align}
\notag
    \int_{\Sp^3}W_{ki\beta l}&\Big[\delta_\eta^k \delta_j^l+\delta_j^k \delta_\eta^l-4x_\eta(\delta_j^k x^l+x^k\delta_j^l)-4\big(\delta_{j\eta}x^k x^l+x_j(\delta_\eta^k x^l+x^k \delta_\eta^l)\big)+24x_j x_\eta x^k x^l\Big]\partial^{\beta\xi} A^{ij}x^\eta x_\xi \\
    \notag
    &=\int\big( W_{\eta i \beta j} +W_{j i \beta \eta}\big)\partial^{\beta\xi}A^{ij}x^\eta x_\xi -4\int\big(W_{j i \beta k}+W_{ki\beta j}\big)\partial^{\beta\xi}A^{ij}x^kx_\xi \\
    \notag
    -4\int& \Big[ W_{ki\beta l}\partial^{\beta \xi}A^{ij}x^k x^l x_j x_\xi +\big( W_{\eta i \beta k}+W_{ k i \beta \eta}\big)\partial^{\beta \xi} A^{ij}x^k x^\eta x_j x_\xi\Big]+24 W_{ki\beta l}\partial^{\beta \xi}A^{ij}\int x^k x^l x_j x_\xi \\
    \notag
    &=-\frac{3}{2}\pi^2 \partial^{\beta \xi}A^{ij}\big(W_{\xi i \beta j}+W_{j i \beta \xi}\big)-\frac{\pi^2}{3}\partial^{\beta \xi}A^{ij}\big(3W_{ ji \beta \xi}+ 3W_{\xi i \beta j}\big) +2\pi^2 \big(W_{j i \beta \xi}+W_{\xi i \beta j}\big) \\
    \label{eq:bigf-3}
    &=-\frac{\pi^2}{2}\partial^{\beta\xi}A^{ij}(0)\big(W_{j i \beta \xi}+W_{\xi i \beta j}\big).
\end{align}
Similarly,
\begin{align}
    \notag\int_{\Sp^3}&W_{k\eta jl}\Big[\delta_i^k \delta_\beta^l+\delta_\beta^k \delta_i^l-4x_\beta(\delta_i^k x^l+x^k\delta_i^l)-4\big(\delta_{i\beta}x^k x^l+x_i(\delta_\beta^k x^l+x^k \delta_\beta^l)\big)+24x_i x_\beta x^k x^l\Big]\partial^{\beta\xi} A^{ij}x^\eta x_\xi \\
    \notag
    &=\int \big( W_{i \eta j \beta}+W_{\beta \eta j i}\big)\partial^{\beta\xi} A^{ij}x^\eta x_\xi-4\int\big( W_{i \eta j k}+W_{k \eta j i}\big)\partial^{\beta\xi} A^{ij} x_\beta x^k x^\eta x_\xi \\
    \notag
    &-4 \sum_i  W_{k\eta j l}\partial^{i \xi}A^{ij}\int x^k x^l x^\eta x_\xi-4\int\big(W_{\beta \eta j k}+W_{k \eta j \beta}\big)\partial^{\beta\xi} A^{ij} x_i x^k x^\eta x_\xi +0 \\
    \notag
    &=\frac{\pi^2}{2}\partial^{\beta\xi} A^{ij}\big( W_{i \xi j \beta}+ W_{\beta \xi j i}\big)-\frac{\pi^2}{3}\partial^{\beta\xi} A^{ij}\big(W_{ i \xi j \beta}+W_{\beta \xi j i}+W_{ i \beta j \xi}+W_{ \xi \beta j i}\big) \\
    \notag
    &-0-\frac{\pi^2}{3}\partial^{\beta\xi} A^{ij}\big( W_{\beta \xi j i }+W_{ i \xi j \beta}+W_{\beta i j \xi}+W_{\xi i j \beta}\big) \\
    \label{eq:bigf-4}
    &=\frac{\pi^2}{6}\partial^{\beta\xi} A^{ij}\big(W_{i \xi j \beta}+W_{\beta \xi j i}\big).
\end{align}
Also, by exchanging the indices $\beta $ and $j$ in \eqref{eq:bigf-3} (except on $A$), we immediately see that
\begin{align}
\notag
    \int_{\Sp^3}W_{kijl}&\Big[\delta_\eta^k \delta_\beta^l+\delta_\beta^k \delta_\eta^l-4x_\beta(\delta_\eta^k x^l+x^k\delta_\eta^l)-4\big(\delta_{\eta\beta}x^k x^l+x_\eta(\delta_\beta^k x^l+x^k \delta_\beta^l)\big)+24x_\eta x_\beta x^k x^l\Big]\partial^{\beta\xi} A^{ij}x^\eta x_\xi \\
    \label{eq:bigf-5}
    &=-\frac{\pi^2}{2}\partial^{\beta\xi} A^{ij}(0)\big(W_{\xi i j \beta}+W_{\beta i j \xi}\big).
\end{align}
Exchanging instead $\beta$ and $j$ in \eqref{eq:bigf-4} (except on $A$), we got
\begin{align}
    \notag
    \int_{\Sp^3}W_{k\eta \beta l}&\Big[\delta_i^k \delta_j^l+\delta_j^k \delta_i^l-4x_j(\delta_i^k x^l+x^k\delta_i^l)-4\big(\delta_{ij}x^k x^l+x_i(\delta_j^k x^l+x^k \delta_j^l)\big)+24x_i x_j x^k x^l\Big]\partial^{\beta\xi} A^{ij}x^\eta x_\xi \\
    \label{eq:bigf-6}
    &=\frac{\pi^2}{6}\partial^{\beta \xi}A^{ij}(0)\big(W_{i \xi \beta j}+W_{j \xi \beta i}\big).
\end{align}
Finally, we also have
\begin{align}
    \notag
    \sum_\alpha\int_{\Sp^3}W_{k i j l}&\Big[2\delta_\alpha^k \delta_\alpha^l-4x_\alpha(\delta_\alpha^k x^l+x^k\delta_\alpha^l)-4x^k x^l-4x_\alpha(\delta_\alpha^k x^l+x^k \delta_\alpha^l)+24 (x_\alpha)^2 x^k x^l\Big]\partial^{\beta\xi} A^{ij}x_\beta x_\xi \\
    \label{eq:bigf-7}
    &=-8\partial^{\beta \xi}A^{ij}W_{kijl}\int x^k x^l x_\beta x_\xi=-\frac{2}{3}\pi^2\partial^{\beta \xi}A^{ij}\big(W_{\beta i j \xi}+W_{\xi i j \beta}\big),
\end{align}
and
\begin{align}
    \notag
    \sum_\alpha\int_{\Sp^3}W_{k i \beta l}&\Big[2\delta_\alpha^k \delta_\alpha^l-4x_\alpha(\delta_\alpha^k x^l+x^k\delta_\alpha^l)-4x^k x^l-4x_\alpha(\delta_\alpha^k x^l+x^k \delta_\alpha^l)+24 (x_\alpha)^2 x^k x^l\Big]\partial^{\beta\xi} A^{ij}x_j x_\xi \\
    \label{eq:bigf-8}
    &=-8\partial^{\beta\xi} A^{ij} W_{ k i \beta l}\int x^k x^l x_j x_\xi=-\frac{2}{3}\pi^2\partial^{\beta\xi} A^{ij}\big( W_{ j i \beta \xi}+W_{\xi i \beta j}\big).
\end{align}
Substituting \eqref{eq:bigf-1}, \eqref{eq:bigf-2}, \eqref{eq:bigf-3}, \eqref{eq:bigf-4}, \eqref{eq:bigf-5}, \eqref{eq:bigf-6}, \eqref{eq:bigf-7} and \eqref{eq:bigf-8} inside \eqref{eq:big-formula} and using the symmetries of the curvature tensor and the fact that $A$ is symmetric, we get
\begin{align*}
    &-\frac{1}{6}\Big[\big(\partial^{\eta j}A^{i\beta}(0)+\partial^{i \beta}A^{\eta j}(0)-\partial^{\eta \beta}A^{ij}(0)-\partial^{ij}A^{\eta \beta}(0)\big)\frac{\pi^2}{6}\big( W_{\beta i j \eta} + W_{\eta i j \beta}\big)\Big] \\
    &-\frac{1}{6}\Big[-\frac{\pi^2}{2}\partial^{\beta \xi}A^{ij}(0)\big(W_{j i \beta \xi}+W_{\xi i \beta j}\big)+\frac{\pi^2}{6}\partial^{\beta \xi}A^{ij}(0)\big( W_{i \xi j \beta}+W_{\beta \xi j i}\big)\Big] \\
    &+\frac{1}{6}\Big[-\frac{\pi^2}{2}\partial^{\beta \xi}A^{ij}(0)\big(W_{\xi i j \beta}+W_{\beta i j \xi}\big)+\frac{\pi^2}{6}\partial^{\beta \xi}A^{ij}(0)\big(W_{i \xi \beta j}+ W_{j \xi \beta i }\big)\Big] \\
    &+\frac{\pi^2}{18}\partial^{\beta \xi}A^{ij}(0)\big(W_{\beta i j \xi}+ W_{\xi i j \beta}\big) -\frac{\pi^2}{18}\partial^{\beta \xi}A^{ij}(0)\big( W_{j i \beta \xi}+W_{ \xi i \beta j}\big)  \\
    &=-\frac{1}{6}\Big[\frac{\pi^2}{3}\big(\partial^{i\beta}A^{\eta j}(0)-\partial^{\eta \beta}A^{ij}(0)\big)\big( W_{\beta i j \eta}+W_{\eta i j \beta}\big)-\frac{\pi^2}{3}\partial^{\beta \xi} A^{ij }(0)\big(W_{\xi i \beta j}+W_{j i \beta \xi}\big)\Big] \\
    &-\frac{\pi^2}{18}\partial^{\beta \xi}A^{ij}(0)\big( W_{\xi i j \beta}+W_{\beta i j \xi}\big)+\frac{\pi^2}{18}\partial^{\beta \xi}A^{ij}(0)\big(W_{\beta i j \xi}+ W_{\xi i j \beta}\big) -\frac{\pi^2}{18}\partial^{\beta \xi}A^{ij}(0)\big( W_{j i \beta \xi}+W_{ \xi i \beta j}\big) \\
    &=-\frac{\pi^2}{18}\partial^{i \beta}A^{\eta j}(0)\big( W_{\beta i j \eta}+W_{\eta i j \beta}\big)+\frac{\pi^2}{18}\partial^{\beta \xi}A^{ij}(0)\big( W_{\beta i j \xi}+W_{\xi i j \beta}\big).
\end{align*}
Finally, using again the symmetries of $W$ and $A$ together with Bianchi identity, we obtain the second term in the RHS of \eqref{eq:nondivint-expansion}.

Since the remainder is $O(\gamma^2)$ (see \eqref{eq:bd-exp-order1}), the result follows.
\end{proof}

It only remains to compute explicitly the coefficients $\mathcal{C}_1$ and $\mathcal{C}_2$ appearing in \eqref{eq:divint-expansion} and \eqref{eq:nondivint-expansion}.

\begin{lemma}\label{lem:C_1+C_2-term}
    Let $\mathcal{C}_1, \mathcal{C}_2$ be defined as in \eqref{eq:divint-mainexp}, \eqref{eq:nondivint-mainexp} respectively. Then
    \begin{equation}\label{eq:C_1+C_2}
        \mathcal{C}_2-\mathcal{C}_1=\frac{\pi^2}{4}\abs{W^M(p)}^2.
    \end{equation}
\end{lemma}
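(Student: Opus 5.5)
The plan is to compute both constants directly from their definitions \eqref{eq:divint-mainexp} and \eqref{eq:nondivint-mainexp}, which only involve the homogeneous singular tensor $K_{ij}(x)=-\frac13 W_{\mu ij\nu}x^\mu x^\nu|x|^{-4}$. By homogeneity ($K$ has degree $-2$, so $\partial^3K$ terms times $K$ have degree $-7$, and $\partial^2K\,\partial K$ also has degree $-7$), after rescaling $x=\gamma z$ both boundary integrals equal $\gamma^{-4}$ times an integral over $\Sp^3$ of a polynomial expression in $z$ contracted with two copies of $W$. I would therefore evaluate everything on the unit sphere, where $|z|=1$ can be used freely after differentiation.

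First I would record the needed derivatives of $K$ explicitly on $\Sp^3$: $\partial_\tau K_{st}$, the second derivative $\partial^2_{\alpha\beta}K_{ij}$ (its form is already written inside \eqref{eq:bd-exp-order1}), $\Delta K_{ij}=\frac83 W_{kijl}z^kz^l|z|^{-6}$, and $\partial_\beta\Delta K_{ij}$ from \eqref{eq:lin-weyl-f1}. For $\mathcal{C}_1$, the simplification already done in the proof of Lemma \ref{lem:Z-en-bal-1} gives $\partial_j\Delta K_{i\beta}-\partial_\beta\Delta K_{ij}=\frac83\big(-3W_{lij\beta}z^l-6W_{ki\beta l}z_jz^kz^l+6W_{kijl}z_\beta z^kz^l\big)$ on the sphere; contracting with $z^\beta$ kills the middle term and collapses the others to a multiple of $W_{kijl}z^kz^l$, so $\mathcal{C}_1$ reduces to a constant times $\int_{\Sp^3}W_{kijl}W_{\mu}{}^{ij}{}_{\nu}z^kz^lz^\mu z^\nu$, which \eqref{eq:brendle-form} together with the trace-free property and the Bianchi identity turns into a multiple of $|W|^2$ (using $W_{kijl}W^{lijk}=\frac12|W|^2$).

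For $\mathcal{C}_2$ I would proceed term by term in the same way: each product in \eqref{eq:nondivint-mainexp} becomes a sum of monomials of degree $2$, $4$, $6$ or $8$ in $z$ contracted with $W\otimes W$. Degree $2$ and $4$ are handled by \eqref{eq:brendleform-order2} and \eqref{eq:brendle-form}; for degree $6$ and $8$ I would use the general formula for $\int_{\Sp^3}z^{i_1}\cdots z^{i_{2m}}$ as the sum over pairings times $|\Sp^3|/(4\cdot6\cdots(2m+2))$ (again Brendle's Corollary 29). Many terms vanish outright because a contraction like $W_{k\cdot\cdot l}z^kz^l z^\beta$ with $\beta$ in an antisymmetric slot of $W$ is zero, or because of tracelessness; the survivors reduce to the two invariants $W_{kijl}W^{kijl}=|W|^2$ and $W_{kijl}W^{lijk}=\frac12|W|^2$. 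Subtracting yields \eqref{eq:C_1+C_2}.

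The main obstacle is bookkeeping in $\mathcal{C}_2$: the second-derivative term produces degree-$8$ monomials whose full pairing expansions are long, and sign errors are easy. To control this, I would first exploit that only the difference $\mathcal{C}_2-\mathcal{C}_1$ is needed, and try to pair terms (e.g. the $\Delta K\,\partial K$ terms in $\mathcal{C}_2$ against the $\partial\Delta K\,K$ terms in $\mathcal{C}_1$ via integration by parts on the sphere in the radial/tangential splitting) before expanding. As a check, I would verify the final coefficient on a single explicit Weyl tensor, e.g. one with only $W_{1221}$-type components (a product-like curvature), where all sphere integrals are elementary, and confirm the value $\frac{\pi^2}{4}|W|^2$.
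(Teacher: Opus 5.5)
Your plan is correct, and it reaches the identity by a genuinely different route from the paper's. You evaluate $\mathcal{C}_1$ and $\mathcal{C}_2$ separately by sphere moments, whereas the paper never computes either constant. The paper applies the same double integration by parts behind \eqref{eq:weyl-g_b-boundary-formula} to $g_t=g_E+tK$ on $\R^4\setminus B_\gamma$. This is allowed because $K$ is TT and biharmonic, so $\dot B(K)=0$, and the integrands decay like $|x|^{-8}$. It thereby identifies $2(\mathcal{C}_2-\mathcal{C}_1)\gamma^{-4}$ with the $t^2$-coefficient of the Weyl energy of $g_t$ outside $B_\gamma$. A dilation and the Kelvin inversion $y=x/|x|^2$ (exact for this quadratic metric), plus the pointwise conformal invariance of $|W|^2dV$, turn this into the energy of $\delta_{ij}-\frac13W_{kijl}x^kx^l$ on $B_{\sqrt t/\gamma}$, namely $\frac{\pi^2}{2}\frac{t^2}{\gamma^4}|W|^2+O(t^3\gamma^{-6})$. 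Equivalently, $|\dot W(K)|(x)=|W|\,|x|^{-4}$ pointwise, so $\mathcal{C}_2-\mathcal{C}_1=\frac18\int_{\Sp^3}|\dot W(K)|^2=\frac{\pi^2}{4}|W|^2$ with essentially no tensor algebra. Your route costs more algebra, but it uses neither the Bach-kernel property of $K$ nor the inversion, and it yields the individual constants.

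The bookkeeping you worry about is lighter than you expect. Set $P_{ij}=W_{kijl}z^kz^l$ and use Euler's relations $x^\eta\partial_\eta(\partial_\beta K_{ij})=-3\partial_\beta K_{ij}$ and $x^\beta\partial_\beta K_{ij}=-2K_{ij}$, together with transversality $K_{ij}x^j\equiv0$. The latter gives $x^\eta\partial^2_{i\beta}K_{\eta j}=-\partial_\beta K_{ij}-\partial_iK_{\beta j}$ and $x^j\partial_\beta K_{ij}=-K_{i\beta}$. With these, the six terms in \eqref{eq:nondivint-mainexp} collapse to $\mathcal{C}_2=-X+Y+\frac29\int_{\Sp^3}|P|^2$, where $Y=\int_{\Sp^3}|\partial K|^2$ and $X=\int_{\Sp^3}\partial_jK_{i\beta}\,\partial_\beta K_{ij}$. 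Only second and fourth moments ever appear, so the sixth- and eighth-order moments are never needed. One gets $\int|P|^2=\frac{\pi^2}{8}|W|^2$, $Y=\frac{\pi^2}{6}|W|^2$ and $X=\frac{\pi^2}{36}|W|^2$, hence $\mathcal{C}_2=\frac{\pi^2}{6}|W|^2$. Your $\mathcal{C}_1$ computation gives $\mathcal{C}_1=-\frac23\int|P|^2=-\frac{\pi^2}{12}|W|^2$, and the difference is $\frac{\pi^2}{4}|W|^2$, as claimed.

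Be careful with the signs of contractions such as $W_{kijl}z^kz^j=-P_{il}$ versus $W_{kijl}z^kz^l=P_{ij}$. That is where this computation most easily goes wrong.
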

\begin{proof}
    By definition, we got
    \begin{align*}
        (\mathcal{C}_2-\mathcal{C}_1)\gamma^{-4}=&\frac{1}{2}\int_{\p B_\gamma}\big[\partial^2_{\eta j}K_{i\beta}+\partial^2_{i \beta}K_{\eta j}-\partial^2_{\eta \beta}K_{ij}-\partial^2_{i j}K_{\eta \beta}\big]\partial^\beta K^{ij}\nu^\eta\,d\sigma \\
       & +\frac{1}{4}\int_{\p B_\gamma}\big[\Delta K_{ij}\partial^\beta K^{ij}\nu_\beta-\Delta K_{i \beta}\partial^\beta K^{ij}\nu_j\big]\,d\sigma. \\
       &-\frac{1}{4}\int_{\partial B_\gamma}\Big[\p_j \Delta K_{i\beta}-\p_\beta \Delta K_{ij}\Big] K^{ij}\nu^\beta\,d\sigma 
    \end{align*}
    
    Consider the family of metrics $g_t:=g_E+t K$. Expanding the Weyl energy of $g_t$ at $t=0$ in $\R^4\backslash B_\gamma(0)$ (as in \eqref{eq:weyl-g_b-expans}, \eqref{eq:second-der-weyl}), we have
    \begin{align*}
        \int_{\R^4\backslash B_\gamma}\abs{W^{g_t}}^2\,dV_{g_t}=-2t^2\int_{\R^4\backslash B_\gamma}\dot{W}_{\alpha i j \beta}\p^\alpha \p^\beta K^{ij}\,dx+O(t^3), \qquad \text{as $t\to 0$.}
    \end{align*}
    By looking at the definition of $K$ \eqref{eq:F-KA-splitting} and the formula for the linearized Weyl tensor \eqref{eq:weyl-lin-tfree-eucl}, we notice that all the integrands on the RHS decay as $\abs{x}^{-8}$ as $\abs{x}\to\infty$, therefore we can integrate by parts twice as in \eqref{eq:weyl-second-der-parts}. Recalling further that $K$ is TT and biharmonic (so in the kernel of the linearized Weyl at $g_E$), as in \eqref{eq:weyl-g_b-boundary-formula} we get
\begin{align*}
     \int_{\R^4\backslash B_\gamma}\abs{W^{g_t}}^2\,dV_{g_t}=2t^2\bigg(\int_{\p B_\gamma} \dot{W}_{\eta i j \beta}\p^\beta K^{ij}\nu^\eta \,d\sigma-\int_{\p B_\gamma}\p^\alpha \dot{W}_{\alpha i j \eta}K^{ij} \nu^\eta\,d\sigma\bigg)+O(t^3),
\end{align*}
where $\nu=x/\abs{x}$ points toward the interior of $\R^4\backslash B_\gamma$.
But now, using again \eqref{eq:weyl-lin-tfree-eucl} we see that indeed
\begin{align}\label{eq:C_1C_1lem-id1}
    \int_{\R^4\backslash B_\gamma}\abs{W^{g_t}}^2\,dV_{g_t}=2\frac{t^2}{\gamma^4}(\mathcal{C}_2-\mathcal{C}_1)+ O(t^3).
\end{align}

Let us define
\begin{equation*}
    g(x)=\Big(\delta_{ij}-\frac{1}{3}W_{kijl}x^k x^l\Big)\,dx^i dx^j.
\end{equation*}
If we let $y:=I(x)={x}/{\abs{x}^2}$, then
\begin{equation*}
    \tilde{g}(y):=I^*(\abs{x}^{-4}g)(y)=\Big(\delta_{ij}-\frac{1}{3}W_{kijl}\frac{y^k y^l}{\abs{y}^4}\Big)\,dy^i dy^j,
\end{equation*}
and therefore $t \tilde{g}=g_t$. Hence
\begin{align}\label{eq:C_1C_2lem-id2}
    \int_{\R^4\backslash B_\gamma}\abs{W^{g_t}}^2\,dV_{g_t}=\int_{\R^4\backslash B_{\gamma / \sqrt{t}}} \abs{W^{\tilde{g}}}^2\, dV_{\tilde{g}}=\int_{ B_{\sqrt{t}/\gamma}}\abs{W^g}^2\,dV_g=\frac{\pi^2}{2}\frac{t^2}{\gamma^4}\abs{W^M(p)}^2+O\Big(\frac{t^3}{\gamma^6}\Big).
\end{align}
The result then follows from \eqref{eq:C_1C_1lem-id1} and \eqref{eq:C_1C_2lem-id2}.
\end{proof}

As a consequence of \eqref{eq:weyl-g_b-boundary-formula}, Lemma \ref{lem:Z-en-bal-1} and Lemma \ref{lem:Z-en-bal-2} and Lemma \ref{lem:C_1+C_2-term}, we obtain the expansion for the Weyl energy on $Z$:

\begin{corollary}\label{cor:Weyl-energy-g_b-Z}
    Let $g_b$ be defined as in \eqref{eq:g_b-definition}. Then
    \begin{equation}\label{eq:weyl-energy-g_b-expansion}
    \w^Z_\gamma(g_b)=\int_{Z\backslash B_\gamma^{g_Z}(q)}\abs{W^{g_b}}^2\,dV_{g_b}=\frac{\pi^2}{2}\frac{b^4}{\gamma^4}\abs{W^M(p)}^2+O(b^4\gamma^2), \qquad \text{as $0\ll b\ll\gamma\ll1$.}
\end{equation}
\end{corollary}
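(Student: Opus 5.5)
The plan is to assemble the three expansions already obtained and observe that the $\p^2 A$ contributions cancel. By \eqref{eq:weyl-g_b-boundary-formula}, $\w^Z_\gamma(g_b)$ equals $2b^4$ times the difference between the first boundary integral $\int_{\p B_\gamma}\dot W_{\eta ij\beta}\p^\beta F^{ij}\nu^\eta\,d\sigma$ and the second one $\int_{\p B_\gamma}\p^\alpha\dot W_{\alpha ij\eta}F^{ij}\nu^\eta\,d\sigma$, up to an $O(b^6)$ error. The first step is to insert Lemma \ref{lem:Z-en-bal-2} for the first integral and Lemma \ref{lem:Z-en-bal-1} for the second. Both lemmas produce the same order-one term $\frac{\pi^2}{6}\p^{kl}A^{ij}(0)W_{kijl}$, so it cancels in the difference, and we are left with
\begin{equation*}
\w^Z_\gamma(g_b)=2b^4\big((\mathcal C_2-\mathcal C_1)\gamma^{-4}+O(\gamma^2)\big)+O(b^6).
\end{equation*}

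Next, Lemma \ref{lem:C_1+C_2-term} gives $\mathcal C_2-\mathcal C_1=\frac{\pi^2}{4}\abs{W^M(p)}^2$. Substituting, the main term becomes $\frac{\pi^2}{2}\frac{b^4}{\gamma^4}\abs{W^M(p)}^2$, as claimed, while the lemma remainders contribute $O(b^4\gamma^2)$.

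The only point needing care is the error $O(b^6)$ coming from the Taylor expansion in $t=b^2$. The third derivative of $\w^Z_\gamma$ involves integrals of cubic expressions in $F$ and its derivatives over $Z\setminus B_\gamma$. Since $F\sim|x|^{-2}$ near $q$, these are bounded by a negative power of $\gamma$, which I expect to be of the form $b^6\gamma^{-8}$ or similar after scaling. I would absorb this into $O(b^4\gamma^2)$ using the regime $b\ll\gamma$, for example by choosing $b$ small enough relative to $\gamma$ that $b^2\gamma^{-8}\le\gamma^2$; this is the meaning of the assumption $0<b\ll\gamma\ll1$, as already noted after the Taylor expansion above. Beyond verifying this scaling estimate, there is no real obstacle, since the cancellation of the $\p^2A$ terms is exact.
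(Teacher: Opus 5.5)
Your proposal is correct and follows the paper's proof exactly: substitute Lemma~\ref{lem:Z-en-bal-2} and Lemma~\ref{lem:Z-en-bal-1} into \eqref{eq:weyl-g_b-boundary-formula}, note that the common term $\frac{\pi^2}{6}W_{kijl}\partial^{kl}A^{ij}(0)$ cancels, and use $\mathcal{C}_2-\mathcal{C}_1=\frac{\pi^2}{4}|W^M(p)|^2$ from Lemma~\ref{lem:C_1+C_2-term}. Your treatment of the $O(b^6)$ remainder agrees with the paper's remark that it is of higher order because $b\ll\gamma$; the substitution $x=\gamma y$ together with conformal invariance shows that its dominant part is of size $b^6\gamma^{-6}$, so $b^2\lesssim\gamma^{8}$ already suffices.
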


\section{Weyl energy of the perturbed non LCF metric}\label{sec:weyl-en-M}

On the Bach-flat manifold $(M,g_M)$, we want to adjust the metric in order to match its inverted and scaled counterpart to the modified metric $g_b$ on $Z$ (defined by \eqref{eq:g_b-definition}). The idea is that, since $A(0)=0$ and $\nabla A(0)=0$ (see \eqref{eq:AgradA-vanish}, \eqref{eq:AgradA-vanish-quotients}), we want to add a perturbation to $g_M$ to match $\nabla^2_{kl} A_{ij}(0)x^kx^l$.

In conformal normal coordinates at $p\in M$, there exists $\epsilon>0$ (small) such that
\begin{align*}
    g_M=\Big(\delta_{ij}-\frac{1}{3}W_{kijl}z^k z^l+\zeta_{ij}(z)\Big)dz^i dz^j, \qquad \abs{z}< 4\epsilon,
\end{align*}
where $\zeta_{ij}(z)=O^{(4)}(\abs{z})^3$ as $\abs{z}\to 0$.
Let us consider a smooth cutoff function $\varphi$ satisfying $0\leq \varphi(r)\leq 1$ $\forall r\geq 0$, $\phi(r)\equiv0$ for $r\geq 2\eps$ and $\varphi(r)\equiv 1$ for $r\leq \eps$. We define a smooth tensor $H$ in $M\backslash\{p\}$ as
\begin{align}\label{eq:H-tens-def}
    H_{ij}(z):=\frac{1}{2}\p^2_{kl}A_{ij}(0) \frac{z^k z^l}{\abs{z}^4}\varphi(\abs{z})=: T_{kijl}\frac{z^k z^l}{\abs{z}^4}\varphi(\abs{z}), \qquad \text{for $\abs{z}\leq 4\eps$,}
\end{align}
with $H\equiv 0$ in $M\backslash B^{g_M}_{2\eps}(p)$. 
Letting $\tilde{g}_M:=g_M+a^4H$, one has
\begin{align*}
    \tilde{g}_M=\Big(\delta_{ij}-\frac{1}{3}W_{kijl}z^k z^l+a^4 T_{kijl}\frac{z^k z^l}{\abs{z}^4}+\zeta_{ij}(z)\Big)dz^i dz^j, \qquad \abs{z}<\eps.
\end{align*}
If we now multiply the metric $\tilde{g}_M$ via a positive function $f$ on $M\backslash\{p\}$, satisfying $f(z)=\abs{z}^{-2}$ near $p$, and call $y={z}/{\abs{z}^2}$ the inverted conformal normal coordinates, by Lemma \ref{lem:perturb-metr-inverse-exp} we get
\begin{align*}
    f^2 \tilde{g}_M=:\tilde{g}_N=\Big(\delta_{ij}-\frac{1}{3}W_{kijl}\frac{y^k y^l}{\abs{y}^4}+a^4T_{kijl}y^k y^l+\tilde{\zeta}_{ij}(y)\Big)dy^i dy^j, \qquad \abs{y}>\frac{1}{\epsilon},
\end{align*}
where $\tilde{\zeta}_{ij}(y)=O^{(4)}(\abs{y}^{-3})$ as $\abs{y}\to+\infty$.
Letting now $g_a:=a^2\tilde{g}_N$, then $x:=ay$ are the inverted normal coordinates for $g_a$ and one has
\begin{align}\label{eq:g_a}
    g_a=\Big(\delta_{ij}-\frac{1}{3}a^2 W_{kijl}\frac{x^k x^l}{\abs{x}^4}+a^2 T_{kijl}x^k x^l+\zeta^a_{ij}(x)\Big)dx^i dx^j, \qquad \abs{x}>\frac{a}{\epsilon},
\end{align}
where now $\zeta^a$ satisfies
\begin{align}\label{eq:zeta^a-err-def}
    \big|\nabla^k \zeta^a_{ij}(x)\big|\leq C a^3 \abs{x}^{-3-k}, \qquad \text{for $\abs{x}>\frac{a}{\eps}$, $k=0,\dots,4$.}
\end{align}

From \eqref{eq:g_a}, \eqref{eq:g_b-definition}, \eqref{eq:H-tens-def} and \eqref{eq:AAbar-formula} we see that, letting $a=b$ we can match $g^M_a$ and $g^Z_b$ in a very precise way (see Section \ref{subsec-gluing-metrics} for the explicit construction).

We now want to compute the Weyl energy of the modified metric $g_a$ defined in \eqref{eq:g_a}. If we glue $(N=M\backslash\{p\},g_a)$ to $(Z,g_b)$ at scale $\gamma$ (with $a=b\ll \gamma\ll1$), then, being $\gamma=\abs{x}=a\abs{z}^{-1}$, we got $\abs{z}=a\gamma^{-1}$ and therefore, using the pointwise conformal invariance of $\abs{W^g}^2\,dV_g$, we need to compute
\begin{align*}
    \int_{M\backslash B_{a\gamma^{-1}}^{g_M}(p)}\abs{W^{\tilde{g}_M}}^2\,dV_{\tilde{g}_M}.
\end{align*}
Recalling the definition of $\tilde{g}_M$, we let $t:=a^4$ and employ the notation 
\begin{align}\label{eq:V(t)-def}
    \tilde{g}_M=g_t:=g_M+t H, \qquad \qquad \qquad V(t):=\int_{M\backslash B_{t^{1/4}\gamma^{-1}}^{g_M}(p)}\abs{W^{g_t}}^2\,dV_{g_t}.
\end{align}
We would now like to Taylor-expand $V$ at $0$, obtaining
\begin{align}\label{eq:V(t)-taylor-exp}
    V(t)=\w^M(g_M)+t\frac{d}{dt}\Big\vert_{t=0} V(t) +o(t).
\end{align}
However, it is not immediately clear whether $V$ and $V'$ are continuous up to zero.
\begin{lemma}
    $V$ is continuous up to zero and $V(0)=\w^M(g_M)$.
\end{lemma}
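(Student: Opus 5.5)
The plan is to split $V(t)$ into the region away from $p$, where $H$ is smooth and bounded, and the small annulus $B_{2\eps}^{g_M}(p)\setminus B_{t^{1/4}\gamma^{-1}}^{g_M}(p)$, where we control the perturbation pointwise. Outside $B_{2\eps}^{g_M}(p)$ we have $H\equiv0$, so the integrand coincides with $\abs{W^{g_M}}^2\,dV_{g_M}$. On the annulus $\eps\le\abs{z}\le2\eps$, $H$ is a fixed smooth tensor with bounded $C^2$ norm, so $\abs{W^{g_t}}^2\,dV_{g_t}=\abs{W^{g_M}}^2\,dV_{g_M}+O(t)$ uniformly. The real work is therefore on the punctured ball $B_\eps\setminus B_{r_t}$ with $r_t:=t^{1/4}\gamma^{-1}$, which is also where the domain moves with $t$.

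In conformal normal coordinates on $B_\eps$ we have $tH_{ij}=tT_{kijl}z^kz^l\abs{z}^{-4}$, so $\abs{\p^k(tH)}\le Ct\abs{z}^{-2-k}$. For $\abs{z}\ge r_t$ this gives
\[
\abs{tH}\le Ct\,r_t^{-2}=C\gamma^2 t^{1/2},
\]
which is small, so $g_t$ is a uniformly nondegenerate metric on the domain of integration. Since $W^{g_t}$ depends on at most two derivatives of the metric, we can write $W^{g_t}=W^{g_M}+E_t$. Here $E_t$ is the linearized Weyl tensor in $tH$ plus quadratic corrections, and we estimate
\[
\abs{E_t}\le C\big(\abs{\p^2(tH)}+\abs{\p g_M}\abs{\p(tH)}+\abs{tH}+\abs{\p(tH)}^2+\dots\big)\le C t\abs{z}^{-4}\big(1+t\abs{z}^{-4}\big)
\]
on $r_t\le\abs{z}\le\eps$. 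We then bound
\[
\Big|\int_{B_\eps\setminus B_{r_t}}\big(\abs{W^{g_t}}^2dV_{g_t}-\abs{W^{g_M}}^2dV_{g_M}\big)\Big|\le C\int_{r_t}^{\eps}\Big(t\rho^{-4}+t^2\rho^{-8}\Big)\rho^3\,d\rho+Ct^{1/2}\gamma^2,
\]
using $\abs{W^{g_M}}\le C$ and $\abs{dV_{g_t}-dV_{g_M}}\le C\abs{tH}\,dV$. The first integral is $O(t\log(1/r_t))=O(t\abs{\log t})$. The second is $O(t^2r_t^{-4})=O(\gamma^4 t)$.

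A cleaner alternative is to use the pointwise conformal invariance of $\abs{W}^2dV$ and rescale $z=r_t w$. Under this rescaling, $tH$ becomes $\gamma^{4}$ times a fixed homogeneous tensor, and its contribution lives on a region of bounded size. Either way, all error terms tend to $0$ as $t\to0^+$, for fixed $\gamma$. Finally, $\int_{B_{r_t}}\abs{W^{g_M}}^2dV_{g_M}=O(r_t^4)\to0$. Combining the three regions gives $V(t)\to\w^M(g_M)=V(0)$.

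The main obstacle is the borderline term $t\rho^{-4}\cdot\rho^3$ near the inner boundary. It is the cross term $2\langle W^{g_M},\dot W(tH)\rangle$, and it only yields $O(t\abs{\log t})$. That is enough for continuity, but it signals that differentiability of $V$ at $0$, needed for \eqref{eq:V(t)-taylor-exp}, is subtler. For the present lemma we only need the crude bound. If one wants more, the first thing to try is to expand $W^{g_M}=W(p)+O(\abs{z})$ and note that $\langle W(p),\dot W(T z z\abs{z}^{-4})\rangle$ is homogeneous of degree $-4$ with vanishing spherical mean, or reduces to a boundary term after integrating by parts. Either way, the log disappears.
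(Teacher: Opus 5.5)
Your proof is correct and follows essentially the same route as the paper: a pointwise bound $|W^{g_t}|^2_{g_t}-|W^{g_M}|^2_{g_M}=O(t|z|^{-4}+t^2|z|^{-8})$ on $\{t^{1/4}\gamma^{-1}<|z|<\eps\}$, integrated to give $O(t|\log t|)$; you are slightly more explicit than the paper about the volume form, the region $\eps\le|z|\le 2\eps$, and the excised ball $B_{r_t}$. One correction to your rescaling aside (not used in your main argument): after $z=r_t w$ the perturbation has size $t\,r_t^{-2}=t^{1/2}\gamma^2$, not $\gamma^4$, and the relevant region $1\le|w|\le \eps/r_t$ is unbounded as $t\to0^+$, which is exactly where the logarithm comes from.
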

\begin{proof}
By definition of $g_t$, for $\abs{z}$ close to $0$ we got 
\begin{align}\label{eq:metr-err-g_s}
    g_t= g_M + t O(\abs{z}^{-2}), \qquad \nabla g_t= \nabla g_M + t O(\abs{z}^{-3}), \qquad \nabla^2 g_t=\nabla^2 g_M + t O(\abs{z}^{-4}),
\end{align}
where $\nabla$ denotes the Euclidean gradient (w.r.t. the coordinates $z$).
Moreover, for $a=t^{1/4}<\abs{z}<\epsilon$, the metric $g_t^{-1}$ has the following expansion:
\begin{align*}
    g_t^{-1}=\Big(\delta_{ij}+\frac{1}{3}W_{kijl}z^k z^l-a^4 T_{kijl}\frac{z^k z^l}{\abs{z}^4}+O^{(3)}(\abs{z}^{3})\Big)dz^i dz^j, \qquad a=t^{1/4}<\abs{z}<\epsilon,
\end{align*}
so one also has
\begin{align}\label{eq:metr-err-g_s-inv}
    g_t^{-1}=g_M^{-1}+tO(\abs{z}^{-2}), \qquad \nabla g_t^{-1}=\nabla g_M^{-1}+t O(\abs{z}^{-3}), \qquad t^{1/4}<\abs{z}<\epsilon.
\end{align}
Notice that here the error terms do not depend on $t$ (but the range of $\abs{z}$ in which the expansion holds does).
Now, the Weyl tensor contains up to second order derivatives of the metric, and, if we look closely at $\abs{W^{g_t}}^2$, we see that it has (at worst) additional terms given by perturbations of order $\approx t^2 \abs{z}^{-8}+t\abs{z}^{-4}$. In other words,
\begin{align}\label{eq:weyl-norm-perturb}
    \abs{W^{g_t}}^2_{g_t}=\abs{W^{g_M}}^2_{g_M}+t^2O(\abs{z}^{-8})+t O(\abs{z}^{-4}), \qquad t^{1/4}<\abs{z}<\epsilon.
\end{align}
But now
\begin{align*}
    \int_{B_\epsilon\backslash B_{t^{1/4}\gamma^{-1}}}(t^2\abs{z}^{-8}+t\abs{z}^{-4})\,dz\approx t \int_{t^{1/4}\gamma^{-1}}^\epsilon r^{-1}\,dr =O(t(\log t)) \qquad \text{as $t\to0^+$,}
\end{align*}
which implies that $V(t)\to V(0)=\int_M \abs{W^{g_M}}^2\,dV_{g_M}=\w^M(g_M)$ as $t\to 0^+$ (the error given by the perturbation vanishes in the limit). 
\end{proof}

\begin{lemma}\label{lem:V'(0)-computation}
    $V$ is differentiable at $0$ and one has
    \begin{align}\label{eq:V'(0)}
    V'(0)=-\frac{\pi^2}{2}\gamma^{-4}\abs{W^M(p)}^2+\frac{4}{3}W^M_{kijl}T^{kijl} +O(\gamma^4).
\end{align}
\end{lemma}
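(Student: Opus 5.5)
The plan is to split $V(t)$ into the integral over the fixed region $M\setminus B_\epsilon(p)$ and the integral over the shrinking annulus $\Omega_t:=B_\epsilon\setminus B_{t^{1/4}\gamma^{-1}}$. The derivative then has two sources: the variation of the integrand, and the motion of the inner boundary, whose radius $\rho(t)=t^{1/4}\gamma^{-1}$ satisfies $\rho'(t)=\tfrac14 t^{-3/4}\gamma^{-1}$.

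On $M\setminus B_\epsilon$ the variation is smooth in $t$. Its first variation equals $-\tfrac14\int \langle B_{g_M},H\rangle$ up to boundary terms, and this bulk part vanishes because $g_M$ is Bach-flat. On the annulus I will write $|W^{g_t}|^2dV_{g_t}=|W^{g_M}|^2dV_{g_M}+t\,\mathcal{L}(H)+t^2\mathcal{Q}(H)$, where $\mathcal{L}$ is the pointwise linear term $2\langle W,\dot W(H)\rangle$ plus trace corrections, and $\mathcal{Q}$ collects higher orders. By \eqref{eq:weyl-norm-perturb} we have $\mathcal{Q}=O(|z|^{-8})$, so $t^2\int_{\Omega_t}\mathcal{Q}=O(t^2\rho^{-4})=O(t\gamma^4)$. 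This term is $t$ times $O(\gamma^4)$, which is the stated error. With care, the $t$-dependence of the domain makes this contribution $O(\gamma^4)\,t$ and it does not spoil differentiability.

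The linear term $t\int_{\Omega_t}\mathcal{L}(H)$ is handled by integrating by parts twice, as in \eqref{eq:weyl-second-der-parts}. Since $B_{g_M}=0$, only boundary integrals on $\partial B_\rho$ survive. On $\partial B_\epsilon$ they cancel against the boundary terms coming from the outer region, because everything is smooth there. Near $p$ I will use the expansions $W\approx W_{kijl}(p)$ and $H=T_{kijl}z^kz^l|z|^{-4}$, which is homogeneous of degree $-2$. The boundary integrals then have the schematic form $\int_{\partial B_\rho}\big(W*\partial H+\partial W*H\big)$, i.e. $\rho^3\cdot\rho^{-3}=O(1)$, and the angular integrals are computed with \eqref{eq:brendle-form} and \eqref{eq:brendleform-order2}. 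This yields a constant multiple of $W^M_{kijl}T^{kijl}$, which I expect to be $\tfrac43 W_{kijl}T^{kijl}$ after using the symmetries of $T$ from \eqref{eq:T-component-def}, and this mirrors the computation of Lemma \ref{lem:Z-en-bal-1}. Terms coming from the $O(|z|)$ part of $W$ or $\nabla W$ give $O(\rho)$ and vanish as $t\to0$.

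The moving boundary contributes $-\rho'(t)\int_{\partial B_\rho}|W^{g_t}|^2$. The leading part is $-\tfrac14 t^{-3/4}\gamma^{-1}\,2\pi^2\rho^3\,|W|^2_{\mathrm{avg}}$, which is $t$-independent at order $t^0$. This piece already belongs to $V(0)$ after reindexing, since $V(t)-\w^M$ contains $-\int_{B_\rho}|W^{g_M}|^2\approx-\tfrac{\pi^2}{2}\rho^4\,\mathrm{avg}$. The angular average of $|W_{kijl}z^kz^l|$-type terms, computed via \eqref{eq:brendle-form}, gives $-\tfrac{\pi^2}{2}\gamma^{-4}|W^M(p)|^2\cdot t$ exactly as in \eqref{eq:C_1C_2lem-id2}, up to conformal normalization. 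The cross term $2t\langle W,\dot W(H)\rangle$ on $\partial B_\rho$ times $\rho'\rho^3$ gives $t\cdot\rho^{-2}\cdot\rho^{3}\rho'=O(t^{1/2}\gamma^{-2})\cdot t^{1/4}\ldots$; I will check that it is of order $t\gamma^{-2}\rho^{2}$, hence negligible. I will then assemble $V(t)-V(0)=t\big(-\tfrac{\pi^2}{2}\gamma^{-4}|W|^2+\tfrac43 W\cdot T+O(\gamma^4)\big)+o(t)$ and read off $V'(0)$.

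The main obstacle is the bookkeeping of the inner boundary terms after integration by parts: getting the exact coefficient $\tfrac43$ requires the full linearized Weyl formula \eqref{eq:weyl-lin-tfree-eucl}. Since $H$ is not TT (cf. Remark \ref{rem:A-non-TT-property}), the trace and divergence terms must be shown to integrate to zero against $W$ by trace-freeness. I would instead first try to reduce to the inverted picture via Lemma \ref{lem:perturb-metr-inverse-exp}: there $T_{kijl}y^ky^l$ is polynomial, and the cross term is a flux at infinity analogous to Lemma \ref{lem:Z-en-bal-2}.
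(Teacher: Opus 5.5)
Your route differs from the paper's, and in its clean form it works. The paper differentiates. For $s>0$ it writes $V'(s)$ as the bulk first variation over $M\setminus B_{\rho(s)}$ minus the moving-boundary term $\rho'(s)\int_{\partial B_\rho}|W^{g_s}|^2\,d\sigma_{g_s}$. It then replaces $g_s$ by $g_M$ in the bulk at cost $O(\gamma^4)$, expands the boundary term to first order in $s$, integrates the bulk by parts twice using $B_{g_M}=0$, and lets $s\to0^+$.

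You instead expand directly:
$V(t)-V(0)=-\int_{B_\rho}|W^{g_M}|^2dV_{g_M}+t\int_{M\setminus B_\rho}\mathcal{L}(H)+t^2\int_{M\setminus B_\rho}\mathcal{Q}+o(t)$.
This buys you something. After two integrations by parts, the linear term is manifestly the flux $4\int_{\partial B_\rho}W_{nij\beta}\nabla^\beta H^{ij}\,d\sigma+O(\rho)$. So you never need the separate spherical identity \eqref{eq:wm-est-coeff}: its vanishing is exactly the absence of a $\log\rho$ term, which your integration by parts already shows.

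You also do not need the full linearized Weyl formula or any TT property of $H$. Pointwise, the first variation of $|W|^2dV$ is $-4\big(W^{\alpha ij\beta}\nabla_\alpha\nabla_\beta H_{ij}+\tfrac12R^{\alpha\beta}W_{\alpha ij\beta}H^{ij}\big)dV$, so only the trace-free $W$ is contracted with $\nabla^2H$. For genuine differentiability, not just a bound, note that the leading part of $\mathcal{Q}$ is homogeneous of degree $-8$. Hence $t^{-1}\cdot t^2\int_{M\setminus B_\rho}\mathcal{Q}$ converges to an explicit $c\,\gamma^4$.

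You must fix the moving-boundary paragraph: it mixes the two bookkeepings and contains a wrong estimate. Since $\dot W(H)\sim\nabla^2H=O(|z|^{-4})$, not $O(|z|^{-2})$, the cross term $t\rho'(t)\int_{\partial B_\rho}2\langle W,\dot W(H)\rangle\,d\sigma$ has size $t\rho'\cdot\rho^{-1}=\tfrac14$. It is scale-invariant and of the same order as $W\cdot T$, not $O(t\gamma^{-2}\rho^2)$.

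In the paper's derivative scheme, this is precisely the term for which ``we actually need to be more precise''. It drops out only because $\int_{\Sp^3}W^{\alpha ij\beta}\partial_\alpha\partial_\beta\tilde H_{ij}\,d\sigma=0$, not by smallness. In your direct scheme it must not appear at all: it is already inside $t\int_{\Omega_t}\mathcal{L}(H)$, and adding it would double count.

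So either drop that paragraph or compute the term exactly as the paper does. If you drop it, the only boundary effect left is
$-\int_{B_\rho}|W^{g_M}|^2dV_{g_M}=-|W(p)|^2\,\mathrm{vol}(B_\rho)+O(\rho^5)=-\tfrac{\pi^2}{2}t\gamma^{-4}|W(p)|^2+o(t)$,
and no angular averages of $W_{kijl}z^kz^l$ are involved.

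When you evaluate the flux with \eqref{eq:brendle-form} and \eqref{eq:brendleform-order2}, you will find $4\cdot\tfrac{\pi^2}{3}W_{kijl}T^{kijl}$. That is also what the paper's own final display produces, so the coefficient $\tfrac43$ in the statement appears to have lost a factor $\pi^2$. This is harmless for the later sign argument.
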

\begin{proof}
In order to compute $V'(0)$, we first compute $V'(s)$ for $s>0$ and then take the limit as $s\to0^+$. Using the chain rule, one has
\begin{align}\label{eq:v-deriv-wm}
    V'(s)=-4\int_{M \backslash B_{s^{1/4}\gamma^{-1}}^{g_M}}\big( W^{g_s}_{\alpha i j \beta}\nabla^\alpha \nabla^\beta H^{ij}+\frac{1}{2}R_{g_s}^{\alpha \beta}W^{g_s}_{\alpha i j \beta}H^{ij}\big)\,dV_{g_s}-\int_{\partial B_{s^{1/4}\gamma^{-1}}^{g_M}}\abs{W^{g_s}}^2\frac{1}{4\gamma s^{3/4}}\,d\sigma_{g_s}.
\end{align}
We now claim that, up to a (small) error term, it is possible to substitute $g_s=g_M + sH$ by $g_M$ in the above expression. Indeed, consider for instance the first term of the first integral. Then, recalling \eqref{eq:metr-err-g_s}, \eqref{eq:metr-err-g_s-inv} and the formula for Weyl's tensor in local coordinates \eqref{eq:weyl-local-coord-expr}, one sees that
\begin{align}
\notag
    \Big\vert\int_{M \backslash B_{s^{1/4}\gamma^{-1}}^{g_M}} W^{g_s}_{\alpha i j \beta}&\nabla^\alpha \nabla^\beta H^{ij}\,dV_{g_s}-\int_{M \backslash B_{s^{1/4}\gamma^{-1}}^{g_M}}W^{g_M}_{\alpha i j \beta}\nabla^\alpha \nabla^\beta H^{ij}\,dV_{g_M}\Big\vert \\
    \label{eq:wm-est-1}
    &\leq Cs+C\int_{B_\epsilon\backslash B_{s^{1/4}\gamma^{-1}}^{g_M}} s(\abs{z}^{-2}+\abs{z}^{-3}+\abs{z}^{-4})\abs{z}^{-4}\,dV_{g_M}\leq Cs+ C\gamma^4,
\end{align}
where we also used that $dV_{g_s}=dV_{g_M}+O(s^{1/4})dx$. Arguing in the same way, we also see that
the second term of the first integral generates an error of order $\sqrt{s}$. As for the last integral in \eqref{eq:v-deriv-wm}, using \eqref{eq:weyl-norm-perturb} we only obtain 
\begin{align*}
    \int_{\partial B_{s^{1/4}\gamma^{-1}}^{g_M}}\abs{W^{g_s}}^2\frac{1}{4\gamma s^{3/4}}\,d\sigma_{g_s}=\int_{\partial B_{s^{1/4}\gamma^{-1}}^{g_M}}\abs{W^{g_M}}^2\frac{1}{4\gamma s^{3/4}}\,d\sigma_{g_M}+O(1),
\end{align*}
so we actually need to be more precise. Since $d\sigma_{g_s}=d\sigma_{g_M}+O(s^{1/4})\,d\sigma_{\Sp^3}$, we focus only on the squared norm of the Weyl tensor. Being $\gamma\ll 1$, we can Taylor-expand this quantity in $\partial B_{s^{1/4}\gamma^{-1}}^{g_M}$ and obtain
\begin{align*}
    \abs{W^{g_s}}^2=\abs{W^{g_M}}^2+\underbrace{s\Big[-4\big( W^{g_M}_{\alpha i j \beta}\nabla^\alpha \nabla^\beta H^{ij}+\frac{1}{2}R_{g_M}^{\alpha \beta}W^{g_M}_{\alpha i j \beta}H^{ij}\big)-\frac{1}{2}\abs{W^{g_M}}^2g_M^{ij}H_{ij}\Big]}_{=O(\gamma^4)}+O(\gamma^8).
\end{align*}
Then, since $\abs{\nabla^2 H_{ij}}=O(\abs{z}^{-4})$, one has
\begin{align*}
    \int_{\partial B_{s^{1/4}\gamma^{-1}}^{g_M}}\abs{W^{g_s}}^2\,d\sigma_{g_s}=&\int_{\partial B_{s^{1/4}\gamma^{-1}}^{g_M}}\abs{W^{g_M}}^2\,d\sigma_{g_M}
    -4s\int_{\partial B_{s^{1/4}\gamma^{-1}}^{g_M}} W^{g_M}_{\alpha i j \beta}\nabla^\alpha \nabla^\beta H^{ij}\,d\sigma_{g_M}+O(\gamma^5 s^{3/4}).
\end{align*}
From this formula and a change of variables, it follows that
\begin{align}\label{eq:wm-est-form}
    \frac{1}{4\gamma s^{3/4}}\int_{\partial B_{s^{1/4}\gamma^{-1}}^{g_M}}\abs{W^{g_s}}^2\,d\sigma_{g_s}=\frac{\pi^2}{2}\gamma^{-4}\abs{W^{g_M}(p)}^2-\int_{\Sp^3}W^{g_M}_{\alpha i j \beta}(p)\partial^\alpha \partial^\beta \tilde{H}^{ij}\,d\sigma +O(\gamma^4)+O(s^{1/4}\gamma^{-1}),
\end{align}
where $\tilde{H}_{ij}(x):=T_{kijl}{x^k x^l}{\abs{x}^{-4}}$.
But now, on $\Sp^3$,
\begin{align*}
    \partial^2_{\alpha \beta} \tilde{H}_{ij}(x)=T_{kijl}\Big[\delta_\alpha^k \delta_\beta^l+\delta_\beta^k \delta_\alpha^l-4x_\beta(\delta_\alpha^k x^l+x^k\delta_\alpha^l)-4\big(\delta_{\alpha\beta}x^k x^l+x_\alpha(\delta_\beta^k x^l+x^k \delta_\beta^l)\big)+24x_\alpha x_\beta x^k x^l\Big],
\end{align*}
so
\begin{align}
\notag
    \int_{\Sp^3}W_{g_M}^{\alpha i j \beta}(p)\partial_\alpha \partial_\beta \tilde{H}_{ij}\,d\sigma&=-W_{g_M}^{\alpha i j \beta}\int_{\Sp^3}\Big[(T_{\alpha i j \beta}+T_{\beta i j \alpha})-4(T_{\alpha i j k}+T_{kij \alpha})x_\beta x^k \\
    \notag
    & \qquad \qquad \qquad \qquad-4(T_{\beta i j k}+ T_{k i j \beta})x_\alpha x^k +24 T_{kijl}x_\alpha x_\beta x^k x^l\Big]\,d\sigma \\
    \label{eq:wm-est-coeff}
    &=-W_{g_M}^{\alpha i j \beta}(2\pi^2-2\pi^2-2\pi^2+2\pi^2) \big( T_{\alpha i j \beta} + T_{\beta i j \alpha}\big)=0,
\end{align}
where the last identity follows from \eqref{eq:brendleform-order2} and \eqref{eq:brendle-form}.
Substituting \eqref{eq:wm-est-1}, \eqref{eq:wm-est-form} and \eqref{eq:wm-est-coeff} into \eqref{eq:v-deriv-wm}, we finally obtain
\begin{align*}
    V'(s)=&-4\int_{M \backslash B_{s^{1/4}\gamma^{-1}}^{g_M}}\big( W^{g_M}_{\alpha i j \beta}\nabla^\alpha \nabla^\beta H^{ij}+\frac{1}{2}R_{g_M}^{\alpha \beta}W^{g_M}_{\alpha i j \beta}H^{ij}\big)\,dV_{g_M}\\
    &\qquad-\frac{\pi^2}{2}\gamma^{-4}\abs{W^M(p)}^2 +O(\gamma^4) + O(s^{1/4}\gamma^{-1}).
\end{align*}
Integrating by parts twice and using that $g_M$ is Bach-flat, we further have
\begin{align*}
    V'(s)=&4\int_{\partial B^{g_M}_{s^{1/4}\gamma^{-1}}}W^{g_M}_{n i j \beta}\nabla^\beta H^{ij}\,d\sigma_{g_M}-4\int_{\partial B^{g_M}_{s^{1/4}\gamma^{-1}}}\nabla^\beta W^{g_M}_{n i j \beta}H^{ij}\,d\sigma_{g_M} \\
    &\qquad-\frac{\pi^2}{2}\gamma^{-4}\abs{W^M(p)}^2 +O(\gamma^4) + O(s^{1/4}\gamma^{-1}),
\end{align*}
where here $n$ denotes the normal direction pointing towards the \emph{exterior} of the ball. But now the second integral is an $O(s^{1/4}\gamma^{-1})$, while, after scaling, the first integral satisfies
\begin{align*}
    \int_{\partial B^{g_M}_{s^{1/4}\gamma^{-1}}}W^{g_M}_{n i j \beta}\nabla^\beta H^{ij}\,d\sigma_{g_M}&=W^M_{\alpha i j \beta}(p)T^{kijl}\int_{\Sp^3}(\delta^\beta_k x_l + x_k \delta^\beta_l-4 x^\beta x_k x_l)x^\alpha\,d\sigma_{\Sp^3}+O(s^{1/4}\gamma^{-1}) \\
    =&W^M_{\alpha i j \beta}(p)\int_{\Sp^3}\Big[(T^{\beta i j k}+T^{kij\beta})x^\alpha x_k -4 T^{kijl}x^\alpha x^\beta x_k x_l\Big]\,d\sigma_{\Sp^3}+O(s^{1/4}\gamma^{-1}) \\
    =&W^M_{\alpha i j \beta}\Big(\frac{\pi^2}{2}-\frac{\pi^2}{3}\Big)\big(T^{\alpha i j \beta}+T^{\beta i j \alpha}\big)+ O(s^{1/4}\gamma^{-1}) \\
    =&\frac{\pi^2}{3}W^M_{\alpha i j \beta} T^{\alpha i j \beta} + O(s^{1/4}\gamma^{-1}),
\end{align*}
where we used \eqref{eq:brendleform-order2}, \eqref{eq:brendle-form} and the symmetry of $T$ in $\alpha , \beta$.
Substituting inside the formula for $V'(s)$ and taking the limit as $s\to 0^+$, we finally obtain \eqref{eq:V'(0)}.
\end{proof}

By looking at \eqref{eq:V(t)-def}, \eqref{eq:V(t)-taylor-exp}, \eqref{eq:V'(0)} and recalling that $t=a^4$, we obtain the following:

\begin{corollary}\label{cor:weyl-en-g_a-M}
    Let $g_a$ be given by \eqref{eq:g_a} (in particular $W=W^M(p)$ and $T_{kijl}=\frac{1}{2}\p^2_{kl}A_{ij}(0)$). Then
\begin{align}\label{eq:weyl-energy-g_a-expansion}
    \int_{M\backslash B_{a\gamma^{-1}}^{g_M}(p)}\abs{W^{\tilde{g}_M}}^2\,dV_{\tilde{g}_M}=\w^M(g_M)-\frac{\pi^2}{2}\frac{a^4}{\gamma^4}\abs{W^M(p)}^2+\frac{4}{3}a^4 W^{kijl} T_{kijl} + O(a^4\gamma^4)+ o(a^4),
\end{align}
provided $0\ll a \ll\gamma \ll 1$.
\end{corollary}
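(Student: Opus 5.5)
The corollary should follow by combining the Taylor expansion \eqref{eq:V(t)-taylor-exp} with the value of $V'(0)$ from Lemma \ref{lem:V'(0)-computation}, after undoing the conformal change and the rescaling. Two preliminary identifications are needed.

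First, the integral on the left of \eqref{eq:weyl-energy-g_a-expansion} is exactly $V(t)$ with $t=a^4$, by the definition \eqref{eq:V(t)-def}. Indeed $\tilde g_M=g_M+a^4H=g_t$, and the excised ball has radius $a\gamma^{-1}=t^{1/4}\gamma^{-1}$.

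Second, the tensor $T$ appearing in $H$ through \eqref{eq:H-tens-def} is $\tfrac12\partial^2_{kl}A_{ij}(0)$. By Propositions \ref{prop:F-expansion-cylinder} and \ref{prop:F-exp-quotients} this $T$ has the form \eqref{eq:T-component-def}. This is what allowed Lemma \ref{lem:perturb-metr-inverse-exp} to be applied in deriving \eqref{eq:g_a}, but it plays no further role in the energy count.

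By the continuity lemma, $V(0)=\w^M(g_M)$ and $V$ is continuous at $0$. Lemma \ref{lem:V'(0)-computation} gives differentiability at $0$ together with
\[
V'(0)=-\frac{\pi^2}{2}\gamma^{-4}|W^M(p)|^2+\frac43 W^M_{kijl}T^{kijl}+O(\gamma^4).
\]
By the definition of the derivative we then have $V(t)=V(0)+tV'(0)+o(t)$ as $t\to0^+$. Substituting $t=a^4$ gives
\[
V(a^4)=\w^M(g_M)-\frac{\pi^2}{2}\frac{a^4}{\gamma^4}|W^M(p)|^2+\frac43a^4W^{kijl}T_{kijl}+O(a^4\gamma^4)+o(a^4),
\]
which is \eqref{eq:weyl-energy-g_a-expansion}.

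The one point that needs care is uniformity. The remainder $o(t)$ a priori depends on $\gamma$, because $\gamma$ enters the domain of integration of $V$. So I would state the corollary, as it is written, for $\gamma$ fixed and $a\to0$. In the regime $0<a\ll\gamma\ll1$ this is legitimate, since $a$ is chosen small after $\gamma$ has been fixed.

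If more quantitative control is wanted, I would instead use the mean value theorem, $V(t)-V(0)=tV'(s)$ for some $s\in(0,t)$. The proof of Lemma \ref{lem:V'(0)-computation} shows that $V'(s)$ differs from $V'(0)$ by at most $O(s^{1/4}\gamma^{-1})+O(\gamma^4)$. Since $s^{1/4}\le a$, this turns the remainder into $O(a^5\gamma^{-1})$, which is $o(a^4)$ whenever $a\ll\gamma$. This uniformity is the only genuine step; everything else is bookkeeping.
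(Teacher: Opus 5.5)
Your proposal is correct and follows the paper's argument: the paper also identifies the left-hand side with $V(a^4)$ and substitutes $t=a^4$ into $V(t)=V(0)+tV'(0)+o(t)$, with $V'(0)$ taken from Lemma \ref{lem:V'(0)-computation}. Your uniformity remark adds useful precision that the paper leaves implicit: you apply the mean value theorem together with the bound $V'(s)-V'(0)=O(s^{1/4}\gamma^{-1})+O(\gamma^4)$ from that lemma's proof, which turns the remainder into $O(a^5\gamma^{-1})$. This sharpens the same route rather than replacing it.
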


\begin{remark}
   The approach adopted here is not the only viable one. Indeed, it is also possible to directly add a quadratic correction to the (scaled and) inverted metric directly in order to obtain an expansion as in \eqref{eq:g_a}. In this way, it is no longer necessary to project the hessian on its \acc curvature part'' using Lemma \ref{lem:tensor-projection} (as done in Section \ref{sec:green-LCF}). However, we would then get additional terms when considering the correction in normal coordinates on $M$ during the proof of Lemma \ref{lem:V'(0)-computation}, which would make the above computations significantly more involved.
\end{remark}

\section{Conclusion of the proofs} \label{sec:conclusion}

After having constructed the metric corrections $g_a$ on $M$ (see \eqref{eq:g_a}) and $g_b$ on $Z$ (see \eqref{eq:g_b-definition}) and having estimated their Weyl energies in Corollary \ref{cor:weyl-en-g_a-M} and Corollary \ref{cor:Weyl-energy-g_b-Z} respectively, we are now in position to define a competitor metric $g_X$ on the connected sum $Z\# \overbar{M}$ and compute its Weyl energy. As we will see in Proposition \ref{prop:en-balance}, the difference between the Weyl energy of $g_X$ and that of $g_M$ will depend upon the interaction term \eqref{eq:A-interaction-form}, which can be shown to have the right sign (i.e. to be negative) after a proper choice of coordinates on $M$, see Lemma \ref{lem:inter-term-formula}.
This will allow us to prove Theorem \ref{thm:main2} in this setting, that is, for $Z$ diffeomorphic to a finite quotient of $\Sp^1 \times \Sp^3$. Then, in Proposition \ref{p:ZZ1} we will deal with the general case in which $Z$ is a connected sum of quotients as in Theorem \ref{thm:CTZ-LCF-classification}. Finally, we will show how to extend the result to the orbifold setting and prove Theorem \ref{thm:gluing-orbifold}.

\subsection{Definition and expansion of the metric on the connected sum}\label{subsec-gluing-metrics}

We start by defining the connected sum between $Z$ and $M$. Let $\{x^i\}$ denote normal (flat) coordinates for $g_Z$ at $q\in Z$, defined for $\abs{x}<\delta$.
With a slight abuse of notation, let us also denote by $\{x^i\}$ the inverted normal coordinates for $g_a$ defined at the beginning of Section \ref{sec:weyl-en-M}, which are defined for $\abs{x}>a/\eps$. 
As in the previous section, let us consider $\gamma>0$ satisfying $0<a\ll\gamma\ll\eps,\delta\ll1$. We now glue $M$ and $Z$ along the annular regions $\{x\mid \gamma/2 <\abs{x}<2\gamma\}$ (of both $M$ and $Z$, given by their respective coordinates) via the identification map $x\to x$.
We denote by $X$ the resulting smooth manifold.

\begin{remark}\label{rem:gluing-diffeo-X}
    By construction, $X$ is diffeomorphic to $Z\# \overbar{M}$, where $\overbar{M}$ denotes the manifold $M$ endowed with its opposite orientation. This is the same choice adopted in \cite{malchiodi-malizia-2025-pre-weyl}, see Remark 3.5, as well as in \cite{gursky-viaclovsky-2016-Advances}, see their Remark 9.1.
    This does not create any problem, since we can easily get a manifold diffeomorphic to $Z\# M$ by simply repeating the same procedure with $\overbar{M}$ in place of $M$.
\end{remark}

We now want to define a competitor metric $g_X$ on $X$. To begin, recall the expressions \eqref{eq:g_b-definition} for $g_b$ in normal (flat) coordinates at $q\in Z$ and \eqref{eq:g_a} for $g_a$ in inverted conformal normal coordinates at $p\in M$. 
We would like to glue these metrics at $\abs{x}=\gamma$, but in order to do so we first need to cut off some higher order error terms.

Since we got $A(0)=0$, $\nabla A(0)=0$ (see e.g. \eqref{eq:AgradA-vanish}), we can write
\begin{align*}
    A_{ij}(x)=\frac{1}{2}\p^2_{kl}A_{ij}(0) x^k x^l + \eta_{ij}(x), \qquad \text{for $\abs{x}<\delta$,}
\end{align*}
where
\begin{align}\label{eq:eta-err-est}
    \big| \nabla^k\eta_{ij}(x)\big|\leq C\abs{x}^{3-k}, \qquad \text{for $\abs{x}\leq \delta$, $k=0,\dots,4$.}
\end{align}
Let us define a smooth cutoff function $\chi_\gamma$ satisfying
\begin{align}\label{eq:chi_gamma-def}
    \chi_\gamma(r)=\begin{cases*}
        0 & \text{for $r\leq \frac{\gamma}{4}$} \\
        1 & \text{for $r\geq \frac{3}{4}\gamma$},
    \end{cases*}
     \qquad \qquad \big|(\chi_\gamma)^{(k)}\big|\leq C \gamma^{-k}, \quad k=0,\dots,4,
\end{align}
where $C>0$ does not depend upon $\gamma$.
We now define the modified metric  $\tilde{g}_a$ on $M\backslash\{p\}$ by
\begin{align}\label{eq:tildeg_a}
    (\tilde{g}_a)_{ij}(x):=\delta_{ij}+a^2\Big(-\frac{1}{3}W_{kijl}\frac{x^k x^l}{\abs{x}^4}+\frac{1}{2}\p^2_{kl}A_{ij}(0)x^k x^l\Big)+ \chi_\gamma(\gamma-\abs{x})\zeta^a_{ij}(x), \qquad \text{for $\abs{x}>\frac{a}{\eps}$},
\end{align}
with $\tilde{g}_a=g_a$ elsewhere on $M\backslash\{p\}$.
Similarly, we define the modified metric $\hat{g}_b$ on $Z$ by
\begin{align}\label{eq:hatg_b}
    (\hat{g}_b)_{ij}(x):=\delta_{ij}+b^2\Big(-\frac{1}{3}W_{kijl}\frac{x^k x^l}{\abs{x}^4}+\frac{1}{2}\p^2_{kl}A_{ij}(0) x^k x^l+\chi_\gamma(\abs{x}-\gamma)\eta_{ij}(x)\Big), \qquad \text{for }\abs{x}<\delta,
\end{align}
with $\hat{g}_b=g_b$ elsewhere on $Z$.
We now \underline{take $a=b$} and define 
\begin{align}\label{eq:g_X-definition}
    g_X:=\begin{cases*}
    g_a & \text{in $M\setminus\{x\mid\abs{x}>2a/\eps\}$} \\
        \tilde{g}_a & \text{in $\{x\mid 2a/\eps<\abs{x}<\gamma$\}} \\
        \hat{g}_b & \text{in $\{x\mid \gamma\leq \abs{x}\leq \delta/2\}$}\\
        g_b & \text{in $Z\setminus \{x\mid \abs{x}\leq\delta/2\}$}.
    \end{cases*}
\end{align}
By definition, $g_X$ is a smooth metric on $X$.

\begin{proposition}\label{prop:en-balance}
    Let $g_X$ be defined as in \eqref{eq:g_X-definition}, and assume that
    $0<a=b\ll \gamma \ll \eps,\delta\ll 1$. Then the following expansion holds:
    \begin{align}\label{eq:energy-balance}
        \w^X(g_X)-\w^M(g_M)=\frac{2}{3}a^4 W^{kijl}\p^2_{kl} A_{ij}(0)+ O(a^4\gamma)+o(a^4).
    \end{align}
\end{proposition}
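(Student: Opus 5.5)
We will split $X$ into four regions and use the pointwise conformal invariance of $\abs{W}^2\,dV$. Write $\w^X(g_X)=I_M+I_{M,\gamma}+I_{Z,\gamma}+I_Z$, where:
\begin{itemize}
\item $I_M$ is the integral over the part of $M$ with $\abs{x}\ge\gamma$ (inverted coordinates, i.e.\ $\abs{z}\le a\gamma^{-1}$ excluded);
\item $I_{M,\gamma}$ is the integral over the annulus $\{2a/\eps<\abs{x}<\gamma\}$ on the $M$ side, where $g_X=\tilde g_a$;
\item $I_{Z,\gamma}$ is the integral over $\{\gamma\le\abs{x}\le\delta/2\}$, where $g_X=\hat g_b$;
\item $I_Z$ is the integral over the rest of $Z$.
\end{itemize}
The point is that $\tilde g_a$ coincides with $g_a$ for $\abs{x}\le 3\gamma/4$ (since $\chi_\gamma(\gamma-\abs{x})=1$ there). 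Likewise $\hat g_b=g_b$ for $\abs{x}\ge 3\gamma/4$. So the only discrepancies with the metrics of Sections \ref{sec:Weyl-en-Z} and \ref{sec:weyl-en-M} are confined to the thin annulus $\gamma/4<\abs{x}<3\gamma/4$ on each side. (The gluing orientation of the regions with respect to $\abs{x}=\gamma$ has to be tracked carefully; I would first recheck which side of $\gamma$ each piece lives on, reindexing cutoffs if necessary.)

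\textbf{Step 1: reduce to Corollaries \ref{cor:weyl-en-g_a-M} and \ref{cor:Weyl-energy-g_b-Z}.} The region of $X$ coming from $M$ is $\{\abs{x}<\gamma\}$ in inverted coordinates, i.e.\ $M\setminus B_{a\gamma^{-1}}(p)$. By conformal invariance this gives exactly the quantity in \eqref{eq:weyl-energy-g_a-expansion}, up to replacing $g_a$ by $\tilde g_a$ where the cutoff acts. Similarly, the $Z$ part equals $\w^Z_\gamma(g_b)$ up to replacing $g_b$ by $\hat g_b$ near $\abs{x}\sim\gamma$. Adding the two expansions, the singular terms $\pm\frac{\pi^2}{2}a^4\gamma^{-4}\abs{W^M(p)}^2$ cancel exactly; this is precisely the role of Lemma \ref{lem:C_1+C_2-term}. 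What remains is
\[
\frac43 a^4W^{kijl}T_{kijl}=\frac23 a^4 W^{kijl}\p^2_{kl}A_{ij}(0),
\]
by \eqref{eq:T-hessian-def}, plus $O(a^4\gamma^2)+O(a^4\gamma^4)+o(a^4)$.

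\textbf{Step 2: estimate the cutoff errors.} On $\gamma/4<\abs{x}<3\gamma/4$ the metric is a perturbation of the common model
\[
g_0:=\delta+a^2\Big(K+\tfrac12\p^2A(0)\,x\,x\Big).
\]
On the $Z$ side the difference is $a^2\eta$, with $\abs{\nabla^k\eta}\le C\gamma^{3-k}$ by \eqref{eq:eta-err-est}, including the derivatives falling on $\chi_\gamma$. On the $M$ side the difference is $\zeta^a$, with $\abs{\nabla^k\zeta^a}\le Ca^3\gamma^{-3-k}$ by \eqref{eq:zeta^a-err-def}. The Weyl tensor of $g_0$ is $O(a^2\gamma^{-4})$, coming from the $K$ part; the quadratic term is a curvature-type term of size $O(a^2)$.

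The change in $\int\abs{W}^2$ is bounded by
\[
\int_{\text{annulus}} \abs{W^{g_0}}\,\abs{\nabla^2(\text{difference})}+\abs{\nabla^2(\text{difference})}^2 .
\]
On the $Z$ side this gives $a^2\gamma^{-4}\cdot a^2\gamma\cdot\gamma^4=O(a^4\gamma)$, plus smaller terms. On the $M$ side it gives $a^2\gamma^{-4}\cdot a^3\gamma^{-5}\cdot\gamma^4=O(a^5\gamma^{-5})=o(a^4)$ for $a\ll\gamma$.

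The main obstacle is the $Z$-side cross term. The $\gamma$-scaling is only borderline, and it produces the $O(a^4\gamma)$ error in the statement rather than $O(a^4\gamma^2)$. One needs the leading $a^2K$ Weyl term paired against $a^2\nabla^2(\chi\eta)$; this pairing has no symmetry cancellation because of the cutoff, so I would simply accept it as $O(a^4\gamma)$. The same estimate applies to the lower-order curvature pieces and to the volume form.

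\textbf{Step 3: collect the terms.} Gathering Steps 1 and 2 with $a=b$ yields \eqref{eq:energy-balance}. Note also that the regions $\abs{x}<2a/\eps$ and $\abs{x}>\delta/2$ are unchanged relative to the expansions already computed.
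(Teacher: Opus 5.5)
Your proof is correct. Step 1 is exactly the paper's reduction: combine Corollary~\ref{cor:Weyl-energy-g_b-Z} and Corollary~\ref{cor:weyl-en-g_a-M} with $a=b$, let the terms $\pm\frac{\pi^2}{2}a^4\gamma^{-4}|W^M(p)|^2$ cancel, use $\frac43 W^{kijl}T_{kijl}=\frac23 W^{kijl}\partial^2_{kl}A_{ij}(0)$, and reduce to comparing $|W|^2dV$ for $g_X$ against $g_a$ and $g_b$ on annuli of radius comparable to $\gamma$.

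You differ from the paper in how you handle the annulus error. The paper expands the energies of $\hat g_b$ and $g_b$ to second order in $b$ about $g_E$ and integrates by parts twice. It then uses $\dot{B}(F)=0$, so that only $\dot{B}(E)$ survives in the interior, with $|\dot{B}(E)|\le C\gamma^{-1}$ and $|\hat F|\le C\gamma^{-2}$. The terms on $\partial B_{2\gamma}$ cancel, and those on $\partial B_\gamma$ are bounded by $C\gamma$. The $M$ side is only sketched ("argue as above with $a^3$ in place of $a^2$").

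You instead bound the change pointwise by $|W^{g_0}|\,|\nabla^2 D|+|\nabla^2 D|^2+\dots$. This gives the same $O(a^4\gamma)$ on the $Z$ side and $O(a^5\gamma^{-5})$ on the $M$ side. Because the claimed error is exactly the size of this crude bound, no cancellation is needed. Your route never uses $\dot{B}(F)=0$ or integration by parts, and it treats both sides uniformly; that is its advantage.

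The cost is the lower-order bookkeeping, which you only mention and should write out. On the $Z$ side, the terms $|W^{g_0}|^2|D|$ (from the norm and volume form) and $|W^{g_0}|\,|\nabla g_0|\,|\nabla D|$, $|W^{g_0}|\,|D|\,|\nabla^2 g_0|$ (from the nonlinear structure of $W$) all integrate to $O(a^6\gamma^{-1})$. These, like $a^5\gamma^{-5}$, are $o(a^4)$ in the regime $a\to0$ with $\gamma$ fixed. That is the same regime in which the paper's $O(b^6)$ remainders are harmless; it is stronger than merely requiring $a/\gamma\to 0$.

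Your localization of the cutoffs is off, as you suspected. Since $\chi_\gamma(r)=0$ for $r\le\gamma/4$ and $\chi_\gamma(r)=1$ for $r\ge 3\gamma/4$:
\begin{itemize}
\item $\tilde g_a=g_a$ only for $|x|\le\gamma/4$, and $\tilde g_a$ equals the model $g_0$ for $3\gamma/4\le|x|<\gamma$;
\item $\hat g_b=g_0$ for $\gamma\le|x|\le 5\gamma/4$, and $\hat g_b=g_b$ only for $|x|\ge 7\gamma/4$.
\end{itemize}
So the model metric sits on both sides of $|x|=\gamma$, which is what makes $g_X$ smooth. The discrepancies live in $\gamma/4<|x|<\gamma$ on the $M$ side and in $\gamma\le|x|<7\gamma/4$ on the $Z$ side, inside the annuli $B_\gamma\setminus B_{\gamma/8}$ and $B_{2\gamma}\setminus B_\gamma$ that the paper uses.

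Your description of the region for $I_M$ is also garbled (the $M$-part of $X$ is $\{|x|<\gamma\}$, as you say correctly in Step 1). None of this affects the estimates. They use only the bounds $|\nabla^k D|\le Ca^2\gamma^{3-k}$ (resp.\ $Ca^3\gamma^{-3-k}$) and $|W^{g_0}|\le Ca^2\gamma^{-4}$ on annuli of radius comparable to $\gamma$.
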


\begin{proof}
    By Corollary \ref{cor:Weyl-energy-g_b-Z}, Corollary \ref{cor:weyl-en-g_a-M} and the definition of $g_X$, we see that
    \begin{align*}
        \w^X(g_X)-\w^M(g_M)=&\frac{2}{3}a^4 W^{kijl}\p^2_{kl} A_{ij}(0)+ O(a^4\gamma^2)+o(a^4) \\
        &+\int_{B_{2\gamma}\backslash B_{\gamma/8}}\abs{W^{g_X}}^2\,dV_{g_X}-\int_{B_{2\gamma}\backslash B_\gamma}\abs{W^{g_b}}^2\,dV_{g_b}-\int_{B_\gamma \backslash B_{\gamma/8}}\abs{W^{g_a}}^2\,dV_{g_a}.
    \end{align*}
    The proposition then follows from the next lemma.
\end{proof}

\begin{lemma}
    Under the assumptions of Proposition \ref{prop:en-balance} (in particular $b=a$), one has
    \begin{align*}
        \int_{B_{2\gamma}\backslash B_{\gamma/8}}\abs{W^{g_X}}^2\,dV_{g_X}-\int_{B_{2\gamma}\backslash B_\gamma}\abs{W^{g_b}}^2\,dV_{g_b}-\int_{B_\gamma \backslash B_{\gamma/8}}\abs{W^{g_a}}^2\,dV_{g_a}=O(a^4\gamma).
    \end{align*}
\end{lemma}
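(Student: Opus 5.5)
The plan is to compare the three integrands pointwise on the annulus $B_{2\gamma}\setminus B_{\gamma/8}$, by writing every metric involved as a perturbation of a common model metric. Set
\begin{align*}
    g^0_{ij}(x):=\delta_{ij}+a^2 P_{ij}(x), \qquad P_{ij}(x):=-\frac{1}{3}W_{kijl}\frac{x^kx^l}{\abs{x}^4}+\frac{1}{2}\p^2_{kl}A_{ij}(0)x^kx^l,
\end{align*}
which is defined on the whole annulus. By \eqref{eq:tildeg_a}, \eqref{eq:hatg_b} and \eqref{eq:g_X-definition}, with $a=b$, we have $g_X=g^0+\chi_\gamma(\gamma-\abs{x})\zeta^a+a^2\chi_\gamma(\abs{x}-\gamma)\eta$ on the annulus. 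Likewise $g_a=g^0+\zeta^a$ on $B_\gamma\setminus B_{\gamma/8}$ and $g_b=g^0+a^2\eta$ on $B_{2\gamma}\setminus B_\gamma$. We then split $\int_{B_{2\gamma}\setminus B_{\gamma/8}}\abs{W^{g_X}}^2dV_{g_X}$ into the pieces over $B_\gamma\setminus B_{\gamma/8}$ and $B_{2\gamma}\setminus B_\gamma$. It then suffices to bound, on each piece, the difference $\abs{W^{g}}^2dV_g-\abs{W^{g'}}^2dV_{g'}$, where $g=g_X$ and $g'$ is either $g_a$ or $g_b$.

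The key observation is that $W^{g^0}$ is of size $a^2\gamma^{-4}$ in the annulus. Indeed, $P$ has $\abs{\nabla^k P}\lesssim\gamma^{-2-k}+\gamma^{2-k}$, so $\nabla^2 P=O(\gamma^{-4})$, and in fact $W^{g^0}=a^2\dot W(P)+O(a^4\gamma^{-8})$. Both perturbations are smaller. From \eqref{eq:zeta^a-err-def} together with \eqref{eq:chi_gamma-def}, the term $\chi\zeta^a$ has $k$-th derivatives $O(a^3\gamma^{-3-k})$. From \eqref{eq:eta-err-est}, the term $a^2\chi\eta$ has $k$-th derivatives $O(a^2\gamma^{3-k})$. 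Hence, for $g$ and $g'$ as above,
\begin{align*}
    W^{g}-W^{g'}=O(a^3\gamma^{-5})+O(a^2\gamma), \qquad dV_g-dV_{g'}=O(a^3\gamma^{-3})+O(a^2\gamma^3).
\end{align*}
Expanding $\abs{W^g}^2-\abs{W^{g'}}^2=\langle W^g-W^{g'},W^g+W^{g'}\rangle+O(\abs{g-g'}\,\abs{W}^2)$ gives the pointwise bound
\begin{align*}
    O\big(a^2\gamma^{-4}\cdot(a^3\gamma^{-5}+a^2\gamma)\big)+O\big(a^4\gamma^{-8}(a^3\gamma^{-3}+a^2\gamma^3)\big).
\end{align*}
Integrating over a region of volume $O(\gamma^4)$ then yields $O(a^5\gamma^{-5})+O(a^4\gamma)$ plus terms of even higher order.

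Since $a\ll\gamma$, we may choose $a\le\gamma^{6}$, so that $a^5\gamma^{-5}\le a^4\gamma$, and the total is $O(a^4\gamma)$. Otherwise the term $a^5\gamma^{-5}$ is simply $o(a^4)$ for fixed $\gamma$, which is also absorbed in \eqref{eq:energy-balance}. The main thing to check carefully is the claim that the nonlinear dependence of $W$ on the metric does not produce cross terms larger than those listed. For this I would write $W^{g}$ as a smooth function of $(g,\p g,\p^2 g)$, which is linear in $\p^2 g$ and quadratic in $\p g$, and apply the mean value theorem along the segment joining $g'$ to $g$. Every metric on this segment is $\delta+O(a^2\gamma^{-2})$, with derivatives controlled as above, so the difference of the Weyl tensors is bounded by $\abs{\p^2(g-g')}+\abs{\p g^0}\,\abs{\p(g-g')}+\abs{g-g'}\,\abs{\p^2 g^0}$. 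This is dominated by the bounds already stated.
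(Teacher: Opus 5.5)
Your proposal is correct, but it takes a different route from the paper.

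\textbf{What the paper does.} On the outer annulus it expands both $\int|W^{g_b}|^2$ and $\int|W^{\hat g_b}|^2$ to second order in $b$ around $g_E$. It integrates by parts twice and uses $\dot B_{g_E}(F)=0$. This reduces the difference to the bulk term $\int\dot B(E)\hat F$, where $E=\hat F-F$, $|\dot B(E)|\lesssim\gamma^{-1}$ and $|\hat F|\lesssim\gamma^{-2}$. What remains are boundary terms on $\partial B_\gamma$ only; those on $\partial B_{2\gamma}$ cancel because $\hat F=F$ there. Each of these pieces is bounded by $C\gamma$. The inner annulus is then handled by remarking that the perturbation there is of order $a^3$ rather than $a^2$, and ``arguing as above''.

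\textbf{What you do instead.} You compare the integrands pointwise against the common background $g^0=\delta+a^2P$. You use only $|W|\lesssim a^2\gamma^{-4}$ and the derivative bounds on the cut-off remainders, and never invoke the linearized Bach equation.

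\textbf{Comparison.} For an estimate this crude, the integration by parts gains nothing. The leading cross term $2a^4\int\langle\dot W(F),\dot W(E)\rangle$ is already $O(a^4\cdot\gamma^{-4}\cdot\gamma\cdot\gamma^4)=O(a^4\gamma)$ pointwise, which is exactly your outer bound. Your route has two advantages:
\begin{itemize}
\item It treats both pieces uniformly.
\item It makes explicit that the inner piece contributes $O(a^5\gamma^{-5})$. So the stated $O(a^4\gamma)$, and the paper's $o(a^4\gamma)$, both require $a$ to be small relative to a power of $\gamma$, e.g.\ $a\le\gamma^6$.
\end{itemize}
The paper leaves this condition implicit in ``$a\ll\gamma$''; its own $O(a^6)$ remainders are also really $\gamma$-dependent. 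The condition is compatible with how the lemma is used, since in the proof of the main theorem $\gamma$ is fixed before $a\to0$. Your fallback, that $a^5\gamma^{-5}=o(a^4)$ for fixed $\gamma$, fits the structure of \eqref{eq:energy-balance}.

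\textbf{A minor point.} The nonlinear remainder in $W^{g^0}$ is actually $O(a^4\gamma^{-6})$, not $O(a^4\gamma^{-8})$, since $|a^2P|\,|a^2\partial^2P|$ and $|a^2\partial P|^2$ are both $O(a^4\gamma^{-6})$. Your weaker bound only needs $a\ll\gamma^2$ to keep $|W^{g^0}|\lesssim a^2\gamma^{-4}$, so it is harmless in the regime you choose. Likewise, the term $|g-g'|\,|\partial g'|^2$ omitted from your mean-value bound is of even lower order.
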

\begin{proof}
    We deal separately with the integrals in $B_{2\gamma}\backslash B_\gamma$ and those in $B_{\gamma}\backslash B_{\gamma/8}$.

\medskip

    \noindent
    $\bullet$ Consider 
    \begin{align}\label{eq:outer-int-diff}
        \int_{B_{2\gamma}\backslash B_\gamma}\abs{W^{g_X}}^2 \,dV_{g_X}-\int_{B_{2\gamma}\backslash B_\gamma}\abs{W^{g_b}}^2 \,dV_{g_b}=\int_{B_{2\gamma}\backslash B_\gamma}\big|W^{\hat{g}_b}\big|^2 \,dV_{\hat{g}_b}-\int_{B_{2\gamma}\backslash B_\gamma}\abs{W^{g_b}}^2 \,dV_{g_b},
    \end{align}
    and let us write $g_b=g_E + b^2F$ and $\hat{g}_b=g_E+b^2\hat{F}$, where we recall that $\hat{g}_b$ is given by \eqref{eq:hatg_b}.
    Since $b=a\ll \gamma$, we can Taylor-expand the Weyl energies in $b$ at $g_E$ as in the beginning of Section \ref{sec:Weyl-en-Z}. Recalling that $\dot{B}(F)=0$, expanding and integrating by parts we obtain
    \begin{align}
    \notag
        \int_{B_{2\gamma}\backslash B_\gamma}\abs{W^{g_b}}^2\,dV_{g_b}&=-2 a^4\int_{B_{2\gamma}\backslash B_\gamma}\dot{W}(F)_{\alpha i j \beta} \p^\alpha \p^\beta F^{ij}\,dx + O(a^6) \\
        \label{eq:intgb-err}
        &=2a^4\int_{\p( B_{2\gamma}\backslash B_\gamma)}\Big(\p^\alpha \dot{W}(F)_{\alpha i j \eta}F^{ij}-\dot{W}(F)_{\eta i j \beta}\p^\beta F^{ij}\Big) \nu^\eta \,d\sigma + O(a^6),
    \end{align}
where this time $\nu$ is the exterior unit normal vector. Similarly,
\begin{align}
\notag
    \int_{B_{2\gamma}\backslash B_\gamma}\big|W^{\hat{g}_b}\big|^2\,dV_{\hat{g}_b}=&-2 a^4\int_{B_{2\gamma}\backslash B_\gamma}\dot{B}_{ij}(\hat{F})\hat{F}^{ij}\,dx \\
    \label{eq:intgbhat-err}
        &+2a^4\int_{\p( B_{2\gamma}\backslash B_\gamma)}\Big(\p^\alpha \dot{W}(\hat{F})_{\alpha i j \eta}\hat{F}^{ij}-\dot{W}(\hat{F})_{\eta i j \beta}\p^\beta \hat{F}^{ij}\Big) \nu^\eta \,d\sigma + O(a^6).
\end{align}
We now decompose
\begin{align*}
    \hat{F}_{ij}(x)= F_{ij}(x)-\big(1-\chi_\gamma(\abs{x}-\gamma)\big)\eta_{ij}(x)=:F_{ij}(x)+E_{ij}(x),
\end{align*}
so that
\begin{align*}
    \dot{B}_{ij}(\hat{F})\hat{F}^{ij}=\dot{B}_{ij}(E)\hat{F}^{ij}.
\end{align*}
Recalling that $\dot{B}$ is a linear fourth-order operator, from the definition of $E$ and $\hat{F}$ (that depend upon $\eta$,$\chi_\gamma$ defined in \eqref{eq:eta-err-est}, \eqref{eq:chi_gamma-def}), we see that
\begin{gather*}
    \big|\hat{F}^{ij}(x)\big|\leq C\abs{x}^{-2}, \qquad \big|\dot{B}_{ij}(E)\big|\leq C \gamma^{-1}, \qquad \text{for $\gamma\leq\abs{x}\leq 2\gamma$}. 
\end{gather*}
As a consequence,
\begin{align}\label{eq:hatgb-int-dotb-err}
    \bigg|\int_{B_{2\gamma}\backslash B_\gamma}\dot{B}_{ij}(\hat{F})\hat{F}^{ij}\,dx\bigg|=\bigg|\int_{B_{2\gamma}\backslash B_\gamma}\dot{B}_{ij}(E)\hat{F}^{ij}\,dx\bigg|\leq C \gamma.
\end{align}
Next, we look at the boundary integrals in \eqref{eq:intgb-err}, \eqref{eq:intgbhat-err}. To begin, we notice that $F^{ij}=\hat{F}^{ij}$ near $\p B_{2\gamma}$, so the integrals in the outer boundary $\p B_{2\gamma}$ cancel out when taking the difference in \eqref{eq:outer-int-diff}. Thus it only remains to estimate the integrals on $\p B_\gamma$.

By linearity, $\dot{W}(\hat{F})_{\alpha i j \beta}=\dot{W}(F)_{\alpha i j \beta}+\dot{W}(E)_{\alpha i j \beta}$; moreover, again by \eqref{eq:eta-err-est}, \eqref{eq:chi_gamma-def}, we get the following estimates at scale $\abs{x}=\gamma$:
\begin{gather*}
    \big|\dot{W}_{\alpha i j \beta}(E)\big|\leq C\gamma, \qquad \big|\p^\alpha \dot{W}_{\alpha i j \eta}(E)\big|\leq C, \qquad \abs{E^{ij}}\leq C \gamma^3, \qquad \big|\p^\beta E^{ij}\big|\leq C\gamma^2.
\end{gather*}
Using these, we readily obtain
\begin{align*}
    \bigg|\int_{\p B_\gamma}\Big(\dot{W}(\hat{F})_{\eta i j \beta}&\p^\beta \hat{F}^{ij}-\dot{W}(F)_{\eta i j \beta}\p^\beta F^{ij}\Big)\nu^\eta\,d\sigma\bigg| \\
    &=\bigg| \int_{\p B_\gamma} \Big(\dot{W}(F)_{\eta i j \beta}\p^\beta E^{ij}+\dot{W}(E)_{\eta i j \beta} \p^\beta \hat{F}^{ij}\Big)\nu^\eta\,d\sigma\bigg| \leq C \gamma, 
\end{align*}
and 
\begin{align*}
    \bigg|\int_{\p B_\gamma}\Big(\p^\alpha \dot{W}(\hat{F})_{\alpha i j \eta}&\hat{F}^{ij}-\p^\alpha \dot{W}(F)_{\alpha i j \eta} F^{ij}\Big)\nu^\eta\,d\sigma\bigg| \\
    &=\bigg|\int_{\p B_\gamma}\Big(\p^\alpha \dot{W}({F})_{\alpha i j \eta}E^{ij}+\p^\alpha \dot{W}(E)_{\alpha i j \eta} \hat{F}^{ij}\Big)\nu^\eta\,d\sigma\bigg|\leq C\gamma.
\end{align*}
Using these estimates and \eqref{eq:hatgb-int-dotb-err} in \eqref{eq:outer-int-diff}, we obtain
\begin{align*}
    \int_{B_{2\gamma}\backslash B_\gamma}\abs{W^{g_X}}^2 \,dV_{g_X}-\int_{B_{2\gamma}\backslash B_\gamma}\abs{W^{g_b}}^2 \,dV_{g_b}=O(a^4\gamma).
\end{align*}

\medskip

\noindent
$\bullet$ Consider now the inner integrals
\begin{align}\label{eq:inner-err-integrals}
    \int_{B_\gamma \backslash B_{\gamma/8}}\abs{W^{g_X}}^2 \,dV_{g_X}-\int_{B_\gamma \backslash B_{\gamma/8}}\abs{W^{g_a}}^2 \,dV_{g_a}= \int_{B_\gamma \backslash B_{\gamma/8}}\abs{W^{\tilde{g}_a}}^2 \,dV_{\tilde{g}_a}-\int_{B_\gamma \backslash B_{\gamma/8}}\abs{W^{g_a}}^2 \,dV_{g_a}.
\end{align}
Recalling the definitions \eqref{eq:g_a}, \eqref{eq:tildeg_a} of $g_a, \tilde{g}_a$, if we locally write $g_a= g_E+ a^2V$, then 
\begin{align*}
    (\tilde{g}_a)_{ij}(x)=\delta_{ij}+ a^2 V_{ij}(x) + a^2\big( \chi_\gamma(\gamma-\abs{x})-1\big)\tilde{\zeta}_{ij}^a(x),
\end{align*}
where here (recall \eqref{eq:zeta^a-err-def})
\begin{align*}
    \tilde{\zeta}^a=a^{-2}\zeta^a, \quad \text{so that} \quad \big|\nabla^k \tilde{\zeta^a}\big|\leq C a \abs{x}^{3+k}, \quad k=0,\dots,4.
\end{align*}
Thus we see that the difference between $\tilde{g}^a$ and $g^a$ this time is given by a term of main order $a^3$ (instead of $a^2$ as before), so that we can argue as above to see that the error term coming from \eqref{eq:inner-err-integrals} is $o(a^4 \gamma)$. This concludes the proof.
\end{proof}

\subsection{Sign of the interaction term}\label{subsec:inter-sign}

As shown in Proposition \ref{prop:en-balance}, the key quantity appearing in the energy balance is given by a positive constant times the product between the hessian of ${A}$ and the Weyl tensor $W^M(p)$. 

\begin{lemma}\label{lem:inter-term-formula}
    Let $A$ be given as in Section \ref{sec:green-LCF}, and recall that $W:=W^M(p)$. Then, if $M$ is not self-dual or anti-self-dual, one has
    \begin{align}\label{eq:interaction-term-identity}
        W^{kijl}\p^2_{kl}{A}_{ij}(0)<0,
    \end{align}
    for a suitable choice of $p\in M$, of conformal normal coordinates for $g_M$ at $p$ and for $t-1$ small enough.
\end{lemma}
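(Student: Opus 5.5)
By Proposition \ref{prop:F-exp-quotients} (and Proposition \ref{prop:F-expansion-cylinder} in the exact cylinder case), and since $C_2(t)\sim \frac{\pi^4}{45}(t-1)^{-4}$ while the error is $O((t-1)^{-1})$, it suffices to show that the bracket
\[
Q(W):=W^{kijl}\big(W_{kijl}+W_{lijk}\big)-4W^{4ijl}\big(W_{4ijl}+W_{lij4}\big)-4W^{kij4}\big(W_{kij4}+W_{4ijk}\big)+24W^{4ij4}W_{4ij4}
\]
is strictly positive for a suitable choice of the point $p$ and of the orthonormal frame at $p$. Here the distinguished direction $e_4$ is fixed by the construction on $Z$, so rotating the conformal normal coordinates at $p$ amounts to letting $W=W^M(p)$ range over its $\mathrm{SO}(4)$-orbit. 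I would therefore choose $p$ with $W_+(p)\neq0\neq W_-(p)$. Such a point exists: since $g_M$ is neither SD nor ASD, both $W_\pm$ are nonzero somewhere, and by unique continuation for the Bach-flat system (or, more simply, by openness of the sets $\{W_\pm\neq0\}$ and a density argument, if one is willing to invoke unique continuation) the two sets intersect. Then the first claim is $-\frac13 C_2(t)Q(W)<0$.

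The first step is algebraic simplification of $Q$. By the first Bianchi identity, $W^{kijl}W_{lijk}=\frac12|W|^2$, so the first term equals $\frac32|W|^2$. Similarly, the middle terms reduce to quadratic expressions in the ``$e_4$-slices'' $W_{4ij\cdot}$. In the $\Lambda^2_\pm$ picture, the tensor $W_{4ij4}$ is the electric part $E_{ij}=\widehat W_+ +\widehat W_-$ (in a suitable basis), and $W_{4ijk}$ encodes the magnetic part, built from $\widehat W_+-\widehat W_-$. Writing $W_\pm$ as traceless symmetric $3\times3$ matrices via the standard identification $\Lambda^2_\pm\cong\R^3$ adapted to $e_4$, I expect
\[
Q=\alpha\big(|W_+|^2+|W_-|^2\big)+\beta\,\langle \widehat W_+,\widehat W_-\rangle
\]
for universal constants $\alpha,\beta$, where the pairing uses the $e_4$-induced identification $\Lambda^2_+\cong\Lambda^2_-$. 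The $|W|^2$ parts should be rotation invariant, and only the cross term depends on the frame. The cross term must be present and the diagonal coefficient must satisfy $\alpha\le0$ or be small: otherwise the conclusion would hold for SD metrics, contradicting the expected Singer picture discussed in Remark \ref{rem:main-thm-remark}. I expect in fact $\alpha=0$, i.e.\ that $Q$ vanishes identically when $W_-=0$, consistent with the ADM-mass analogy.

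The second step, which is the main obstacle, is precisely this bookkeeping: computing $\alpha$ and $\beta$ exactly with the paper's sign and index conventions. I would verify it on test tensors, namely a pure $W_+$ (where $Q$ should be $0$) and a product-type $W$ such as that of $\mathbb S^2\times\mathbb H^2$ or of $\mathbb{CP}^2$ oriented either way. This fixes the constants, since both expressions lie in the two-dimensional space of invariants under the stabilizer of $e_4$.

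The third step is the optimization over frames. Rotations in $\mathrm{SO}(4)=\mathrm{SU}(2)\times\mathrm{SU}(2)/\pm1$ act independently by $\mathrm{SO}(3)$ on $\widehat W_+$ and on $\widehat W_-$. Hence one can conjugate $\widehat W_-$ to be simultaneously diagonal with $\widehat W_+$, with eigenvalues ordered either concordantly or with one sign reversed. Since both are traceless and nonzero, a choice of ordering gives $\langle\widehat W_+,\widehat W_-\rangle$ of either sign with absolute value at least $c|W_+||W_-|>0$. Choosing the sign matching $\beta$ yields $Q>0$. Finally, one takes $t-1$ small so that the $O((t-1)^{-1})$ remainder is dominated, which gives \eqref{eq:interaction-term-identity}.
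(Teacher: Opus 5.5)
Your reduction to the sign of the bracket $Q(W)$ matches the paper; it uses $C_2(t)\sim\frac{\pi^4}{45}(t-1)^{-4}$ against the $O((t-1)^{-1})$ remainder. So do your choice of $p$ with $W_+(p)\neq0\neq W_-(p)$ via unique continuation (the paper gets this from real analyticity of Bach-flat metrics) and your last step (independent $\mathrm{SO}(3)$ rotations of $\widehat W_\pm$, then reordering eigenvalues so the cross pairing has the needed sign).

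\textbf{The gap.} The middle step, which you yourself call the main obstacle, is left as an expectation, and it is the actual content of the lemma. Your last step works only if $\alpha\ge0$, and, when $\alpha=0$, only if $\beta\neq0$. Nothing in the proposal establishes either.
\begin{itemize}
\item Your argument for $\alpha\le0$ appeals to the conjectural Singer picture, which is not a proof.
\item The test tensors you name cannot determine $\beta$. $\mathbb{S}^2\times\mathbb{H}^2$ with curvatures $\pm1$ is locally conformally flat, so $W=0$. $\mathbb{CP}^2$ in either orientation has $W_-=0$ or $W_+=0$, so it only tests $\alpha$.
\item Your claim that the quadratic invariants form a two-dimensional space is correct under the full stabilizer $\mathrm{O}(3)$ of $e_4$ in $\mathrm{O}(4)$. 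So two well-chosen evaluations (e.g.\ a pure $W_+$ and $\mathbb{S}^2\times\mathbb{S}^2$) would suffice, but they still have to be carried out.
\end{itemize}

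\textbf{How the paper closes it.} Lemma \ref{lem:hessian-identity-W^P} shows by index computation that $Q=W\stell W^P$, where $P$ is the reflection $e_4\mapsto-e_4$. Since $P$ reverses orientation, $\widehat P$ interchanges $\Lambda^2_+$ and $\Lambda^2_-$. Hence $\langle W,W^P\rangle=4\,\mathrm{tr}(\widehat W\widehat P\widehat W\widehat P^*)$ contains only $\widehat W_+$--$\widehat W_-$ products. In a Derdzinski frame it equals $8(\lambda^+\lambda^-+\mu^+\mu^-+\nu^+\nu^-)$. The Gursky--Viaclovsky identity $W\stell W^P=\frac32\langle W,W^P\rangle$ then gives $Q=12(\lambda^+\lambda^-+\mu^+\mu^-+\nu^+\nu^-)$, that is, $\alpha=0$ and $\beta\neq0$.

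\textbf{A direct route in your framework.} Set $E_{ij}:=W_{4ij4}$ and $W_{4ijl}=\epsilon_{jlm}B_{im}$ for $i,j,l\le3$, with $B$ symmetric and trace-free. Then:
\begin{itemize}
\item $|W|^2=8(|E|^2+|B|^2)$, so the first term of $Q$ is $12(|E|^2+|B|^2)$;
\item each of the two middle terms equals $-12(|E|^2+|B|^2)$;
\item the last term is $24|E|^2$.
\end{itemize}
Hence $Q=12(|E|^2-|B|^2)=12\langle E+B,E-B\rangle$. Up to orientation conventions, $E\pm B$ represent $\widehat W_\pm$. With this computation supplied, your final step completes the proof.
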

\begin{remark}
    We notice that the interaction term \eqref{eq:interaction-term-identity} is not affected by gauge terms on $Z$  generated by infinitesimal diffeomorphisms. Indeed, for each term of type $(\mathcal{L}_{X}g)_{ij}=\nabla_i\omega_j+\nabla_j\omega_i$, being $g_Z$ Euclidean around $0$ one has
    \begin{align*}
        W^{kijl}\p^2_{kl}(\mathcal{L}_{X}g)_{ij}=W^{kijl}\big(\p^3_{kli}\omega_j+\p^2_{klj}\omega_i\big)=0,
    \end{align*}
    by the symmetry properties of Weyl's tensor.
\end{remark}

In order to prove Lemma \ref{lem:inter-term-formula}, we need the following identity:

\begin{lemma}\label{lem:hessian-identity-W^P}
    Let $(e_1,\dots,e_4)$ be an orthonormal basis for $T_pM$. Then 
    \begin{align}
    \notag
        W^{kijl}\p^2_{kl}{A}_{ij}(0)&=-\frac{1}{3}C_2(t)W\stell W^P+O\big((t-1)^{-1}\big) \\
        \label{eq:hessian-identity-1}
        &=:-\frac{1}{3}C_2(t)W^{kijl}\big(W^P_{kijl}+W^P_{lijk}\big) +O\big((t-1)^{-1}\big), \qquad \text{as $t\to 1^+$,}
    \end{align}
    where $W^P:=W\circ P$ and $P\colon T_pM\to T_pM$ denotes the linear map given by $P(e_i):=(-1)^{\delta_{4i}}e_i$.
\end{lemma}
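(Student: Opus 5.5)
The plan is to start from the formula already established in Proposition \ref{prop:F-exp-quotients}, namely \eqref{eq:A-interaction-form-quotients}, whose bracket reads
\begin{align*}
\mathcal{B}:=W^{kijl}\big(W_{kijl}+W_{lijk}\big)-4W^{4ijl}\big(W_{4ijl}+W_{lij4}\big)-4W^{kij4}\big(W_{kij4}+W_{4ijk}\big)+24W^{4ij4}W_{4ij4},
\end{align*}
and then show the purely algebraic identity $\mathcal{B}=W^{kijl}(W^P_{kijl}+W^P_{lijk})$. Since the $O((t-1)^{-1})$ remainder is inherited verbatim from \eqref{eq:A-interaction-form-quotients} (and vanishes in the cylinder case by Proposition \ref{prop:F-expansion-cylinder}), only this tensor identity needs proof. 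Here $W^P_{abcd}:=W(Pe_a,Pe_b,Pe_c,Pe_d)=\sigma_a\sigma_b\sigma_c\sigma_d W_{abcd}$, with $\sigma_a=-1$ if $a=4$ and $\sigma_a=1$ otherwise.

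First I will use the symmetries of $W$ to merge the two middle terms: relabelling and the pair symmetry $W_{abcd}=W_{cdab}$ give $W^{kij4}(W_{kij4}+W_{4ijk})=W^{4jik}(W_{4jik}+W_{kji4})$. After renaming the dummy indices, this equals the first middle term. So
\begin{align*}
\mathcal{B}=S-8S_4+24\,W^{4ij4}W_{4ij4},\qquad S:=\sum W_{kijl}(W_{kijl}+W_{lijk}),\quad S_4:=\sum_{i,j,l}W_{4ijl}(W_{4ijl}+W_{lij4}).
\end{align*}

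Next, expand $W^{kijl}W^P_{kijl}+W^{kijl}W^P_{lijk}$ by counting how many of the indices $k,i,j,l$ equal $4$. The factor $\sigma_k\sigma_i\sigma_j\sigma_l$ equals $(-1)^{\#\{4\}}$. Because $W$ is antisymmetric in each pair, components with three or four indices equal to $4$ vanish, so only the one-$4$ terms change sign. The identity therefore reduces to showing that the one-$4$ contributions to $S$, counted with a factor $-2$, reproduce $-8S_4+24W^{4ij4}W_{4ij4}$.

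This is the bookkeeping step I expect to be the main obstacle. Write $S=\sum_{m}S^{(m)}$ according to the number $m$ of indices equal to $4$. By the pair symmetry, the four positions contribute equally to $S^{(1)}$, and tracking where the $4$ lands in $W_{lijk}$ gives $S^{(1)}=4\,(S_4-\text{terms with a second }4)$. The overlap consists of the terms with exactly two $4$'s. Splitting $S_4$ accordingly, the two-$4$ part equals $\sum W_{4ij4}(W_{4ij4}+W_{4ij4})$ plus cross terms such as $W_{4i4l}$, which vanish. This yields $S_4^{(2)}=2\sum_{i,j}W_{4ij4}^2$, hence $S^{(1)}=4S_4-8\sum W_{4ij4}^2$ and
\begin{align*}
S-2S^{(1)}=S-8S_4+16\sum W_{4ij4}^2.
\end{align*}

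This discrepancy (16 versus 24) signals that a further trace identity is needed. I would try the trace-free property $\sum_k W_{kik l}=0$, which gives $W_{4i4l}=-\sum_{k\le3}W_{kikl}$, together with the first Bianchi identity $W_{lij4}+W_{ij l4}+W_{jli4}=0$, applied inside $S_4$. The aim is to re-express $\sum_{i,j,l\le3}W_{4ijl}W_{lij4}$ in terms of $\sum W_{4ij4}^2$ and so close the gap. Careful index checking at this point is the key difficulty; everything else is formal.
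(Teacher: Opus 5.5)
Your reduction follows the paper's route. The lemma comes down to the exact algebraic identity $\mathcal{B}=W^{kijl}(W^P_{kijl}+W^P_{lijk})$, and your intermediate steps are correct:
\begin{itemize}
\item the two middle terms of $\mathcal{B}$ coincide;
\item components of $W$ with three or four indices equal to $4$ vanish, so $W^{kijl}(W^P_{kijl}+W^P_{lijk})=S-2S^{(1)}$;
\item the four positions contribute equally, so $S^{(1)}=4S_4^{(1)}$, where $S_4^{(1)}$ is the part of $S_4$ with $i,j,l\neq 4$.
\end{itemize}
The mistake is in the two-$4$ part of $S_4$, i.e.\ the terms in which exactly one of $i,j,l$ equals $4$. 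There are three cases, with $w:=\sum_{i,j}W_{4ij4}^2$:
\begin{itemize}
\item $i=4$ gives $0$, since $W_{44jl}=0$;
\item $l=4$ gives $\sum_{i,j}W_{4ij4}(W_{4ij4}+W_{4ij4})=2w$;
\item $j=4$ gives $\sum_{i,l}W_{4i4l}(W_{4i4l}+W_{li44})=\sum_{i,l}W_{4i4l}^2=w$, and this is the term you discarded.
\end{itemize}
Only the factor $W_{li44}$ vanishes. The component $W_{4i4l}=-W_{4il4}$ is the electric part of $W$ with respect to $e_4$ and is nonzero in general. Hence $S_4^{(2)}=3w$, $S^{(1)}=4S_4-12w$, and
\[
S-2S^{(1)}=S-8S_4+24w=\mathcal{B},
\]
which is exactly the lemma.

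The ``16 versus 24'' discrepancy is therefore an arithmetic artifact, and the step you plan to close it with cannot work. $S_4$ enters both $\mathcal{B}$ and your miscounted expression $S-8S_4+16w$ with the same coefficient $-8$. So no rewriting of $\sum W_{4ijl}W_{lij4}$ through trace-freeness or the Bianchi identity can remove the difference $8w$, which is strictly positive for a generic Weyl tensor. As written, the argument does not close. Once the $j=4$ terms are counted, only the antisymmetries and pair symmetry of $W$ are needed, which is how the paper's proof proceeds. Trace-freeness has already been used, in passing from \eqref{eq:hessian-Abar-quotients-formula} to \eqref{eq:A-interaction-form-quotients}, to kill the $\delta_{kl}W_{4ij4}$ term.
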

\begin{proof}
To see this, we just need to explicitly compute both sides of \eqref{eq:hessian-identity-1}. Being $C_2(t)=O\big((t-1)^{-4}\big)$ as $t\to 1^+$, we can forget about the lower order error terms in the following computations. For the LHS, using \eqref{eq:A-interaction-form} we have
\begin{align}
\notag
    W^{kijl}\p^2_{kl}{A}_{ij}(0)=&-\frac{1}{3}C_2(t)\bigg[W^{kijl}\big(W_{kijl}+W_{lijk}\big)-4W^{4ijl}\big(W_{4ijl}+W_{lij4}\big) \\
    \notag
    &\qquad \qquad \qquad-4 W^{kij4}\big(W_{kij4}+W_{4ijk}\big)+24W^{4ij4}W_{4ij4}\bigg] \\
    \notag
    =&-\frac{1}{3}C_2(t)\sum_{i,j}\bigg[\sum_{k,l\not=4}W_{kijl}\big(W_{kijl}+W_{lijk}\big)-3\sum_l W_{4ijl}\big(W_{4ijl}+W_{lij4}\big) \\
    \notag
     &+\sum_{k\not=4}W_{kij4}\big(W_{kij4}+W_{4ijk}\big)-4\sum_k W_{kij4}\big(W_{kij4}+W_{4ijk}\big)+ 24\big( W_{4ij4}\big)^2\bigg]  \\
     \notag
     =&-\frac{1}{3}C_2(t)\sum_{i,j}\bigg[\sum_{k,l\not=4}W_{kijl}\big(W_{kijl}+W_{lijk}\big)-3\sum_{l\not=4} W_{4ijl}\big(W_{4ijl}+W_{lij4}\big) \\
     \label{eq:weyl-id-rearr-1}
     &-3\sum_{k\not=4} W_{kij4}\big(W_{kij4}+W_{4ijk}\big)+ 10\big( W_{4ij4}\big)^2\bigg].
\end{align}

    As for the RHS of \eqref{eq:hessian-identity-1}, by definition of $P$ we have
    \begin{align}
    \notag
        W\stell W^P=&W^{kijl}\big(W_{kijl}+W_{lijk}\big)(-1)^{\delta_{4k}+\delta_{4i}+\delta_{4j}+\delta_{4l}} \\
        \notag
        =&\sum_{i,j}(-1)^{\delta_{4i}+\delta_{4j}}\Big[\sum_{k,l\not=4}W_{kijl}\big(W_{kijl}+W_{lijk}\big)-\sum_{k\not=4}W_{kij4}\big(W_{kij4}+W_{4ijk}\big) \\
        \notag
        & \qquad \qquad \qquad \qquad -\sum_{l\not=4}W_{4ijl}\big(W_{4ijl}+W_{lij4}\big)+W_{4ij4}\big(W_{4ij4}+W_{4ij4}\big)\Big] \\
        \notag
        =&\sum_{i,j,k,l\not=4}W_{kijl}\big(W_{kijl}+W_{lijk}\big)-\sum_{j,k,l\not=4}W_{k4jl}\big(W_{k4jl}+W_{l4jk}\big)-\sum_{i,k,l\not=4}W_{ki4l}\big(W_{ki4l}+W_{li4k}\big) \\
        \notag
        &+\sum_{k,l\not=4}W_{k44l}\big(W_{k44l}+W_{l44k}\big)-\sum_{i,j,k\not=4}W_{kij4}\big(W_{kij4}+W_{4ijk}\big)+\sum_{j,k\not=4}W_{k4j4}\big(W_{k4j4}+W_{44jk}\big) \\
        \notag
        &-\sum_{i,j,l\not=4}W_{4ijl}\big(W_{4ijl}+W_{lij4}\big)+\sum_{i,l\not=4}W_{4i4l}\big(W_{4i4l}+W_{li44}\big)+2\sum_{i,j\not=4}\big(W_{4ij4}\big)^2 \\
        \notag=&\sum_{i,j}\bigg[\sum_{k,l\not=4}W_{kijl}\big(W_{kijl}+W_{lijk}\big)-\sum_{k\not=4}W_{kij4}\big(W_{kij4}+W_{4ijk}\big)-\sum_{l\not=4}W_{4ijl}\big(W_{4ijl}+W_{lij4}\big)\bigg] \\
        \notag&-2\sum_{j,k,l\not=4}W_{k4jl}\big(W_{k4jl}+W_{l4jk}\big)-2\sum_{i,k,l\not=4}W_{ki4l}\big(W_{ki4l}+W_{li4k}\big) \\
      \label{eq:weyl-id-rearr-2}  &+2\sum_{j,k\not=4}\big(W_{k4j4}\big)^2+2\sum_{i,l\not=4}\big(W_{4i4l}\big)^2 +2\sum_{i,j}\big(W_{4ij4}\big)^2.
    \end{align}
    Relabeling the indices and using the symmetries of curvature tensors, we also have
    \begin{align*}
\sum_{j,k,l\not=4}W_{k4jl}\big(W_{k4jl}+W_{l4jk}\big)=&\sum_{j,k,l\not=4}W_{4klj}\big( W_{4klj}+W_{jkl4}\big)=\sum_{i,j,l\not=4}W_{4ijl}\big( W_{4ijl}+W_{lij4}\big) \\
=&\sum_{i,j}\sum_{l\not=4}W_{4ijl}\big( W_{4ijl}+W_{lij4}\big)-\sum_{i,l\not=4}W_{4i4l}\big(W_{4i4l}+W_{li44}\big),
    \end{align*}
    and 
    \begin{align*}
\sum_{i,k,l\not=4}W_{ki4l}\big(W_{ki4l}+W_{li4k}\big)=&\sum_{i,k,l\not=4}W_{ikl4}\big( W_{ikl4}+W_{4kli}\big)=\sum_{i,j,k\not=4}W_{kij4}\big( W_{kij4}+W_{4ijk}\big) \\
=&\sum_{i,j}\sum_{k\not=4}W_{kij4}\big( W_{kij4}+W_{4ijk}\big)-\sum_{j,k\not=4}W_{k4j4}\big(W_{k4j4}+W_{44jk}\big).
    \end{align*}
    Substituting in \eqref{eq:weyl-id-rearr-2} yields
    \begin{align*}
        W\stell W^P=&\sum_{i,j}\bigg[\sum_{k,l\not=4}W_{kijl}\big(W_{kijl}+W_{lijk}\big)-3\sum_{k\not=4}W_{kij4}\big(W_{kij4}+W_{4ijk}\big) \\
        & \qquad\qquad -3\sum_{l\not=4}W_{4ijl}\big(W_{4ijl}+W_{lij4}\big)\bigg]+10\sum_{i,j}\big(W_{4ij4}\big)^2.
    \end{align*}
    \eqref{eq:hessian-identity-1} now follows by comparing the above identity with \eqref{eq:weyl-id-rearr-1}.
\end{proof}

\begin{proof}[Proof of Lemma \ref{lem:inter-term-formula}]
    As above, assume $\{e_1,\dots,e_4\}$ is an orthonormal basis for $T_pM$ and let $\{e^1,\dots,e^4\}$ denote the dual basis. To begin we recall that, given two tensors $S,T\in (T_p^*M)^{\otimes4}$ with the symmetries of Riemann's tensor, they define symmetric endomorphisms $\widehat{S},\widehat{T}\in\mathrm{End}(\Lambda^2(T_p^*M))$ as in \eqref{eq:curv-operator-def}. 
    In particular, with this convention
    \begin{align}\label{eq:sc-prod-curv-oper-conv}
        \mathrm{tr}(\widehat{S}\widehat{T})=\frac{1}{4}\langle S,T\rangle=\frac{1}{4}S^{kijl}T_{kijl}.
    \end{align}

    Given the map $P$ defined in Lemma \ref{lem:hessian-identity-W^P}, i.e. $P(e_i):=(-1)^{\delta_{4i}}e_i$ (but this holds more generally $\forall P\in \mathrm{GL}(4,\R)$), we define the map $W^P:=W\circ P$, to which we can associate $\widehat{W}^P\in\mathrm{End}(\Lambda^2(T_p^*M))$.
    We can also extend $P$ to a linear map $\widehat{P}\in\mathrm{End}(\Lambda^2(T_p^*M))$ by letting
    \begin{align*}
        \widehat{P}(e^i\wedge e^j):=P^T(e^i)\wedge P^T(e^j),
    \end{align*}
    where $P^T\in\mathrm{End}(T_p^*M)$ denotes the transpose of $P$, defined by $P^T(e^i):=P^i_j e^j$. In this setting, a direct computation gives the following identity:
    \begin{align}\label{eq:weyl-P-operator-form}
        \widehat{W}^P=\widehat{P}\circ \widehat{W}\circ \widehat{P}^*,
    \end{align}
    where $\widehat{P}^*$ is the adjoint of $\widehat{P}$. 

    By \cite[Lemma 2]{derdzinski-1983-Compositio}, there exists an oriented orthonormal basis $\{e_1,\dots e_4\}$ for $T_pM$ such that, letting
    \begin{align}
    \notag
        \omega^+&=e^1\wedge e^2+ e^3\wedge e^4 &  \omega^-&= e^1\wedge e^2- e^3\wedge e^4 \\
        \label{eq:basis-forms}
        \eta^+&=e^1\wedge e^3+ e^4\wedge e^2 &  \eta^-&= e^1\wedge e^3- e^4\wedge e^2 \\
        \notag
        \theta^+&=e^1\wedge e^4+e^2\wedge e^3 & \theta^-&=e^1\wedge e^4- e^2\wedge e^3,
    \end{align}
    then $\{\omega^\pm,\eta^\pm,\theta^\pm\}$ is an orthogonal basis for $\Lambda^2_{\pm}(T_p^*M)$ and $\widehat{W}_{\pm}$ is diagonalized in such a basis. In particular, let us denote by $\lambda^\pm,\mu^\pm,\nu^\pm$ the eigenvalues of $\widehat{W}_{\pm}$ so that, in the basis \eqref{eq:basis-forms}, one has
    \begin{equation*}
    \widehat{W} = \begin{pmatrix}
\lambda^+ & & & & & \\
& \mu^+ & &  & &\\
& & \nu^+ &  & &\\
& & & \lambda^- & & \\
& & & & \mu^- & \\
& & & & & \nu^-
\end{pmatrix}.
\end{equation*}
If we now look at the action of $\widehat{P}$ on the elements of \eqref{eq:basis-forms}, we see that 
\begin{equation*}
\widehat{P}=
\begin{pNiceArray}{ccc|ccc}
\Block{3-3}{\text{\LARGE{0}}} & & & 1 & & \\
                       & & &   & 1 & \\
                       & & &   &   & -1 \\
\hline
1 & &    & \Block{3-3}{\text{\LARGE{0}}} & & \\
  & 1 &  &   & & \\
  & & -1 &   & & \\
\end{pNiceArray}.
\end{equation*}
Hence, by \eqref{eq:sc-prod-curv-oper-conv}, \eqref{eq:weyl-P-operator-form} we got 
\begin{align}\label{eq:weyl-eigenvalue-formula}
   W^{kijl}W^P_{kijl}=4 \mathrm{tr}(\widehat{W}\widehat{W}^P)=4 \mathrm{tr}(\widehat{W}\widehat{P}\widehat{W}\widehat{P}^*)=8(\lambda^+\lambda^-+\mu^+\mu^-+\nu^+\nu^-),
\end{align}
and the latter quantity can be made \emph{positive} (for a suitable choice of $p\in M$) whenever $M$ is \emph{not} self-dual or anti-self-dual, see the proof of Lemma 6.2 in \cite{malchiodi-malizia-2025-pre-weyl}.
We notice that under the assumptions of Theorem \ref{thm:main2} we can always find a point $p\in M$ where both $W_+(p)\not=0$ and $W_-(p)\not=0$. In fact, the  vanishing of $B_{ij}$, coupled with a proper conformal choice and the use of biharmonic coordinates, becomes an elliptic system. 
Therefore the metric $g$ has to be real analytic, see \cite[Theorem 12.1]{agmon-douglis-nirenberg-1964-CPAM-2} (and \cite{deturck-kazdan-1981-ENS-regularty-thms} for the case of Einstein metrics), so neither $W_+$ nor $W_-$ 
can vanish on open sets of $M$ without being identically zero. 

Finally, being $\widehat{W}^P$ diagonal as well, we can  apply \cite[Lemma 12.2]{gursky-viaclovsky-2016-Advances} to show that
\begin{align*}
    W\stell W^P=\frac{3}{2}W^{kijl}W^P_{kijl},
\end{align*}
which, together with \eqref{eq:hessian-identity-1} and \eqref{eq:weyl-eigenvalue-formula}, allows to obtain \eqref{eq:interaction-term-identity} provided $t-1$ is small enough.
\end{proof}

\subsection{Construction for a general locally conformally flat $Z$ of positive Yamabe class and conclusion of the proof of Theorem \ref{thm:main2}} \label{subsec:conn-sums}

Here we will exploit the construction in Section \ref{sec:green-LCF} to deal with the case of a 
general $Z$. Recall that, by Theorem \ref{thm:CTZ-LCF-classification}, $Z$ is a connected sum of 
quotients of the type $(\Sp^1_{t_i} \times \Sp^3)/{G_i}$ for some $t_i > 1$ and finite groups $G_i$. 

Let us focus our attention to the first summand, and let us set for brevity $(\Sp^1_{t} \times \Sp^3)/{G} = (\Sp^1_{t_1} \times \Sp^3)/{G_1} := Z_1$. Our goal is to extend the previous tensor $F$ from $Z_1$ to $Z$
 so that the Weyl energy is affected as little as possible. Our goal is to prove the following result.

 \begin{proposition}\label{p:ZZ1}
     Let $g_{Z_1}$ be a locally conformally flat metric on $Z_1$, and let    $g_b=g_{Z_1}+b^2 F$ ($b\ll 1$) 
     be the metric as at the beginning 
     of Section \ref{sec:Weyl-en-Z}. 
For $\tilde{q} \in Z_1, \tilde{q} \neq q$, we assume that $Z$ is obtained from $Z_1$ via a connected sum near $\tilde{q}$. 
Then for every $\eps > 0$  there exists $\delta > 0$ and a metric $g_\eps$ on $Z$ such that $g_\eps = g_b$ on $Z_1 \setminus B_\delta(\tilde{q})$ and such that 
\[
\left| \int_{Z} |W_{g_\eps}|^2_{g_\eps} dV_{g_\eps} - 
\int_{Z_1} |W_{g_b}|^2_{g_b} dV_{g_b} \right| < \eps. 
\]
Here $q$, as before, is the point of $Z_1$ where we 
perform a connected sum to $M$. 
\end{proposition}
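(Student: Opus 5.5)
We would prove Proposition \ref{p:ZZ1} by a conformal capacity argument. The key point is that near $\tilde q\neq q$ the perturbation $b^2F$ is smooth, while $g_{Z_1}$ can be taken flat; the only obstruction to gluing in the remaining summand is therefore the small term $b^2F$.

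\textbf{Step 1: flatten the metric near $\tilde q$.} Since $F$ is smooth near $\tilde q$, we may use conformal invariance to assume $g_{Z_1}$ is Euclidean on a ball $B_{\rho}(\tilde q)$ with $\rho$ fixed. On this ball,
\[
g_b = g_E + b^2F, \qquad |\nabla^k F| \le C_k \quad \text{on } B_\rho(\tilde q),
\]
with constants $C_k$ independent of $\delta$.

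\textbf{Step 2: cut off $F$.} Fix a small $\delta$ and a logarithmic cutoff $\chi_\delta$ with $\chi_\delta\equiv 1$ outside $B_{\delta}(\tilde q)$ and $\chi_\delta\equiv 0$ in $B_{\delta^2}(\tilde q)$. A standard cutoff with $|\nabla^k\chi|\le C\delta^{-k}$ on the annulus $B_\delta\setminus B_{\delta/2}$ also works in dimension four. Define
\[
\hat g = g_E + b^2\chi_\delta F .
\]
Then $\hat g = g_b$ outside $B_\delta(\tilde q)$, and $\hat g$ is exactly flat in $B_{\delta/2}(\tilde q)$. On the annulus the second derivatives of $\chi_\delta F$ are $O(\delta^{-2})$, so $|W_{\hat g}|\le Cb^2\delta^{-2}$ there. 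Since the annulus has volume $O(\delta^4)$, its Weyl energy is $O(b^4)$, and this does \emph{not} vanish as $\delta\to0$.

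To make the error small, we would replace the naive cutoff by a first-order Taylor subtraction. Write
\[
F = F(\tilde q) + \nabla F(\tilde q)\,x + O(|x|^2).
\]
The constant and linear parts are gauge and conformal terms up to curvature-free pieces: their linearized Weyl tensor vanishes, since $\dot W$ involves only second derivatives. We then cut off only the remainder $O(|x|^2)$, which has second derivatives $O(1)$ and gives Weyl energy $O(b^4\delta^4)$ on the annulus. The metric $g_E+b^2(F(\tilde q)+\nabla F(\tilde q)x)$ remaining in $B_{\delta/2}$ must itself be locally conformally flat to all orders, not just linearly. We handle this by writing the constant part as a linear coordinate change, and the linear part as an exact Lie derivative of a quadratic vector field plus a conformal factor. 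We then replace that linearized expression, together with its quadratic and higher corrections, by the genuine pulled-back flat metric $\Psi^*g_E$ for a diffeomorphism $\Psi=\mathrm{Id}+b^2X$, with $X$ polynomial. The difference from $\hat g$ is $O(b^4|x|)$ in $C^2$ and can be cut off with negligible cost.

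\textbf{Step 3: glue in the rest of $Z$.} After Step 2 we have a metric on $Z_1$ that equals $g_b$ outside $B_\delta(\tilde q)$ and is flat in $B_{\delta/4}(\tilde q)$. Take the remaining summand $Z'$, a connected sum of quotients that is locally conformally flat by Theorem \ref{thm:CTZ-LCF-classification}, with its LCF metric flat near some point. We perform a flat neck gluing: scale $Z'$ down conformally, invert it about its flat point, and glue flat-to-flat inside $B_{\delta/4}(\tilde q)$ via the identity map. The resulting region is locally conformally flat, so it contributes zero Weyl energy.

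\textbf{Accounting and main obstacle.} The total change in Weyl energy is the cutoff error on the annulus, which is $O(b^4\delta)$ or smaller, plus the removed energy $\int_{B_\delta}|W_{g_b}|^2 = O(b^4\delta^4)$. Both tend to $0$ as $\delta\to0$, giving the claim. The main obstacle is Step 2: ensuring the region inside the small ball is \emph{exactly} conformally flat while the cutoff error still tends to zero. The first approach I would try is the Taylor/gauge subtraction described there. If that fails, the fallback is to observe that the naive cutoff costs $O(b^4)$ uniformly and to absorb it by taking $b$ smaller. This fallback is not acceptable as stated, though, since $\varepsilon$ is arbitrary with $b$ fixed, so the gauge subtraction is needed.
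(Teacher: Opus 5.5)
Your argument is valid, but it takes a different and more laborious route than the paper. You were pushed onto it by a miscalculation.

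The paper does exactly what you first wrote down and then abandoned. Near $\tilde q$ it sets $g_\varepsilon = g_E+\chi\,b^2F$, with $\chi$ a logarithmic (capacity) cutoff that vanishes on $B_{\tilde\delta}(\tilde q)$, equals $1$ outside $B_\delta(\tilde q)$, and satisfies $\int_{B_\delta}(|\nabla^2\chi|^2+|\nabla\chi|^4)\,dx\to0$ as $\tilde\delta/\delta\to0$. Since $F$ is smooth near $\tilde q$, the pointwise bound $|W_g|^2\le C(|\nabla g^{-1}|^2|\nabla g|^2+|\nabla g|^4+|\nabla^2g|^2)$ makes the energy in $B_\delta(\tilde q)$ arbitrarily small. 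The other summands are then glued flat-to-flat inside $B_{\tilde\delta}(\tilde q)$, as in your Step 3.

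Your sentence that a standard cutoff ``also works in dimension four'' is wrong. The non-vanishing $O(b^4)$ cost you computed belongs to the standard cutoff. The logarithmic one has $|\nabla^2\chi|\lesssim(|x|^2\log(1/\delta))^{-1}$ on $B_\delta\setminus B_{\delta^2}$, so $\int|\nabla^2\chi|^2\lesssim1/\log(1/\delta)$ and the cost is $O(b^4/\log(1/\delta))$. Points have zero $W^{2,2}$-capacity in dimension four, so your ``main obstacle'' is not there.

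Your jet-matching route replaces $g_b$ near $\tilde q$ by a flat metric $\Psi^*g_E$ agreeing with it to first order, and cuts off only the $O(|x|^2)$ remainder with a standard cutoff. This gives a polynomial rate $O(\delta^4)$ and does not rely on the critical dimension, at the price of constructing $\Psi$.

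Make sure the zeroth-order term is matched exactly, with the linear part of $\Psi$ equal to $(I+b^2F(\tilde q))^{1/2}$. If $X$ only solves $\mathcal{L}_Xg_E=F(\tilde q)+\nabla F(\tilde q)x$, then $\Psi^*g_E-(g_E+b^2\mathcal{L}_Xg_E)=b^4(\nabla X)^T\nabla X$ does not vanish at $\tilde q$. Cutting that off with a standard cutoff costs an amount of order $b^8$, independent of $\delta$.

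The cleanest version of your Step 2 is to use $g_b$-normal coordinates $y$ at $\tilde q$. There $g_b-g_E=O(b^2|y|^2)$ with bounded second derivatives, and you simply cut off $g_b-g_E$ on $B_\delta\setminus B_{\delta/2}$.
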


In the proof of Theorem \ref{thm:main2} below, we will need a number of small 
parameters. We will first take $b$ to be small, and 
later choose $\eps$ much smaller.

\begin{proof}
    Without loss of generality, we can assume that $g_{Z_1}$ is Euclidean in $B_\delta(\tilde{q})$, and 
    we can choose $\delta$ so small that 
\[
 \int_{ B_\delta(\tilde{q})} |W_{g_b}|^2_{g_b} dV_{g_b}  < \frac{\eps}{2}. 
\]
We choose $0 < \tilde{\delta} \ll \delta$ and choose a radial cutoff function $\chi = \chi(r)$ in $\R^4$ such that 
\[
\begin{cases}
    \chi(r) = 0 & \hbox{ in } B_{\tilde{\delta}}(0); \\ 
    \chi(r) = 1 & \hbox{ in } \R^4 \setminus B_{{\delta}}(0); \\ 
    \int_{B_\delta(0)} \left(
    |\nabla^2 \chi|^2 + |\nabla \chi|^4 \right) dx 
    = o_{{\tilde{\delta}}/{\delta}}(1). 
\end{cases}
\]
Here $o_{{\tilde{\delta}}/{\delta}}(1)$ is a 
quantity that tends to zero as ${\tilde{\delta}}/{\delta}$ tends to zero: the latter property in the last formula can be satisfied by capacity reasons, see e.g. Theorem 2.1.6 in \cite{Ziemre-book-89}.

Recalling that we assume  $g_{Z_1}$ to be locally Euclidean near $\tilde{q}$, we define $g_\eps$ in $g_{Z_1}$-normal coordinates $x$ at $\tilde{q}$ as 
\[
g_\eps = dx^2 + \chi(|x|) b^2 F, 
\]
with $\tilde{\delta}$ to be determined later. 

Recall from \cite{malchiodi-malizia-2025-pre-weyl}, formula (5.41), that the following estimate holds true:
\begin{equation}\label{eq:weyl-est-rough}
        \abs{W^g}_g^2=W^{ijkl}W_{ijkl}\leq C\big(\lvert\nabla g^{-1}\rvert^2\abs{\nabla g}^2+ \abs{\nabla g}^4 +\lvert\nabla^2 g\rvert^2\big). 
    \end{equation}
Here we are denoting by $g$ a metric on $\R^4$ whose coefficient matrix is pointwise close to the Kronecker symbols, as in our case.

By \eqref{eq:weyl-est-rough} and our choice of $\chi$, we have that also 
\[
 \int_{B_\delta(\tilde{q})} |W_{g_\eps}|^2_{g_\eps} dV_{g_\eps}  < \frac{\eps}{2}, 
\]
whenever $\tilde{\delta}$ is chosen sufficiently small, depending on $b, \eps$ and $\delta$. 

\medskip

By our choice of $\chi(|x|) $, $g_\eps$  is Euclidean in $B_{\tilde{\delta}}(\tilde{q})$. 
We can therefore assume that $g_Z$ is obtained via a connected sum in this ball with the other manifolds $(\Sp^1_{t_i} \times \Sp^3)/{G_i}$, $i > 1$, and obtain a metric $g_\eps$ on the whole $Z$
as desired.
\end{proof}

We can now conclude the proof of Theorem \ref{thm:main2}.

\begin{proof}[Proof of Theorem \ref{thm:main2}]
We start by considering the case in which the manifold $Z$ is a single quotient of $\Sp^1 \times \Sp^3$. 
    As pointed out in Remark \ref{rem:gluing-diffeo-X}, the manifold $X$ defined at the beginning of Section \ref{sec:conclusion} is diffeomorphic to $Z\#\overbar{M}$. However, if we perform the same procedure with $Z$ and $\overbar{M}$, we end up with a manifold $Y\overset{\mathrm{diff}}{\cong}Z\#M$.  Applying Proposition \ref{prop:en-balance}, we obtain
    \begin{align}\label{eq:en-difference}
        \w^Y(g_Y)-\w^M(g_M)=\frac{2}{3}a^4 \big({W}^{\overbar{M}}(p)\big)^{kijl}\p^2_{kl} A_{ij}(0)+ O(a^4\gamma)+o(a^4).
    \end{align}
    If $(M,g_M)$ is not self-dual/anti-self-dual, then the same is true for $\overbar{M}$,
    so the proof now follows from Lemma \ref{lem:inter-term-formula} and a subsequent suitable choice of $0<a\ll\gamma\ll 1$. We therefore obtained the desired inequality $\w^Y(g_Y)-\w^M(g_M)<0$ 
    for our special choice of $Z$. 

    In the general case in which $Z$ is a connected sum of quotients of $\Sp^1 \times \Sp^3$, we call $Z_1$ one of these quotients and define $F$ on $Z_1$ as in Section \ref{sec:green-LCF}. If, as before, $q\in Z_1$ be the gluing point to $M$, we recall that 
    $g_Y = g_b$ away from $q$. Choosing then $\eps < \frac{1}{2}(\w^M(g_M)-\w^Y(g_Y))$ in Proposition \ref{p:ZZ1}, we obtain 
    the inequality $\w^Y(g_Y)-\w^M(g_M)<0$ for a general 
    locally conformally flat $Z$. 
\end{proof}

\subsection{Connected sum of a Bach-flat orbifold with a locally conformally flat orbifold and proof of Theorem \ref{thm:gluing-orbifold}}\label{subsec:orbifold-sums}

In this last subsection we show how to extend the proof of Theorem \ref{thm:main2} for some orbifolds.
We start by recalling the definition of orbifold:

\begin{definition}
    A \emph{Riemannian orbifold} $(M^n,g)$ is a topological space which is a smooth manifold with a smooth metric away from a finite set of singular points. At each singular point $p$, $M$ is locally diffeomorphic to a cone over a spherical space form $\Sp^{n-1}/\Gamma_p$, where $\Gamma_p\subset \mathrm{SO}(n)$ is a finite subgroup acting freely on $\Sp^{n-1}$. Moreover, we require that at each singular point the metric locally lifts to a smooth (up to the origin), $\Gamma_p$-equivariant metric on a small  Euclidean ball.
\end{definition}

As for the locally conformally flat four-manifolds with positive scalar curvature, we have a similar classification result for orbifolds of the same class:

\begin{theorem}[\cite{chen-tang-zhu-2012-JDG-4LCF-classification}, Corollary 5.3 and \cite{wang-2025-pre-LCForbifolds}, Theorem 1.6] \label{thm:classif-LCF-PSC-orbifolds}
    Let $(Z,g_Z)$ be a closed, oriented, locally conformally flat $4$-orbifold with positive scalar curvature. Then $Z$ is diffeomorphic to a connected sum of manifolds of type $\big(\R\times \Sp^3\big)/G$ and $\Sp^4/\Gamma$, where $G$ is a cocompact discrete subgroup of the isometry group of the standard metric on $\R\times \Sp^3$ with at most isolated singularities, and $\Gamma\subset \mathrm{SO}(4)\subset\mathrm{SO}(5)$ has two fixed antipodal points.
    In particular, a finite manifold cover of $Z$ is diffeomorphic to $\Sp^4$ or a connected sum of copies of $\Sp^1 \times \Sp^3$.
\end{theorem}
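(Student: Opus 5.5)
The statement is the orbifold analogue of Theorem \ref{thm:CTZ-LCF-classification}, and the paper attributes it to \cite[Corollary 5.3]{chen-tang-zhu-2012-JDG-4LCF-classification} and \cite[Theorem 1.6]{wang-2025-pre-LCForbifolds}. So I would not reprove it from scratch. I would assemble it from the developing-map structure theory, following the smooth case.

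The first step is to pass to the orbifold universal cover $\tilde Z$. Because $Z$ is a good orbifold (only isolated singularities with $\Gamma_p\subset \mathrm{SO}(4)$), the locally conformally flat structure lifts, and positivity of scalar curvature gives positive Yamabe class. The orbifold version of the Schoen--Yau argument \cite{schoen-yau-1988-Inventiones-Conf-flat} should then show that the developing map $\Phi:\tilde Z\to \Sp^4$ is injective. Its image is $\Sp^4\setminus\Lambda$, where $\Lambda$ is the limit set of the holonomy Kleinian group $G\subset \mathcal M_4$, and $\dim\Lambda<1$. This yields $Z\cong(\Sp^4\setminus\Lambda)/G$, where $G$ now acts with isolated fixed points corresponding to the orbifold points.

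Next I would apply the Kleinian group decomposition, as in Izeki \cite{izeki-1995-Inventiones-KleinianGroups} and in the orbifold-surgery classification of Chen--Tang--Zhu (Hamilton's PIC flow with orbifold surgeries). Since $\dim\Lambda<1$, the group $G$ is geometrically finite and virtually free. By a Maskit-type combination argument, $G$ splits as a free product of elementary pieces. There are two kinds of pieces:
\begin{itemize}
\item finite groups, whose quotients give spherical orbifolds $\Sp^4/\Gamma$; freeness away from isolated points forces $\Gamma\subset\mathrm{SO}(4)$ fixing two antipodal points;
\item virtually cyclic loxodromic groups, whose quotients give $(\R\times\Sp^3)/G_i$.
\end{itemize}
Free products then correspond to connected sums.

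The last claim, about a finite manifold cover, would follow from Selberg's lemma. The group $G$ is finitely generated and linear, so it has a torsion-free finite-index subgroup. The corresponding cover is a smooth LCF manifold with positive scalar curvature, and Theorem \ref{thm:CTZ-LCF-classification} then applies to it.

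The main obstacle is the injectivity of the developing map with orbifold points present. This requires a Green's function and positive mass argument on $\tilde Z$ that tolerates the orbifold singularities, which is exactly the content of \cite{wang-2025-pre-LCForbifolds}. I would cite it rather than redo it.
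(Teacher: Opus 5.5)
Your proposal has a genuine gap at the Kleinian-group step. You assert $\dim\Lambda<1$, then that $G$ is geometrically finite and virtually free, and then that it splits into finite and loxodromic virtually cyclic pieces ``by a Maskit-type combination argument''. None of these steps is justified.

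\emph{The dimension bound.} Schoen--Yau \cite{schoen-yau-1988-Inventiones-Conf-flat} give only $\dim_H\Lambda\le\frac{n-2}{2}$. For $n=4$ this is exactly the borderline value $1$. The bound is attained in the scalar-flat case by $\Sigma_g\times\Sp^2$ with the product of the curvature $-1$ and $+1$ metrics, whose holonomy has a round circle as its limit set.

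\emph{The splitting.} Even granting a strict bound, you still have to pass from a small limit set to a splitting of $G$ over finite subgroups, realized by separating spherical necks in $\Omega$. With torsion present these splittings are amalgamated products, not just free products. This is a substantial theorem, and Maskit's combination theorems run in the opposite direction: they build groups from pieces rather than decompose a given one. This is the part that the Kleinian-group approach did not settle in dimension four. That is why the paper presents \cite{chen-tang-zhu-2012-JDG-4LCF-classification} as completing the work of \cite{hamilton-1997-CAG-PIC} and \cite{izeki-1995-Inventiones-KleinianGroups}.

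\emph{What the cited proof does instead.} It bypasses the developing map entirely. For $W=0$ the isotropic curvature equals $R/3$, so $g_Z$ has positive isotropic curvature. The Chen--Tang--Zhu classification of compact four-orbifolds with isolated singularities and positive isotropic curvature, proved by Ricci flow with surgery, then directly yields the decomposition into pieces $\Sp^4/\Gamma$ and $(\R\times\Sp^3)/G$. This is the ingredient you need to invoke.

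Two secondary points.

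First, goodness of $Z$ does not follow from the singularities being isolated. The weighted projective plane $\mathbb{P}(1,1,k)$ has a single singular point of type $\mathbb{C}^2/\Z_k$ and trivial orbifold fundamental group, so it is bad. Goodness follows instead from the $(\mathcal{M}_4,\Sp^4)$-structure, since orbifolds carrying a $(G,X)$-structure are good (Thurston).

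Second, once goodness is in place, $\tilde Z$ is a smooth, complete, simply connected LCF manifold with $R\ge c>0$ and bounded geometry. The Schoen--Yau injectivity theorem therefore applies verbatim. The step you single out as the main obstacle needs no orbifold-adapted Green's function or positive mass argument.

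Your Selberg argument for the last assertion is sound. $\pi_1^{\mathrm{orb}}(Z)\cong G$ is finitely generated and linear. A torsion-free finite-index subgroup acts freely on $\Omega$, because point stabilizers there are finite. Theorem \ref{thm:CTZ-LCF-classification} then applies to the resulting compact oriented LCF manifold with $R>0$.
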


Here by manifold cover we mean that $Z$ is obtained as a quotient of a smooth manifold by the action of some group with fixed points. The next two lemmas shed some light on the structure of each singular factor in the decomposition.

\begin{lemma} \label{lem:football-orbif-count}
    All the factors $\Sp^4/\Gamma$ in Theorem \ref{thm:classif-LCF-PSC-orbifolds} are football orbifolds.
\end{lemma}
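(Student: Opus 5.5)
To prove the lemma, I would work directly from the structure of $\Gamma$ in Theorem \ref{thm:classif-LCF-PSC-orbifolds}. We have $\Gamma\subset \mathrm{SO}(4)\subset\mathrm{SO}(5)$, where $\mathrm{SO}(4)$ acts on $\Sp^4\subset\R^5=\R^4\times\R$ by rotating the first four coordinates and fixing the last one. Writing $\Sp^4$ as the suspension of $\Sp^3$,
\[
\Sp^4=\{(\sin s\, w,\cos s)\mid w\in\Sp^3,\ s\in[0,\pi]\},
\]
each $O\in\Gamma$ acts by $(\sin s\,w,\cos s)\mapsto(\sin s\,O(w),\cos s)$. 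Every $\Gamma$-orbit therefore stays inside a single latitude sphere $\{s=\mathrm{const}\}$. The two poles $N=e_5$ and $S=-e_5$ are fixed by all of $\Gamma$.

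The key step is to show that the action is free away from $N$ and $S$. The quotient $\Sp^4/\Gamma$ is an orbifold in the sense of the Definition, so its singular set is finite. If some nontrivial $O\in\Gamma$ fixed a point $w\in\Sp^3$, then $O$ would fix the whole meridian arc $\{(\sin s\,w,\cos s)\}$, which gives a non-isolated singular set. This is a contradiction, so $\Gamma$ acts freely on $\Sp^3$. It follows that $\Gamma$ is a spherical space form group and that the only singular points of $\Sp^4/\Gamma$ are the images of $N$ and $S$. At each of them the local model is the cone over $\Sp^3/\Gamma$, lifting equivariantly to a Euclidean ball via stereographic-type coordinates centered at the pole.

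Finally, I would identify the quotient globally. Using the suspension coordinates, $\Sp^4/\Gamma$ is the suspension of $\Sp^3/\Gamma$ with the round suspension metric $ds^2+\sin^2 s\, g_{\Sp^3/\Gamma}$. This is exactly the football orbifold $\Sp^4/\Gamma$ with two antipodal cone points of group $\Gamma$, and it is locally conformally flat and of positive scalar curvature, consistent with the classification.

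I expect the main obstacle to be the use of the hypotheses of the cited theorem: one must make sure that "two fixed antipodal points" together with $\Gamma\subset\mathrm{SO}(4)$ really means the action just described, up to conjugation in $\mathrm{SO}(5)$. If $\Gamma$ is only given abstractly as fixing two antipodal points, I would first conjugate in $\mathrm{SO}(5)$ so that these points become $\pm e_5$. Then $\Gamma$ preserves their orthogonal complement $\R^4$ and acts there through a subgroup of $\mathrm{SO}(4)$, and the argument above applies unchanged. The freeness argument is what rules out fixed circles or surfaces, so it is the step where the isolated-singularity assumption is essential.
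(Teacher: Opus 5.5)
Your argument is correct and has the same structure as the paper's proof: $\Gamma$ fixes the two antipodal poles and acts freely on the orthogonal $\Sp^3$, so $\Sp^4/\Gamma$ is the suspension of the space form $\Sp^3/\Gamma$, i.e.\ a football orbifold. The only difference is that the paper cites Lemma 5.1 of Chen--Tang--Zhu, which covers any finite $\Gamma\subset\mathrm{SO}(5)$ with isolated fixed points, whereas you take the two fixed poles from the statement of Theorem~\ref{thm:classif-LCF-PSC-orbifolds} and prove freeness directly with the fixed-meridian argument, a valid self-contained substitute for the citation.
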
 
\begin{proof}
    The lemma follows from the fact that if $\Gamma\subset \mathrm{SO}(5)$ acts on $\Sp^4$ with only isolated fixed points, then $\Gamma$ fixes exactly two antipodal points on $\Sp^4$ and acts freely on the orthogonal $\Sp^3$-section of $\Sp^4$, see Lemma 5.1 and the discussion at p.75 in \cite{chen-tang-zhu-2012-JDG-4LCF-classification}. 
\end{proof}

\begin{lemma}\label{lem:cylinder-orbifold-count}
    All the factors $\big(\R\times \Sp^3\big)/G$ in Theorem \ref{thm:classif-LCF-PSC-orbifolds} only possess $\Z_2$-orbifold singularities, and the number of orbifold points is either $0$ or $4$.
\end{lemma}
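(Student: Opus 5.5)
The plan is to reduce to elementary geometry of isometries of $\R\times\Sp^3$. Write $Z=(\R\times \Sp^3)/G$ with $G$ a cocompact discrete subgroup of $\mathrm{Isom}(\R\times\Sp^3)=\mathrm{Isom}(\R)\times \mathrm{O}(4)$ acting with at most isolated fixed points, and orientation preserving since $Z$ is oriented. As in Remark \ref{rem:H-subgroup-translations}, let $H\trianglelefteq G$ be the subgroup of elements acting by translation on the $\R$ factor. First we show that $H$ has finite index and is isomorphic to $\Z$. Cocompactness forces some element to act by a nontrivial translation $r\mapsto r+c$. Then the $\mathrm{O}(4)$-components of the translational elements form a subgroup of the compact group $\mathrm{O}(4)$, and by discreteness a suitable power of such an element has trivial rotation part.

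Next we analyze the elements $\gamma\in H$ with nontrivial rotation part, $\gamma(r,w)=(r+c,O(w))$. If $c\neq 0$, then $\gamma$ has no fixed points at all. If $c=0$, then $\gamma$ fixes $\R\times \mathrm{Fix}(O)$, and this set is never isolated: it is either empty or at least a line. Isolatedness therefore forces $\gamma$ to act freely. Hence all singular points come from elements $\gamma\notin H$, which act on $\R$ as a reflection, $\gamma(r,w)=(L-r,O(w))$. The fixed set of such an element is $\{L/2\}\times\mathrm{Fix}(O)$. For this to be nonempty and isolated, $O$ must have fixed points on $\Sp^3$ and its fixed subspace must be $1$-dimensional, so that $\mathrm{Fix}(O)\cap \Sp^3$ consists of two antipodal points $\pm v$. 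Orientation preservation requires $\det O=-1$, since the reflection on $\R$ reverses orientation. An element of $\mathrm{O}(4)$ with determinant $-1$ and exactly one-dimensional fixed space has eigenvalue $1$ on $v$ and acts on $v^\perp$ with determinant $-1$ and no fixed vectors; in $\mathrm{O}(3)$ this means $-$(a rotation), whose eigenvalue $1$ must be excluded. The isotropy group of the point $(L/2,v)$ is the set of elements of $G$ fixing it, and its action on the tangent space gives the local group $\Gamma_p$. An element of $H$ cannot fix the point, because those elements act freely. Two reflection-type elements fixing the same point have a product lying in $H$ that fixes the point, so that product must be the identity. Therefore the isotropy group is $\{1,\gamma\}\cong\Z_2$, and it acts on the tangent space by $\mathrm{diag}(-1)\oplus(\text{reflection-type})$. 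One checks that this element squares to the identity, so in fact $O=\mathrm{diag}(1,-1,-1,-1)$ in a suitable basis and $d\gamma=-\mathrm{Id}$. This gives the $\Z_2$ statement.

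For the count, we pass to $Z'=(\R\times\Sp^3)/H$, a smooth manifold with $Z=Z'/(G/H)$. If $G=H$, there are no singular points. Otherwise $H_0:=$ (the translational part of $G$) has index $2$ in $G$ when restricted to the action on $\R$, because the image of $G$ in $\mathrm{Isom}(\R)$ is an infinite dihedral group. Modulo $H$, singular points correspond to $G$-orbits of fixed points of the reflection-type elements. The reflections in the infinite dihedral group fall into two conjugacy classes, with fixed points $L/2$ and $L/2+c/2$. Each class contributes one fixed level, and each level carries the two points $\pm v$. We then need to decide whether $\pm v$ are identified by elements of $G$ preserving the level. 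The expected answer is no: the stabiliser of a level is $\Z_2$ modulo the free rotations in $H$ with $c=0$, and those rotations must act freely. The subtle point, which we expect to be the main obstacle, is exactly this bookkeeping: rotations in $H$ with zero translation could a priori map $v$ to $-v$ and reduce the count to $2$. We plan to exclude this using the relation $\gamma\rho\gamma^{-1}$ together with the freeness of $\rho$, or else to exploit the parity statement that orbifold points of a given group occur in even numbers. Granting this, we obtain $2\times 2=4$ singular points, or $0$ when there are no reflection-type elements.
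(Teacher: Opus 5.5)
Your treatment of the isotropy is correct and is essentially the paper's. Translation-type elements act freely. A reflection-type $\gamma=(L-r,O)$ must have $\det O=-1$ and a one-dimensional fixed space. Then $\gamma^2$ is a translation-type element with fixed points, hence trivial, so $O\sim\mathrm{diag}(1,-1,-1,-1)$ and the isotropy group is $\{1,\gamma\}$ acting by $-\mathrm{Id}$.

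The gap is in the count, which is the other half of the statement. You explicitly ``grant'' the one step that decides the answer, namely that $(L/2,v)$ and $(L/2,-v)$ lie in different $G$-orbits. Your claim that each level carries only the two points $\pm v$ presupposes the same fact. Indeed, for every pure rotation $(r,k)\in G$, the element $(L-r,Ok)$ is again reflection-type over the same reflection and fixes its own antipodal pair on that level. Neither suggested remedy is carried out, and the parity one cannot work. Evenness does not distinguish $2$ from $4$, and in the paper the evenness of orbifold points is deduced from this very lemma, so invoking it would be circular.

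Your preliminary claim that the translation-type subgroup is $\cong\Z$ would by itself remove the obstacle, since the pure rotations form a finite subgroup of it. But the argument you give for it is not valid. Discreteness does not force some power of $(r+c,O)$ to have trivial rotation part (take $O$ an irrational Hopf rotation). Moreover, for $\Sp^1\times L(m,1)$ that subgroup is $\Z\times\Z_m$.

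The missing idea is short. Let $(r,k)\in G$ be a pure rotation and apply your own analysis to $(L-r,Ok)$: this gives $Ok=2uu^T-\mathrm{Id}$ for some unit $u$. With $O=2vv^T-\mathrm{Id}$ you get $k=O\,(Ok)=(\mathrm{Id}-2vv^T)(\mathrm{Id}-2uu^T)$, which fixes $\{u,v\}^\perp$ pointwise. Since pure rotations must act freely, $k=\mathrm{Id}$. Hence, whenever reflection-type elements exist, the stabilizer of the level $\{L/2\}\times\Sp^3$ is exactly $\{1,\gamma\}$. So the points $(L/2,\pm v)$ are the only singular points on that level and they are not identified. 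The two non-conjugate reflection classes then give exactly $4$.

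The paper closes the count differently, by orbit--stabilizer on $Z'=\Sp^1_R\times\Sp^3$. This is the quotient by a finite-index normal subgroup $H$ of pure translations, which the paper assumes exists without loss of generality. Each of the $|N|/2$ reflection-type elements of $N=G/H$ fixes exactly four points of $Z'$. These fixed sets are disjoint because every isotropy group has order $2$, and every singular orbit has $|N|/2$ points, giving $2|N|/(|N|/2)=4$. Once completed, and with the unneeded claim about $H$ dropped, your route through the dihedral image of $G$ in $\mathrm{Isom}(\R)$ has the mild advantage of not requiring that assumption.
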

\begin{proof}
    By the discussion in  Section \ref{subsec:quotients-geom-setting}, we can assume that there exists a normal subgroup $H\trianglelefteq G$, of finite index and isomorphic to $\Z$, which acts on $\R \times \Sp^3$ via translations of length $mR$, for $m\in\Z$ and $R>0$ fixed, and which contains all translations in $G$. Therefore it is sufficient to consider finite quotients of the cylinder $([-R/2,R/2]\times \Sp^3)/\sim$ under the action of the quotient group $N:=G/H$. Looking at the proof of Lemma \ref{lem:quotient-map-cylinder}, we see that the only elements $\gamma\in N$ (all orientation-preserving) generating fixed points are maps acting as an inversion on the first factor and a rotation $O\in \mathrm{O}(4) \setminus \mathrm{SO}(4)$ on the second one, namely $\gamma(r,w)=(L-r,O(w))$, for $\abs{L}\leq R/2$. In this case, $\det (I-{O})=0$ and there exists a (unique up to sign\footnote{With $2$ vectors we could take combinations between them and obtain a positive-dimensional set of fixed points.}) vector $v\in \Sp^3$ satisfying $O(v)=v$. Then 
    \begin{align*}
        \Big(\frac{L}{2}, \pm v\Big), \quad \Big(\frac{L+R}{2},\pm v\Big),
    \end{align*}
    are four distinct fixed points for $\gamma$.
    Moreover, since $\gamma^2=(\mathrm{Id},O^2)$ and $O^2$ fixes $\pm v$, then necessarily $O^2=\mathrm{Id}$ as otherwise $\gamma^2$ would fix a positive-dimensional set. This means that $O$ has only eigenvalues $1$ (with multiplicity $1$) and $-1$ (with multiplicity $3$). Conversely, all elements in $N$ containing an inversion on the $\Sp^1$-factor are of this form and thus fix $4$ points exactly.

Now, let us write $N=N_+ \sqcup N_-$, where 
\begin{align*}
    N_\pm:=\big\{\gamma=(\rho, O)\in N \mid \det(\rho)=\pm 1\big\}.
\end{align*}
If $N_-=\emptyset$, then all elements in $N$ act freely and the quotient is a smooth manifold. Therefore assume from now on that $N_-\not=\emptyset$. Since the sign map 
\begin{align*}
    N\to\{\pm1\},  \quad \gamma=(\rho,O)\mapsto \det(\rho),
\end{align*}
is a homomorphism, then $\abs{N_-}=\abs{N}/2$.
Moreover, each fixed point has stabilizer of order $2$ (if nontrivial $\gamma_1\not=\gamma_2$ both fix $p=(s,w)$, then $\gamma_1 \gamma_2\not=\mathrm{Id}$ would fix $\Sp^1 \times \{w\}$, which is not possible), therefore we always have that, for any $\gamma_1\not=\gamma_2, \, \gamma_1,\gamma_2\in N_-$,  the fixed point sets of $\gamma_1$ and $\gamma_2$ are always pairwise disjoint and each fixed point generates an orbit of cardinality $\abs{N}/2$. Hence the total number of fixed points is $4 \cdot \abs{N_-}=2\abs{N}$, which, once divided by the orbit size $\abs{N}/2$, descends to $4$ orbifold points.
 This concludes the proof.
\end{proof}

\begin{proof}[Proof of Theorem \ref{thm:gluing-orbifold}]
    The proof essentially goes along the same lines of that of Theorem \ref{thm:main2}, therefore we will only highlight how the arguments extend to the orbifold case.
    
The crucial point is that Theorem \ref{thm:classif-LCF-PSC-orbifolds} allows us to extend the construction in Section \ref{sec:green-LCF} to this case whenever we have a $\Z_2$-orbifold point lifting to a cylinder quotient. Indeed, assume we have a singular tensor $F$ on $Z$ which behaves as in Proposition \ref{prop:F-exp-quotients} near the orbifold point. Then, since the quadratic term $\p^2_{kl}A_{ij}(0)x^k x^l$ will be  $\Z_2$-equivariant at $0$, we can immediately define the correction $H$ on $M$, singular at the orbifold point $p\in M$, as in \eqref{eq:H-tens-def}  (it's a purely local argument). At this point, we can perform all the energy expansions as in sections \ref{sec:Weyl-en-Z} and \ref{sec:weyl-en-M}, where the only change will be a different coefficient (depending only on $\abs{\Gamma}=2$, thus equal on $M$ and $Z$) multiplying each term in the energy expansions and caused by the different volume of geodesic balls around the orbifold points.
We can then define the metric $g_X$ on the connected sum and conclude the proof just as above; in particular, the proof of Lemma \ref{lem:inter-term-formula} only depends on a choice of coordinates on a local covering of the orbifold point, so the argument extends with no further issue.

\smallskip

Given the above discussion, we now focus on the construction of the singular section $F$ on $Z$.

\smallskip

Let us start by assuming that $Z$ is a single quotient of type $\big(\R\times \Sp^3\big)/G$. In this case, the group $G$ does not act freely but with fixed points, which descend indeed to four orbifold points by Lemma \ref{lem:cylinder-orbifold-count}. Up to composition with an isometry, we can assume that the orbifold point in $Z$ in which we would like to perform the gluing is $q=(0,S)\in\R\times \Sp^3$.
    We can then argue exactly as in Section \ref{subsec:green-quotients} and, after possibly shrinking the cylindrical variable, define a singular tensor $\xi$ on $\R^4\backslash\{0\}$ as in \eqref{eq:xi-quotient-def}, namely which is gauged in such a way to make the series of its pullbacks convergent and with covariant derivatives vanishing up to first order at $-e_4$. We can also add a further gauge term to also project the hessian on symmetrized curvature-type tensors. By construction, the series of pullbacks will be equivariant and thus define a tensor on $Z$ itself with all the desired properties. Note that the pullbacks by elements in the stabilizer of $(0,S)$ (which is a subgroup isomorphic to the orbifold group $\Z_2$) do not  modify the asymptotic behavior of $\xi$ at $(0,S)$ by virtue of the fact that the Weyl tensor is evaluated at the orbifold point $p\in M$, and therefore it is already $\Z_2$-invariant.

    \smallskip

    We now consider the general case in which $Z$ is diffeomorphic to one of the manifolds described in Theorem \ref{thm:classif-LCF-PSC-orbifolds}, namely
    \begin{align}\label{eq:Z-orbifold-decomposition}
        Z\overset{\mathrm{diff}}{\cong}\#_i \frac{\R\times \Sp^3}{G_i}\#_j\frac{\Sp^4}{\Gamma_j}.
    \end{align}
    If the orbifold point $q$ belongs to one of the $(\R\times\Sp^3)/G_i$ factors, then we can argue as before to define a singular tensor on that factor and then employ Proposition \ref{p:ZZ1} to define a tensor on the whole $Z$. Now, from Lemmas \ref{lem:football-orbif-count} and \ref{lem:cylinder-orbifold-count}, we know that the number of orbifold points with any given group $\Gamma$ is always an even number. Moreover, if there are at least $4$ orbifold points with group $\Z_2$, then we can assume that the orbifold point $q$ always belongs to a cylinder quotient $(\R\times \Sp^3)/G$ in the orbifold decomposition \eqref{eq:Z-orbifold-decomposition}. Indeed, the connected sum of two $\Z_2$-footballs is diffeomorphic to the quotient $(\Sp^1 \times \Sp^3)/N$, where e.g. $N=\{\mathrm{Id},\gamma\}$ and $\gamma(r,w)=(-r,O(w))$, where $O=\mathrm{diag}(1,-1,-1,-1)$. This concludes the proof.
\end{proof}

\begin{remark}\label{rem:orbif-gluing-theorem-generalization}
    We expect that Theorem \ref{thm:gluing-orbifold} can be extended to the more general case in which $Z$ is not conformally equivalent to a football orbifold. Indeed, by Theorem \ref{thm:classif-LCF-PSC-orbifolds} we know that any such orbifold admits a finite manifold cover diffeomorphic to a connected sum of copies of $\Sp^1 \times \Sp^3$. Then the hope is to define a singular correction $F_0$ on one of these cylinder factors, and define $F$ afterwards as the sum of the pullbacks of $F_0$ by the (finitely many) deck elements. However, there are two issues related to this approach that need to be addressed. The first is that we need to suitably \acc normalize'' the manifold cover (and thus $Z$) so that the orbifold point $q$ lifts to a point at the center of a narrow cylinder $\Sp^1_t \times \Sp^3$ with $t$ close to $1$. The second difficulty lies in the fact that, when $\abs{\Gamma}\geq 3$, the representation of $\Gamma$ as a group of matrices depends on our choice of coordinates on the orbifold chart, and in general there will be multiple representations, any two of which agree only up to conjugation with a suitable matrix in $\mathrm{SO}(4)$. Hence we need to choose orbifold charts which simultaneously provide the same \acc matrix representation'' of $\Gamma$ at $p\in M$ and $q\in Z$, while also diagonalizing the Weyl operator on $M$ and providing the right interaction on $Z$.
\end{remark}

\printbibliography[heading=bibintoc]

\Addresses

\end{document}